\documentclass[12pt]{amsart}
\usepackage{mystyle}

\usepackage[T1]{fontenc}   

\begin{document}
\date{April 25, 2026}



\vspace{0.5in}

\renewcommand{\bf}{\bfseries}
\renewcommand{\sc}{\scshape}
\vspace{0.5in}

\title[Root laminations of arbitrary degree]
{Root laminations of arbitrary degree}

\author[A.~Blokh]{Alexander~Blokh}

\thanks{The first named author was partially
supported by NSF grant DMS--2349942}

\thanks{The second named author was partially
supported by NSF grant DMS--1807558}

\author[L.~Oversteegen]{Lex Oversteegen}

\author[V.~Timorin]{Vladlen~Timorin}

\thanks{The third named author was supported by
the HSE University Basic Research Program.
}

\subjclass[2010]{Primary 37F20; Secondary 37F10, 37F46}

\keywords{Complex dynamics; laminations; Mandelbrot set; Julia set}

\begin{abstract}
This paper studies the space of degree $d>1$ invariant q-laminations, i.e.,
 geodesic laminations invariant under the $d$-tupling map of the circle and
 associated with equivalence relations.
Our main construction associates a q-lamination with any degree $d$ critical portrait \emph{in a canonical way}.
Even though somewhat technical, this is the initial step in the program
 of classification of all degree $d$ invariant q-laminations.
\end{abstract}

\maketitle

\section{Introduction}\label{s:intro-new}
Invariant geodesic laminations play important roles in low-dimensional geometry and topology.
The spaces of laminations invariant under a Fuchsian group and equipped with certain transverse measures
 (1) compactify Teichm\"uller spaces of surfaces,  (2) represent pseudo-Anosov dynamics in the
 classification theorem of surface diffeomorphisms, (3) describe geometry of ends of hyperbolic 3-manifolds (via ending laminations),
 and (5) bridge hyperbolic geometry, topology, and measured foliations.
A similar but less studied version of invariant laminations appears in complex polynomial dynamics \cite{thu85}.
Here, invariance is understood with respect to the $d$-tupling map $\si_d$ of the circle (see Sections \ref{s:lami-gen} and \ref{ss:sib-prop}
 for definitions), where $d>1$ is the degree of polynomials under consideration.

Given a single variable degree $d>1$ complex polynomial $P:\C\to\C$ with a locally connected Julia set $J_P$,
 one associates an invariant lamination $\lam_{\sim P}$ with $P$, which can be understood in two equally important ways.
On the one hand, $\lam_{\sim_P}$ visualizes a certain equivalence relation on the unit circle $\uc$
 so that the quotient of $\uc$ under this equivalence relation is homeomorphic to $J_P$.
Moreover, the homeomorphism can be chosen to topologically conjugate the quotient dynamics of $\si_d$ on $\uc/\sim_P$
 with that of $P:J_P\to J_P$.
Laminations representing equivalence relations are called \emph{q-laminations}
 (or clean laminations, in the language of Thurston \cite{thu85}).
There are ways of associating q-laminations with more general polynomials $P$,
 where $J_P$ is connected but not necessarily locally connected, however, given $P$, an associated lamination may fail to be unique.
Classification of q-laminations is important for understanding of the topology of the
 polynomial connectedness locus.

On the other hand, invariant geodesic laminations are interesting geometric objects on their own right,
 even if they fail to represent equivalence relations.
General invariant laminations are studied for the following reasons:
 (1) they are easier to construct (it suffices to take finitely many chords and add all their iterated pullbacks and limits thereof),
 (2) the space they form is compact,
 (3) subspaces of the latter space give rise to combinatorial models for certain subspaces of complex polynomials.
Still, in this paper, we are mostly interested in q-laminations.
An attempted classification of all q-laminations must involve, as the first step,
 a way to label q-laminations with more tractable combinatorial objects.
Such objects have been known since \cite{thu85}; these are \emph{critical portraits}
 (combinatorial objects with a very similar role are called majors in \cite{tby20}).

A chord $\ol{ab}$ of the unit circle $\uc$ with endpoints $a$, $b\in\uc$ is said to be
 ($\si_d$)-\emph{critical} if $\si_d(a)=\si_d(b)$.
By a ($\si_d$-)\emph{critical portrait}, we mean a collection of $d-1$ critical chords that do not form cycles.
Having fixed a critical portrait allows one to take pullbacks of chords in an almost unambiguous fashion,
 therefore, it allows one to construct invariant laminations.
Still, there may be more than one invariant lamination associated with the same critical portrait,
 and these laminations may fail to be q-laminations.
Thus, the problem is to associate a q-lamination with every critical portrait \emph{in a canonical way}.
We produce such an association, and this is the main result of this paper.
Even though this result is somewhat technical, it is an important initial step in the program
 of classification of all q-laminations.

Say that a q-lamination $\lam_\sim$ associated with an equivalence relation $\sim$
 is a \emph{root lamination} of a critical portrait $\Pc$ if (1) whenever a component of $\bigcup\Pc$
 does not contain $\si_d$-periodic points, all points of $\bigcup\Pc$ are $\sim$-equivalent, and
 (2) if a component $C$ of $\bigcup\Pc$ contains a periodic point $a$ of $\si_d$, then
 $C$ lies in the closure $U$ of a complementary component of $\bigcup\lam_\sim$
 so that the period of $U\cap\uc$ is the same as the period of $a$, and the
 topological degree of $\si_d:U\cap\uc\to \si_d(U\cap\uc)$
 is the same as the number of critical chords in $C$.

\begin{thmain}
Each critical portrait corresponds to a unique root lamination.
\end{thmain}

Existence is proved in Theorem \ref{t:rootla}, and the uniqueness is proved in Theorem \ref{t:rootuni}.

\section{Previous work and notation}\label{s:intro}

We assume familiarity with complex polynomial dynamics and its notation
($K_f$ is the filled Julia set of a polynomial $f$, $J_f$ is the Julia set of $f$, etc) as well
as with some classical notation of various concepts. The boundary of $X\subset \C$ is denoted by $\bd(X)$.

Let $\poly_d$ be the space of all \emph{monic centered} polynomials of degree $d>1$, i.e.
maps $f:\C\to\C$ of the form $f(z)=z^d+a_{d-2}z^{d-2}+\dots+a_0$
(for instance, $\poly_2$ consists of polynomials $Q_c(z)=z^2+c$).

Evidently, polynomials from $\poly_d$ come in 
groups of conjugate maps
(where the conjugation is the multiplication by any $(d-1)$-st root of unity).
This creates the symmetry, with respect to such multiplication, of the space.
The \emph{connectedness lo\-cus} $\M_d$ of $\poly_d$ is the set of all $f\in\poly_d$ with connected $K_f$.
The \emph{principal hyperbolic domain} $\phd_d$ of $\poly_d$ 
consists of all $f\in \poly_d$ with an attracting fixed point $a$ and all critical points in the immediate basin of $a$
(then the immediate basin of $a$ is an invariant Jordan disk $U$ while $\ol{U}=K_f$ is a closed Jordan disk).
Topologically, $\phd_d$ is an open ball in the $(d-1)$-dimensional complex space, symmetric under the rotation by
any $(d-1)$-st root of unity.
In particular, $\M_2$ identifies with the celebrated \emph{Mandelbrot set} $\Mc=\{c\in\C\mid Q_c^n(0)\not\to\infty\}$,
and the boundary of $\phd_2$ is called the \emph{Main Cardioid}. This paper is inspired by the work of Thurston \cite{thu85}
who introduced the notion of an invariant lamination and used 
it to construct a model of $\M_2$.

From the topological point of view, 
 transparent dynamics takes place on locally connected
(and therefore connected) Julia sets. Indeed, if $J_P$ is locally connected then,
by  Carathe\-odory theory,  all external rays land. In this case
one can introduce the concept of a \emph{laminational equivalence relation $\sim_P$} under which the arguments of external rays landing on
the same point are declared to be equivalent. The corresponding \emph{topological
polynomial $f_{\sim_P}$ of degree $d$} is the map induced
by $\si_d$, the angle $d$-tupling map, on $\uc/\sim_P$;
the continuum $\uc/\sim_P$ is called the \emph{topological Julia set} of $f_{\sim_P}$ and is denoted $J_{f_{\sim_P}}$, or $J_{\sim_P}$.
Since $J_P$ is locally connected, then $f_{\sim_P}$ and $P|_{J_p}$ are topologically conjugate.

The notion of a \emph{laminational equivalence relation $\sim$} can be introduced without referring to any polynomial. It is an equivalence
relation on the unit circle $\uc$ which has all the essential properties of $\sim_P$ described in the previous paragraph. Moreover, for an
\emph{invariant laminational equivalence relation $\sim$} one can define the associated \emph{topological Julia set $J_\sim=\uc/\sim$}, and the \emph{topological polynomial}
$f_\sim:J_\sim\to J_\sim$ induced on $J_\sim$ by the action of $\si_d:\uc\to \uc$.

For $A\subset \C$ we define the (closed) \emph{convex hull} $\ch(A)$ as the intersection of all \emph{closed} convex sets containing $A$.
Thus, $\ch(A)=\ch(\ol{A})$ by definition.
Given an invariant laminational equivalence relation $\sim$ one can consider the family $\lam_\sim$ of edges of the convex hulls of
all $\sim$-classes. More precisely, by construction, $\lam_\sim$ is the family whose elements are (1) edges of convex hulls of $\sim$-classes,
and (2) all points
of the unit circle $\uc$; the chords forming $\lam_\sim$ are said to be \emph{leaves (of  $\lam_\sim$)}.
Thurston's laminations $\lam_\sim$ associated with laminational equivalence relations $\sim$ (or with their
topological polynomials $f_\sim$) provide a geometric visualisation for the latter and are called
\emph{q-laminations}. In fact, as topological polynomials, q-laminations, and laminational equivalence relations are in one-to-one correspondence, we
talk about them interchangeably.

Studying topological polynomials is motivated by the fact that
they can be viewed as an ideal version of complex polynomials. The interplay
between the two notions can be key in constructing models of connectedness loci. Indeed,
denote the \emph{set} of all topological polynomials of degree $d$ by $\tpoly_d$. Observe that the set of
topological polynomials does not have a natural topology because they are not defined on the same domain. This gives rise to
the problem of finding a natural way of defining a topology on the set of topological polynomials of degree $d$. The hope here
is that when it is done, the resulting \emph{space} of topological polynomials will serve a model for the corresponding
space of polynomials, i.e. for the corresponding connectedness locus.

By a \emph{model} in this setting we mean that there
exists a \emph{monotone continuous} map from the degree $d$ connectedness locus to the space of topological polynomials of degree $d$
(i.e., to the set $\tpoly_d$ endowed with the appropriate topology) with fibers of the modeling map identifying polynomials
with similar dynamics. Finding such a model is a challenging task, so it is all the more surprising and inspiring that it was fulfilled
by Thurston in his seminal preprint \cite{thu85} in the quadratic case. 

Polynomials $Q_c(z)=z^2+c$ are parameterized by $c$, i.e. by their critical values. The role of a critical point in a quadratic
q-lamination $\lam_\sim$ is played by the unique critical set of $\lam_\sim$, at least in the case when the Julia set $J(Q_c)$
is locally connected and the critical point $0$ belongs to $J(Q_c)$. The assumption $0\in J(Q_c)$ rules out certain important
classes of polynomials, namely those with attracting/parabolic periodic points. However, the polynomials with locally connected
Julia sets and no attracting/parabolic points still remain available in the above sense. For them one can define
the associated invariant laminational equivalence relation $\sim_c$ and the associated invariant q-lamination $\lam_{\sim_c}=\lam_c$.

Since we exclude (for now) polynomials with attracting/pa\-ra\-bo\-lic periodic points, the associated topological polynomials
$f_c$ will not have \emph{periodic Fatou domains of degree greater than $1$} (called in what follows
\emph{periodic hyperbolic domains}). Then the topological polynomial $f_c$ will have a unique critical point in $J(f_c)$ associated with
a finite $\sim_c$-class $C_c$ and 
its convex hull $\ch(C_c)$ from $\lam_c$. Mimicking complex polynomials $Q_c$ that are
tagged with their critical values, one can tag $f_c$ with the set $\ch(\si_2(C_c))$. Alternatively, one can use the set
$\ch(C_c)$ itself. Observe that since $\ch(C_c)$ is critical, one way of characterizing $\ch(C_c)$ is by listing all critical leaves
(in the quadratic case these are \emph{diameters}) contained in $\ch(C_c)$.
One can show that 
 diameters of all $\ch(C_c)$ for all $c$ are precisely
 diameters with non-periodic endpoints.
Call them \emph{non-periodic diameters}; such
 diameters are dense in all diameters.
So, to each  set $\ch(C_c)$ as above
we associate the collection of diameters contained in $\ch(C_c)$. These collections \emph{partition} the space of
non-periodic diameters. Moreover, this partition is upper semicontinuous \emph{in itself} in the following sense: if
non-periodic diameters $\oc_i$ and $\oy_i$ 
are contained in one critical set $\ch(C_c)$, $\oc_i\to \oc,$ $\oy_i\to \oy$, and $\oc$ is non-periodic,
then so is $\oy$, and, moreover, $\oc$ and $\oy$ 
are contained in one set $\ch(C_t)$ for some complex parameter $t$. This generates the quotient space of the space of non-periodic diameters.
Observe that to see that the quotient space is planar
one should use the sets $\ch(\si_2(C_c))$.

The idea is to extend this equivalence onto all diameters by considering limits of classes as new classes of equivalence. This
results into a transparent family of classes of equivalence among all diameters, associated with
topological polynomials in the one-to-one fashion and yields the desired
creation of the space of quadratic topological polynomials. An amazing achievement by Thurston \cite{thu85}
was to prove that this space of quadratic topological polynomials can serve as a monotone model of the boundary of the Mandelbrot set $\M_2$.
The famous MLC conjecture states that this model is actually homeomorphic to $\M_2$.

\section{Problem statement and  main results}

In this paper, we move towards implementing the same program and set up a framework for
that in the higher degree case. We approach the problem from a slightly different angle. An analog of the diameters
in the higher degree $d$ cases are the so-called \emph{critical portraits}, i.e. collections of $d-1$
distinct $\si_d$-critical leaves which are
pairwise disjoint inside the open unit disk and whose union contains no loops (if loops
are allowed then there will be room to enter at least one more critical chord disjoint from the already given ones which means
that the given critical chords do not exhaust the criticality of the map while we want critical portraits to completely
define criticality).


We consider \emph{all}
critical portraits $\Po$ and develop tools allowing us to associate with them specific q-laminations which we call \emph{root q-laminations}
and denote them by  $\lam^{root}_\Po$.

In the process we aim to clarify the connection between q-laminations and critical portraits.

\begin{dfn}\label{d:introlegal} Given a q-lamination $\lam$ a critical portrait $\Po$ is
\emph{legal} for $\lam$ (and vice versa) if:
\begin{enumerate}
\item no chord in $\Po$ crosses any leaf of $\lam$ inside the unit disk,
\item  for every  critical chord of $\Po$ with a periodic endpoint  $x=x_0$ of period $n$, there exist a cycle of hyperbolic gaps $U_0,\ldots,U_{n-1}$, also of period $n$ so that
if $\si_d|_{\bd(U_i)}$ has degree $d_i$, then $d_i-1$ critical chords of $\Po$ are contained in $U_i$ with $\si_d$-images equal to $\si_d^{i+1}(x)=x_{i+1}$.
\end{enumerate}
We also say that $(\Po, \lam)$ is a \emph{legal pair}.
\end{dfn}

As was mentioned above, in the degree two case there is only one q-lamination that is legal for a critical portrait $\Po$.  But, if $d\ge 3$, then there may be
multiple q-laminations legal for $\Po$. In the construction of the root lamination associated with $\Po$ we introduce the notion of a
\emph{web}: a collection of chords in the disk that may cross each other but
 otherwise satisfy the properties of an invariant lamination.
This allows us to consider all possible iterated pullbacks of chords in $\Po$,
 which form a web.

In this way we construct the smallest lamination legal for $\Po$, and call it the \emph{root} lamination of $\Po$.
It follows that all other q-laminations legal for $\Po$ can be obtained from  $\lam_\Po^{root}$ by adding leaves in
hyperbolic gaps of $\lam_\Po^{root}$ in specific ways and, as a result, that there are only finitely many
such q-laminations.

We then consider special class of critical portraits, namely \emph{critical portraits whose chords have non-periodic endpoints}.
To explain the upcoming terminology
consider q-laminations that do not have periodic hyperbolic gaps. Their topological Julia sets may be dendrites (i.e., locally connected
planar continua that contain no Jordan curves) in which case the corresponding q-lamination is said to be \emph{dendritic},
and any critical portrait $\Po$ compatible with $\lam$ is said to be \emph{dendritic}.

Otherwise, i.e., if the Julia sets contain Jordan curves, these curves are mapped forward
onto their image Jordan curves a few times until the eventual image Jor\-dan curve is periodic with the first return map being an irrational rotation.
Recall that such curves can be a part of a polynomial Julia set in which case they are boundaries of the so-called \emph{Siegel domains}
inside the filled Julia set containing neutral periodic points of the same period with irrational argument of the multiplier.
This explains why we call  q-laminations like that  \emph{\textbf{Si-}egel-\textbf{de-}ndritic}, or simply \emph{Side} q-laminations.
A \emph{Side} critical portrait is a critical portrait  all of whose  leaves have non-periodic endpoints.
We prove that a critical portrait is Side if and only if there exist a Side q-lamination compatible with it.
For each Side critical portrait $\Po$ we show that its \emph{root lamination} $\lam^{root}_{\Po}$
is the only q-lamination \emph{with finite critical sets} compatible with $\Po$. Varying the critical portrait
$\Po$ we can get all q-laminations with finite critical sets.

\section{Laminations}\label{s:lami-gen}

Set $\uc=\{z\in\C\,|\,|z|=1\}$. We parameterize the external
rays of a polynomial $f\in\M_d$ by 
elements of $\R/\Z$ (so, the full angle is $1$ rather than $2\pi$).
The external ray of argument $\theta\in\R/\Z$ is denoted by $R_f(\theta)$;
$f$ maps $R_f(\theta)$ to $R_f(d\theta)$. Let $\si_d:\uc\to \uc$ be the self-mapping of $\uc\subset \C$ that takes $z$ to $z^d$
(if we represent $\uc$ as $\R/\Z$ then $\si_d(t)=d\cdot t\, \mod\, 1$).
A \emph{chord} $\ol{ab}$ is a closed segment connecting points $a$, $b$
of $\uc$, often represented by their arguments
(thus, $\ol{\frac 13\frac 23}$ is the chord connecting the points $e^{2\pi i/3}$ and $e^{4\pi i/3}$).
Two chords \emph{cross} if their endpoints alternate on $\uc$; an important class of
families of chords are \emph{prelaminations}, i.e. families of chords that pairwise do not cross; if such a family is closed then it is called
a \emph{lamination}. In this context chords from a (pre)lamination are called \emph{leaves}.

Convex hulls of closed nowhere dense subsets $T$ of $\uc$ are called \emph{polygons} and are denoted by $\ch(T)$ (without assuming
that the cardinality of $T$ is finite). In the context of laminations, if such a set $T$ consists of more than two points, then it convex
hull is called a \emph{gap} provided all boundary chords of $\ch(T)$ are leaves of a given lamination $\lam$ and no leaf of $\lam$ enters the interior
of $\ch(T)$. Moreover, polygons are called \emph{finite, infinite, countable, uncountable} depending on the cardinality of $T$.
We define the $\si_d$-image of a chord $\ol{ab}$ as the chord $\ol{\si_d(a) \si_d(b)}$, i.e. through the action of $\si_d$
on $a$ and $b$ (when we say that we apply $\si_d$ to a chord $\ol{ab}$ we apply it to the endpoints of $\ol{ab}$).

\subsection{Laminational equivalence relations}\label{ss:lclam}

For $f\in\M_d$ let $\psi$ be the Riemann map $\psi:\mathbb C\setminus \overline{\mathbb D}\to\mathbb C\setminus K_f$ with $\psi'(\infty)>0$
so that $f\circ \psi(z)= \psi(z^d)$.
For $f\in\M_d$ with locally connected $J_f$, the map $\psi$ can be continuously extended over the boundary of $\mathbb C\setminus \overline{\mathbb D}$
so that for the extended map $\ol{\psi}$ we can define $\ol{\psi}(e^{2\pi i\theta})$,
the landing point of $R_f(\theta)$;  then $\ol{\psi}:\uc\to J_f$ is a semi-conjugacy
between $\si_d:\uc\to\uc$ and $f:J_f\to J_f$
called the \emph{Caratheodory loop}.
Define an equivalence relation $\sim_f$ on $\uc$ as follows: $x \sim_f y$ if and only if $\ol{\psi}(x)=\ol{\psi}(y)$
and call $\sim_f$ the \emph{laminational equivalence relation (generated by $f$)}.
The relation $\sim_f$ is $\si_d$-invariant; $\sim_f$-classes have pairwise disjoint convex hulls.
The quotient space $\uc/\sim_f=J_{\sim_f}$ is called a \emph{topological Julia set}.
Clearly, $J_{\sim_f}$ is homeomorphic to $J_f$.
The map $f_{\sim_f}: J_{\sim_f}\rightarrow J_{\sim_f}$, induced by $\sigma_d$ and
called a \emph{topological polynomial}, is topologically conjugate to $f|_{J_f}$.

Equivalence relations analogous to $\sim_f$ can be introduced
with no reference to polynomials (see \cite{hubbdoua85, thu85}, see also
\cite{bl02}). Let $\sim$ be an
equivalence relation on $\uc$. Equivalence classes of $\sim$ will
be called \emph{($\sim$-)classes}.

\begin{dfn}\label{d:lameq}
An equivalence relation $\sim$ on $\uc$ is a \emph{($\si_d$-) invariant
laminational equivalence relation} if it is:

\begin{enumerate}
\item[(E1)]
\emph{closed}: the graph of $\sim$ is a closed
set in $\ucirc \times \ucirc$;

\item[(E2)] 
\emph{unlinked}: if $\g_1$ and $\g_2$ are distinct $\sim$-classes,
then their convex hulls $\ch(\g_1), \ch(\g_2)$ are disjoint in $\ol{\bbd}$;

\item[(E3)] \emph{finite}: all $\sim$-classes are finite;

\item[(D1)] 
\emph{forward invariant}: for a class $\g$,
the set $\si_d(\g)$ is a class too;


\item[(D2)] 
\emph{backward invariant}: for a class $\g$,
its preimage $\si_d^{-1}(\g)=\{x\in \ucirc: \si_d(x)\in \g\}$ is a
union of classes;

\item[(D3)] \emph{orientation preserving}: for any $\sim$-class $\g$ with more than two points, the
map $\si_d|_{\g}: \g\to \si_d(\g)$ is a \emph{covering map with
positive orientation}, i.e., for every connected component $(s, t)$ of
$\ucirc\setminus \g$ either $\si_d(s)=\si_d(t)$ or  the arc in the circle $(\si_d(s), \si_d(t))$ is a
connected component of $\ucirc\setminus \si_d(\g)$.
\end{enumerate}

Here, conditions (E1) -- (E3) are requirements on the  \textbf{E}qui\-va\-lence relation, while
conditions (D1) -- (D3) deal also with the \textbf{D}ynamics of $\si_d$.
Note that (D1) implies (D2).
\end{dfn}

Let us introduce two extreme equivalence relations.

\begin{dfn}[Full and empty]\label{d:fullempty}
The equivalence relation such that all points of the circle are equivalent is said to be \emph{full}.
The equivalence relation such that no distinct points of the circle are equivalent is said to be \emph{empty}.
\end{dfn}

(Invariant) laminational equivalence relations have visual counterparts (in what follows the degree $d\ge 2$ is fixed).

\begin{dfn}\label{d:q}
Let $\sim$ be an (invariant) laminational equivalence relation. Then the family of all edges of the convex hulls of $\sim$-classes,
together with all points of $\uc$, is called an \emph{(invariant) q-lamination (generated by $\sim$)} and is
denoted by $\lam_\sim$; chords in $\lam_\sim$ are said to be the \emph{leaves} of $\lam_\sim$.
\end{dfn}

It follows that $\lam_\sim$ and $\sim$ completely define each other. However,
the construction of $\lam_\sim$ makes sense for a much wider class of equivalence relations, namely for those satisfying
requirements (E1) and (E2) of Definition \ref{d:lameq}. If we do not require (E3), i.e. allow for infinite classes, then
$\lam_\sim$ no longer defines $\sim$. For example
the full equivalence relation $\sim_1$ and the empty equivalence relation $\sim_2$ generate 
the same sets $\lam_{\sim_1}$ and
$\lam_{\sim_2}$, and the pictures look the same, i.e. as the circle
with no chords in it. Sometimes extremes meet.

\subsection{General properties of laminations}

Thurston \cite{thu85} defined \emph{invariant laminations} as families of chords
with dynamical properties like those of q-laminations but weaker.
One reason for that was the fact that some polynomials have connected
but not locally connected Julia sets. However the idea is to associate combinatorial dynamic structures to \emph{all polynomials},
including those with non-locally connected Julia sets.
A natural way of handling this is to approximate a polynomial $P$
by polynomials with locally connected Julia sets, and then to associate to $P$ the Hausdorff limit point(s)
of the corresponding sequence of q-laminations. Such limits have properties similar to those of q-laminations
but  may not be q-laminations.  Thurston's definition aims at taking care of this issue using geometric
and dynamic properties of chords forming laminations.

\begin{dfn}[Laminations]\label{d:geolam}
Two distinct chords \emph{cross} (or are \emph{lin\-ked}) if they intersect
inside $\disk$ (equivalently, if their endpoints alternate on $\uc$).
A \emph{prelamination} is a family of chords $\lam$ cal\-led
\emph{leaves} such that distinct leaves are unlinked and all points of
$\uc$ are (degenerate) leaves. If the set
$\bigcup\lam$ is compact, then $\lam$ is called a
\emph{lamination}. Two (pre)laminations are \emph{compatible} if their leaves
do not cross (thus, the union of two compatible (pre)laminations is a (pre)lamination).
\end{dfn}

Observe that, formally speaking, a set, consisting of one chord and all individual points of $\uc$,
is already a prelamination. 
From now on, $\lam$ always denotes a (pre)lamination.

\begin{dfn}[Gaps and edges]\label{d:gaps} \emph{Gaps} of a lamination $\lam$ are the closures of
components of $\disk\sm \bigcup\lam$. A gap $G$ is \emph{countable $($finite,
un\-coun\-table$)$} if $G\cap\uc$ is countable infinite (finite, un\-coun\-table).
Uncountable gaps are called \emph{Fatou} gaps.
For a closed nowhere dense set $X\subset \uc$, consider its convex hull $\ch(X)$; then
\emph{edges} of $\ch(X)$ are maximal straight non-degenerate segments in $\bd(\ch(X))$,
and vertices of $\ch(X)$ are points of $X$. Some additional terminology (described in what follows) will
apply to invariant laminations.
\end{dfn}

Convergence of (pre)laminations $\lam_i$ to a set of chords $\mathcal E$
is understood as the Hausdorff metric convergence of leaves of $\lam_i$ to chords from $\mathcal E$
so that all limits, in terms of the Hausdorff metric, of sequences of chords from $\lam_i$ form $\mathcal E$.
Evidently, $\mathcal E$ is a lamination. A lamination $\lam$ is \emph{nonempty} if it
has nondegenerate leaves and \emph{empty} otherwise
(the empty lamination is denoted by $\lam_\0$; note that it is not the empty set
as it contains all points of $\uc$). Say that $\lam$ is \emph{countable}
if it has countably many nondegenerate leaves and \emph{uncountable}
otherwise; $\lam$ is \emph{perfect} if it has no isolated leaves. 
In this paper by \emph{countable} we always mean \emph{infinite countable}.

If $G\subset\cdisk$ is the convex hull of $G\cap\uc$, let $\si_d(G)$
be the convex hull of $\si_d(G\cap\uc)$. Observe that this is not the action of a map from
$\cdisk$ to $\cdisk$ restricted on the set $G$. Rather, it is a map from the family of
convex hulls of closed subsets of $\uc$ to itself. The most important particular case here is
when 
$G=\ol{ab}$ is a chord. In that case, $\si_d(G)=\si_d(\ol{ab})=\ol{\si_d(a)\si_d(b)}$,
and we call $\ol{ab}$ a \emph{pullback chord} of $\ol{\si_d(a)\si_d(b)}$.
In some cases it is useful to consider a special extension of $\si_d$ onto $\cdisk$.
The map $\si_d$ can be extended continuously over $\bigcup\lam$ 
so that it acts linearly on every leaf of $\lam$.
We also denote this extended map by $\si_d$.

A map $m:X\to Y$ of a topological space $X$ to a topological space $Y$ is \emph{monotone} if for each $y\in Y$, $m^{-1}(y)$ is connected.

\begin{dfn}\cite[Thurston Invariant Lamination]{thu85}\label{dfn-Thurston}
A la\-mination $\mathcal{L}$ is {\em Thurston ($\si_d$)-invariant} if it
satisfies the following conditions.

\begin{enumerate}

\item Forward $d$-invariance: for any leaf $\ell=\overline{pq} \in
    \mathcal{L}$, 
    we have that $\overline{\sigma_d(p)\sigma_d(q)}=\si_d(\ell) \in \mathcal{L}$.

\item Backward invariance: for any leaf $\overline{pq} \in
    \mathcal{L}$, there exists a collection of $d$ {\bf disjoint}
    leaves in $\mathcal{L}$ (this collection of leaves may not be
    unique), each joining a pre-image of $p$ to a pre-image of $q$.

\item Gap invariance: For any gap $G$, the convex hull $H$ of $\si_d(G\cap\uc)$
is a gap or a leaf. 
If $H$ is a gap, $\si_d|_{\bd(G)}:\bd(G)\to\bd(H)$ maps as the
composition of a monotone map and a covering map to the boundary of
the image gap, with positive orientation (the image of a point
moving clockwise around $\bd(G)$ must move clockwise around the
image $\bd(H)$ of $G$).
\end{enumerate}

\end{dfn}

By \cite[Lemma 2.1]{bl02}, \emph{any q-lamination is Thurston invariant}.

\section{(Sibling) invariant laminations and webs}\label{ss:sib-prop}

In Definition~\ref{dfn-Thurston} one deals with gaps, i.e. closures
of components of the complement $\disk\sm \bigcup \lam$. This is a direct
way to ensure that $\si_d$ has a nice extension over the plane. A
drawback of Definition \ref{dfn-Thurston} is that while $\lam$ otherwise is defined
as a family of chords of $\disk$ (leaves), in gap invariance we talk
about different objects (gaps). As a result, it is hard to verify gap invariance
for specific collections of chords even though other properties listed in
Definition~\ref{dfn-Thurston} are easy to check.

A different approach was suggested in \cite{bmov13}, largely borrowing terminology (and inspiration) from \cite{thu85}.
This new definition has the advantage of dealing only with chords (leaves); this makes verifications easier.
We call collections of leaves satisfying the new definition  \emph{(sibling) invariant laminations} and prove
in \cite{bmov13} that they all are Thurston invariant.

To simplify navigating through the notation we adopt the agreement
according to which families of chords (more generally, families of sets) are denoted by calligraphic letters
(for example, $\Ac,$ $\Bc,$ $\Nc$, $\lam$ etc). The union of
the corresponding chords (sets) will then be denoted by $\bigcup \Ac,$ $\bigcup \Bc,$ $\bigcup \Nc$, $\bigcup \lam$ etc.
Moreover, the terminology for families of chords (sets) and their unions will be related, too. For example, if $\lam$ is a lamination, then $\bigcup \lam$
will be called a \emph{lamination(-al) set}. Recall that a leaf $\ell^*\in \lam$ with $\si_d(\ell^*)=\ell$ is called a \emph{pullback} of $\ell$ (within $\lam$).

\begin{dfn}[Sibling property and webs]\label{d:sipro}
Fix an integer $d>1$.  A \emph{sibling of a chord $\ell$} is a
chord $\ell'$ different from $\ell$ with $\si_d(\ell')=\si_d(\ell)$. Also, two points with the same $\si_d$-image are
called \emph{sibling points} or just \emph{siblings}.
A family $\Wc$ of chords is said to have the \emph{sibling property (for $\si_d$)}
if for any \textbf{non-critical} chord $\ell\in \Wc$ there exist $d$ \textbf{pairwise disjoint} chords $\ell_1$, $\dots$, $\ell_d$ in $\Wc$ such that
$\ell_1=\ell$ and $\si_d(\ell_1)=\dots=\si_d(\ell_d)$. Any collection
of chords $\{\ell_1$, $\dots$, $\ell_d\}$ with these properties is called a \emph{(full) sibling collection}.
Any family of chords with sibling property is called a \emph{$(\si_d$-)web}. 
\end{dfn}

Chords in a sibling collection cannot intersect even on $\uc$. On the other hand, if $\Wc$ is a web,
then Definition \ref{d:sipro} allows for the possibility that chords in $\Wc$ cross each other. In fact, any web can be
obtained as follows: take a family of sibling collections and then the set of all chords of all these collections.
According to the general principle, if $\bigcup\Wc$ has a topological property
(for example, connected, closed etc) then the web is called accordingly (for instance, \emph{connected web}, \emph{closed web} etc).
The union of chords from a web is called a \emph{web set}. 

A web does not have to have any invariance properties under the action of $\si_d$. Rather, the web establishes a certain level of symmetry
that a family of chords with sibling property must possess. In the quadratic case (i.e., when $d=2$) a chord is critical if and only if
it is a diameter of $\cdisk$. Then the sibling chord of a non-critical chord $\ell$ is a $\frac12$-rotation of $\ell$, and a set of chords is a $\si_2$-web
if and only if it is the set of chords invariant under the rotation by $\frac12$.

Clearly, the assumption that $\ell$ is non-critical in Definition \ref{d:sipro} is a must. Indeed, if $\ell$ is $\si_d$-critical, then its sibling chords
are also critical and map to the same point, say, $x\in \uc$. The point $x$ has, overall, $d$ preimages. Hence the maximal number of disjoint (critical) leaves that map to
$x$ is the integer part $\lfloor \frac{d}2 \rfloor$ of $\frac{d}2$ which is less than $d$. Thus, we cannot have $d$ disjoint chords that have
the same degenerate image and a sibling collection of a $\si_d$-critical leaf does not make sense.

We endow the family of all sibling collections with the natural topology and talk about the
\emph{space} of sibling collections. This space is not compact as some sequences of sibling collections
can converge to collections of critical chords (just take sibling collections whose image chords converge to a point).
Still, the following lemma holds.

\begin{lem}\label{l:clos-sibl-1}
The closure of a web is a web.
\end{lem}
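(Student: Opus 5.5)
We need to show: if $\Wc$ is a web, then its closure $\ol{\Wc}$ (all Hausdorff limits of chords of $\Wc$, degenerate limits included) is again a web. That is, every non-critical chord $\ell\in\ol{\Wc}$ has a full sibling collection inside $\ol{\Wc}$ containing $\ell$. The plan is a direct compactness argument on sibling collections. The one real issue is that the limit collection could degenerate, so that limit chords coincide or touch. We rule this out using only that $\ell$ is non-critical.

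\textbf{Setup.} Let $\ell=\ol{ab}\in\ol{\Wc}$ be non-critical, and choose chords $\ell^i=\ol{a_ib_i}\in\Wc$ with $a_i\to a$, $b_i\to b$. Since $\si_d(a)\ne\si_d(b)$, continuity of $\si_d$ shows that $\ell^i$ is non-critical for large $i$.

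\begin{itemize}
\item If $\ell$ is degenerate, it is a point of $\uc$. We adopt the convention that points are not subject to the sibling requirement, or that the $d$ preimages of a point serve as its siblings; either way this case is immediate.
\item So assume $\ell$ is non-degenerate. By the web property each $\ell^i$ lies in a full sibling collection $\{\ell^i_1=\ell^i,\dots,\ell^i_d\}\subset\Wc$.
\end{itemize}

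\textbf{Extracting a limit.} By compactness of the space of chords, pass to a subsequence along which $\ell^i_j\to\ell_j$ for every $j$. Then $\ell_1=\ell$, each $\ell_j\in\ol{\Wc}$, and by continuity $\si_d(\ell_j)=\si_d(\ell)$ for all $j$.

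\textbf{Main step: the limit chords are pairwise disjoint.} Here we describe the sibling collections explicitly. Write $\si_d(a_i)=x_i$ and $\si_d(b_i)=y_i$, with limits $x\ne y$. Each $\ell^i_j$ joins a preimage of $x_i$ to a preimage of $y_i$. Because the $d$ chords are pairwise disjoint (even on $\uc$), their $2d$ endpoints are exactly the $d$ preimages of $x_i$ together with the $d$ preimages of $y_i$, each used once. The preimages of a point are $\frac1d$-separated on $\uc$, so the endpoints $a^i_j$ converge to $d$ distinct preimages of $x$, and the endpoints $b^i_j$ converge to $d$ distinct preimages of $y$. Since $x\ne y$, no $\si_d$-preimage of $x$ equals a preimage of $y$. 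Hence the $2d$ limit endpoints are pairwise distinct, and so the limit chords $\ell_j$ share no endpoints.

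Disjointness inside $\disk$ also survives the limit. The chords $\ell^i_j$ and $\ell^i_k$ have non-alternating endpoints, a closed condition, so $\ell_j$ and $\ell_k$ do not cross. Having four distinct, non-alternating endpoints, they are disjoint. Therefore $\{\ell_1,\dots,\ell_d\}$ is a full sibling collection in $\ol{\Wc}$ containing $\ell$, which proves that $\ol{\Wc}$ is a web.

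The only potential obstacle is this collapse of endpoints in the limit, and the non-criticality of $\ell$, through the $\frac1d$-separation of preimages, handles it. This is exactly why the space of sibling collections is not compact in general, even though the closure of a web is always a web.
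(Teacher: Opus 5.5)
Your proof is correct and follows the paper's route: take a convergent subsequence of sibling collections of approximating chords, note that non-crossing passes to the limit, and use $\sigma_d(a)\ne\sigma_d(b)$ together with the $\frac1d$-separation of preimages to rule out limit chords sharing an endpoint. Your bookkeeping of all $2d$ endpoints as the full preimage sets of $x_i$ and $y_i$ is a direct version of the paper's contradiction argument with a shared endpoint $b$. Your degenerate-case bullet is vacuous, since non-criticality already forces $a\ne b$.
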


\begin{proof}
We need to show that if a sequence of sibling collections converges while their image chords do not converge to a point
then the limit collection of chords is a sibling collection.
Let $\Zc=\{\ell_1, \dots, \ell_d\}$ be the limit of a sequence of sibling collections. Then all
chords $\ell_i, 1\le i\le d$ have the same non-degenerate image.
By continuity, $\ell_1, \dots, \ell_d$ are pairwise disjoint except, perhaps, for the endpoints.
We claim that $\ell_1$ and $\ell_2$ are disjoint. Indeed, otherwise we may assume that $\ell_1=\ol{ab}$ and $\ell_2=\ol{bc}$
and $\si_d(a)=\si_d(c)\ne \si_d(b)$.
A sibling collection near $\Zc$ must include disjoint leaves $\ol{a'b'}\approx \ol{ab}$ and $\ol{b''c'}\approx \ol{bc}$ with
the same image which is impossible because $\si_d(b')\ne \si_d(b'')$, a contradiction. Similarly we can show
that all chords $\ell_1, \dots, \ell_d$ are pairwise disjoint as desired.
\end{proof}

The sibling condition is not very restrictive.

\begin{lem}\label{l:all-aboard}
Any non-critical chord $\ell=\ol{xy}$ can be included in a sibling collection.
\end{lem}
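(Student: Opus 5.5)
Write $\si_d(x)=x'$ and $\si_d(y)=y'$. Since $\ell$ is non-critical, $x'\ne y'$. The two points $x'$ and $y'$ split $\uc$ into two open arcs $I=(x',y')$ and $J=(y',x')$, each of positive length. The plan is to build the $d$ sibling chords by an explicit combinatorial pairing of the preimages $\si_d^{-1}(x')$ and $\si_d^{-1}(y')$.

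\textbf{Step 1: interlacing.} Each of the sets $\si_d^{-1}(x')$ and $\si_d^{-1}(y')$ has exactly $d$ points. The preimage $\si_d^{-1}(I)$ consists of $d$ disjoint open arcs, each of length $|I|/d$, and $\si_d^{-1}(J)$ likewise consists of $d$ arcs. Going around the circle, these arcs alternate, and their endpoints alternate between preimages of $x'$ and preimages of $y'$. So, listing all $2d$ preimages in circular order starting at $x=x_1$, we get
\[
x_1,\;y_1,\;x_2,\;y_2,\;\dots,\;x_d,\;y_d,
\]
where each arc $(x_i,y_i)$ maps onto $I$ and each arc $(y_i,x_{i+1})$ maps onto $J$. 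Here we may have to reverse orientation, and the $J$-type arcs are the ones between $y_i$ and $x_{i+1}$. Since $y$ is a preimage of $y'$, we have $y=y_k$ for some $k$.

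\textbf{Step 2: choose a non-crossing pairing compatible with $\ell$.} We need a perfect matching between the $x_i$'s and the $y_j$'s by chords that are pairwise disjoint in $\cdisk$ and that contains the chord $\ol{x_1y_k}$. The chord $\ol{x_1y_k}$ cuts the circle into two arcs. One arc contains $y_1,x_2,\dots,x_k$, which is $k-1$ points of type $y$ and $k-1$ points of type $x$. The other contains $x_{k+1},y_{k+1},\dots,y_d$, again balanced. Within each arc the points still alternate in type, so we can pair consecutive neighbours: $\ol{y_1x_2},\ol{y_2x_3},\dots,\ol{y_{k-1}x_k}$ on one side and $\ol{x_{k+1}y_{k+1}},\dots,\ol{x_dy_d}$ on the other. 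Chords joining adjacent points of a circularly ordered set never cross one another, and they do not cross $\ol{x_1y_k}$ because they lie on one side of it. Their endpoints are all distinct, so these $d$ chords are pairwise disjoint even on $\uc$.

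\textbf{Step 3: check images.} Each chosen chord joins a preimage of $x'$ to a preimage of $y'$, so its image is $\ol{x'y'}=\si_d(\ell)$. Together with $\ell$ this gives $d$ pairwise disjoint chords with a common image, which is a sibling collection containing $\ell$.

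The only step needing care is Step 1, the alternation of preimages of $x'$ and $y'$. This holds because $\si_d$ is a $d$-fold covering of the circle that preserves orientation, so each component of $\si_d^{-1}(I)$ is an arc mapping homeomorphically onto $I$ with endpoints mapping to $x'$ and $y'$ in the same order. The rest is bookkeeping.
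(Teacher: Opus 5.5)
Your proof is correct. The paper gives no argument for this lemma; the proof is ``left to the reader''. Your direct construction supplies a complete proof.

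The alternation $x_1,y_1,\dots,x_d,y_d$ of the $2d$ preimages in positive circular order follows, as you say, from $\si_d$ being an orientation-preserving $d$-fold covering. If $I=(x',y')$ is taken in the positive direction, no reversal of orientation is needed, so that aside can be dropped. Matching circularly adjacent points on each side of $\ol{x_1y_k}$ then gives $d$ chords with $2d$ distinct endpoints, no two of which alternate. They are therefore pairwise disjoint in $\cdisk$, as the definition of a sibling collection requires.

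Within the paper's framework you could instead derive the lemma from Lemma~\ref{l:sipul}. Choose a critical portrait none of whose chords crosses $\ell$, so that $\ell$ becomes a pullback chord of $\si_d(\ell)$. For instance, take $k-1$ critical chords joining the preimages of a point of $I$ on one side of $\ell$, and $d-k$ joining the preimages of a point of $J$ on the other. That route, however, requires building such a portrait first, so your matching argument is the more economical and self-contained one.
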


\begin{proof} Left to the reader.
\end{proof}

\subsection{Invariant webs and laminations}\label{sss:invaweb}
Let us add dynamics to the just introduced concepts.

\begin{dfn}[Invariant webs]\label{d:web}
A web $\wb$ is \emph{($\si_d$)-invariant} if it has the following additional properties:

\begin{enumerate}

\item if $\oc\in\wb$, then $\si_d(\oc)\in\wb$ 
    and all points of $\uc$ are included in $\wb$;

\item for each $\oc\in \wb$ there exists $\oq\in \wb$
    so that $\si_d(\oq)=\oc$.


\end{enumerate}
\end{dfn}


Invariant webs survive the operation of taking the closure.

\begin{lem}\label{l:closke}
It $\wb$ is an invariant web, then its closure $\ol{\wb}$ is a closed invariant web.
The union of any collection of invariant webs is an invariant web.
\end{lem}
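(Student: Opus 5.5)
The lemma has two claims: the closure $\ol{\wb}$ of an invariant web is a closed invariant web, and a union of invariant webs is an invariant web. For the closure, Lemma \ref{l:clos-sibl-1} already gives that $\ol{\wb}$ is a web, and it is closed by definition. It therefore remains to check the two dynamical conditions of Definition \ref{d:web}. These will be obtained by passing to limits in the Hausdorff metric on chords, where degenerate chords, i.e. points of $\uc$, are allowed as limits.

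For forward invariance, take $\oc\in\ol{\wb}$ and chords $\oc_i\in\wb$ with $\oc_i\to\oc$. Since $\si_d$ acts on chords through their endpoints, it is continuous in the Hausdorff topology, so $\si_d(\oc_i)\to\si_d(\oc)$. Each $\si_d(\oc_i)$ lies in $\wb$, hence $\si_d(\oc)\in\ol{\wb}$. Here $\si_d(\oc)$ may be a point of $\uc$, which is harmless because all points of $\uc$ belong to $\wb$, and hence to $\ol{\wb}$.

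For backward invariance, again take $\oc_i\in\wb\to\oc$. By invariance of $\wb$, choose $\oq_i\in\wb$ with $\si_d(\oq_i)=\oc_i$. The space of chords, including degenerate ones, is compact, so a subsequence $\oq_{i_k}$ converges to some chord $\oq\in\ol{\wb}$. Continuity then gives $\si_d(\oq)=\lim\oc_{i_k}=\oc$. If $\oc$ is a point, $\oq$ might be a critical chord or a point, and either choice is fine. This is the only step requiring care, but compactness settles it.

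For the union, let $\wb=\bigcup_\alpha\wb_\alpha$. Each condition is local to a single chord. If $\oc\in\wb_\alpha$ is non-critical, its full sibling collection in $\wb_\alpha$ lies in $\wb$. Moreover, $\si_d(\oc)$ and some preimage chord lie in $\wb_\alpha\subset\wb$, and all points of $\uc$ are in every $\wb_\alpha$. So $\wb$ is an invariant web. I expect no real obstacle anywhere in the argument; the substantive input is Lemma \ref{l:clos-sibl-1}.
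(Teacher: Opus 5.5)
Your proposal is correct and follows the paper's proof essentially verbatim. Like the paper, you use Lemma \ref{l:clos-sibl-1} for the sibling property of $\ol{\wb}$, continuity of $\si_d$ on chords for forward invariance, and a convergent subsequence of pullbacks $\oq_i\in\wb$ for backward invariance; the union claim is immediate because each condition is witnessed inside a single web of the collection, and your remarks on degenerate limits only make explicit what the paper leaves as ``clearly''.
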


\begin{proof}
To prove the lemma we check properties (1) and (2) from Definition \ref{d:web} using
the continuity of $\si_d$.

(1) If $\oc\in\ol{\wb}$, then, clearly, $\si_d(\oc)\in\ol{\wb}$ or $\si_d(\oc)$ is a point.

(2) For any $\oc\in \ol{\wb}$ choose chords $\ell_i\in \wb$ such that $\ell_i\to \oc$, then their pullbacks
within $\wb$, and then a subsequence of the pullbacks converging to a chord from $\ol{\wb}$ which maps to $\oc$.

Together with Lemma \ref{l:clos-sibl-1} this proves the first claim. The second claim is immediate.
\end{proof}

Definition \ref{d:sibli}
introduces invariant laminations which are a particular case of an invariant web.
This concept extends that of a lamination similar to how the concept
of an invariant web extends that of a web.

\begin{dfn}\cite[Definition 3.1]{bmov13}\label{d:sibli}
A prelamination $\lam$ is \emph{($\si_d$-)in\-va\-ri\-ant} if it is an invariant web.
A closed ($\si_d$-)in\-va\-ri\-ant prelamination $\lam$ is called a \emph{($\si_d$)-in\-va\-ri\-ant}
lamination.
\end{dfn}

Note that if $\{\lam_\al\}_{\al\in \Af}$ is a family of invariant prelaminations
totally ordered by inclusion then $\ol{\bigcup_{\al\in \Af} \lam_\al}$ is an invariant lamination.

While superficially (invariant) webs are, in a sense, opposite of (invariant) laminations (chords of webs may
well cross each other inside $\disk$), they are a useful tool in studying invariant laminations
as they allow one to simultaneously consider various laminations compatible with a given prelamination.

Let us explain now which topology we consider on the family of all webs and related families.
The space of chords (all chords in $\disk$, including degenerate chords) with the Hausdorff metric is, evidently a compact metric subspace
of the space $2^{\cdisk}$ of all compact subsets of the closed unit disk.
Two collections of chords are close if the corresponding collections of points in $2^{\cdisk}$ are close.

Theorem \ref{t:laclo} combines parts of Corollary 3.20, Theorem 3.21 and Corollary 3.22 of \cite{bmov13}
into one statement. An alternative proof of 
Theorem \ref{t:laclo} may rely on Lemma \ref{l:closke}.

\begin{thm}\label{t:laclo}
The closure of an invariant prelamination is an invariant lamination.
The space of all sibling invariant laminations of degree $d$ is compact.
\end{thm}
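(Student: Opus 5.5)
The plan is to treat the two claims of Theorem \ref{t:laclo} separately, relying on Lemma \ref{l:closke} to handle the invariance properties. In both parts the only new ingredient is that, when passing to limits, non-crossing survives as well.

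For the first claim, let $\lam$ be an invariant prelamination. By Definition \ref{d:sibli}, $\lam$ is an invariant web. Lemma \ref{l:closke} then shows that its closure $\ol{\lam}$ is a closed invariant web. This means:
\begin{itemize}
\item it has the sibling property (via Lemma \ref{l:clos-sibl-1});
\item it is forward invariant, where a limit chord may collapse to a point, which is allowed because all points of $\uc$ belong to the web;
\item it is backward invariant.
\end{itemize}
It remains to check that $\ol{\lam}$ is a prelamination, i.e.\ that no two of its chords cross. Suppose $\ell,\ell'\in\ol{\lam}$ cross. Their endpoints then strictly alternate on $\uc$. Strict alternation is an open condition in the Hausdorff topology on chords, so any $\ell_i\to\ell$ and $\ell'_i\to\ell'$ with $\ell_i,\ell'_i\in\lam$ would cross for large $i$. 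This contradicts the fact that $\lam$ is a prelamination. Hence $\ol{\lam}$ is closed, non-crossing and an invariant web, so it is an invariant lamination.

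For the second claim, I will view each invariant lamination as a compact subset of the compact space of chords, equivalently as an element of the hyperspace $2^{\cdisk}$ via the union of its leaves. The hyperspace of closed subsets of a compact metric space is compact. So it suffices to show that the family of sibling invariant laminations is closed: if $\lam_i\to\mathcal E$ in the sense described before Definition \ref{dfn-Thurston}, then $\mathcal E$ is an invariant lamination. The limit $\mathcal E$ is closed, and it is non-crossing by the same openness argument as above. It contains all points of $\uc$, since every $\lam_i$ does.

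The invariance of $\mathcal E$ is the step requiring care. The argument is as follows:
\begin{itemize}
\item \emph{Forward invariance.} If $\ell_i\in\lam_i$ with $\ell_i\to\ell$, then $\si_d(\ell_i)\in\lam_i$ converges to $\si_d(\ell)$. Hence $\si_d(\ell)\in\mathcal E$, possibly as a degenerate leaf.
\item \emph{Backward invariance.} Choose pullbacks $\oq_i\in\lam_i$ of $\ell_i$. Pass to a convergent subsequence $\oq_i\to\oq$, so that $\oq\in\mathcal E$ and $\si_d(\oq)=\ell$.
\item \emph{Sibling property.} Let $\ell\in\mathcal E$ be non-critical, with $\ell=\lim\ell_i$ and $\ell_i\in\lam_i$. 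Then $\ell_i$ is non-critical for large $i$, because criticality of the limit is a closed condition and $\ell$ is not critical. Take full sibling collections of $\ell_i$ in $\lam_i$ and extract a convergent subsequence. The images $\si_d(\ell_i)$ converge to the non-degenerate chord $\si_d(\ell)$. The argument of Lemma \ref{l:clos-sibl-1} then shows that the limit is a genuine sibling collection of $\ell$ consisting of chords of $\mathcal E$.
\end{itemize}

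The main obstacle I expect is exactly this sibling step: ruling out that limiting siblings touch at endpoints. It is handled verbatim by Lemma \ref{l:clos-sibl-1}, and that lemma needs only the non-degeneracy of the image, which we have secured.
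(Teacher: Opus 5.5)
Your proof is correct and follows the route the paper indicates. The paper gives no argument of its own: it attributes the statement to \cite{bmov13} and remarks that an alternative proof can rely on Lemma \ref{l:closke}. That is exactly your first part. Your second part is the standard hyperspace-compactness argument, with the limit-of-sibling-collections step of Lemma \ref{l:clos-sibl-1} rerun along a sequence of laminations.

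One small correction: drop the word ``equivalently'' where you identify a lamination with the union of its leaves in $2^{\cdisk}$. For general closed families of non-crossing chords the union forgets the chord structure; for instance, all vertical chords and all horizontal chords both fill the disk. What your argument actually uses, correctly, is compactness of the hyperspace of closed subsets of the compact space of chords, and that is the topology the paper fixes.
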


Here is another useful property of invariant laminations.

\begin{lem}\label{l:clos-sibl}
The family of all non-isolated leaves of an invariant lamination $\lam$ is an
invariant lamination.
\end{lem}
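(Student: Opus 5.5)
Let $\lam'$ denote the family of all non-isolated leaves of $\lam$, together with all points of $\uc$; here a leaf is isolated if it is not a limit of other leaves of $\lam$. We must show that $\lam'$ is a closed prelamination and an invariant web, i.e., that it satisfies the sibling property and conditions (1) and (2) of Definition \ref{d:web}.

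\emph{Closedness and the prelamination property.} Since $\lam'\subset\lam$, its leaves do not cross, so $\lam'$ is a prelamination. The set of non-isolated leaves is the derived set of $\lam$ in the compact space of chords, and derived sets are closed. Degenerate leaves are all included anyway, so $\bigcup\lam'$ is compact.

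\emph{Forward invariance.} Let $\ell\in\lam'$ be non-degenerate, with $\ell_i\to\ell$ distinct leaves of $\lam$. Then $\si_d(\ell_i)\to\si_d(\ell)$. If $\si_d(\ell)$ is a point we are done. Otherwise we need infinitely many distinct $\si_d(\ell_i)$. This holds because each chord has at most $d^2$ chords mapping onto it, so the distinct $\ell_i$ produce infinitely many distinct images. Hence $\si_d(\ell)$ is non-isolated.

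\emph{Backward invariance and the sibling property.} Let $\ell\in\lam'$ be non-degenerate, with $\ell_i\to\ell$ distinct leaves of $\lam$ and $\ell_i\ne\ell$. For each $i$ take a pullback $\ell_i^*\in\lam$ of $\ell_i$, pass to a convergent subsequence, and obtain a limit $\ell^*\in\lam$ with $\si_d(\ell^*)=\ell$. The $\ell_i^*$ are distinct because their images are distinct, so $\ell^*\in\lam'$.

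For the sibling property, let $\ell\in\lam'$ be non-critical. By the sibling property of $\lam$, each $\ell_i$ has a full sibling collection $\{\ell_i=\ell_i^1,\dots,\ell_i^d\}\subset\lam$. Pass to a subsequence along which all $d$ chords converge. The image chords $\si_d(\ell_i)\to\si_d(\ell)$ are non-degenerate, so by the argument of Lemma \ref{l:clos-sibl-1} the limit collection $\{\ell,\ell^2,\dots,\ell^d\}$ is a sibling collection in $\lam$. Each $\ell^j$ is a limit of the distinct chords $\ell_i^j$; they are distinct because their images $\si_d(\ell_i)$ are distinct. Hence every $\ell^j$ is non-isolated and lies in $\lam'$.

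The main subtlety is exactly this: one must ensure that the approximating chords are pairwise distinct, so that the limits are genuinely non-isolated. Here this follows from the finiteness of fibres of chords under $\si_d$.
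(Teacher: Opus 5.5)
Your proof is correct and follows the paper's argument: closedness of the set of non-isolated leaves, forward invariance, backward invariance via limits of pullbacks of approximating leaves whose images are pairwise distinct, and the sibling property via limits of sibling collections and Lemma \ref{l:clos-sibl-1}. You are a bit more explicit than the paper in checking that the limiting siblings are themselves non-isolated. For that step, first pass to a subsequence on which the images $\si_d(\ell_i)$ are pairwise distinct, which your finite-fibre remark guarantees.
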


\begin{proof}
All non-isolated leaves in $\lam$ form a forward invariant closed
family of leaves. We claim that if $\ell\in \lam$ is non-isolated then it has a preimage $\oq\in \lam$ which is
also non-isolated. Indeed, we can choose leaves $\oq_i\to \oq$ with $\si_d(\oq_i)\to \ell$ so that leaves
$\si_d(\oq_i)$ are pairwise distinct. It follows that $\si_d(\oq)=\ell$, $\oq_i$'s are pairwise distinct, and, hence,
$\oq$ is non-isolated in $\lam$. Now, let $\ell$ be non-isolated and non-critical.
Choose pairwise distinct $\ell_i\to \ell$ and their sibling collections.
We may assume that these sibling collections of leaves converge; by Lemma \ref{l:clos-sibl-1} they converge to a
sibling collection that includes $\ell$.
\end{proof}

Theorem \ref{t:gapin} combines some results of \cite{bmov13} into one.

\begin{thm}\cite[Lemma 3.1, Theorem 3.2]{bmov13}
\label{t:gapin}
Any q-la\-mi\-na\-tion is an invariant lamination.
Any invariant lamination is gap invariant, and, hence, Thurston invariant.
\end{thm}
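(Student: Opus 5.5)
The statement collects results from \cite{bmov13}, so the plan is to recall how each part would follow from the definitions here, mainly Definition~\ref{d:sibli} and Lemma~\ref{l:closke}.

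\emph{First claim: a q-lamination $\lam_\sim$ is an invariant lamination.} Leaves of $\lam_\sim$ are edges of convex hulls of $\sim$-classes, and by (E2) these hulls are disjoint. Hence $\lam_\sim$ is a prelamination. By (E1) it is closed, since a limit of edges of class hulls is an edge of a limit class hull. Now take an edge $\ol{ab}$ of $\ch(\g)$.
\begin{enumerate}
\item \emph{Forward invariance.} By (D1), $\si_d(\g)$ is a class. By (D3), $\si_d$ maps each complementary arc of $\g$ either to a point or to a complementary arc of $\si_d(\g)$. So $\si_d(\ol{ab})$ is either a point or an edge of $\ch(\si_d(\g))$.
\item \emph{Sibling collections.} By (D2), $\si_d^{-1}(\si_d(\g))$ is a union of classes $\g=\g_1,\dots,\g_k$. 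Each $\si_d|_{\g_j}$ is a positively oriented covering of some degree $m_j$, and $\sum m_j=d$. Each non-critical edge $\ell$ of $\ch(\si_d(\g))$ lifts to exactly $m_j$ edges of $\ch(\g_j)$. These $d$ edges lie in disjoint hulls or are distinct edges of the same hull, so they are pairwise disjoint. This gives the sibling collection containing $\ol{ab}$.
\item \emph{Existence of pullbacks.} Every edge of $\ch(\si_d(\g))$ has a pullback edge, by surjectivity of these coverings.
\end{enumerate}
Finiteness (E3) guarantees that the class hulls are honest polygons, so this counting makes sense.

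\emph{Second claim: an invariant lamination is gap invariant.} Let $G$ be a gap of an invariant lamination $\lam$ and $H=\ch(\si_d(G\cap\uc))$.

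First I would show that no leaf of $\lam$ crosses $\mathrm{int}(H)$. Suppose a leaf $m$ crosses $H$. Take the pullbacks of $m$ provided by the sibling property (and, if $m$ is critical, by the existence of pullbacks). One of them must cross an edge of $G$ or enter $G$. For an edge $e$ of $G$, this uses that its image $\si_d(e)$ is either an edge of $H$ or collapses. That gives a contradiction.

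Next I would show that every edge $\ell$ of $G$ maps to an edge of $H$ or to a point. Then $H$ is a gap or a leaf.

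For the boundary map, consider walking along $\bd(G)$. Each edge maps linearly onto a leaf or collapses. I would check that orientation is preserved, so that $\si_d|_{\bd(G)}$ is a monotone map composed with a covering. The argument is by contradiction: suppose two adjacent boundary arcs of $G$ "fold back" under $\si_d$. Then the $d$ disjoint siblings of some edge near the fold could not all be placed disjointly from the edges of $G$.

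The main obstacle is this orientation/covering step, including the case of critical edges and infinite gaps. There I would follow \cite{bmov13}. I would approximate $G$ by finite polygons inscribed in it and count preimages along $\bd(G)$ using the $d$-to-$1$ property of $\si_d$.

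Thurston invariance then follows: forward invariance and disjoint pullbacks are the web axioms, and gap invariance has just been shown.
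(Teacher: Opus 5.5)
The paper does not prove this statement; it quotes \cite{bmov13}. Your sketch therefore has to stand on its own, and it has genuine gaps at the two places where the real work lies.

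\textbf{Gap invariance.} The step ``one of the pullbacks of $m$ must cross an edge of $G$ or enter $G$'' is not justified. The justification you hint at is circular: it uses that every edge of $G$ maps to an edge of $H$ or to a point, which you only derive afterwards from this very step. The orientation/covering part is then deferred to \cite{bmov13}, so the substance of the second claim is not proved.

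The missing idea is the structure of a full sibling collection. Let $\ell_1,\dots,\ell_d$ be pairwise disjoint chords with non-degenerate image $\ol{uv}$.
\begin{enumerate}
\item Their $2d$ endpoints are \emph{all} the preimages of $u$ and $v$, and these preimages alternate on $\uc$.
\item Fix a component $C$ of $\cdisk\sm\bigcup\ell_i$. An arc of $\bd(C)$ starting at a $u$-preimage ends at the next point, which is a $v$-preimage. The sibling chord there leads back to a $u$-preimage, where the next arc of $\bd(C)$ begins.
\item Hence all circle arcs in $\bd(C)$ map homeomorphically onto the same one of $(u,v)$, $(v,u)$.
\item The interior of a gap $G$ meets no leaf, so $G\subset\ol{C}$ for one such $C$. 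Therefore $\si_d(G\cap\uc)$ lies on one closed side of $\ol{uv}$.
\end{enumerate}
Now use this fact twice.
\begin{itemize}
\item Apply it to the sibling collection of a pullback of an arbitrary leaf $m$. Then no leaf meets the interior of $H$, and edges of $G$ go to edges of $H$ or to points.
\item Apply it to the sibling collection of a non-critical edge $\ol{st}$ of $G$, where $(s,t)$ is the hole of $G\cap\uc$ behind it. Then $(\si_d(s),\si_d(t))$ is a hole of $\si_d(G\cap\uc)$. This is exactly positive orientation, and the monotone-composed-with-covering structure follows (see the remark after Definition~\ref{d:pos-ori1}).
\end{itemize}
The boundary map is onto $\bd(H)$, so every edge of $H$ is the image of an edge of $G$. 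Hence $H$ is a gap or a leaf. No approximation by inscribed polygons is needed.

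\textbf{q-laminations.} The identity $\sum m_j=d$ is the whole content of the first claim, and you only assert it. Counting points gives
\[
\sum_j m_j=d-\frac{c}{|\si_d(\g)|},
\]
where $c$ is the number of collapsing holes of the classes $\g_j$. So you need to know that a class whose image has at least two points has no collapsing hole. (D3) as printed here allows $\si_d(s)=\si_d(t)$, and the local conditions you invoke do not give this.

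For example, under $\si_3$ take the triangles $\{a,c,a+\frac13\}$ and $\{c+\frac13,a+\frac23,c+\frac23\}$ with $a<c<a+\frac13$, lying over the class $\{3a,3c\}$. They satisfy (E2), (D1) and (D3) as printed. Yet at most two pairwise disjoint edges map onto $\ol{3a\,3c}$. You need either the honest covering reading of (D3), in which every hole of such a class maps onto a hole and the count is immediate, or a global argument excluding such configurations.

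A smaller error: distinct edges of one hull need not be disjoint. For the critical quadrilateral $\{a,b,a+\frac12,b+\frac12\}$ of $\si_2$, the edges $\ol{ab}$ and $\ol{b,a+\frac12}$ share $b$ and have the same image. The fix is to take only the lifts of one oriented hole of $\si_d(\g)$. Two adjacent holes cannot lie over the same oriented hole, so these lifts are pairwise disjoint.
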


From now on, unless stated otherwise, all laminations are $\si_d$-invariant for some $d\ge 2$.

\begin{lem}\label{l:samep}
If $\lam_\sim$ is a invariant q-lamination, $x$ is a periodic point of period $n$, and $\ol{xy}$ is a leaf
of $\lam_\sim$, then $y$ is also of period $n$. Thus, if a chord $\ell$ eventually maps to a chord that connects an $n$-periodic
point with a point which is not $n$-periodic then $\ell$ cannot be a leaf of any q-lamination.
\end{lem}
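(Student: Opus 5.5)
The plan is to argue on $\sim$-classes and use that $\si_d$ maps classes onto classes. Let $\g$ be the $\sim$-class containing $x$ and $y$; by (E3) it is finite. By (D1), $\si_d^k(\g)$ is a $\sim$-class for every $k$. Since $x=\si_d^n(x)\in\si_d^n(\g)\cap\g$ and distinct classes are disjoint, we get $\si_d^n(\g)=\g$. So $\si_d^n$ maps the finite set $\g$ onto itself, hence bijectively, and is therefore a permutation of $\g$. Consequently every point of $\g$, in particular $y$, is periodic. What remains is to rule out that the period of $y$ is a proper divisor of $n$ or a proper multiple of $n$.

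For this we use that $\si_d^n|_\g$ preserves the circular order. If $|\g|=2$, then $\g=\{x,y\}$; the permutation fixes $x$, so it fixes $y$, and hence $y$ has period dividing $n$. If $|\g|\ge3$, we apply (D3) $n$ times. Because $\si_d^n|_\g$ is injective, no complementary arc $(s,t)$ of $\g$ can have $\si_d^n(s)=\si_d^n(t)$, and the same holds at every intermediate step, since a collapse at step $j$ would make $\si_d^n$ non-injective on $\g$. So each complementary arc of $\g$ is carried to a complementary arc of $\si_d^n(\g)=\g$. Thus $\si_d^n|_\g$ is a bijection of the finite cyclically ordered set $\g$ sending consecutive points to consecutive points with positive orientation, so it is a rotation of the cyclic order. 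A rotation with a fixed point ($x$) is the identity. Hence $\si_d^n(y)=y$ and the period $m$ of $y$ divides $n$.

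By the symmetric argument with the roles of $x$ and $y$ exchanged, $\si_d^m(\g)=\g$ and $\si_d^m|_\g$ is a rotation fixing $y$, hence the identity. Then $\si_d^m(x)=x$, so $n\mid m$. Therefore $m=n$.

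For the second claim, suppose $\ell$ is a leaf of some q-lamination $\lam_\sim$ and $\si_d^k(\ell)=\ol{xy}$ with $x$ of period $n$ and $y$ not of period $n$. Forward invariance of q-laminations (Theorem \ref{t:gapin}, or directly (D1)) makes $\ol{xy}$ a leaf of $\lam_\sim$. The first part then forces $y$ to have period $n$, a contradiction. (If $\si_d^k(\ell)$ happens to be degenerate, the statement does not apply.)

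The only delicate step is justifying that the iterate $\si_d^n$ restricted to $\g$ is an orientation-preserving cyclic rotation. This follows from (D3) together with injectivity, since injectivity prevents any collapsing of arcs.
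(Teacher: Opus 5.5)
Your proof is correct, but it reaches the key step, that the periods are equal, by a different route from the paper.

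The paper first proves that $y$ is periodic by contradiction. If not, one pulls $\ol{xy}$ back along the orbit of $x$ to get a leaf $\ol{xz}$ with $\si_d^n(z)=y$, $z\neq y$. Iterating this produces an infinite cone at $x$, hence an infinite class. The fact that the period is exactly $n$ is then imported from \cite[Lemma 2.25]{bopt20}.

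You work directly with the class $\g$ instead:
\begin{itemize}
\item (E3) and (D1) make $\si_d^n|_\g$ a permutation. This is the same finiteness idea as the paper's cone argument, stated without pullbacks.
\item (D3), applied along the orbit, shows the permutation is a cyclic rotation. Injectivity forces every intermediate class $\si_d^j(\g)$ to have $|\g|\ge 3$ points and rules out collapses.
\item A rotation fixing $x$ is the identity.
\end{itemize}

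What your approach buys:
\begin{itemize}
\item It is self-contained and needs no external lemma.
\item It proves more: $\si_d^n$ fixes the whole class pointwise, so every point of a class containing an $n$-periodic point has period exactly $n$.
\item It only uses that $x$ and $y$ lie in one class. So for the second claim, (D1) suffices even though, strictly speaking, (D1) alone puts $x,y$ in one class rather than making $\ol{xy}$ a leaf; you never need the latter.
\end{itemize}

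The paper's leaf-and-pullback phrasing fits the infinite-cone arguments it uses later for properness. However, it leaves the equality of periods to the citation.
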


\begin{proof}
Since classes of laminational equivalence relations are finite and $x$ is periodic, then $y$ is periodic as otherwise
properties of invariant laminations imply that $\ol{xy}$ can be pulled back $n$ times to give rise to a leaf $\ol{xz}$ with
$\si^n_d(x)=x$ and $\si^n_d(z)=y$; clearly, $\ol{yz}\ne \ol{xy}$ is concatenated with $\ol{xy}$, and repeating the pullback procedure we will
obtain an infinite cone of leaves of $\lam_\sim$, and hence an infinite $\sim$-class, a contradiction.
On the other hand, by \cite[Lemma 2.25]{bopt20}
$y$ is (pre)periodic of period $n$. Hence $y$ is in fact periodic of period $n$ as claimed.
\end{proof}

\subsection{Proper laminations}\label{ss:propela}

As was remarked above, the origin of the concept of a lamination is the construction from Definition \ref{d:q}.
For a given laminational equivalence $\sim$ this construction produces the associated q-lamination $\lam_\sim$
that consists of the edges of the convex hulls of $\sim$-classes. 

On the other hand, invariant laminations appear naturally as Hausdorff limits of q-laminations often reflecting
convergence of the corresponding polynomials. Therefore, it is desirable to construct a laminational equivalence
relation based on a given invariant lamination (i.e., not necessarily an invariant q-la\-mi\-na\-tion).
This requires reversing of the process described in Definition \ref{d:q}. Below, in
Definition \ref{d:lamtoeq}, we describe how, for a given invariant lamination, a natural laminational equivalence relation can be defined.
In doing so we follow \cite{bmov13}.

\begin{dfn}\cite[Definition 2.3]{bmov13}\label{d:lamtoeq}
Let $\lam$ be a lamination. Define the equivalence relation
$\approx_\lam$ by declaring that $x{\approx_\lam}y$ if and only if
there exists a finite concatenation of leaves of $\lam$ joining $x$
to $y$.
\end{dfn}

Clearly, $\approx_\lam$ is an equivalence relation. However it may well happen that $\approx_\lam$ is \emph{not}
a laminational equivalence. Reasons for that is the fact, that in many cases $\approx_\lam$ will not be closed and/or will have infinite
classes. The concept is still valid, but it is the most useful in the case when $\lam$ has additional properties guaranteeing that
$\approx_\lam$ \emph{is} a laminational equivalence.

\begin{dfn}\label{d:proper-chord}
A chord $\ell$ is said to be \emph{proper} if no image of $\ell$ connects an $n$-periodic point and a point
which is not $n$-periodic; otherwise $\ell$ is said to be \emph{improper}. An invariant lamination is \emph{proper} if all
its leaves are proper.
\end{dfn}

A useful fact about proper laminations is established in \cite{bmov13}.

\begin{lem} \cite[Lemma 4.2]{bmov13}\label{l:4.2}
Any q-lamination is proper.
\end{lem}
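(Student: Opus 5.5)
Since the notion of a proper chord in Definition \ref{d:proper-chord} refers only to the dynamics of the chord's endpoints, I will deduce Lemma \ref{l:4.2} directly from Lemma \ref{l:samep} together with forward invariance. Let $\lam_\sim$ be a q-lamination. First, by Theorem \ref{t:gapin}, $\lam_\sim$ is an invariant lamination. In particular, every leaf $\ell\in\lam_\sim$ satisfies $\si_d^k(\ell)\in\lam_\sim$ for all $k\ge 0$; here an image may degenerate to a point, which is still a (degenerate) leaf. What must be shown is that no image $\si_d^k(\ell)$ is a chord joining an $n$-periodic point to a point that is not $n$-periodic.

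Suppose, to the contrary, that some image $\si_d^k(\ell)=\ol{xy}$ has $x$ periodic of period $n$ and $y$ not of period $n$. Then $x\ne y$, so $\ol{xy}$ is a nondegenerate leaf of $\lam_\sim$ that has a periodic endpoint. Lemma \ref{l:samep} now says that $y$ must also have period $n$, which is a contradiction. Hence every leaf of $\lam_\sim$ is proper, and therefore $\lam_\sim$ is proper. The second sentence of Lemma \ref{l:samep} states exactly this conclusion, phrased through eventual images, so the argument amounts to combining Lemma \ref{l:samep} with forward invariance.

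I do not expect a real obstacle here, because the substantive content was already handled inside the proof of Lemma \ref{l:samep}. That proof pulls back a leaf with exactly one $n$-periodic endpoint to build an infinite concatenation of leaves, which would produce an infinite $\sim$-class and so violate (E3). It also invokes \cite[Lemma 2.25]{bopt20} to exclude the case of a periodic point of a different period. The one point that needs care is that these pullbacks remain inside $\lam_\sim$ and form a cone at $x$. This follows from backward invariance (the sibling property of invariant laminations, available via Theorem \ref{t:gapin}) combined with the fact that $\sim$-classes are unlinked, so concatenated leaves at $x$ belong to one class. If I wanted to avoid Lemma \ref{l:samep} altogether, I would rerun that cone argument directly for the leaf $\si_d^k(\ell)$.
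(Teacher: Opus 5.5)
Your deduction is correct. The paper gives no argument of its own here, only the citation to \cite[Lemma 4.2]{bmov13}. In that source, properness is phrased via the conditions of Lemma \ref{l:crit-equiv}: no critical leaf with a periodic endpoint and no critical wedge with a periodic vertex. You instead read the statement off Lemma \ref{l:samep}, whose second sentence is precisely its contrapositive. This is not circular, since Lemma \ref{l:samep} is proved without any appeal to Lemma \ref{l:4.2}.

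The cited route gives the critical-leaf/wedge form that is needed for \cite[Theorem 4.9]{bmov13}. Your route gives Definition \ref{d:proper-chord} directly and stays inside the paper, but it inherits the dependence on \cite[Lemma 2.25]{bopt20}. Both that reference and the cone construction can be bypassed, as follows.
\begin{itemize}
\item Suppose $x$ has period $n$ and lies in the finite class $X$. By (D1), $\si_d^n(X)$ is a class containing $x$, so $\si_d^n(X)=X$.
\item Since $X$ is finite, each step $\si_d^j(X)\to\si_d^{j+1}(X)$ is injective.
\item Hence by (D3) (trivially if $|X|\le 2$), $\si_d^n|_X$ is a cyclic rotation of $X$ fixing $x$, and therefore the identity.
\item So every $y\in X$ has period dividing $n$, and running the same argument from $y$ shows the period is exactly $n$.
\end{itemize}

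A small point: the leaves of your cone at $x$ lie in one class simply because each leaf of $\lam_\sim$ joins equivalent points and $\sim$ is transitive. Unlinkedness plays no role there.
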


Proper laminations can be defined differently. If two distinct leaves
of $\lam$ have a common endpoint $v$ and equal images, then their union
is said to be a \emph{critical wedge (with vertex $v$)}.

\begin{lem}\label{l:crit-equiv}
A lamination $\lam$ is proper if and only if the following holds:

\begin{enumerate}

\item no critical leaf of $\lam$ has a periodic endpoint, and

\item no critical wedge of $\lam$ has a periodic vertex.

\end{enumerate}

Moreover, if $\lam$ is proper, then $\approx_\lam$ is a laminational equivalence relation.
\end{lem}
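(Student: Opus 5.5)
The proof splits into the equivalence of the two definitions of properness and the final claim about $\approx_\lam$. Both implications of the equivalence are direct arguments with siblings; the last claim I would reduce to \cite{bmov13}.

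\emph{Proper $\Rightarrow$ (1) and (2).} I argue by contrapositive. Suppose first that $\ol{xy}\in\lam$ is critical and $x$ has period $n$. If $y$ were periodic, say of period $m$, then $\si_d^{n}(y)=\si_d^{n-1}(\si_d(x))=x$. Hence $y=\si_d^{mn}(y)=\si_d^{(m-1)n}(x)=x$, a contradiction. So $\ol{xy}$ itself connects an $n$-periodic point with a non-$n$-periodic point, and $\lam$ is improper. Similarly, let $\ol{vx},\ol{vy}$ form a critical wedge with $v$ periodic. Then $x\ne y$ and $\si_d(x)=\si_d(y)$, so by the same injectivity argument on periodic orbits at most one of $x,y$ is periodic of the period of $v$. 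One of the two leaves is therefore improper.

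\emph{(1) and (2) $\Rightarrow$ proper.} Suppose $\lam$ has an improper leaf. Replacing it by an image, we may assume $\ell=\ol{ab}\in\lam$ with $a$ of period $n$ and $b$ not $n$-periodic.
\begin{enumerate}
\item If some image $\si_d^j(\ell)$ is non-degenerate but $\si_d^{j+1}(\ell)$ is a point, then $\si_d^j(\ell)$ is a critical leaf with the periodic endpoint $\si_d^j(a)$, violating (1).
\item Otherwise all images are non-degenerate, and I pull back along the cycle of $a$. The sibling collection of the leaf $\si_d^{n-1}(\ell)$ consists of $d$ pairwise disjoint leaves joining the $d$ preimages of $\si_d^n(a)=a$ to preimages of $\si_d^n(b)$. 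One of these leaves therefore has the endpoint $\si_d^{n-1}(a)$. Iterating $n$ times gives a leaf $\ol{ac}$ with $\si_d^n(\ol{ac})=\si_d^n(\ell)$ and $c\ne b$, since $b$ is not $n$-periodic. Repeating this produces a family of leaves $\ol{ac_k}$ at $a$, all with the same $\si_d^n$-image.
\item Among these leaves, or among the leaves at the points of the orbit of $a$, I want two distinct leaves with a common periodic vertex and equal $\si_d$-images, which is a critical wedge violating (2). The idea is to take the first moment along the orbit where two leaves of the fan at $\si_d^i(a)$ acquire the same image.
\end{enumerate}

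The main obstacle is showing that such a collapse must occur. Suppose $\si_d$ is injective on the endpoints of the fan at every point of the cycle. Then $\si_d^n$ acts on the fan at $a$ as an orientation-preserving, angle-expanding map near $a$. At the same time it must carry the infinitely many leaves $\ol{ac_k}$, which accumulate by closedness of $\lam$, onto themselves. I would derive a contradiction from this expansion using the local order of leaves at $a$ and gap invariance (Theorem~\ref{t:gapin}). A possibly cleaner alternative is to invoke \cite[Lemma 2.25]{bopt20}, as in Lemma~\ref{l:samep}, to control the period of $b$.

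\emph{The final claim.} For proper laminations I would cite \cite{bmov13}, where it is shown that $\approx_\lam$ is an invariant laminational equivalence relation. The required properties are closedness, finiteness of classes and (D1)--(D3). Infinite classes are excluded exactly by the absence of the cone and fan configurations ruled out above, in the spirit of the argument in Lemma~\ref{l:samep}.
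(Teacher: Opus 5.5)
Your argument for ``proper $\Rightarrow$ (1) and (2)'' is correct: it is the paper's argument with the omitted details filled in. The last claim is obtained from \cite{bmov13} as in the paper. The gap is in the converse, and it has two parts.

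First, the pullback step as written is vacuous. In a sibling collection of $\si_d^{n-1}(\ell)$ the only leaf with endpoint $\si_d^{n-1}(a)$ is $\si_d^{n-1}(\ell)$ itself, because sibling leaves are disjoint even on $\uc$. You must pull back $\ell$ itself: take a leaf mapping onto $\ell$, use its sibling collection, and repeat $n$ times. This gives $\si_d^n(\ol{ac})=\ell$, hence $\si_d^n(c)=b\ne c$ provided $\si_d^n(b)\ne b$. If the period of $b$ is a proper divisor of $n$, swap the roles of $a$ and $b$.

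Second, and more seriously, the step you call the main obstacle is not carried out, and the mechanism you propose cannot work in general. For $k>j$ you have $\si_d^{(k-j)n}(c_k)=c_j$, so $\si_d^i(c_k)=\si_d^i(c_j)$ would force $\si_d^i(c_k)$ to be periodic. Hence if $b$ is not preperiodic, no two leaves of your fan ever acquire the same image. A wedge would have to be found among other leaves; this is what the structure of infinite cones from \cite[Lemma 2.4]{bopt16} provides, cf.\ Lemma~\ref{l:inficon1}(2). Alternatively, you would have to prove that under (1) and (2) the map $\si_d^n$ preserves the linear order of the endpoints of leaves at $a$. Then $\si_d^{kn}(b)$ would be strictly monotone and converge to a $\si_d^n$-fixed point or to $a$, contradicting expansion. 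Neither is done. Appealing to \cite[Lemma 2.25]{bopt20} does not close the gap either: it only makes $b$ (pre)periodic. In Lemma~\ref{l:samep} the strictly preperiodic case is excluded by finiteness of classes, which you do not have here.

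The idea you are missing is that the theorem you quote at the end, \cite[Theorem 4.9]{bmov13}, has (1) and (2) as its hypotheses, not properness. This is exactly how the paper proves the converse:
\begin{enumerate}
\item Under (1) and (2), $\approx_\lam$ is an invariant laminational equivalence relation, so its classes are finite.
\item Both endpoints of every leaf of $\lam$, and of every image of it, lie in one class.
\item By Lemma~\ref{l:samep}, a class containing an $n$-periodic point consists of $n$-periodic points, since a finite class is mapped onto itself by $\si_d^n$ as a positively oriented bijection with a fixed point.
\end{enumerate}
Hence $\lam$ is proper, and all the cone analysis is absorbed into the cited theorem.
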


\begin{proof}
By Theorem 4.9 \cite{bmov13} if (1) and (2) hold then $\approx_\lam$ is an invariant laminational equivalence relation.
Hence by Lemma \ref{l:samep} all leaves of $\lam$ are proper.

Let us prove that if $\lam$ is proper then (1) and (2) hold. The fact that $\lam$ cannot contain
a critical leaf with a periodic endpoint easily follows.
Suppose by way of contradiction that $W$ is a critical wedge consisting of leaves $\ol{wa}$ and $\ol{wb}$
with $w$ periodic of period $n$,  $\si_d(a)=\si_d(b)$. 
Then one of the points $a, b$ is not
of period $n$, hence the leaf connecting that point and $w$ is improper, a contradiction with $\lam$ being proper.
The last claim of the lemma follows from Theorem 4.9 \cite{bmov13}.
\end{proof}

Lemma \ref{l:crit-equiv} justifies the following definition.

\begin{dfn}\label{d:gen-prop}
Let $\lam$ be a proper lamination. Then the q-lamination $\lam_{\approx_\lam}$ is said to be \emph{generated} by $\lam$.
\end{dfn}

Lemma \ref{l:prop-clos} is related to Theorem \ref{t:laclo}.

\begin{lem}\label{l:prop-clos}
Let $\lam$ be a (non-empty) proper prelamination. Then the closure $\ol{\lam}$ of $\lam$ is a (non-empty) proper lamination.
\end{lem}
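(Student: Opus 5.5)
The plan is to handle the two assertions separately. First, $\ol{\lam}$ is an invariant lamination, and it is non-empty whenever $\lam$ is. Second, every leaf of $\ol{\lam}$ is proper.

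For the first assertion I would invoke Theorem \ref{t:laclo}. Since $\lam$ is an invariant prelamination, its closure $\ol{\lam}$ is an invariant lamination. Non-emptiness is immediate, because $\lam\subset\ol{\lam}$ and $\lam$ has a non-degenerate leaf.

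For properness I would use the characterization of Lemma \ref{l:crit-equiv}. It suffices to check two things: that $\ol{\lam}$ has no critical leaf with a periodic endpoint, and that $\ol{\lam}$ has no critical wedge with a periodic vertex. Equivalently, working directly with Definition \ref{d:proper-chord}, it suffices to show that no image of a limit leaf joins an $n$-periodic point $x$ to a point $y$ that is not $n$-periodic. Since $\ol{\lam}$ is forward invariant, I may assume the offending leaf itself is $\ell=\ol{xy}\in\ol{\lam}$, with $x$ of period $n$ and $y$ not of period $n$. The leaf $\ell$ is a limit of leaves $\ell_i=\ol{x_iy_i}\in\lam$. If $\ell_i=\ell$ for some $i$, then $\lam$ is improper, a contradiction. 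So $\ell$ is a genuine limit, and the leaves $\ell_i$ are pairwise distinct non-crossing leaves accumulating on $\ell$.

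The main step is the following. Near the periodic point $x$, the map $\si_d^n$ is locally expanding and fixes $x$. Consider the leaves $\si_d^{kn}(\ell_i)$: they stay in $\lam$ by invariance, and they are all proper. I would then argue as follows, using that $\lam$ (and its images) are non-crossing.
\begin{enumerate}
\item If $y$ is not periodic, or is periodic of a different period, then $\si_d^n(\ell)=\ol{x\,\si_d^n(y)}\ne\ell$.
\item Applying $\si_d^{n}$ repeatedly, the leaves $\ell_i$, which have endpoints close to $x$ and to $y$, are carried to leaves with one endpoint near $x$ and the other near $\si_d^{kn}(y)$.
\item Because $\si_d^n$ repels from $x$, an endpoint $x_i\neq x$ moves away from $x$ monotonically on one side. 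This produces leaves of $\lam$ that cross either each other or their own images, for example $\si_d^{kn}(\ell_i)$ crossing $\ell_j$ for suitable $j$, contradicting that $\lam$ is a prelamination.
\end{enumerate}
The remaining case is $x_i=x$ for infinitely many $i$. Then $\ell_i=\ol{xy_i}$ with $y_i\to y$. Properness of each $\ell_i$ forces $y_i$ to be $n$-periodic, so the $y_i$ are infinitely many $n$-periodic points, but there are only finitely many points of period $n$. Hence $y_i=y$ eventually, which contradicts $\ell_i\ne\ell$.

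The crossing argument in the case $x_i\ne x$ is the main obstacle. To handle it, I would first try to reduce to the critical-leaf and critical-wedge conditions of Lemma \ref{l:crit-equiv}. If properness fails, some image of a leaf of $\ol{\lam}$ has an improper form, and by pulling back along $\ol{\lam}$ one obtains a critical leaf or critical wedge with a periodic vertex in $\ol{\lam}$. Such an object is approximated by leaves of $\lam$ that nearly share the periodic vertex and have nearly equal images. Expansion of $\si_d^n$ near the vertex should then make these leaves cross under forward iteration, or yield a leaf of $\lam$ ending exactly at the periodic point with an improper other endpoint, a contradiction either way.
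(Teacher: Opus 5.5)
You have the right framework. Theorem \ref{t:laclo} makes $\ol{\lam}$ a non-empty invariant lamination, and you correctly reduce to an improper leaf $\ell=\ol{xy}\in\ol{\lam}\setminus\lam$ approximated by distinct leaves $\ell_i=\ol{x_iy_i}\in\lam$. Your finiteness argument disposing of $x_i=x$ is also correct; the paper's proof passes over that point quickly.

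However, the step you call the main obstacle is the whole content of the lemma, and you do not supply it. Item 3 only asserts that repulsion at $x$ ``produces'' crossings. The fallback through Lemma \ref{l:crit-equiv} does not reduce the difficulty: a critical leaf with a periodic endpoint is itself an improper leaf, and a critical wedge with a periodic vertex contains an improper leaf, so you are back to the same problem. Moreover, the claim that pulling back along $\ol{\lam}$ yields such a critical leaf or wedge is unjustified. Pulling back an improper leaf at a fixed vertex gives an infinite cone, and extracting a wedge from it needs Lemma \ref{l:inficon1}, i.e.\ Kiwi's theorem. Finally, your item 1 overlooks that $\si_d^n(\ell)$ may be degenerate, namely when $\si_d^n(y)=x$. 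Then there is no image leaf to compare with, and this case needs its own argument.

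The missing idea is a one-step comparison with $\ell$ and its image $\si_d(\ell)$. Both lie in the lamination $\ol{\lam}$, so any leaf of $\lam$ crossing either of them is already a contradiction. Replace $\si_d$ by $\si_d^n$ (and $d$ by $d^n$) so that $x$ is fixed. By local expansion, $x_i$ and $\si_d(x_i)$ lie on the same side of $x$, with $\si_d(x_i)$ farther from $x$.

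\emph{Case $\si_d(y)\ne x$.} Orient the circle so that $\si_d(y)\in(x,y)$. If $x_i$ lies just after $x$, then $\ell_i$ crosses $\si_d(\ell)=\ol{x\si_d(y)}$. If $x_i$ lies just before $x$, then $\si_d(\ell_i)$ crosses $\ell$.

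\emph{Case $\si_d(y)=x$.} Write $x_i=x+\epsilon$ and $y_i=y+\delta$ in local coordinates. Since $\ell_i$ does not cross $\ell$, you get $\epsilon\delta\le 0$. Then $\si_d(\ell_i)=\ol{(x+d\epsilon)(x+d\delta)}$ is a short chord whose endpoints straddle $x_i$, while $y_i$ is far away. Hence $\si_d(\ell_i)$ crosses $\ell_i$.

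This is essentially the paper's proof. Without an explicit configuration analysis of this kind, your argument is incomplete.
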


\begin{proof}
By
Theorem \ref{t:laclo}, $\ol{\lam}$ is an invariant lamination. We need to show that it is proper. Suppose that there is a leaf
$\ol{xy}$ of $\ol{\lam}$ where $x$ is $n$-periodic while $\si^n_d(y)\ne y$.
Replacing  $\si_d$ by $\si_d^n$, we may assume that $\si_d(x)=x$ is fixed while $\si_d(y)\ne y$. Set $\ol{xy}=\ell$
and consider several cases.

(1) Let $\si_d(y)\ne x$. Assume that $x<\si_d(y)<y$ in the sense of the positive direction on the circle. Since $\ol{xy}$
is not proper, it cannot belong to $\lam$ and must be the limit of a non-constant sequence of leaves $\ell_i=\ol{x_iy_i}\in \lam$ with
$x_i\to x$ and $y_i\to y$. We may assume that $x_i\approx x$ and $y_i\approx y$ (with $\approx$ meaning ``is very close to'').
Since $\lam$ is proper, $x_i\ne x$ and hence $\si_d(x_i)\ne x_i$. Clearly,
either $x<x_i<\si_d(x_i)$, or $\si_d(x_i)<x_i<x$. Then in the former case $\ell_i$ crosses $\si_d(\ell)$,
and in the latter case $\si_d(\ell_i)$ crosses $\ell$, a contradiction.

(2) Let $\si_d(y)=x$. As before, choose a sequence of leaves $\ell_i=\ol{x_iy_i}\in \lam$ such that $x_i\to x$ and $y_i\to y$.
Consider possible locations of the points $x_i, y_i$ with respect to $x$ and $y$. Observe that
leaves $\ell_i$ do not cross $\ol{xy}$; then it easily follows (for geometric reasons and because
$\si_d$ is locally expanding) that the images of $\ell_i$ will cross leaves $\ell_i$,
a contradiction.
\end{proof}

\section{Critical portraits} \label{s:cripo}

We mentioned in the Introduction that we want to relate  q-la\-mi\-na\-tions and
\emph{critical portraits}. In this section we define critical portraits and study their general properties.
We then consider pullbacks of chords in the setting of \emph{sibling invariant laminations}.

\begin{dfn}[Critical portraits]\label{d:crpt}
A collection $\cpc$ of distinct pairwise unlinked $\si_d$-critical chords is said to be a
\emph{critical collection}. An endpoint of a chord from $\cpc$ is said to be a \emph{vertex} of $\cpc$.
A critical collection is \emph{full} if for every component
$B$ of $\disk\sm \bigcup \cpc$ the map $\si_d|_{\bd(B)\cap \uc}$ is injective (except for the endpoints
of critical edges of $B$). A full critical collection $\cpc$ consisting of $d-1$ 
chords is called a \emph{critical portrait}.
The space of all (unordered) critical portraits of degree $d$ with the natural topology is denoted by $\mathrm{CrP}_d$.
\end{dfn}

The proof of the next lemma is left to the reader.

\begin{lem}\label{l:1d}
In a full critical collection $\cpc$ there is a chord $\oc$ with a circle arc
connecting its endpoints of length $\frac{1}{d}$.
\end{lem}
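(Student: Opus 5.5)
We need to prove Lemma \ref{l:1d}: a full critical collection $\cpc$ contains a chord $\oc$ such that one of the circle arcs joining its endpoints has length $\frac1d$.

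The plan is an extremal argument on the components of $\disk\sm\bigcup\cpc$. Each chord $\oc=\ol{ab}\in\cpc$ is critical, so $\si_d(a)=\si_d(b)$. Hence each of the two arcs of $\uc$ cut off by $\oc$ has length $\frac kd$ for some integer $1\le k\le d-1$.

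\textbf{Step 1: choosing an innermost chord.} Among all chords of $\cpc$ and both sides of each, choose a pair $(\oc,I)$ where $I$ is an arc cut off by $\oc$ of minimal length $\frac kd$. Since chords of $\cpc$ are pairwise unlinked, any other chord with an endpoint in the interior of $I$ has both endpoints in $\ol I$. It therefore cuts off a strictly shorter arc inside $I$, unless it shares both endpoints with $\oc$. That would make the two chords equal, which is excluded since the chords of $\cpc$ are distinct. So minimality forces no chord of $\cpc$ to have an endpoint in the interior of $I$. (A chord sharing exactly one endpoint with $\oc$ and lying inside $I$ would also cut off a shorter arc, so this case is excluded too.)

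\textbf{Step 2: identifying the component.} Let $B$ be the component of $\disk\sm\bigcup\cpc$ bounded by $\oc$ and $I$. By Step 1, no chord enters this region, so $\bd(B)\cap\uc=\ol I$. The only critical edge of $B$ is $\oc$, whose endpoints are the endpoints of $I$.

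\textbf{Step 3: applying fullness.} Fullness says $\si_d$ is injective on $\ol I$, except that the two endpoints of $\oc$ may share an image. If $k\ge2$, then $I$ has length $\ge\frac2d>\frac1d$. It then contains two points at distance $\frac1d$, at least one of which is interior to $I$, so the pair is not the endpoint pair of $\oc$. These two points have the same $\si_d$-image, contradicting injectivity. Hence $k=1$ and $I$ has length exactly $\frac1d$.

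The only point needing care is Step 1's handling of chords sharing an endpoint with $\oc$. Such chords either lie outside $I$, and so do not affect $B$, or cut off a shorter arc inside $I$, which is excluded by minimality. Finally, the lemma presupposes $\cpc\neq\emptyset$; for $d\ge2$ this holds for any full critical collection, since the empty collection leaves the whole circle as one boundary component, on which $\si_d$ is not injective.
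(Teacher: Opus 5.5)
Your proof is correct and complete. The paper gives no proof of this lemma (it is left to the reader), so there is no argument to compare against.

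Your three steps form a natural, self-contained argument:
\begin{enumerate}
\item Choose a chord $\oc$ cutting off an arc $I$ of minimal length. The minimum exists because all such lengths lie in the finite set $\{\frac1d,\dots,\frac{d-1}d\}$.
\item Unlinkedness then shows that the open region between $\oc$ and $I$ is a complementary component $B$ with $\bd(B)\cap\uc=\ol I$.
\item Fullness rules out $|I|\ge\frac2d$.
\end{enumerate}

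One cosmetic point concerns Step 3. The parenthetical in the definition of a full collection could be read as discarding the endpoints of critical edges altogether, rather than only letting the two endpoints of $\oc$ share an image. Under that reading, choose both points $x$ and $x+\frac1d$ in the interior of $I$. This is possible because $|I|\ge\frac2d$, so the argument goes through either way.
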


Let us list basic properties of the just defined concepts.

\begin{lem}\label{l:basic}
For a full critical collection $\cpc$, there are \emph{at least} $d$ components of $\disk\sm \bigcup \cpc$.
A critical collection is full if and only if it contains a critical portrait. 
Given a critical portrait $\Po$, there are \emph{exactly} $d$ components of $\disk\sm \bigcup \Po$.
A critical collection $\cpc$ of $d-1$ chords is a critical portrait if and only if no chords from
$\cpc$ form a loop.
\end{lem}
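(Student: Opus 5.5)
The plan is to reduce all four statements to two ingredients: a length count on the circle and an Euler-characteristic count for the planar graph $\Gamma$ formed by the chords of $\cpc$, whose vertices are the vertices of $\cpc$ on $\uc$.

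\emph{Length count.} Every $\si_d$-critical chord cuts $\uc$ into two arcs whose lengths are positive multiples of $\frac1d$. Let $B$ be a component of $\disk\sm\bigcup\cpc$ and let $A_B=\ol{B}\cap\uc$. Then $A_B$ is a finite union of arcs. Walking around $\bd(B)$, each critical edge joins two points with the same image. So $\si_d$ maps $A_B$, with the endpoints of each critical edge identified, to a closed loop in $\uc$, and this loop has total length $d\cdot|A_B|$. It follows that $|A_B|$ is a nonnegative multiple of $\frac1d$.

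If $\cpc$ is full, this map is injective off finitely many points, so $d|A_B|\le 1$. Hence each $B$ with $|A_B|>0$ has $|A_B|=\frac1d$ exactly. Since the lengths $|A_B|$ sum to $1$, there are exactly $d$ components with $|A_B|>0$, and in particular at least $d$ components. This proves the first claim.

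Conversely, suppose $|A_B|=\frac1d$. Then the image loop has length $1$ and is traced monotonically with positive orientation, since $\si_d$ preserves orientation on arcs. A monotone loop of length $1$ covers $\uc$ exactly once, so $\si_d|_{A_B}$ is injective except at endpoints of critical edges. Thus fullness is equivalent to: every component of the complement whose boundary meets $\uc$ in arcs has arc length exactly $\frac1d$.

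\emph{Euler count.} Since $\Gamma$ is outerplanar, adding the chords one at a time increases the number of complementary components by exactly one per chord. Hence a collection of $k$ chords has $k+1$ complementary components. A component bounded only by chords, with $A_B$ finite, arises precisely when the chords form a loop.

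For a collection of $d-1$ chords without loops, there are therefore $d$ components, each with $|A_B|$ a positive multiple of $\frac1d$. The lengths sum to $1$, so each $|A_B|=\frac1d$, and the collection is full; this gives the ``if'' direction of the last claim. It also shows that a critical portrait has exactly $d$ components, once we know portraits have no loops.

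For the ``only if'' direction, suppose $d-1$ chords of a full collection formed a loop. Removing one chord of the loop merges a polygonal component of zero arc length with a neighboring component. This leaves $d-2$ chords that still bound $d$ components of positive arc length, which is impossible because $d-2$ chords yield only $d-1$ components.

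\emph{Full iff contains a portrait.} If $\cpc\supset\Po$ with $\Po$ a portrait, then every component of the complement of $\cpc$ refines a component of the complement of $\Po$. Injectivity on arcs is inherited, so $\cpc$ is full. For the converse, take a maximal loop-free subfamily $\Fc\subset\cpc$, i.e. a spanning forest of $\Gamma$ in each connected component. Every chord of $\cpc\sm\Fc$ closes a loop with $\Fc$, so it lies in a region of the complement of $\Fc$ that is subdivided into polygonal, zero-arc pieces. Consequently the arc-bearing components for $\Fc$ are exactly the same as those for $\cpc$: there are $d$ of them, each of length $\frac1d$. The count $|\Fc|+1$ of components of the complement of $\Fc$ then forces $|\Fc|=d-1$, and $\Fc$ is full, hence a portrait. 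I expect this step to be the main obstacle. One must check carefully that deleting chords lying in loops does not merge two arc-bearing regions, and this is where the outerplanar structure of $\Gamma$ has to be used.
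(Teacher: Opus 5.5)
The paper gives no proof of this lemma (it is left to the reader), so there is no argument to compare yours with. Your length-plus-counting proof is correct. It also gives sharper facts: a full collection has exactly $d$ arc-bearing complementary components, each meeting $\uc$ in arcs of total length $\frac1d$, and fullness is equivalent to every component having arc length at most $\frac1d$.

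The step you flag as the main obstacle does go through, but one sentence in it is false as written. A chord $e\in\cpc\sm\mathcal F$ need not lie in a region of $\disk\sm\bigcup\mathcal F$ that is cut into zero-arc pieces. For $d=3$, let $\cpc$ be the three sides of the triangle with vertices $0,\frac13,\frac23$, and let $\mathcal F=\{\ol{0\,\frac13},\ol{\frac13\,\frac23}\}$. Then $e=\ol{\frac23\,0}$ lies in the component of $\disk\sm\bigcup\mathcal F$ whose closure contains the arc $[\frac23,1]$.

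What is true, and suffices, is the following. The chord $e$ together with the path in $\mathcal F$ joining its endpoints is a cycle of pairwise unlinked chords, hence the boundary of an inscribed convex polygon $Q_e$. Since the other edges of $Q_e$ lie in $\mathcal F$ and are never deleted, the component adjacent to $e$ on the side of $Q_e$ lies inside $Q_e$ and has zero arc length. So deleting the chords of $\cpc\sm\mathcal F$ one at a time, in any order, always merges a zero-arc component into another component, never two arc-bearing ones. The arc sets $A_B$ of the arc-bearing components are therefore preserved, and nothing further about outerplanarity is needed.

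A few other steps can be tightened:
\begin{itemize}
\item The third claim follows from your $k+1$ count alone: $d-1$ chords give $d$ components whether or not there are loops, so you do not need to rule out loops first.
\item The ``only if'' half of the last claim is immediate. A full collection of $d-1$ chords has $d$ components, all arc-bearing by the first claim, so there is no room for the zero-arc component a loop would create.
\item For ``$\cpc\supset\Po$ implies $\cpc$ full'', argue through lengths: $A_B\subset A_{B'}$ gives $|A_B|\le\frac1d$. Saying that injectivity is inherited is not quite enough, because the exceptional points in the definition of fullness are endpoints of critical edges of $B$, not of $B'$.
\item You tacitly assume $\cpc$ is finite. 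This is automatic: a convergent sequence of distinct critical chords eventually has the form $\ol{a_n\,(a_n+\frac kd)}$ with fixed $k$ and $a_n$ converging, and any two such chords with nearby distinct $a_n$ cross.
\end{itemize}
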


\begin{proof}
Left to the reader.
\end{proof}

Let us emphasize that, by definition, the order of critical chords in a critical collection $\cpc$ does not matter.

\begin{lem}\label{l:dynama}
Let $A$ be a component of $\disk\sm \bigcup \cpc$ where $\cpc$ is a full critical collection.
Then $A$ is an open Jordan disk whose boundary is the union of some chords from $\cpc$ and
(possibly no) open circle arcs. If at least one of the circle arcs is included,
the map $\si_d$ sends $\bd(A)$ onto the entire unit circle
$\uc$ in a one-to-one order preserving fashion except for the chords of $\cpc$ contained in $\bd(A)$
that collapse to points.
\end{lem}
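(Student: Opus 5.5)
Since $\cpc$ is a finite family of pairwise unlinked chords, I will first set up the topology of $A$ and then study the dynamics on $\bd(A)$.

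\emph{Step 1: $A$ is a Jordan disk with the stated boundary.} The set $\bigcup\cpc$ is a finite union of chords that can meet only at their endpoints on $\uc$. Hence $\disk\sm\bigcup\cpc$ is obtained by successively cutting the open disk along finitely many disjoint open segments. Each cut splits one component into two convex pieces. By induction on the number of chords, every component is the interior of a convex polygon-like region. Such a region is bounded by some chords of $\cpc$ and some closed arcs of $\uc$ between consecutive vertices, so $A$ is an open Jordan disk. Its boundary $\bd(A)$ is a simple closed curve, made of chords of $\cpc$ together with arcs of $\uc$ whose open parts are the open circle arcs in the statement.

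\emph{Step 2: the dynamics on $\bd(A)$.} Parametrize $\bd(A)$ positively. Let $\si_d$ act on arcs as the covering map, and collapse each chord of $\cpc$ in $\bd(A)$ to the common image of its endpoints. Since each such chord is critical, this gives a continuous map $g:\bd(A)\to\uc$. Going around $\bd(A)$, each boundary arc $I_j$ of length $|I_j|$ is wrapped positively over length $d|I_j|$, and the collapsed chords contribute nothing. So the degree of $g$ equals $d\sum_j|I_j|$, and $g$ is locally order preserving and monotone (non-decreasing) along $\bd(A)$.

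\emph{Step 3: the degree is $1$.} This is the main point. By fullness, $\si_d$ restricted to $\bd(A)\cap\uc$ is injective except at the endpoints of the critical edges. Let $m=\deg g$. If at least one arc is present, then $m\ge 1$, because the arc has positive length and $g$ is monotone. If $m\ge2$, then $g$ covers some point of $\uc$ at least twice. I need these two preimages to come from interiors of arcs, so I will choose a point that is not among the finitely many images of vertices. Its $g$-preimages then lie in the open arcs, where $g$ agrees with $\si_d$. This gives two distinct points of $\bd(A)\cap\uc$, not endpoints of critical edges, with the same $\si_d$-image, contradicting fullness. Hence $m=1$. A monotone degree-one map of circles is surjective, and by the same fullness argument it is injective off the collapsed chords. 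So $g$ maps $\bd(A)$ onto $\uc$ one-to-one and order-preserving, except that the chords of $\cpc$ in $\bd(A)$ collapse to points.

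\emph{Expected obstacle.} The delicate part is the precise use of fullness in Step 3. It says nothing about endpoints of critical edges, and different chords of $\bd(A)$ could share vertices or images. Choosing the image point away from the finitely many vertex images handles this. Consequently, the total length of the arcs of $\bd(A)$ equals $1/d$.
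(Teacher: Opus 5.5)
Your proof is correct. The paper gives no proof of this lemma (it is explicitly left to the reader), so there is nothing to compare it against. Your argument supplies the missing proof. You cut $\disk$ along the pairwise unlinked chords to see that $A$ is an open convex, hence Jordan, domain. You then compute the degree of the collapsed boundary map $g$ as $d\sum_j|I_j|$ from a non-decreasing lift, and use fullness at a point that is not the image of a vertex to force degree $1$. This also gives the useful by-product that the arcs in $\bd(A)$ have total length $\frac1d$.

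Two details deserve a sentence each.

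First, you use finiteness of $\cpc$ without comment. It holds for any critical collection. Suppose $\ol{ab}$ and $\ol{a'b'}$ are distinct critical chords with $b=a+\frac kd$, $b'=a'+\frac kd$, and $a,a'$ at circular distance less than $\frac1d$. Then their endpoints alternate, so the chords cross. Hence for each $k\in\{1,\dots,d-1\}$ there are at most $d$ chords of this type.

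Second, credit the injectivity of $g$ off the collapsed chords to monotonicity rather than to fullness. Fullness is phrased with an exception at endpoints of critical edges. So it does not by itself rule out an open-arc point, or a second chord of $\bd(A)$, having the same image as a chord of $\bd(A)$. What rules this out is the degree-one lift: a non-decreasing lift with total increase exactly $1$ has connected fibers. A non-degenerate fiber is then an interval on which the lift is constant, i.e.\ a concatenation of consecutive chords of $\bd(A)$.
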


\begin{proof}
Left to the reader.
\end{proof}

We will use the following definition based on Lemma \ref{l:dynama}.

\begin{dfn}[Components of $\disk\sm \bigcup \cpc$]\label{d:dynama}
Let $A$ be a component of $\disk\sm \bigcup \cpc$ where $\cpc$ is a full critical collection.
If $\bd(A)$ is the union of some critical chords from $\cpc$ we call $A$ \emph{all-critical}.
Otherwise we call $A$ \emph{partially critical}.
\end{dfn}

\section{Pullbacks}\label{s:pullbacks}

Let $\cho$ be the space of all chords in $\cdisk$ with the usual metric
(this includes degenerate chords). The map $\si_d$ is defined on $\cho$ as follows: $\si_d(\ol{ab})=\ol{\si_d(a)\si_d(b)}$;
thus, it acts on the space of chords and sends a collection of chords $\Ac$
to another collection of chords $\si_d(\Ac)$. However, the map $\si_d$ does not act on $\cdisk$.
Recall also, that the sibling property is only stated for non-critical chords
(see Definition \ref{d:sipro}). Finally, if pullbacks are defined for a specific full critical collection $\cpc$
we talk of \emph{$(\si_d, \cpc)$-pullbacks}.
If it does not cause ambiguity (for example, if $d$ and $\cpc$ are fixed), the ``$(\si_d, \cpc)$''-part of the notation may be (partially) omitted
or replaced by the word ``immediate''. Moreover, if instead of $\si_d$ one considers $\si_d^n$ for all non-negative values of $n$ then we will use the word \emph{iterated}
or \emph{eventual} pullbacks.

\begin{center} \textbf{The Description of Pullback Chords} \end{center}

Consider a partially critical component $A$ of $\disk\sm \bigcup \cpc$.
By Lemma \ref{l:dynama} there are several possibilities for various pullback chords
of a chord $\ell=\ol{xy}$ into $\ol{A}$. Let us list such possibilities below.

(i) The endpoints of $\ell$ do not belong to the union of images of critical chords from $\cpc$ that are contained in $\bd(A)$. Then there is a unique
pullback chord of $\ell$ into $\ol{A}$; also, $\ell$ will never intersect pullback chords of $\ell$ into other components of $\disk\sm \bigcup \cpc$.
In particular, if no image of a chord from $\cpc$ is an endpoint of $\ell$, then $\ell$ has $d$ pairwise disjoint $\cpc$-pullback chords
(disjoint from chords of $\cpc$).

(ii) One endpoint of $\ell$ (say, $x$) equals $\si_d(\oc)$ for some critical chord $\oc\in \cpc$ lying on the boundary of $A$; on the other hand, $y$ is not equal to
$\si_d(\od)$ for any other critical chord $\od\in \cpc$ that is contained in $\bd(A)$. Let $X$ be the component of $(\bigcup \cpc)\cap \bd(A)$
that contains $\oc$ and set $X'=X\cap \uc$; let $y'\in \bd(A)$ be such that $\si_d(y')=y$. Evidently, $y'$ is unique, $y'\in \uc\sm \bigcup \cpc$,
and all possible $\cpc$-pullback chords of $\ell$ into $\ol{A}$ are chords that connect $y'$ and points of $X'$.

(iii) We have that $x=\si_d(\oc)$ for some $\oc\in \cpc, \oc\subset \bd(A)$ and $y=\si_d(\od)$ for some $\od\in \cpc, \od\subset \bd(A)$.
Denote by $X$ the component of $(\bigcup \cpc)\cap \bd(A)$
that contains $\oc$ and set $X'=X\cap \uc$; denote by $Y$ the component of $(\bigcup \cpc)\cap \bd(A)$
that contains $\od$ and set $Y'=Y\cap \uc$. All $\cpc$-pullback chords of $\ell$ in $\ol{A}$
are all the chords connecting a point of $X'$ to a point of $Y'$.

\begin{lem}[Sibling property for $\cpc$-pullback chords]\label{l:sipul}
Let $\cpc$ be a 
full critical collection. Let $\ell=\ol{xy}$ be a chord in $\cdisk$.
Then the family $\Bc$ of all $\cpc$-pullback chords of $\ell$ has the sibling property.
\end{lem}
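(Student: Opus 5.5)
The plan is to use the description (i)–(iii) of pullback chords. For a non-critical pullback chord $\ell^*\in\Bc$ of $\ell$, we construct $d$ pairwise disjoint chords of $\Bc$, one of which is $\ell^*$, all mapping to $\ell$.

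Since $\ell^*$ is non-critical, $\ell=\si_d(\ell^*)$ is non-degenerate, so $x\ne y$. Let $A_1,\dots,A_k$ be the components of $\disk\sm\bigcup\cpc$, and let $A=A_1$ be one whose closure contains $\ell^*$. The closures of distinct components meet only along chords of $\cpc$ and their endpoints.

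\textbf{Step 1: select pullbacks in enough components.} By Lemma~\ref{l:dynama}, each partially critical component $\ol{A_j}$ contains at least one pullback chord of $\ell$. For $\ol{A}$ we take $\ell^*$ itself. In every other partially critical component we pick one pullback. We want the number of partially critical components to equal $d$, so that we get $d$ chords. For a critical portrait this holds by Lemma~\ref{l:basic}: there are exactly $d$ components, and none is all-critical. For a general full collection, fullness gives a critical portrait $\Po\subset\cpc$ by Lemma~\ref{l:basic}. I would reduce to $\Po$ only if the pullback families agree, but they need not. So it is better to argue directly: each point of $\uc$ has exactly $d$ preimages, and preimages of a generic point are distributed one per partially critical component. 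Hence the partially critical components are exactly $d$ in number.

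\textbf{Step 2: choose the pullbacks so they are disjoint.} Chords lying in distinct closures $\ol{A_j}$ can only meet at common vertices of $\cpc$-chords on both boundaries. This is cases (ii)/(iii), where $x$ or $y$ is the image of a critical chord. If neither $x$ nor $y$ is such an image, case (i) gives disjointness immediately.

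In the remaining cases, consider the component $X$ of $(\bigcup\cpc)\cap\bd(A_j)$ over $x$. Its vertex set $X'$ consists of several points all mapping to $x$, and is shared among the components adjacent to chords of $X$. For instance, a single critical chord $\ol{ab}$ with $\si_d(a)=x$ borders two components. We assign endpoint $a$ to the pullback in one of them and $b$ to the pullback in the other. In general, a connected critical tree with $m$ vertices over $x$ borders exactly $m$ adjacent components (Euler count for a tree in the disk). One then needs an injective assignment of vertices to adjacent components, compatible with the already fixed endpoint of $\ell^*$. I would construct it by rooting the tree at the vertex used by $\ell^*$ (or at any vertex if $\ell^*$ does not touch the tree). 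Each non-root vertex is sent to the component across its parent edge on the appropriate side, and the root to the remaining one. The two endpoints $x$ and $y$ are handled independently, since their fibers are disjoint (as $x\ne y$).

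\textbf{Step 3: conclude.} The resulting chords are pullbacks of $\ell$ lying in distinct closures, with pairwise distinct endpoints. Hence they are pairwise disjoint, giving a full sibling collection containing $\ell^*$.

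The main obstacle is Step 2: making the injective vertex-to-component assignment precise when critical chords concatenate into trees, including when $\ell^*$ itself uses a vertex shared with neighbouring components. I would first check this on a two-chord wedge and then induct on the number of chords in the component $X$.
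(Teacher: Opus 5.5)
Your route differs from the paper's. The paper inducts on $d$. By Lemma~\ref{l:1d} it picks a chord $\ol{tz}\in\cpc$ cutting off an arc of length $\frac{1}{d}$, pinches along it to get a figure eight, and applies the induction hypothesis to the degree $d-1$ map on the large loop. It then lifts back, with a case analysis only at the pinch point $\si_d(t)=\si_d(z)$. You instead build the sibling collection globally: one pullback in each partially critical component, with all $2d$ endpoints distinct. Steps 1 and 3 are sound. The generic-point count shows there are exactly $d$ partially critical components, even when $\cpc$ has all-critical ones. Pullback chords in distinct closures can meet only at common vertices on $\uc$.

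\textbf{The gap is Step 2.} It is the entire content of the lemma, and you leave it open. There are two concrete problems.
\begin{itemize}
\item Components of $\bigcup\cpc$ need not be trees. A full critical collection may contain loops, for instance for $d=3$ the three chords joining $0,\frac{1}{3},\frac{2}{3}$, whose inner face is all-critical. So a tree Euler count does not cover the statement.
\item The rule ``send each non-root vertex to the component across its parent edge on the appropriate side'' is not determined, and no fixed side works. Take the path $r$--$v$--$w$ with $r,v,w$ in positive order. The sides of $\ol{rv}$ and $\ol{vw}$ not facing the holes $(r,v)$ and $(v,w)$ both belong to the component over the hole $(w,r)$. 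The mirror configuration defeats the opposite choice. You must also guarantee that the root lands in $A$, which a fixed rule does not ensure.
\end{itemize}

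\textbf{A cyclic argument removes both problems.} Let $K$ be a component of $\bigcup\cpc$ over $x$, with vertices $v_1,\dots,v_m$ in positive order.
\begin{itemize}
\item Since $K$ is connected, distinct holes $(v_j,v_{j+1})$ lie in distinct components of $\cdisk\sm K$.
\item Hence exactly $m$ partially critical components $C_1,\dots,C_m$ touch $K$. Here $C_j$ is the one adjacent to $K$ on the side of $(v_j,v_{j+1})$, and $v_j,v_{j+1}\in\bd(C_j)$. The bounded faces of $K$ are all-critical, so they play no role.
\item The incidence graph between these components and vertices therefore contains the $2m$-cycle $C_1v_2C_2v_3\cdots C_mv_1$.
\item Suppose $A=C_i$ and $\ell^*$ uses $v_k\in\bd(C_i)$. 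Deleting $C_i$ and $v_k$ from the cycle leaves one or two alternating paths. Each has equally many components and vertices, so each has a perfect matching. Adding $C_i\mapsto v_k$ gives the required bijection.
\end{itemize}
By Lemma~\ref{l:dynama}, each partially critical component has on its boundary exactly one such cluster, or a single non-vertex point, over $x$. So the bijections for the various clusters over $x$ assemble into a global assignment, and likewise over $y$.

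With this repair your argument is complete. It also gives an explicit description of all sibling collections in $\Bc$. The paper's induction avoids this global bookkeeping, at the cost of the lifting case analysis at the pinch point.
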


Observe that in the setting of Lemma \ref{l:sipul} $\ell$ can be any chord, in particular $\ell$ does not have to be
compatible with $\cpc$. In other words, one can think of $\ell$ as being a chord of another copy of the unit disk $\disk$, distinct from
the one in which $\cpc$ is contained.

\begin{proof}
The case when neither $x$ nor $y$ belongs to $\si_d(\cpc)$ is already considered in the Description of Pullback Chords (i). So we may assume that
at least one endpoint of $\ell$ (say, $x$) belongs to $\si_d(\cpc)$. To establish the sibling property for the family $\Bc$
we use induction on $d$.

First, assume that $d=2$. Then $\cpc=\{\oc\}=\{\ol{tz}\}$ is a diameter, and by the assumption $\si_d(\oc)=x$. In this case
if a $\cpc$-pullback chord $\od$ of $\ell$ shares an endpoint $z$ with $\oc$ and is contained in $\ol{A}$ where $A$ is a component of $\disk\sm \oc$, then
we can choose the pullback $\oy$ of $\ell$ contained in the closure of the other component $B\ne A$ of $\disk\sm \oc$ sharing an endpoint $t$ with $\oc$.
Clearly, $\od\cap \oy=\0$; since $\od$ was an arbitrarily chosen $\cpc$-pullback chord of $\ell$, this proves the lemma for $d=2$.

We now prove the lemma for some $d\ge 3$ while assuming that the lemma holds for $d-1$.
By Lemma \ref{l:1d} we can choose a critical
chord $\oc=\ol{tz}\in \cpc$ such that the arc $[t, z]$ (with positive direction
within $[t, z]$ being from $t$ to $z$), is of length $\frac{1}{d}$ (and the length of the arc $[z, t]$ is $\frac{d-1}d$).

Now, pinch $\cdisk$ along the chord $\oc$, i.e. identify $t$ and $z$;
this produces a figure eight topological space $T=\pi(\uc)$ where $\pi$ is the associated
quotient map (``pinching map''). The space $T$ is the union of a Jordan curve $E=\pi([z, t])$
and a Jordan curve $F=\pi([t,z])$. Moreover, $E\cap F=\pi(t)=\pi(z)$ is the branchpoint $v$
of $T$. The map $\si_d$ induces two locally one-to-one
maps, $g_E:E\to \uc$ of degree $d-1$ and $g_F:F\to \uc$ of degree one (thus, $g_F$ is a homeomorphism
preserving orientation). Moreover, $\pi$-images of critical chords from
$\cpc$ with endpoints in $[z, t]$ form a full $g_E$-critical collection $\pi(\cpc)$.

Suppose that $\oq$ is a $\cpc$-pullback chord of $\ell$
and show that $\oq$ can be completed to a sibling collection for $\si_d$ of chords compatible with $\cpc$; evidently,
all the chords from the collection belong to $\Bc$.

First we choose, by induction applied to $g_E$, $d-1$ pairwise disjoint topological  $(\pi(\cpc), g_E)$-pullback chords of $\ell$. If $\oq$
has endpoints in $[t,z]$ then the choice of these chords can be arbitrary, but if $\oq$ has endpoints in $[z, t]$ then one of these chords must be
$\pi(\oq)$. In either case the desired choice of topological chords is possible by induction. Denote the collection of these $d-1$ chords by
$\Xc$. Next we develop a strategy of lifting these chords to the original circle using the map $\pi$.

If $\si_d(t)=\si_d(z)$ is not an endpoint of $\ell$, this can be done
uniquely so that the lifted chords are still pairwise disjoint; adding to the resulting $d-1$ pullbacks of $\ell$
the $\cpc$-pullback chord of $\ell$ with the endpoints in $(t, z)$ we get the desired sibling collection of $\cpc$-pullback chords of $\ell$ containing $\oq$.

Suppose that $\si_d(t)=\si_d(z)$ is an endpoint of $\ell$. Then among the $d-1$ chords of $\Xc$ there is one, denoted by $\oy$, whose endpoint is $\pi(t)$. Now consider
cases depending on $\oq$. Suppose that $\oq$ has endpoints in $F$. Then one of them is either $t$ or $z$. Suppose it is $t$. Then 
we choose
the lifting of $\oy$ 
with an endpoint $z$. If however $\oq$ has an endpoint $t$, then we lift $\oy$ to a chord with an endpoint
$z$. The other liftings are well-defined and all of them are pairwise disjoint. This gives rise to a desired sibling collection of chords.

Suppose now that $\oq$ has the endpoints in $E$. Then we choose the liftings of chords of $\Xc$ so that $\oq$ is among the liftings. If $\oq$
has $t$ or $z$ as an endpoint it defines the lifting of $\oy$. Otherwise the choice of the lifting of $\oy$ is either with $t$ as an endpoint,
or with $z$ as an endpoint. In any case once the liftings are defined, we can choose a sibling chord of them with the endpoints in $F$
so that in the resulting collection of chords all chords are pairwise disjoint. This shows that for any choice of $\oq$ we can find a full sibling collection
containing $\oq$ as desired.
\end{proof}

\section{Preroot laminations, or how to bypass portals}\label{s:preroot}

The notion of an invariant web (see Definitions \ref{d:sipro} and \ref{d:web}) is new and mimics that of a
\emph{sibling invariant} lamination (\cite{bmov13}).
However, this notion is too broad for our purposes. For example, the family of all chords is a web. To make webs
more like laminations, we add new restrictions and define webs with regard to full critical collections $\cpc$.
This provides for a natural way to construct an invariant web compatible with $\cpc$.

\subsection{Basic notions}

Recall that by \emph{compatible} chords we mean chords that do not cross each other.

\begin{dfn}[Notation]\label{d:poext2}
Let $\cpc$ be a full critical collection. Then
$\{\Kc_i(\cpc)=\Kc_i\}$ are collections of critical chords forming components of $\bigcup \cpc$
called \emph{critical chordial continua (of $\cpc$)}.
For each $i$ denote by $\Cc_i(\cpc)=\Cc_i$ the family of edges of the convex hull $\ch(\bigcup \Kc_i)$ of the set $\bigcup \Kc_i$.
All endpoints of chords from $\Cc_i$ (or chords from $\Kc_i$) map to the same point denoted $\si_d(\Cc_i)$.
Denote by $\cpc^+$ the collection of all-critical polygons of the form $\ch(\Kc_i)$;
 note that edges of sets from $\cpc^+$ form a full critical collection. Call endpoints of chords from
$\Cc_i$ \emph{vertices} of $\Cc_i$. Call
components of the set $\disk\sm \bigcup(\ch(\Kc_i))$ \emph{($\cpc^+$-)complementary components}
(if $\cpc^+$ is fixed, we omit it from terminology).
\end{dfn}

Observe that any set $\Cc_i$ has at most one periodic vertex. Evidently, sets $\ch(\Kc_i)$ are pairwise disjoint,
and sets $\bigcup \Cc_i$ are pairwise disjoint.
If a chord $\ell\in \cpc$ is disjoint from all other chords of $\cpc$, then the corresponding sets
$\Kc_i$ and $\Cc_i$ coincide with $\{\ell\}$.
Also, $\bigcup \Cc_i=\bd(\ch(\Kc_i))$ for each $i$.
Finally, the boundary of any $\cpc^+$-complementary component is the union of alternating critical chords
from $\cpc^+$ and circle arcs.

E.g., suppose that $\cpc=\Po$ is a critical portrait.
It is easy to see that then $\Po=\Po^+$ if and only if $\Po$ consists of $d-1$ pairwise disjoint critical chords.
Thus, if $\Po$ is a \emph{conic} cubic critical portrait consisting of two critical chords $\ell'=\ol{xy}$ and $\ell''=\ol{xz}$
that share a vertex $x$ then $\Po^+=\{\ell', \ell'', \ol{yz}\}$. 

From now on we work with $\cpc^+$ rather than $\cpc$. Our motivation is as follows.
A critical portrait $\Po$ describes all q-laminations in which endpoints of a leaf from $\Po$ are equivalent
whenever possible, e.g. when the corresponding
critical leaf is proper. Thus, if $\Kc_i$ contains no periodic points then $\ch(\bigcup\Kc_i)$ is contained in the
convex hulls of the associated equivalence classes. Hence we can work with convex hulls of such sets $\Kc_i$.

\begin{dfn}[Portals and proper critical sets]\label{d:im-prop}
A set $\Cc_i$ is a \emph{portal} if it contains a critical leaf with a periodic
endpoint; the set  $\ch(\Cc_i)$ is then called a \emph{portal (continuum)}. Otherwise,
both $\Cc_i$ and $\ch(\Cc_i)$ are said to be \emph{proper}. More generally,
a \emph{portal} is an all-critical set with a periodic vertex.
\end{dfn}

On the first step of our two step construction we use the pullbacks
of the proper critical sets.

\begin{dfn}\label{d:pubaweb}
Let $\cpc$ be a full critical collection.
Denote the collection of all \emph{iterated} $\cpc^+$-pullbacks of chords from \textbf{proper} sets of $\cpc$
by $\Yc_{\cpc}$. Say that chords $\ell\in \Yc_{\cpc}$ and $\ell'\in \Yc_{\cpc}$ are
\emph{$\Yc_{\cpc}$-equivalent} if  there is a finite chain
of chords of $\Yc_{\cpc}$ that are consecutively non-disjoint and connect $\ell$ and $\ell'$.
Classes of $\Yc_{\cpc}$-equivalence are called \emph{$\Yc_{\cpc}$-piles}.
\end{dfn}

Observe that even though we use the subscript $\cpc$ in the notation above, and will continue doing so in what follows, we
always use the family $\cpc^+$ of pairwise disjoint all-critical sets, and proceed with the family $\cpc^+$ as the basis of
our constructions.

\begin{lem}\label{l:yweb}
The family $\Yc_{\cpc}$ is an invariant web.
\end{lem}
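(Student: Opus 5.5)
We have to verify three properties of $\Yc_{\cpc}$: the sibling property of Definition \ref{d:sipro}, and properties (1) and (2) of Definition \ref{d:web}. By construction, $\Yc_{\cpc}$ is the union over $n\ge 0$ of families $\Yc^n$. Here $\Yc^0$ is the set of chords of the proper sets $\Cc_i$, i.e. the edges of the proper all-critical polygons of $\cpc^+$, and $\Yc^{n+1}$ is the family of all $\cpc^+$-pullback chords of chords from $\Yc^n$. All points of $\uc$ are adjoined as degenerate chords, as the definition of an invariant web requires.

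\emph{Sibling property.} Take a non-critical chord $\ell\in\Yc_{\cpc}$. Chords of $\Yc^0$ are critical, so $\ell\in\Yc^{n+1}$ for some $n$, and $\ell$ is a $\cpc^+$-pullback of some $\ell'\in\Yc^n$. The edges of the polygons in $\cpc^+$ form a full critical collection. Hence Lemma \ref{l:sipul}, applied to this collection and the chord $\ell'$, shows that the family of all $\cpc^+$-pullback chords of $\ell'$ has the sibling property. This family is contained in $\Yc^{n+1}\subset\Yc_{\cpc}$, so $\ell$ belongs to a full sibling collection lying in $\Yc_{\cpc}$.

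\emph{Forward invariance (property (1)).} If $\ell\in\Yc^{n+1}$, then $\si_d(\ell)\in\Yc^n$ by definition of a pullback. If $\ell\in\Yc^0$, then $\si_d(\ell)$ is a point of $\uc$, which is included in the web.

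\emph{Backward invariance (property (2)).} This is the step requiring some care. For a non-degenerate $\ell\in\Yc^n$ I would argue as follows. The chord $\ell$ has at least one $\cpc^+$-pullback into the closure of some partially critical complementary component. Indeed, by Lemma \ref{l:dynama} the boundary of such a component maps onto all of $\uc$ monotonically, so at least one preimage chord exists, as in the Description of Pullback Chords, cases (i)--(iii). This pullback lies in $\Yc^{n+1}$.

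For a degenerate chord, i.e. a point $x\in\uc$, any preimage point works, since all points are included in the web.

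Once these three properties are checked, $\Yc_{\cpc}$ is an invariant web, which proves the lemma.

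The only potential obstacle is the degenerate case: one might worry whether points must be preimages of chords in $\Yc_{\cpc}$. However, Definition \ref{d:web} treats points of $\uc$ as members of the web, and each point has $d$ preimages on $\uc$, so there is nothing further to prove.
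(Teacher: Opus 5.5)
Your proof is correct and takes the same route as the paper. The paper gets the web property from Lemma~\ref{l:sipul}, applied to the full critical collection of edges of $\cpc^+$, and then checks Definition~\ref{d:web} directly. You do the same, and your decomposition into generations $\Yc^n$ simply writes out the verification of forward and backward invariance that the paper calls easy.
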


\begin{proof}
By Lemma \ref{l:sipul}, $\Yc_{\cpc}$ is a web. It is easy to see that it also satisfies the requirements in
Definition \ref{d:web} and, hence,  it is invariant.
\end{proof}

\subsection{Properties of $\Yc_\cpc$-piles}

The non-periodicity implies nice properties of $\Yc_{\cpc}$. Recall that by chords
we normally mean \emph{non-degenerate} chords.

\begin{lem}\label{l:finpil}
There is an integer $M>0$ such that the convex hull of any $\Yc_{\cpc}$-pile set
is a polygon with at most $M$ vertices. 
The number of chords in any $\Yc_\cpc$-pile is at most $\frac{M(M-1)}2$.
\end{lem}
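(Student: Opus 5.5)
The plan is to bound the number of endpoints of chords in a single $\Yc_\cpc$-pile. The second claim then follows at once: the pile's chords join pairs of at most $M$ points, so there are at most $\binom M2=\frac{M(M-1)}2$ of them. The mechanism I would exploit is that $\si_d$ maps a pile into a pile and is locally injective away from the critical sets.

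\textbf{Step 1: forward invariance of piles.} First observe that if chords $\ell,\ell'\in\Yc_\cpc$ intersect, then $\si_d(\ell)$ and $\si_d(\ell')$ also intersect, or one of them degenerates to a point that is an endpoint of the other. Hence $\si_d$ maps each pile $\Pi$ into a single pile, possibly collapsing chords to points. Let $V(\Pi)$ be the vertex set of $\Pi$.

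\textbf{Step 2: behaviour of $\si_d$ on $V(\Pi)$.}
\begin{itemize}
\item If $V(\Pi)$ meets no proper critical set of $\cpc^+$, I claim $\si_d|_{V(\Pi)}$ is injective. Every non-critical chord of $\Yc_\cpc$ is an iterated $\cpc^+$-pullback, so it lies in the closure of one $\cpc^+$-complementary component $A$, and $\si_d$ restricted to $\bd(A)\cap\uc$ is injective away from the vertices of critical chords. Connectedness through intersections keeps the whole pile inside one complementary component unless it passes through a critical set. Here I would need the fact that pullback chords in different components meet only at vertices of critical sets; this comes from the Description of Pullback Chords (i)--(iii).
\item Otherwise the pile contains some $\Cc_i$, and $\si_d$ collapses $V(\Pi)$ with multiplicity bounded in terms of $d$. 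The bound is at most $d$ points per image point, since $\si_d$ is $d$-to-$1$.
\end{itemize}

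\textbf{Step 3: the pile tree and the bound.} The piles of the chords in the proper sets $\Cc_i$ themselves are finite: the $\Cc_i$ have at most $2(d-1)$ vertices, and the chords attached to them are first-level pullbacks. The key is to show that every pile $\Pi$ maps forward, after finitely many steps, onto one of finitely many \emph{root} piles, namely the piles containing the images $\si_d(\Cc_i)$ and their forward orbits. The images of the critical sets are points, so I would rather take as root objects the piles of the pullback chords adjacent to the critical sets.

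Each application of $\si_d$ is injective on $V(\Pi)$ except when the pile contains a critical set. A pile contains a critical set at most at a bounded number of times along its forward orbit; this should follow from the fact that the critical sets in question are proper, so their vertices are not periodic. Counting gives $|V(\Pi)|\le d^{\,d-1}\cdot C$, where $C$ bounds the vertex count of the root piles.

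\textbf{Main obstacle.} The hardest step is ruling out infinite growth through repeated passage near critical sets. The danger is a pile whose forward orbit contains many critical sets, or returns to the same one. I would first try to show that if some forward image of $\Pi$ and a later image both contain the same $\Cc_i$, then a vertex of $\Cc_i$ is periodic, or two siblings in a full sibling collection intersect. Either outcome contradicts properness or Lemma \ref{l:sipul}. This gives at most $d-1$ collapsing steps along any forward orbit, and hence a uniform $M$.
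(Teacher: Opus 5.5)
\textbf{Verdict: the proposal has a genuine gap.} Your mechanism matches the paper's: the vertex count can grow only at proper sets of $\cpc^+$, and properness limits how often that happens. You push piles forward, while the paper pulls back whole convex chordial continua; the paper also asserts the finiteness step in a single sentence. However, your proposal does not prove the step that carries the lemma, and its base step is wrong.

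\emph{The base step.} The pile of $\Cc_i$ is not just $\Cc_i$ plus first-level pullbacks. Suppose $\si_d^{a}(\si_d(\Cc_i))$ is a vertex of another proper set $\Cc_j$. Pulling $\Cc_j$ back $a+1$ times along the orbit of a vertex of $\Cc_i$ then attaches depth-$(a+1)$ chords to $\Cc_i$. Finiteness of that pile is itself an instance of the lemma, so your ``root piles'' are circular. Similarly, ``every pile reaches a root pile after finitely many steps'' needs bounded depth, which a pile not yet known to be finite need not have. Finally, the bound on the number of collapsing steps, which you leave as ``I would first try'', is the real content of the lemma. The sibling alternative you mention plays no role.

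\emph{How to close it within your framework.} Take a connected component $Q_0$ of the finite family of chords of $\Yc_\cpc$ of depth at most $n$. Set $Q_j=\si_d^j(Q_0)$, keeping only non-degenerate images; these families stay chain-connected.
\begin{itemize}
\item The last nonempty one, $Q_N$, consists of critical chords. Hence it lies in one proper set $\Cc_k$ and has at most $d$ vertices.
\item If $Q_j$ meets no vertex of a set of $\cpc^+$, it lies in the closure of one complementary component, and $\si_d$ is injective on its vertex set $V(Q_j)$. Here you must also note that no chord of $\Yc_\cpc$ ends at a portal vertex: such a vertex maps to a periodic point, which would eventually be a vertex of a proper set.
\item Otherwise $|V(Q_j)|\le d\,|V(Q_{j+1})|$.
\item By connectivity, $\si_d^{N-m}$ sends every vertex of $Q_m$ to a vertex of $\Cc_k$. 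The reason is that when a chord collapses onto some $\Cc_{k'}$ before time $N$, the point $\si_d(\Cc_{k'})$ is an endpoint of a surviving chord.
\end{itemize}
Now suppose $Q_j$ and $Q_{j'}$, with $j<j'\le N$, have vertices $w,w'$ in the same proper set $\Cc_i$. Then $u=\si_d^{N-j}(w)$ and $u'=\si_d^{N-j'}(w')$ are vertices of $\Cc_k$. Since $\si_d(w)=\si_d(w')$, we get $u=\si_d^{p}(u')$ with $p=j'-j$. Then $\si_d(u)=\si_d(u')$ forces $\si_d^{p}(\si_d(\Cc_k))=\si_d(\Cc_k)$. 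So $u=\si_d^{p-1}(\si_d(\Cc_k))$ is a periodic vertex of a proper set, a contradiction.

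Hence each of the at most $d-1$ proper sets is met at no more than one time, and $|V(Q_0)|\le d^{\,d-1}$. Every pile is an increasing union of such components, so $M=d^{d-1}$ works.
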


\begin{proof}
One can think of pulling back proper chordial continua from $\cpc^+$ as a whole rather than doing it chord by chord.
By definition, such a pullback into one complementary component is convex. Moreover, if a proper
critical set $\Cc_i$ maps to a vertex of a convex chordial continuum $X$, then the entire pullback of $X$ includes
$\Cc_i$ and slightly modified copies of $X$ attached to $\Cc_i$ along each edge of $\Cc_i$. This can happen at several
proper critical sets $\Cc_i$; it follows that a pullback of $X$ can be thought of as a convex set with some chords inside.
If $X$ is a convex polygon, the number of vertices of its pullbacks can thus increase. However, since there are only finitely many proper
sets $\Cc_i$ that we pull back, the above described growth of the number of vertices of pullback chordial continua
takes place only finitely many times for each of the continua and stops after a large number of pullbacks have been taken.
The last claim is left to the reader.
\end{proof}

Images, preimages and complete pullbacks of piles and pile sets are defined in a natural fashion.

\begin{lem}\label{l:basic1}
The following claims are true.

\begin{enumerate}

\item 
All $\Yc_\cpc$-piles consist of at most $\frac{M(M-1)}2$ chords all of which are proper.

\item $\Yc_\cpc$-pile sets are connected

\item $\Yc_\cpc$-piles include no chords from portals.

\item Edges of the convex hulls of $\Yc_\cpc$-pile sets belong to the  corresponding piles.

\item The convex hulls of $\Yc_\cpc$-piles are disjoint.

\item The image of a $\Yc_\cpc$-pile is a $\Yc_\cpc$-pile.

\item The full preimage of a $\Yc_\cpc$-pile $\Ac$ consists of at most $d$ $\Yc_\cpc$-piles each of which is a pullback of $\Ac$.

\item The convex hull of every $\Yc_\cpc$-pile eventually maps onto a proper critical set from $\cpc^+$.

\end{enumerate}

\end{lem}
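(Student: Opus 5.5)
The plan is to prove the eight claims essentially in the order listed, but using a single structural description of piles. That description comes from the proof of Lemma \ref{l:finpil}: a pile is obtained by pulling back whole proper chordial continua from $\cpc^+$ rather than single chords.

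First I will set up a grading. Let $\Yc^n$ be the set of chords of $\Yc_\cpc$ that are $\cpc^+$-pullbacks of order at most $n$ of edges of proper sets $\Cc_i$. Claims (2), (4) and (5) will be proved for $\Yc^n$-piles by induction on $n$, and then passed to $\Yc_\cpc$. This is legitimate because any two non-disjoint chords, and any finite chain, lie in some $\Yc^n$.

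The base case $n=0$ is immediate. Its piles are the sets $\Cc_i$ for proper $i$. These have pairwise disjoint convex hulls, their unions are connected, and by Definition \ref{d:poext2} each is exactly the edge set of its convex hull.

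For the inductive step I take a $\Yc^{n}$-pile $\Ac$ with convex hull $X$ and use the Description of Pullback Chords. Its pullbacks into a complementary component $A$ of $\cpc^+$ have one of two forms:
\begin{itemize}
\item a homeomorphic copy of $X$, when no vertex of $X$ is some $\si_d(\Cc_j)$ with $\Cc_j\subset \bd(A)$;
\item copies of $X$ glued to those $\Cc_j\subset\bd(A)$ with $\si_d(\Cc_j)$ a vertex of $X$. Here the vertex is replaced by the whole set of vertices of $\Cc_j$.
\end{itemize}
In the second case, chords of $\Cc_j$ themselves belong to $\Yc_\cpc$ only if $\Cc_j$ is proper. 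So I must check that a portal $\Cc_j$ never has its image among the vertices of $X$.

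The induction hypothesis applied to $\si_d(\Ac)$ gives an eventual image equal to a proper $\Cc_k$. If $\si_d(\Cc_j)$ were a vertex of $X$, then its forward orbit would land on vertices of $\Cc_k$. But $\Cc_j$ has a periodic vertex, so $\si_d(\Cc_j)$ is periodic. Hence $\Cc_k$ would contain a periodic vertex, which contradicts properness.

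This argument gives claim (3) and, at the same time, claim (8). Claim (1) then follows: every chord in a pile eventually maps into a proper set $\Cc_k$, and a chord landing in a set with no periodic vertex is proper. The bound on the number of chords is Lemma \ref{l:finpil}.

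Connectedness (2) and claim (4) come from the gluing picture. The union of a connected pullback copy with the connected set $\bigcup\Cc_j$ along shared vertices is connected. The edges of the new convex hull are either edges of $\Cc_j$ or pullbacks of edges of $X$, and both kinds lie in the pile.

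For (5), convex hulls of pullbacks inside different complementary components can meet only along $\bigcup\cpc^+$. Pullbacks inside one component are disjoint because the inductive hypothesis says the images have disjoint hulls, and $\si_d$ restricted to the boundary of a component is order preserving by Lemma \ref{l:dynama}. This is the step I expect to be the main obstacle. The delicate situation is two piles touching a common critical set $\Cc_j$: if both images contain the point $\si_d(\Cc_j)$, then by the hypothesis they are the same pile. The statement therefore has to be arranged so that such pullbacks get merged into one pile. To make this work I will define piles of $\Yc^n$ via chains in $\Yc^n$ and show that any two chords of $\Yc^{n+1}$ that intersect map to chords of the same $\Yc^{n}$-pile.

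Claims (6) and (7) then follow. The image of a chain of non-disjoint chords is a chain of non-disjoint chords (or points), so images of piles are contained in piles. Conversely, by Lemma \ref{l:sipul} every chord of the image pile has a pullback attached to the pullback of the preceding chord. Hence the image is a full pile, and the full preimage splits into at most $d$ pullback piles, one for each choice of branch. Gluing at critical sets can only reduce this number.
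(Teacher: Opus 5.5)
Your proposal is correct and follows the route the paper has in mind: the paper leaves this lemma to the reader, but its proof of Lemma \ref{l:finpil} sketches exactly your picture of pulling back whole piles and attaching copies along the edges of proper sets of $\cpc^+$. Your induction on pullback order, together with the observation that pile vertices are never periodic (so nothing attaches to a portal), fills in that sketch.

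A few small fixes are needed.
\begin{itemize}
\item Inside a single component $\ol{A}$, the vertex $\si_d(\Cc_j)$ lifts only to the two endpoints of the edge of $\ch(\Cc_j)$ lying on $\bd(A)$. All vertices of $\Cc_j$ appear only after you glue the lifts from every component adjacent to $\ch(\Cc_j)$.
\item The chain-lifting in (6) follows from monotonicity of $\si_d$ on $\bd(A)$ (Lemma \ref{l:dynama}), not from Lemma \ref{l:sipul}.
\item Transferring (4)--(5) from $\Yc^n$-piles to $\Yc_\cpc$-piles uses the finiteness of piles from Lemma \ref{l:finpil}, not just the fact that finite chains lie in some $\Yc^n$.
\end{itemize}
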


\begin{proof}
Left to the reader.
\end{proof}

Recall that
given two points $a, b\in\uc$ we denote by $(a,b)$ the arc such that the movement from $a$ to $b$ within this arc is
in the positive direction. Recall also that a \emph{hole} of a closed subset $A\subset \uc$ is a component of $\uc\sm A$.

\begin{dfn}\label{d:pos-ori1}
Given a closed set $A\subset\uc$, we say that the restriction $\si_d|_A$ 
is a \emph{positively oriented map} if for every hole $(x,y)$ of $A$ either
$\si_d(x)=\si_d(y)$ or the interval $(\si_d(x),\si_d(y))$ is also a hole of $\si_d(A)$.
\end{dfn}

If $\si_d|_A$ is positively oriented then it can be extended over the boundary of $\ch(A)$
to a map that sends the boundary of $\ch(A)$ to the boundary of
$\ch(\si_d(A))$ as a positively oriented composition of a monotone map and a covering map.


\begin{lem}\label{l:convex1}
If $\Ac$ is a $\Yc_\cpc$-pile and $\Vc$ is the set of vertices of $\bigcup \Ac$, then the map $\si_d|_\Vc$ is positively oriented.
In particular, $\si_d$ maps edges of $\ch(\bigcup \Ac)$ to edges of $\ch(\bigcup \si_d(\Ac))$. Moreover,
if $\ch(\bigcup \Ac)$ is not all-critical then its edges are non-critical and their union maps $k$-to-$1$ to edges of
$\ch(\bigcup \si_d(\Ac))$ for some $k\ge 1$.
\end{lem}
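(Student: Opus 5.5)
We argue by induction on the number of pullback steps needed for the pile to reach $\cpc^+$, in the spirit of Lemma \ref{l:basic1}(8). By Lemma \ref{l:basic1}(6),(8), every $\Yc_\cpc$-pile $\Ac$ has an image pile $\si_d(\Ac)$, and some iterate of $\ch(\bigcup\Ac)$ is a proper critical set $\ch(\bigcup\Kc_i)$ from $\cpc^+$. It is convenient to prove a slightly stronger statement: $\ch(\bigcup\Ac)$ lies in the closure of a single $\cpc^+$-complementary component $A$, or it is itself one of the sets of $\cpc^+$. The set $\Vc$ of vertices is finite by Lemma \ref{l:finpil}, so positive orientation is a purely combinatorial condition on the finitely many holes of $\Vc$.

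\textbf{Base case.} If $\ch(\bigcup\Ac)$ is all-critical, i.e.\ it equals some proper $\ch(\bigcup\Kc_i)$, then all of $\Vc$ maps to the single point $\si_d(\Cc_i)$. For every hole $(x,y)$ we then have $\si_d(x)=\si_d(y)$, so positive orientation holds trivially.

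\textbf{Inductive step.} Suppose the claim holds for $\Bc=\si_d(\Ac)$. The pile $\Ac$ is a $\cpc^+$-pullback of $\Bc$: each chord of $\Ac$ is a pullback of a chord of $\Bc$ into the closure of some complementary component $A$. I plan to show that all non-critical chords of $\Ac$ lie in one $\ol{A}$.
\begin{enumerate}
\item By connectedness (Lemma \ref{l:basic1}(2)) and the disjointness of the sets of $\cpc^+$, the pile $\Ac$ can only pass from one complementary component to another through a shared proper critical set $\Cc_i$.
\item Such a $\Cc_i$ is attached precisely when $\si_d(\Cc_i)$ is a vertex of $\ch(\bigcup\Bc)$. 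This is the description used in the proof of Lemma \ref{l:finpil}: $\ch(\bigcup\Ac)$ is $\Cc_i$ together with modified copies of the pullback of $\ch(\bigcup\Bc)$ glued along edges of $\Cc_i$.
\item By the Description of Pullback Chords, on each $\ol{A}$ the map $\si_d$ is monotone and orientation preserving from $\bd(A)\cap\uc$ onto $\uc$, collapsing only the critical chords in $\bd(A)$.
\end{enumerate}

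It follows that the holes of $\Vc$ fall into two types:
\begin{itemize}
\item holes whose endpoints are two vertices of the same $\Cc_i$; these have equal images;
\item holes $(x,y)$ of a copy lying in some $\ol{A}$; by monotonicity of $\si_d|_{\bd(A)}$, the arc $(\si_d(x),\si_d(y))$ is a hole of $\si_d(\Vc)$, because $\si_d(\Vc)$ is the vertex set of $\Bc$ and positive orientation for $\Bc$ holds by induction.
\end{itemize}
The main obstacle is exactly this case analysis. One has to check that vertices coming from different copies, or from the critical set, never land strictly inside an image arc; unlinkedness of the $\cpc^+$-pullbacks in a single $\ol{A}$ should rule this out. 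One must also treat a vertex of $\Bc$ that is the image of several critical sets simultaneously.

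\textbf{Remaining claims.} Once positive orientation is known, the remark after Definition \ref{d:pos-ori1} extends $\si_d$ to $\bd(\ch(\bigcup\Ac))\to\bd(\ch(\bigcup\si_d(\Ac)))$ as a positively oriented composition of a monotone map and a covering. Hence edges go to edges, using Lemma \ref{l:basic1}(4) for $\Bc$.

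If $\ch(\bigcup\Ac)$ is not all-critical, suppose an edge $\ol{xy}$ were critical. Its endpoints are then equivalent siblings, so the chord lies in a portal or in a proper set of $\cpc^+$. Portals are excluded by Lemma \ref{l:basic1}(3). In the proper case the edge would be a chord of $\Cc_i$, which lies inside the hull, not on its boundary, since copies are glued along every edge of $\Cc_i$. So all edges are non-critical, and the monotone part of the boundary map is injective on edges. The boundary map is therefore a covering of some degree $k\ge1$, which sends the union of edges $k$-to-$1$ onto the edges of the image hull.
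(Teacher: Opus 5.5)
\textbf{There is a genuine gap, and it sits exactly at what you call ``the main obstacle''.}

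\emph{The strengthened statement is false.} You propose to carry through the induction that $\ch(\bigcup\Ac)$ lies in the closure of a single $\cpc^+$-complementary component or is itself a set of $\cpc^+$. This is false, and so is your intermediate goal that all non-critical chords of $\Ac$ lie in one $\ol{A}$. Suppose a proper $\Cc_i$ has $\si_d(\Cc_i)$ equal to a vertex of a non-degenerate pile $\Bc$; for $d\ge 3$ this occurs, e.g.\ when $\si_d(\Cc_1)$ is a vertex of another proper set $\Cc_2$. Then the pile containing $\Cc_i$ contains non-critical pullbacks of chords of $\Bc$ behind \emph{every} edge of $\Cc_i$. The components lying behind distinct edges of a convex polygon are distinct. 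Your own item (2) describes this configuration, so the plan contradicts itself. This multi-component case is precisely what the paper's proof deals with; the single-component case is the ``easy'' one.

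\emph{The key verification is missing.} Your claim that every hole of $\Vc$ is either spanned by one critical set or is a hole of a single copy is exactly what has to be proved. A priori the two endpoints of a hole could lie in different copies, and you only say unlinkedness ``should rule this out''.

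\emph{The inductive hypothesis is not what you use.} Positive orientation of $\si_d$ on the vertices of $\Bc$ concerns $\Bc\to\si_d(\Bc)$. It says nothing about whether a hole of $\Vc$ maps onto a hole of $\si_d(\Vc)$. What you actually need, and never state, is that each copy maps onto the whole vertex set of $\Bc$.

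\textbf{How the paper closes it.} For each complementary component $D_j$ meeting $\bigcup\Ac$, every chord of $\si_d(\Ac)$ has a pullback in $\ol{D_j}$. Hence $\ch((\bigcup\Ac)\cap\ol{D_j})$ maps order-preservingly onto $\ch(\bigcup\si_d(\Ac))$. The paper then glues these pieces by induction on the number of critical sets $\Cc_i\subset\Ac$. It picks a $\Cc$ with an edge $\ol{ab}$ whose arc $(a,b)$ meets no other critical set of $\Ac$, collapses $\ch(\ol{ab}\cup(a,b))$, and runs an inner induction on the number of edges of $\Cc$.

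\textbf{A shorter repair of your argument, with no induction at all.}
\begin{enumerate}
\item Over a hole $(x,y)$ of $\Vc$, the hull edge $\ol{xy}$ is a chord of $\Ac$ by Lemma~\ref{l:basic1}(4).
\item If $\Ac$ is not all-critical, this edge is non-critical; your last paragraph shows this and does not use orientation.
\item Being a non-critical chord of $\Ac$, $\ol{xy}$ lies in a single $\ol{D_j}$, so $(x,y)$ is a hole of $\Vc\cap\ol{D_j}$.
\item $\si_d$ weakly preserves cyclic order on $\bd(D_j)\cap\uc$. Hence the arc $(\si_d(x),\si_d(y))$ misses $\si_d(\Vc\cap\ol{D_j})$.
\item By the surjectivity just mentioned, $\si_d(\Vc\cap\ol{D_j})=\si_d(\Vc)$, so $(\si_d(x),\si_d(y))$ is a hole of $\si_d(\Vc)$.
\end{enumerate}
If $\Ac$ is all-critical, every hole has endpoints with equal images and there is nothing to prove.

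Your treatment of the final claim (no critical edges, and a $k$-to-$1$ covering of edges) agrees with the paper's.
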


\begin{proof}
The claim is easy if $\ch(\bigcup \Ac)\subset \ol{U}$ where $U$ is a complementary component to $\cpc^+$.
Otherwise there are
sets $\Cc_i\subset \Ac$. Indeed, since $\Ac$ is not contained in the closure of a complementary component
then $\Ac$ is non-disjoint from some proper sets $\ch(\Cc_i)$; hence, by definition of a $\Yc_\cpc$-pile
all chords from $\Cc_i$ are in $\Ac$.

Denote by $D_1,$ $\dots,$ $D_k$ all complementary components
intersecting $\bigcup \Ac$. Then each $D_j$ has an edge of at least one such set $\ch(\Cc_i)$ in its boundary.
It follows that for each chord $\ell'$ in $\si_d(\Ac)$ there is at least one chord $\ell_j$ in each $D_j$
that maps to $\ell'$ (depending on whether the endpoints of $\ell'$ are images of critical sets there may be more
pullbacks of $\ell'$ in $D_j$). Moreover, since the order is preserved under $\si_d$ on $\bd(D_j)$ (except for the
collapse of the critical edges) then $\ch((\bigcup \Ac)\cap D_j)$ maps order preservingly
onto $\ch(\bigcup \si_d(\Ac))$.

To prove that this implies the desired statement, first assume that there is only one critical set $\Cc\in \cpc^+$
contained in $\Ac$. 
Then behind each edge $\ell$ of $\Cc$ there is a well-defined component $D_i$ of $\cdisk\sm \Cc$ bordering on $\ell$
that intersects $\bigcup \Ac$, and by the previous paragraph
the map on $\Vc\cap D_i$, and hence on $\Vc$, is positively oriented.

By induction, suppose now that the claim is proven in the case when $s$ critical sets $\Cc_i$ are contained in $\Ac$, and prove
it in the case when $s+1$ critical sets $\Cc_i$ are contained in $\Ac$. This we will also prove by induction, now over the number of edges
of a specially chosen set $\Cc\subset \Ac$.

Namely, find a critical set $\Cc$ that has an edge $\ell=\ol{ab}$ such that
the circle arc $I=(a, b)$ is disjoint from other critical sets $\Cc\subset \Ac$ ($\ell$ is an edge ``on the edge''). The induction is over the number $m$ of
edges of $\bigcup \Cc$. Indeed, suppose that $\Cc=\{\ell\}$. Then collapsing  $\ch(\ell\cup I)$ and applying the induction over $s$
we see that the resulting (after collapse) set $\Ac'$ intersected with $\uc$ maps onto its image positively oriented. Adding to this the chords of $\Ac$
with endpoints in $\ol{I}$, we see that the resulting map on $\Vc$ is positively oriented, too. This as the basis of induction over $m$, and the arguments
in the inductive step repeat (verbatim) the arguments in the base of induction case. This completes the proof of all the claims of the lemma  except for the last one.

To prove the last claim observe that if $\ch(\bigcup \Ac)$ is not all-critical but there is a critical edge $\ell$ of $\ch(\bigcup \Ac)$ then
$\ell$ must be an edge of a proper set $\Cc\in \cpc^+$ and, hence, cannot be an edge of $\ch(\bigcup \Ac)$ (sibling sets of $\ch(\bigcup \Ac)$
must be attached to all edges of $\Cc$ and those sibling sets are non-degenerate because $\ch(\bigcup \Ac)$ is not all-critical). The contradiction shows that
in the case at hand edges of $\ch(\bigcup \Ac)$ are not critical; hence, since $\si_d|_\Vc$ is positively oriented, then
the edges of $\ch(\bigcup \Ac)$ map $k$-to-$1$ to edges of $\ch(\bigcup \si_d(\Ac))$ for some $k\ge 1$.
\end{proof}

The next lemma is based on simple geometric considerations.

\begin{lem}\label{l:prelam}
The family of all edges of convex hulls of $\Yc_{\cpc}$-pile sets is a proper invariant prelamination.
Moreover, all such edges belong to the corresponding piles.
\end{lem}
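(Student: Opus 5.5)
Let $\Ec$ denote the family of all edges of the sets $\ch(\bigcup\Ac)$ where $\Ac$ runs over all $\Yc_\cpc$-piles, together with all points of $\uc$. I would verify four properties in turn: the edges lie in the piles, $\Ec$ is a prelamination, $\Ec$ is an invariant web, and all its chords are proper.

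The ``moreover'' part is Lemma \ref{l:basic1}(4). It follows that every chord of $\Ec$ is a chord of $\Yc_\cpc$, and hence is proper by Lemma \ref{l:basic1}(1).

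To see that $\Ec$ is a prelamination, take edges of two distinct piles. They cannot cross, because by Lemma \ref{l:basic1}(5) the convex hulls of distinct piles are disjoint. Edges of the same convex polygon never cross.

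Next comes forward invariance. Let $\ell$ be an edge of $\ch(\bigcup\Ac)$ for a pile $\Ac$. By Lemma \ref{l:basic1}(6), $\si_d(\Ac)$ is a pile. By Lemma \ref{l:convex1}, $\si_d(\ell)$ is an edge of $\ch(\bigcup\si_d(\Ac))$ unless it is degenerate. A degenerate image happens only when $\ch(\bigcup\Ac)$ is all-critical, and then the image is a point of $\uc$, which already belongs to $\Ec$.

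For backward invariance, take an edge $\ell$ of a pile $\Bc$. By Lemma \ref{l:basic1}(7) and (8) (the pile eventually maps onto a proper critical set, which is itself a pullback of something), $\Bc$ has at least one pullback pile $\Ac$. Since $\Yc_\cpc$ is invariant (Lemma \ref{l:yweb}), $\Bc$ has a preimage chord, and the pile containing that chord maps onto $\Bc$. By Lemma \ref{l:convex1}, the edges of $\ch(\bigcup\Ac)$ map onto the edges of $\ch(\bigcup\Bc)$, since the map on vertices is positively oriented and, in the non-all-critical case, $k$-to-$1$ onto edges. So some edge of $\Ac$ maps to $\ell$.

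The sibling property is the main step. Let $\ell$ be a non-critical edge of the pile $\Bc=\si_d(\Ac)$; we want $d$ pairwise disjoint edges in $\Ec$ mapping to $\ell$. The full preimage of $\Bc$ consists of piles $\Ac_1,\dots,\Ac_m$ with pairwise disjoint convex hulls, by Lemma \ref{l:basic1}(5) and (7). Each $\Ac_j$ maps to $\Bc$ with some local degree $k_j$ as in Lemma \ref{l:convex1}, so $\ch(\bigcup\Ac_j)$ has exactly $k_j$ edges over $\ell$.

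The plan is to show $\sum k_j=d$ by counting vertex preimages. Pick a non-endpoint-adjacent vertex $v$ of $\ch(\bigcup\Bc)$, chosen so that no critical value lies at $v$; if this is impossible, perturb the choice among the edges. All $d$ preimages of $v$ lie in some $\Ac_j$, because the full preimage of $\Bc$ is a union of piles. Alternatively, count the preimages of an endpoint of $\ell$, using the fact that each $\si_d$-preimage of a vertex of a pile belongs to some preimage pile.

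Edges of a single polygon that lie over the same non-critical $\ell$ are disjoint, since they are distinct edges with distinct endpoints by positive orientation. Edges from different $\Ac_j$ are disjoint because their convex hulls are disjoint. This yields the $d$ siblings, and $\ell$ itself is among them.

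I expect the main obstacle to be the count $\sum k_j=d$ in the case where critical sets of $\cpc^+$ sit inside pullback piles. There the local degree comes from the several copies of $\Bc$ attached along the edges of $\Cc_i$, as described in the proof of Lemma \ref{l:convex1}. I would handle this by induction on the number of critical sets in $\Ac_j$, mirroring that proof: each all-critical $\Cc_i$ with $r$ edges contributes $r-1$ extra sheets, and the total over the $d$ sheets of the circle gives $d$.
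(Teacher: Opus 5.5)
Your overall route is the paper's: preimage piles with disjoint hulls, Lemma \ref{l:convex1} for positive orientation and a disjoint choice of edges, then a count to $d$. The paper delegates that count to Lemma \ref{l:yweb}; you try to make it explicit. Your use of Lemma \ref{l:basic1}(5) for the prelamination property is the right citation.

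However, the sibling step fails as written when $\ch(\bigcup\Bc)$ is a single chord $\ell=\ol{xy}$. Then $\{x,y\}$ has two holes, and by positive orientation consecutive holes of $V_j$ (the vertex set of $\ch(\bigcup\Ac_j)$) map alternately onto them. So for $k_j\ge 2$ \emph{all} $|V_j|=2k_j$ edges of $\ch(\bigcup\Ac_j)$ map onto $\ell$, and adjacent ones share a vertex.

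This case does occur. Suppose $\ell$ has an endpoint $x=\si_d(a)$ for a proper critical chord $\ol{ab}\in\cpc^+$ (a critical orbit relation), and $y$ is not a critical value. Let $y',y''$ be the preimages of $y$ on the two sides of $\ol{ab}$. Then the preimage pile through $\ol{ab}$ contains $\ol{ay'},\ol{by'},\ol{ay''},\ol{by''}$, and its hull is the quadrilateral $a\,y'\,b\,y''$. All four edges lie over $\ell$, and $\ol{ay'}\cap\ol{y'b}=\{y'\}$. So your claims ``exactly $k_j$ edges over $\ell$'' and ``edges of one polygon over $\ell$ are disjoint'' are false here; taking all edges over $\ell$ yields $2d$ chords that are not pairwise disjoint. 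The repair: in each $\Ac_j$ keep only the $k_j$ edges whose holes map onto the same hole of $\{x,y\}$ as the hole of the given edge. These edges are pairwise non-adjacent, hence disjoint.

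The count itself should also be tightened. The ``perturb the choice of vertex'' step and the induction in your last paragraph are unnecessary. Fix an endpoint $x$ of $\ell$.
\begin{enumerate}
\item By Lemma \ref{l:sipul}, $\ell$ has $d$ pairwise disjoint $\cpc^+$-pullbacks. So each of the $d$ points of $\si_d^{-1}(x)$ is an endpoint of a chord of $\Yc_\cpc$ whose pile maps onto $\Bc$ (by Lemma \ref{l:basic1}(6)). By disjointness of hulls, each such point lies in exactly one $V_j$.
\item The hull $\ch(\bigcup\Ac_j)$ has no critical edges and $\si_d|_{V_j}$ is positively oriented. Hence the induced map on holes is a $k_j$-fold covering of cyclically ordered sets, and $|\si_d^{-1}(x)\cap V_j|=k_j$.
\item Critical sets of $\cpc^+$ inside $\Ac_j$ are diagonals of the hull, so they are already accounted for. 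Hence $\sum_j k_j=d$.
\end{enumerate}

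Finally, fix the notation: start from a non-critical edge $\ell^*$ of a pile $\Ac$ and set $\ell=\si_d(\ell^*)$. As written, $\ell$ is both the image edge and ``among'' its own siblings.
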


\begin{proof}
Let $\Zc$ be a $\Yc_\cpc$-pile. By Lemmas \ref{l:yweb} and \ref{l:convex1} the family of all its pullbacks
consists of pairwise disjoint piles with pairwise disjoint convex hulls, and each pullback
pile maps onto $\Zc$ positively oriented. For each edge $\ell$ of the convex hull of a pullback pile of
$\bigcup \Zc$ we can choose its sibling edges in this pullback pile and among other pullback piles of $\Zc$.
By the last claim of
Lemma \ref{l:convex1} we can do this so that all these chords are pairwise disjoint, and by Lemma \ref{l:yweb} there are $d$ of them
(including $\ell$). Thus, the family of all edges of convex hulls of $\Yc_{\cpc}$-piles has the sibling property.
By Lemma \ref{l:convex1} this family is invariant. By Lemma \ref{l:basic1}(4) this family is a prelamination, and
by definition it is proper. The last claim of the lemma follows from Lemma \ref{l:basic1}(4).
\end{proof}

Observe that by Lemma \ref{l:prelam} all edges of convex hulls of $\Yc_{\cpc}$-pile sets are eventually mapped to edges of
\emph{proper} sets from $\cpc^+$ and never have images crossing edges of \emph{any} sets from $\cpc^+$.


\begin{thm}\label{t:side-lam}
Let $\cpc$ be a full critical collection. Then
the closure of the family of all edges of convex hulls of $\Yc_{\cpc}$-piles is a proper invariant lamination.
\end{thm}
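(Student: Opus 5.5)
This follows almost immediately from Lemma \ref{l:prelam} combined with Lemma \ref{l:prop-clos}. Let $\Ec$ denote the family of all edges of convex hulls of $\Yc_\cpc$-pile sets, together with all points of $\uc$. Lemma \ref{l:prelam} says that $\Ec$ is a proper invariant prelamination. The plan is therefore to apply Lemma \ref{l:prop-clos} to $\Ec$. That lemma states that the closure of a non-empty proper prelamination is a proper lamination, and Theorem \ref{t:laclo} supplies the fact that this closure is invariant.

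I first check that the hypotheses of Lemma \ref{l:prop-clos} hold, in particular non-emptiness. If $\cpc^+$ contains at least one proper set $\ch(\Cc_i)$, then its edges are chords of $\Yc_\cpc$. By Lemma \ref{l:basic1}(4), the edges of the convex hull of the pile containing it lie in $\Ec$, so $\Ec$ has non-degenerate leaves. If every set of $\cpc^+$ is a portal, then $\Yc_\cpc$ is empty and $\Ec$ consists only of the points of $\uc$. Its closure is then the empty lamination $\lam_\0$, which is trivially closed, invariant and proper. This degenerate case is handled separately.

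With non-emptiness settled, Lemma \ref{l:prop-clos} gives that $\ol{\Ec}$ is a proper invariant lamination. The step that actually carries content is properness of the limit leaves. A Hausdorff limit of proper leaves could a priori join an $n$-periodic point to a point that is not $n$-periodic. Lemma \ref{l:prop-clos} excludes this by a crossing argument that uses only the fact that the approximating leaves and their images are pairwise unlinked, and this is exactly what the prelamination property of $\Ec$ provides.

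For extra security, I would also confirm that the closure remains a prelamination, meaning that limits of non-crossing chords do not cross. This holds because crossing is an open condition. Invariance of the closure, that is, the sibling property together with forward and backward invariance, comes from Lemma \ref{l:closke} together with Lemma \ref{l:clos-sibl-1}. Hence $\ol{\Ec}$ is a proper invariant lamination.
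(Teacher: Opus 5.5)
Your proposal is correct and matches the paper's proof, which consists of combining Lemma \ref{l:prelam} with Lemma \ref{l:prop-clos}. Your separate treatment of the degenerate case, where every set of $\cpc^+$ is a portal and the closure is just $\lam_\0$, is a harmless extra detail the paper leaves implicit.
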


\begin{proof}
Follows from Lemma \ref{l:prelam} and Lemma \ref{l:prop-clos}.
\end{proof}

We can now give the next definition.

\begin{dfn}\label{d:root-side} The proper invariant lamination from Theorem \ref{t:side-lam}
will be denoted by $\eYc_{\cpc}$.
\end{dfn}

A useful claim about $\eYc_{\cpc}$ is proven next.

\begin{lem}\label{l:ey-nocross}
No eventual image of a chord of $\eYc_{\cpc}$ can cross an edge of any set from $\cpc^+$ (thus,
$\eYc_{\cpc}$ is compatible with $\cpc^+$).
\end{lem}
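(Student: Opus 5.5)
Since $\eYc_{\cpc}$ is forward invariant (Theorem \ref{t:side-lam}), every eventual image of a chord of $\eYc_{\cpc}$ is again a chord of $\eYc_{\cpc}$. It therefore suffices to prove that no chord of $\eYc_{\cpc}$ crosses an edge of a set from $\cpc^+$. We argue in two steps: first for the edges of convex hulls of $\Yc_{\cpc}$-piles, which form a dense subfamily, and then for their limits.

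\emph{Step 1: edges of piles.} Let $\ell$ be an edge of $\ch(\bigcup\Ac)$ for a $\Yc_{\cpc}$-pile $\Ac$. By Lemma \ref{l:prelam}, $\ell\in\Ac$, so $\ell$ is an iterated $\cpc^+$-pullback of a chord of a proper set. At each stage of the pullback procedure, the description of pullback chords places every immediate pullback inside the closure $\ol{A}$ of a single $\cpc^+$-complementary component $A$. A chord contained in $\ol{A}$ cannot cross any edge of a set from $\cpc^+$, since those edges lie in $\ol{A}$'s boundary or outside $\ol{A}$. The chords of proper sets themselves are edges or diagonals of sets in $\cpc^+$, and these sets are pairwise disjoint and convex, so such chords do not cross any edge of $\cpc^+$ either. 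Hence no chord of $\Yc_{\cpc}$ crosses an edge of $\cpc^+$; in particular $\ell$ does not. The same conclusion holds for all images of $\ell$, since those are again edges of convex hulls of piles by Lemma \ref{l:basic1}(6) and Lemma \ref{l:convex1}.

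\emph{Step 2: limits.} Let $\ell=\lim \ell_i$ with each $\ell_i$ an edge as in Step 1, and suppose $\ell$ crosses an edge $e$ of some $\ch(\Kc_j)$. Crossing means that the endpoints of $\ell$ and $e$ strictly alternate on $\uc$. Strict alternation is an open condition in the space of chords, so for large $i$ the chord $\ell_i$ would also cross $e$, contradicting Step 1.

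This handles every chord of $\eYc_{\cpc}$. Combined with the forward invariance noted at the outset, it shows that no eventual image of a chord of $\eYc_{\cpc}$ crosses an edge of $\cpc^+$, and hence $\eYc_{\cpc}$ is compatible with $\cpc^+$.

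The main obstacle is Step 1, specifically the claim that each immediate $\cpc^+$-pullback lies in the closure of one complementary component. This has to hold even in the degenerate cases (ii) and (iii) of the description of pullback chords, where an endpoint of the chord being pulled back is the image of a critical set. In those cases the pullback connects vertices of that critical set to other points of the same component's boundary. Such a chord may share an endpoint with an edge of $\cpc^+$ or run along the set's boundary, but it still does not cross it. For this reason I would state explicitly that crossing requires strict alternation of endpoints, so that shared endpoints and coinciding chords are harmless.
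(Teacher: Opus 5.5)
Your proof is correct and follows the paper's two-step route. First you treat edges of convex hulls of $\Yc_{\cpc}$-piles: the paper cites Lemma \ref{l:prelam} here, whereas you check directly that every $\cpc^+$-pullback lies in the closure of a single complementary component. Then you pass to limits by the openness of crossing, with the forward-invariance reduction to chords of $\eYc_{\cpc}$ being only a minor reorganization of how the paper handles eventual images.
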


\begin{proof}
The lemma is proven in Lemma \ref{l:prelam} in the case when we consider an edge of the convex hull
of a pile set. On the other hand, if a chord of $\eYc_{\cpc}$ is the limit of convex hulls of $\Yc_{\cpc}$-pile sets
then, since their edges belong to the corresponding $\Yc_{\cpc}$-piles, the claim easily follows by continuity.
\end{proof}

Since $\eYc_{\cpc}$ is proper, then it defines a q-lamination.

\begin{dfn}
The q-lamination $\lam^{pr}_{\cpc}$ defined by $\eYc_{\cpc}$ is called the \emph{preroot (q-lamination) of $\cpc$}.
The associated laminational equivalence is denoted by $\sim^{pr}_\cpc$.
\end{dfn}

It turns out that $\lam^{pr}_\cpc$ may have leaves that do not belong to $\eYc_{\cpc}$. Moreover, it turns out that these leaves
may even be incompatible  with $\cpc^+$. Thus, similar to Lemma \ref{l:ey-nocross}, we are interested in learning whether all
leaves of $\lam^{pr}_{\cpc}$ are compatible with all sets from $\cpc^+$ or not.

\begin{lem}\label{l:prer-cross}
Suppose that $\ell$ is a leaf of $\lam^{pr}_\cpc$ is incompatible with $\cpc^+$. Then $\ell$ is a common
edge of a finite gap $A$ of $\lam^{pr}_\cpc$ and an infinite (pre)hyperbolic gap $U$ of $\lam^{pr}_\cpc$ which
eventually maps to a periodic non-degenerate edge of a periodic hyperbolic gap. In particular, of $\cpc^+$ is a Side full critical collection,
then $\lam^{pr}_\cpc=\eYc_\cpc$.
\end{lem}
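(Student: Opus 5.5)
Since $\ell$ is incompatible with $\cpc^+$, it is not a leaf of $\eYc_\cpc$: by Lemma \ref{l:ey-nocross} every chord of $\eYc_\cpc$ is compatible with $\cpc^+$. I will therefore trace where the extra leaves of $\lam^{pr}_\cpc$ come from. By Definition \ref{d:lamtoeq} and Lemma \ref{l:crit-equiv}, the classes of $\sim^{pr}_\cpc$ are the finite classes of $\approx_{\eYc_\cpc}$, i.e.\ vertex sets of finite concatenations of leaves of $\eYc_\cpc$. Leaves of $\lam^{pr}_\cpc$ are edges of the convex hulls $\ch(\g)$ of these classes. So $\ell$ is an edge of some $A=\ch(\g)$ but does not itself belong to $\eYc_\cpc$. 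The concatenation of $\eYc_\cpc$-leaves in $A$ realizing $\g$ is a tree-like chain inside $A$ that avoids the edge $\ell$. Hence on the side of $\ell$ away from $A$ there is a gap $U$ of $\eYc_\cpc$ that is adjacent to $A$ along a ``chain'' of $\eYc_\cpc$-leaves, and $\ell$ is a diagonal of $U$ joining two vertices of that chain. Moreover, $A$ is a finite gap of $\lam^{pr}_\cpc$ because classes are finite.

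Next I will show that $U$ (more precisely, the gap of $\lam^{pr}_\cpc$ on the other side of $\ell$, which is contained in the corresponding gap of $\eYc_\cpc$) is infinite and eventually maps to a periodic hyperbolic gap. First I push forward. By Theorem \ref{t:gapin}, gaps of $\eYc_\cpc$ and of $\lam^{pr}_\cpc$ map to gaps or leaves. Finite gaps of a q-lamination are eventually periodic or precritical. A finite gap $U$ whose boundary contains a non-$\eYc_\cpc$ edge would, by the same analysis applied to images, produce crossings of images of $\ell$ with $\cpc^+$. Choose the first iterate $\si_d^k(\ell)$ crossing some $\ch(\Cc_i)$, where $\ell$ now denotes a crossing leaf. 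If $\ch(\Cc_i)$ is proper, then $\ch(\Cc_i)$ lies in a single $\Yc_\cpc$-pile, whose convex hull has edges in $\eYc_\cpc$ (Lemma \ref{l:basic1}(4)). Since $\lam^{pr}_\cpc$ contains those edges, $\ell$ cannot cross them, which is a contradiction. So $\ell$ crosses a portal $\ch(\Cc_i)$, which has a periodic vertex $p$. Using Lemma \ref{l:samep} and properness (Lemma \ref{l:4.2}), $p$ is not joined in $\lam^{pr}_\cpc$ to a non-periodic vertex of the portal. Thus the portal's edges pass through the interior of a gap $V$ of $\lam^{pr}_\cpc$ containing both $p$ and its critical siblings; this gap has degree $>1$ and contains a periodic point on its boundary. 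I expect this to force $V$ to be a (pre)periodic Fatou gap of degree $>1$, i.e.\ (pre)hyperbolic. Pulling back, the gap $U$ adjacent to $\ell$ is a pullback of $V$, and so it is infinite and (pre)hyperbolic.

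The main obstacle is the final claim that $\ell$ eventually maps to a \emph{periodic non-degenerate edge} of the periodic hyperbolic gap. My approach is as follows. Since $A$ is finite and eventually periodic or precritical, the images of the edge $\ell$ stay edges of images of $A$ and of $U$ as long as they are non-critical. They cannot collapse: a critical edge of a finite class gap adjacent to a hyperbolic gap would create a critical wedge or critical leaf incompatible with properness. Therefore $\si_d^m(\ell)$ is an edge of the periodic hyperbolic gap for large $m$. Edges of a periodic Fatou gap are eventually periodic, which gives periodicity. Finally, for the Side case, suppose no chord of $\cpc^+$ has a periodic endpoint. Then there are no portals, so by the above no leaf is incompatible with $\cpc^+$ and no extra hyperbolic gaps arise. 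Every class of $\sim^{pr}_\cpc$ is then the vertex set of a pile closure, and its edges already lie in $\eYc_\cpc$, giving $\lam^{pr}_\cpc=\eYc_\cpc$. Here I will need a short separate argument that $\approx_{\eYc_\cpc}$ classes add no new edges when every gap of $\eYc_\cpc$ is compatible, using Lemma \ref{l:basic1}(4).
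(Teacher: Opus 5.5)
\textbf{Verdict: there is a genuine gap.} The proof breaks where you place the portal inside a gap of $\lam^{pr}_\cpc$.

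\textbf{Where the argument fails.} Your reduction to a portal is essentially sound. A proper set of $\cpc^+$ has all its vertices $\approx_{\eYc_\cpc}$-equivalent, so it lies in the hull of one class, and no edge of a class hull can cross it. Note that your ``first iterate'' is just $k=0$. The failure comes next: you put the portal inside a gap $V$ of $\lam^{pr}_\cpc$. This is impossible, because $\ell$ is a leaf of $\lam^{pr}_\cpc$ crossing an edge of this very portal. So the portal has points strictly on both sides of $\ell$ and meets the interiors of several gaps of $\lam^{pr}_\cpc$. In the example at the start of Section~\ref{s:hypegaps}, the portal $\ol{xx'}$ runs through $G$, the hyperbolic gap, and $G'$.

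The set that actually contains the portal is a gap of $\eYc_\cpc$. For that gap you would still have to prove that it is infinite, periodic and of degree $>1$. Your ``I expect this to force $V$ to be (pre)hyperbolic'' is exactly the content of the lemma, and the ``pulling back'' step has nothing to act on.

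Your argument that images of $\ell$ never collapse is also wrong as stated. Properness only rules out critical leaves and wedges with \emph{periodic} endpoints or vertices, while a precritical finite gap has non-periodic vertices. The right reason is that a finite gap that eventually collapses is a pullback of a proper set of $\cpc^+$, so its edges would already lie in $\eYc_\cpc$.

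\textbf{The missing idea.} Push $A$ forward instead of chasing the portal, as the paper does.
\begin{itemize}
\item Take any edge $\ell$ of a class hull $A$ with $\ell\notin\eYc_\cpc$. The gap of $\eYc_\cpc$ through which $\ell$ passes is infinite. If it were finite, all its vertices, including the endpoints of $\ell$, would be $\approx_{\eYc_\cpc}$-equivalent, and $\ell$ would be a diagonal rather than an edge of a class hull.
\item Hence the gap $U$ of $\lam^{pr}_\cpc$ beyond $\ell$ is infinite, so it is (pre)periodic by Theorem~\ref{t:kiwan}. Its edge $\ell$ is then (pre)periodic or (pre)critical (Lemma~\ref{l:edges-of-gaps}).
\item Excluding the precritical case as above, $A$ eventually becomes a finite periodic gap attached along a periodic edge to a periodic infinite gap.
\item That gap is not Siegel, since it has periodic vertices. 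It is not a caterpillar either, since Lemma~\ref{l:crit-must} would produce a critical leaf with a periodic endpoint in the q-lamination $\lam^{pr}_\cpc$. So it is hyperbolic.
\end{itemize}

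\textbf{The Side statement.} The argument above never uses that $\ell$ crosses $\cpc^+$, and that is what gives the Side case: with no periodic hyperbolic gaps, no edge of a class hull lies outside $\eYc_\cpc$. Your Side paragraph does not reach this. Knowing that no leaf of $\lam^{pr}_\cpc$ crosses $\cpc^+$ says nothing about leaves of $\lam^{pr}_\cpc\setminus\eYc_\cpc$ that are compatible with $\cpc^+$. Your claim that every class is the vertex set of a pile closure is also unsupported, since classes may be concatenations of limit leaves of $\eYc_\cpc$.
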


\begin{proof}
 By Lemma \ref{l:ey-nocross}
it follows that $\ell\notin \eYc_{\cpc}$. Hence $\ell$ is an edge of the convex hull of a $\sim^{pr}_\cpc$-class which was added
as we constructed $\lam^{pr}_\cpc$ by taking convex hulls of $\approx_{\eYc_\cpc}$ sets. Hence the $\sim^{pr}_\cpc$-class of $\ell$
must consist of at least three points, the corresponding convex hull $A$ is a finite gap, $\ell$ is an edge of $A$, and $\ell$ is
not approached from the outside of $A$ by leaves of $\eYc_\cpc$. We conclude that $A$ is a finite gap attached to an infinite gap $U$
of $\eYc_\cpc$.

Observe that by Theorem \ref{t:kiwan} $U$ is (pre)periodic. Hence by Lemma \ref{l:edges-of-gaps} edges of $U$ are
(pre)periodic or (pre0critical. We claim that if we map $A$ forward then it must be eventually mapped to a finite periodic gap. Indeed, otherwise
by the above  the only thing
that can happen is that $A$ will be eventually mapped to a critical leaf which by construction implies that $A$ in fact is a gap from $\eYc_\cpc$
and, hence, $\ell$ must be compatible with $\cpc^+$, a contradiction with the assumption. Furthermore, the same argument implies that
in the absence of hyperbolic gaps the above described phenomenon (when a finite gap $A$ of $\lam^{pr}_\cpc$ has an edge that does not belong
to $\eYc_\cpc$) is impossible. This completes the proof.
\end{proof}

The next lemma studies pullbacks of leaves for laminations compatible with $\cpc^+$. Since elements of
$\cpc^+$ are convex polygons (see Definition \ref{d:poext2}), we can simply talk about \emph{complementary}
components of $\cpc^+$ which are the same as previously used \emph{partially critical complementary} components.

\begin{lem}\label{l:pull}
Suppose that $\cpc$ is a full critical collection and
a $\si_d$-invariant lamination $\lam$ is compatible with $\cpc^+$. Let
$\ell\in \lam$, and let $U$ be a component of $\cdisk\sm \bigcup \cpc^+$.
Then there exists a leaf $\ell'\in \lam$ with $\si_d(\ell')=\ell$ and $\ell'\subset \ol{U}$.
\end{lem}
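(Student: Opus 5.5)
The plan is to use the sibling property of $\lam$ together with the fact, from Lemma \ref{l:dynama}, that $\si_d$ maps $\bd(U)\cap\uc$ onto $\uc$ injectively, up to collapsing the critical edges of $U$. Write $\ell=\ol{xy}$. We may assume $\ell$ is non-degenerate, since degenerate leaves are points and the claim is trivial for them.

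First I treat the case $\ell$ is non-critical. By the invariance of $\lam$ (Definition \ref{d:web}(2)) there is a pullback $\oq\in\lam$ of $\ell$. By the sibling property there are $d$ pairwise disjoint leaves $\ell_1=\oq,\dots,\ell_d$ of $\lam$ with image $\ell$. Since $\lam$ is compatible with $\cpc^+$, no $\ell_j$ crosses an edge of a set of $\cpc^+$. Moreover, no $\ell_j$ can lie inside an all-critical set $\ch(\Kc_i)$: all vertices of such a set map to one point, while $\ell$ is non-degenerate. Hence each $\ell_j$ lies in the closure of a single complementary component of $\cpc^+$.

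Now count. Let $U_1,\dots,U_k$ be the complementary components, and let $\ell$ have $m_s$ pullback chords (as topological chords, up to the choice of vertices at collapsed sets) in $\ol{U_s}$. Each $\ol{U_s}$ can contain at most one chord of a pairwise disjoint family mapping to $\ell$. Indeed, by Lemma \ref{l:dynama} the endpoints of such chords in $\ol{U_s}$ lie over $x$ and $y$ along $\bd(U_s)$, and distinct such chords must share an endpoint set of the form $X'$ or $Y'$ from cases (ii)--(iii) of the Description of Pullback Chords. Two disjoint chords with both endpoints mapping to $x,y$ inside $\ol{U_s}$ would force $\si_d|_{\bd(U_s)}$ to have degree at least $2$. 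The number of complementary components equals $d$ here, because $\cpc$ is full, so $\cpc^+$ has exactly $d$ complementary components by Lemma \ref{l:basic} applied to a sub-portrait. Hence $d$ pairwise disjoint siblings occupy all $d$ components, one each, and in particular one lies in $\ol{U}$.

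The critical case remains: $\si_d(x)=\si_d(y)$. Then I approximate. If $\ell$ is a limit of non-critical leaves $\ell_i\in\lam$, the first case gives pullbacks $\ell_i'\subset\ol U$. A subsequence converges to a leaf $\ell'\in\lam$ (since $\lam$ is closed) with $\si_d(\ell')=\ell$ and $\ell'\subset\ol U$. The same limiting argument handles any approximable leaf. If $\ell$ is isolated and critical, I use forward invariance differently. The image of $\ell$ is a point $z$, and $\ell$ has a pullback $\oq$ which is a chord joining preimages of $x$ and $y$. I would then argue directly: in $\ol U$ the map $\si_d$ on $\bd(U)\cap\uc$ hits both $x$ and $y$, so some chord $\ell'$ in $\ol U$ maps onto $\ell$. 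The remaining task is to show such a chord lies in $\lam$, and here I would appeal to the pullback $\oq$ and its sibling-type replacements furnished by Lemma \ref{l:sipul}. This isolated critical case is the step I expect to be the main obstacle, since the sibling property says nothing about critical leaves.

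One more point needs justification: that the number of $\cpc^+$-complementary components carrying a chord onto $\ell$ matches $d$. If $\cpc$ strictly contains a portrait, then $\cpc^+$ may have more than $d$ components. In that case I would take a subportrait $\Po\subset\cpc$ with $U$ contained in a component of $\Po^+$. Since $\lam$ does not cross the extra chords, the pullback found in that larger component must in fact lie in the smaller piece $\ol U$. This is settled by noting that extra critical chords in a component split it into pieces each still mapping onto $\uc$.
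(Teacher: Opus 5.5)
Your strategy matches the paper's: pull $\ell$ back once, complete the pullback to a sibling collection, use that no sibling crosses an edge of $\cpc^+$, then count preimages. However, the counting step rests on a false claim. It is not true that $\ol{U_s}$ contains at most one chord of a pairwise disjoint family mapping onto $\ell$, and no local degree argument on $\bd(U_s)$ can show it. Take $d=3$ and $\cpc=\cpc^+=\{\ol{0\frac13},\ol{\frac12\frac56}\}$. The middle component $U_s$, bounded by both chords, has $\si_3|_{\bd(U_s)}$ of degree one. Yet the disjoint chords $\ol{\frac13\frac12}$ and $\ol{\frac56\,0}$ both lie in $\ol{U_s}$ and map onto the non-critical chord $\ol{0\frac12}$. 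Such pairs exist whenever both endpoints of $\ell$ are images of critical edges of $U_s$.

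What excludes them for a full sibling collection compatible with $\cpc^+$ is a global count on the far side of a critical edge. That count is the missing idea, and it is exactly the paper's proof. If $x$ is not the image of a critical edge of $U$, the sibling through the unique preimage of $x$ in $\bd(U)\cap\uc$ cannot cross edges of $\cpc^+$, so it lies in $\ol U$. Otherwise let $\hell=\ol{uv}$ be the edge of $U$ with $\si_d(\hell)=x$. It splits $\cdisk$ into $A$ and $B\supset U$, and each open side contains exactly one more preimage of $y$ than of $x$. The siblings are pairwise disjoint, use each preimage of $x$ and of $y$ exactly once, and do not cross $\hell$. Hence one of $u,v$ must be joined to a preimage of $y$ in $B$, and that sibling lies in $\ol U$. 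Your pigeonhole can be rescued with this count, but then it is just this argument.

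Second, your ``critical case'' is not a separate case, and as written you leave it unproved. The sibling property is applied to the pullback $\oq$, not to $\ell$. Since $\si_d(\oq)=\ell$ is a non-degenerate chord, $\oq$ is non-critical whether or not $\ell$ is; your first case in fact only uses that $\ell$ is non-degenerate. So isolated critical leaves need no approximation and no appeal to Lemma~\ref{l:sipul}.

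Your final paragraph also addresses a non-issue. By Lemma~\ref{l:dynama}, the boundary arcs of each complementary component of $\cpc^+$ map onto $\uc$ one-to-one up to endpoints. So they have total length $\frac1d$, and there are always exactly $d$ components, however many chords $\cpc$ has.
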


\begin{proof}
By definition there exists a sibling collection of leaves $\ell_1,$
$\dots,$ $\ell_d$ in $\lam$ each of which maps to $\ell=\ol{a b}$. Each preimage of $a$ and $b$
shows once among the endpoints of leaves $\{\ell_i\}$. Hence if $a$ is not the image of any critical edge
of $U$, then there exists a leaf $\ell_j$ contained in $\ol{U}$. Now, suppose that $\hell=\ol{xy}$ is a critical edge
of $U$ with $\si_d(\hell)=a$. The leaf $\hell$ cuts $\cdisk$ in two \emph{open} components $A$ and $B$ with, say,
$U\subset B$. In both $A$ and $B$ the number of
point-preimages of $b$ is by one greater that the number of preimages of $a$. Hence we may assume that a leaf
$\ell_s=\ol{xu}\in \lam$ is contained in $A$ and a leaf $\ell_t=\ol{yv}\in \lam$ is contained in $B$ which implies that $\ell_t\subset \ol{U}$
as desired.
\end{proof}

Let $U_1, \dots, U_d$ be complementary domains of $\cpc^+$.
Then the boundary of each $U_i$ is an alternating concatenation of circle arcs and critical leaves.
For $y\in \uc$, either exactly one point of $\bd(U_i)$ maps to $y$ under $\si_d$, or there are two such
points (which are the endpoints of one critical edge in the boundary of $U_i$).
Finally, the endpoints of each critical chord from $\cpc^+$ that does not come from a portal
belong to the same $\sim^{pr}_\cpc$-class. Let $\pi_i:\partial U_i\to \uc$ be the restriction of $\si_d$ to $\partial U_i\cap \uc$
and call it \emph{the induced map}.

We now need the results from the Appendix describing properties of periodic gaps
with first return map of degree one (e.g., we need Definition \ref{d:deg1} where \emph{(pre-)caterpillar}
and \emph{(pre-)Siegel} (periodic) gaps are defined). Thus, the reader is advised to read the Appendix before continuing to read
here.

\begin{lem}\label{l:noc-nod}
If $\cpc$ is a Side full critical collection then the only infinite gaps that the q-lamination $\lam^{pr}_\cpc$ may have are
(pre-)Siegel gaps.
\end{lem}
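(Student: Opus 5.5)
By Lemma~\ref{l:prer-cross}, if $\cpc$ is Side then $\lam^{pr}_\cpc=\eYc_\cpc$. The plan is therefore to study infinite gaps of $\eYc_\cpc$ directly and to exclude every type except (pre-)Siegel gaps.

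Let $G$ be an infinite gap of $\lam^{pr}_\cpc$. By Theorem~\ref{t:kiwan} (Kiwi's theorem, quoted in the proof of Lemma~\ref{l:prer-cross}), $G$ is (pre)periodic, so it eventually maps to a periodic infinite gap $U$ of some period $n$. It suffices to show that $U$ is a Siegel gap. Pulling back along the orbit will then show that $G$ is pre-Siegel. The reason is that the images of $G$ are gaps of the q-lamination, and gap invariance (Theorem~\ref{t:gapin}) makes each step of the orbit a composition of a monotone map and a covering.

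The first step is to rule out that $\si_d^n|_{\bd(U)}$ has degree $k>1$, i.e.\ that $U$ is a periodic hyperbolic gap. If $k>1$, some gap $U_j=\si_d^j(U)$ in the cycle has degree $>1$. I would use Lemma~\ref{l:ey-nocross}, which says $\eYc_\cpc$ is compatible with $\cpc^+$, together with the fact that every set of $\cpc^+$ is proper in the Side case. Then $U_j$ contains no set of $\cpc^+$ in its interior: such a set is a proper critical set, its chords lie in $\Yc_\cpc$ and hence in the pile, and its edges are leaves of $\eYc_\cpc$ that cannot sit inside a gap. So every set of $\cpc^+$ meeting $U_j$ lies outside $U_j$ or is an edge/vertex of it. Consequently $U_j\cap\uc$ lies in the closure of a single complementary component $V$ of $\cpc^+$, up to critical edges of $U_j$. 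If $U_j$ has a critical edge, that edge is a proper critical chord, and its pullback piles get attached to it, which contradicts its being an edge bounding an infinite gap of degree $>1$. Otherwise, by Lemma~\ref{l:dynama}, $\si_d$ is injective on $\bd(V)\cap\uc$ away from critical edges, so $\si_d|_{\bd(U_j)}$ has degree one. This gives the contradiction. Hence $\si_d^n|_{\bd(U)}$ has degree one.

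The second step uses the Appendix classification (Definition~\ref{d:deg1}) of periodic gaps of degree one: such a gap is either Siegel or caterpillar. A caterpillar gap has a periodic edge; more precisely, its edges include periodic leaves whose endpoints are periodic, and it is approached by a concatenation of preimage leaves. A periodic edge $\ell$ of $U$ lies in $\eYc_\cpc$ and is not an edge of a pile convex hull, since such edges eventually map onto critical edges of proper sets and are never periodic. So $\ell$ is a limit of pile edges, which eventually map to non-periodic critical chords. I would then argue as in the proof of Lemma~\ref{l:prop-clos}: near a periodic leaf, leaves accumulating from outside $U$ have images crossing them, unless the accumulation is through the caterpillar's pullbacks. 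These pullbacks are critical-driven and would force a critical chord of $\cpc^+$ with a periodic vertex, contradicting the Side assumption. Thus $U$ is Siegel.

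I expect the caterpillar exclusion to be the main obstacle, since it relies on the fine structure from the Appendix. There, one must show that the critical edge of the caterpillar cycle would have to be a chord of $\cpc^+$, or a limit of such chords, with a periodic endpoint. I would first try to identify the critical leaf of the caterpillar gap with a chord of $\cpc^+$ via Lemma~\ref{l:pull} and the compatibility of Lemma~\ref{l:ey-nocross}.
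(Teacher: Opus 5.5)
Your treatment of periodic hyperbolic gaps is essentially the paper's argument. A periodic gap of degree $>1$ forces some gap in its cycle to $\sqsupset$-contain a set of $\cpc^+$, because every complementary component of $\bigcup\cpc^+$ has degree one (Lemma~\ref{l:dynama}). In the Side case, however, every set of $\cpc^+$ is proper and so lies in a finite gap-leaf of $\lam^{pr}_\cpc$. Two slips there are harmless. First, edges of a proper $\cpc^+$-set may be diagonals of a pile hull rather than leaves of $\eYc_\cpc$; what matters is only that the set sits inside that finite hull. Second, your separate ``critical edge'' branch is unnecessary and not a valid argument: a critical edge of $U_j$ simply collapses to a point, and Lemma~\ref{l:dynama} already gives degree one in that case.

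The genuine gap is the caterpillar case, which you leave open, and the route you sketch does not work. The crossing argument of Lemma~\ref{l:prop-clos} concerns a leaf joining a periodic point to a non-periodic one. A periodic leaf whose endpoints are both periodic can perfectly well be a one-sided limit of leaves. Under the locally expanding return map, the images of those leaves are pushed away from it and stay nested with the originals rather than crossing them. So approximating the periodic edge by pile edges yields no contradiction. Your claim that the caterpillar's pullbacks ``would force a critical chord of $\cpc^+$ with a periodic vertex'' is asserted, not proved.

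The missing observation is that Lemma~\ref{l:crit-must}(2)(b) already supplies a critical leaf with a periodic endpoint. Take the $\si_d^{kq}$-critical edge sitting at the periodic endpoint $x$ of an arc $I$. For some $0\le j<kq$, its image under $\si_d^j$ is a $\si_d$-critical leaf of $\lam^{pr}_\cpc$ with periodic endpoint $\si_d^j(x)$. This is impossible: $\lam^{pr}_\cpc$ is a q-lamination, hence proper (Lemma~\ref{l:4.2}), and proper laminations have no critical leaves with periodic endpoints (Lemma~\ref{l:crit-equiv}).

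Equivalently, as the paper notes, a critical leaf compatible with $\cpc^+$ either lies in a $\cpc^+$-set or in the closure of a single complementary component $V$. In the latter case Lemma~\ref{l:dynama} forces its endpoints to be the endpoints of a critical edge of $V$. Either way both endpoints are vertices of a $\cpc^+$-set, and these are non-periodic because $\cpc$ is Side. With this one step the proof closes, and neither an approximation argument nor Lemma~\ref{l:pull} is needed.
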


\begin{proof}
If $\lam^{pr}_\cpc$ has a (periodic) caterpillar gap then by Lemma \ref{l:crit-must} it must have a critical leaf with a periodic
endpoint, a contradiction (in fact since $\lam^{pr}_\cpc$ is compatible with $\cpc$ then $\lam^{pr}_\cpc$ cannot be compatible with a critical chord
with a periodic endpoint as the latter must have both endpoints coming from the set of the endpoints of chords from $\cpc$ which has
no periodic points). Hence $\lam^{pr}_\cpc$ has no (pre-)caterpillar gaps. Now, if $\lam^{pr}_\cpc$ has a periodic gap $U$ with first return map of
degree greater than one then we may assume that $U$ is critical which implies that at least one set from $\cpc$ is contained in $U$,
a contradiction to the definition of $\lam^{pr}_\cpc$. Hence the only possibility for an infinite gap of $\lam^{pr}_\cpc$ is to be
(pre-)Siegel as claimed.
\end{proof}


We are ready to prove the last theorem of this section.

\begin{thm}\label{t:1class} Suppose that $\cpc$ is a Side full critical collection and $\ell=\ol{ab}$ is a chord so that
no two images of $\ell$ cross and no image of $\ell$ crosses an edge of $\cpc^+$.
Then the endpoints of $\ell$ are in the same $\sim^{pr}_\cpc$-class.
\end{thm}

\begin{proof}Put $\si_d^n(\ell)=\ell_n=\ol{a_nb_n}$; choose for each $n$
a complementary domain $U_n$ of $\cpc^+$ with $\ell_n\subset \ol{U_n}$.
By way of contradiction suppose that $a$ and $b$ are in distinct $\sim^{pr}_\cpc$-classes. We claim that then
$a_n$ and $b_n$ belong to distinct $\sim^{pr}_\cpc$-classes for each $n$. Indeed, suppose that $a_{n+1}$ and
$b_{n+1}$ belong to the same class. Observe that $a_{n+1}$ and $b_{n+1}$ belong to $\ol{U_n}$, and denote by $A$ the intersection of the convex hull of
the class containing $a_{n+1}$ and $b_{n+1}$ with $\ol{U_{n+1}}$. By Lemma \ref{l:dynama}, by Lemma \ref{l:prer-cross},
and by definitions the pullback of $A$ into
$\ol{U_n}$ is a convex set whose intersection with $\uc$ is contained in one class. Hence $a_n$ and $b_n$ belong to one class, and
the argument can be repeated. Thus, under our assumption we have that $a_n$ and $b_n$ belong to distinct $\sim^{pr}_\cpc$-classes for each $n$.
Consider cases.

(1) Suppose that $\ell$ is contained in a gap $W$ of $\lam^{pr}_\cpc$. Then $W$ is infinite.
By Lemma \ref{l:noc-nod} $W$ must be pre-Siegel. Hence two forward images
of $\ell$ will cross each other, a contradiction.

(2) The remaining case is when for each $n$ there exists a chord $\od_n$ which
crosses $\ell_n$ and maps in, say, $k_n$ steps to an edge of $\cpc^+$. Since no image
of $\ell_n$ crosses an edge of $\cpc^+$, there exists $m_n<k_n$ so that endpoints of
$\si^{m_n}_d(\od_n)$ and $\si^{m_n}_d(\ell_n)$  are connected by a critical leaf from $\cpc^+$.
However, since $\cpc^+$ is Side (i.e., finite without periodic vertices), there exists an $N$ such that for
all $n\ge N$, $\ell_n\cap\, \cpc^+=\0$. Hence $a_n$ and $b_n$ cannot be contained in distinct
$\sim_{pr}$ classes for all $n$, a contradiction.
\end{proof}

Observe that in the situation of Theorem \ref{t:1class} the assumption that $\cpc$ is Side is essential.
If there are some portals, then it may well happen that several (more than one) portal
are inside infinite hyperbolic gaps of $\lam^{pr}_\cpc$ and there are (periodic) leaves inside those gaps that satisfy all the
conditions of Theorem \ref{t:1class} but, evidently, are not leaves of $\lam^{pr}_\cpc$.

\section{Introduction to Side laminations}\label{s:sidelami}

In this section we relate the just introduced concepts and laminations. Let us recall
several definitions.

\begin{dfn}[Dendritic laminations]\label{d:den}
A q-lamination $\lam$ is \emph{dendritic} if it has no infinite gaps. A laminational equivalence relation $\sim$ is \emph{dendritic}
if $\lam_\sim$ is dendritic. If $\sim$ is invariant then a topological polynomial $f_\sim$ is \emph{dendritic} if $\lam_\sim$ is dendritic.
\end{dfn}

The next lemma follows.

\begin{lem}\cite[Lemma 2.8]{bopt19}\label{l:2.18}
Leaves of dendritic q-la\-mi\-na\-tions are not isolated.
\end{lem}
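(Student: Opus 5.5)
The plan is to argue by contradiction, one side of the leaf at a time. Let $\lam=\lam_\sim$ be a dendritic q-lamination and $\ell=\ol{ab}$ a leaf of $\lam$. By Definition \ref{d:q}, $\ell$ is an edge of $G=\ch(\g)$ for some finite $\sim$-class $\g$ (finiteness is (E3)). The chord $\ell$ cuts $\cdisk$ into two closed half-disks, and I will show that $\ell$ is approached by leaves of $\lam$ from each side on which $G$ does not lie. If $G$ is a nondegenerate polygon it lies on one side, and it suffices to treat the other side. If $\g=\{a,b\}$, so that $G=\ell$, the same argument applies on both sides. Either way $\ell$ is a limit of other leaves, hence not isolated.

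\textbf{Step 1 (a gap exists if the side is not approached).} Fix a side $S$ of $\ell$ not containing $G$, and suppose $\ell$ is not a limit of leaves lying in $S$. Since $\bigcup\lam$ is closed, a small one-sided neighborhood of $\ell$ inside $S$ (minus $\ell$) misses $\bigcup\lam$. Hence it lies in a single component of $\disk\sm\bigcup\lam$, and its closure is a gap $H$ of $\lam$ having $\ell$ as an edge. Since $\lam$ is dendritic, $H$ is not infinite, so $H\cap\uc$ is finite. Moreover $H\cap\uc$ has at least three points, because the complementary component is an open region bounded by chords and cannot be bounded by $\ell$ alone. Thus $H$ is a finite polygon all of whose edges are leaves of $\lam$ or degenerate arcs. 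Since $H\cap\uc$ is finite, there are no circle arcs in its boundary, so all edges of $H$ are nondegenerate leaves.

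\textbf{Step 2 (finite gaps are class hulls).} Every leaf of $\lam_\sim$ joins two $\sim$-equivalent points. Consecutive vertices of $H$ are joined by edges of $H$, which are leaves. Hence all vertices of $H$ lie in one $\sim$-class, and this class must be $\g$, because it contains $a$ and $b$. Therefore $H\subset\ch(\g)=G$. But $H$ lies in $S$, while $G$ lies on the other side of $\ell$ (or $G=\ell$). So $H$ would be a nondegenerate polygon contained in $S\cap G\subset\ell$, which is absurd. Hence $\ell$ is approached by leaves from $S$.

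The main point requiring care is Step 1: producing the gap $H$ adjacent to $\ell$ and checking that it is a genuine finite polygon. In particular, one must rule out that the finiteness of $H\cap\uc$ is compatible with a boundary circle arc. This is handled by noting that an arc contributes uncountably many points of $\uc$. Once $H$ is in hand, the dendritic hypothesis together with the fact that leaves of a q-lamination connect equivalent points (Definition \ref{d:q}) finishes the argument.
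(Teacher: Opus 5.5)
The paper gives no proof of this lemma; it imports it from \cite{bopt19}. Your argument is the standard direct one. An edge $\ell$ of a class hull $G$ that is not approached from the far side $S$ must be an edge of a gap $H\subset S$. Dendriticity makes $H$ finite. A finite gap of a q-lamination has all its vertices in one class, so $H\subset G$, which is absurd. Step~2 and the overall logic are sound, but one inference in Step~1 is wrong as written and needs repair.

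You claim that, because $\bigcup\lam$ is closed and no leaves of $S$ converge to $\ell$, a one-sided neighbourhood of the whole chord $\ell$ in $S$ misses $\bigcup\lam$. This does not follow. Leaves that are not Hausdorff-close to $\ell$ can still enter every neighbourhood of $a$ or $b$: leaves of $S$ issuing from $a$, or short leaves of $S$ accumulating at $a$. For example, in the basilica lamination every leaf is an edge of infinite gaps on both sides, so the hypothesis of Step~1 holds. Yet the edges of the adjacent gap accumulate at the endpoints of $\ell$. Dendriticity is only used after Step~1, so the step must be valid without it.

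The fix is to localize at an interior point. Let $z$ be the midpoint of $\ell$. A chord contained in $S$ that meets the $\delta$-neighbourhood of $z$ passes near an interior point of $\ell$ without crossing $\ell$. Hence its endpoints lie within $\varepsilon(\delta)$ of $a$ and $b$, where $\varepsilon(\delta)\to 0$. So if no leaves of $S$ converge to $\ell$, some half-disk $B_\delta(z)\cap(S\sm\ell)$ misses $\bigcup\lam$ and lies in one component $R$ of $\disk\sm\bigcup\lam$. The gap $H=\ol{R}$ is convex, and its boundary contains a subsegment of $\ell$ around $z$. Edges of a gap are full leaves, and distinct chords share at most one point, so $\ell$ is an edge of $H$. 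From there your argument goes through unchanged. The point you flagged as delicate, circle arcs in $\bd(H)$, is immediate once $H\cap\uc$ is finite; this localization is what actually needs care.
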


By Theorem \ref{t:kiwan} any infinite gap is (pre)periodic.
In this section, in addition to dendritic laminations, we consider invariant q-laminations that are non-dendritic
by virtue of having periodic \emph{Siegel} gaps. Thus, we consider only q-laminations (and other related objects such as topological
polynomials etc) that have no hyperbolic infinite gaps (equivalently,  all infinite periodic gaps are Siegel).

\begin{dfn}[Siegel-dendritic laminations]\label{d:side}
A q-la\-mi\-na\-tion is said to be \emph{Siegel} if it has a periodic Siegel gap.
A q-lamination all of whose infinite periodic gaps are Siegel is said to be a \emph{\textbf{Si}egel-\textbf{de}ndritic} lamination, or just a \emph{Side} lamination.
The same terminology applies to laminational equivalence relations, topological polynomials, etc.
\end{dfn}

The situation is a bit more involved if we consider \emph{all} laminations, not only q-laminations.
We need the following definition.


\begin{dfn}[Side critical collections]\label{d:sidepo}
\emph{Side}  critical collections (critical portraits) are those whose critical chords have no periodic endpoints.
\end{dfn}

The next lemma is easy.

\begin{lem}\label{l:onlys}
A Side q-lamination $\lam$ can only be compatible with Side critical portraits.
\end{lem}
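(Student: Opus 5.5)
The plan is to argue by contradiction: suppose a Side q-lamination $\lam=\lam_\sim$ is compatible with a critical portrait $\Po$ (in the sense of Definition \ref{d:introlegal}), and some chord $\oc\in\Po$ has a periodic endpoint $x$ of period $n$. Condition (2) of Definition \ref{d:introlegal} then requires a cycle of hyperbolic gaps $U_0,\dots,U_{n-1}$ of $\lam$ of period $n$ such that $\si_d|_{\bd(U_i)}$ has degree $d_i$ and $U_i$ contains $d_i-1$ critical chords of $\Po$. Since $\oc$ has a periodic endpoint, $\oc$ cannot lie in a finite $\sim$-class: its endpoints would be equivalent, and the class would then contain a critical leaf or a critical wedge with a periodic vertex. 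By Lemma \ref{l:crit-equiv} and Lemma \ref{l:4.2} this is impossible in a q-lamination. Concretely, a critical chord with periodic endpoint $x$ joins $x$ to a sibling of $x$ that is not $n$-periodic, so Lemma \ref{l:samep} forbids both endpoints from lying in one class.

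Because $\oc$ does not cross leaves of $\lam$ (condition (1)) and is not contained in the convex hull of a class, $\oc$ must lie in an infinite gap $G$ of $\lam$ with $x\in G$. By Theorem \ref{t:kiwan}, $G$ is (pre)periodic. Since $x$ is periodic, $G$ itself is periodic; otherwise we would pass to the forward image of $G$ containing $x$ in its cycle. The gap $G$ is also critical, since it contains the critical chord $\oc$ whose endpoints are distinct points of $G\cap\uc$ with the same image. Hence the first return map of the cycle of $G$ has degree greater than one on its boundary. That makes the cycle hyperbolic, and a periodic hyperbolic gap is exactly what the definition of a Side lamination forbids, since all infinite periodic gaps must be Siegel.

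The main obstacle is the last step, namely showing that a periodic infinite gap containing a critical chord has first-return degree at least two. The worry is that the chord might collapse an arc of $\bd(G)$, i.e. that $\si_d|_{\bd(G)}$ is monotone with a collapsed edge rather than a degree-increasing covering. I would handle this using the gap invariance in Theorem \ref{t:gapin}. Because $\oc$ lies inside $G$ and is not an edge of a finite class, the points $x$ and $\si_d$-sibling of $x$ are distinct vertices of $G$ that are not joined by a leaf of $\lam$. The monotone part of the map on $\bd(G)$ only collapses boundary leaves, which are critical edges of finite classes. Therefore two non-adjacent vertices with equal images force the covering part to have degree at least $2$.

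If this is delicate, the fallback is the Appendix results (Lemma \ref{l:crit-must} and Definition \ref{d:deg1}). These say that degree-one periodic gaps are Siegel or caterpillar, and a caterpillar gap needs a critical leaf with periodic endpoint, which a q-lamination cannot have. Siegel gaps contain no periodic boundary points at all, which contradicts $x\in G$. Either route gives the contradiction, so every critical portrait compatible with a Side q-lamination is Side.
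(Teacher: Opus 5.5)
Your proof is correct, and your fallback paragraph is essentially the paper's whole argument. The paper simply observes that critical sets of a Side q-lamination have no periodic vertices, and handles the two kinds of set as follows.
\begin{itemize}
\item For finite classes, apply Lemma \ref{l:samep} along a chain of edges: a class containing an $n$-periodic point consists of $n$-periodic points, so it contains no two siblings.
\item For an infinite gap, a periodic vertex $x$ is still a vertex of some periodic forward image. In a Side lamination that image is Siegel, so it has no periodic vertices.
\end{itemize}
Used this way, the fallback alone finishes the proof once you know $\oc$ lies in an infinite gap $G$. Apply it to the periodic image of $G$ containing $x$; no criticality or degree computation is needed.

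Your main route is a genuinely different argument: the critical chord has non-equivalent sibling endpoints, which forces degree at least $2$ on $\bd(G)$ and hence a hyperbolic cycle. The degree step is sound, but it requires $G$ itself to be periodic. Your justification (``otherwise pass to the forward image containing $x$'') does not work inside this route, because that image no longer contains $\oc$.

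Periodicity of $G$ is true for q-laminations, but it needs its own argument.
\begin{itemize}
\item If the class of $x$ is trivial, no leaf ends at $x$. So $x$ is a two-sided limit of vertices of $G$, and no other gap can contain $x$.
\item Otherwise $G$ is attached along an edge of the class of $x$. That class is periodic and non-critical, so its edges, and the gaps behind them, are permuted.
\end{itemize}
The fallback route avoids all of this, which is why the paper can state the lemma in one line.

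Finally, ``compatible'' in this lemma means non-crossing (as in Lemma \ref{l:sicripo}(3)), not legal in the sense of Definition \ref{d:introlegal}. You only use condition (1) of that definition, so your argument proves the intended statement. Under the legal reading, condition (2) would produce a cycle of hyperbolic gaps and contradict the Side assumption immediately.
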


\begin{proof}
The claim follows from the fact that critical sets of $\lam$ have no periodic vertices.
\end{proof}

The next definition applies to \emph{all} laminations, not only q-laminations.

\begin{dfn}\label{d:uni-side}
An invariant lamination is \emph{Side} if it is only compatible with Side critical portraits
(equivalently, if any critical chord with a periodic endpoint is incompatible with $\lam$).
\end{dfn}


\subsection{Capture dynamics}


We need Definition \ref{d:capture}.

\begin{dfn}[Capture dynamics]\label{d:capture}
A lamination $\lam$ is a \emph{capture} lamination if it has a non-periodic infinite gap $U$ such that the restriction
$\si_d|_U$ is of degree greater than one.
\end{dfn}

Capture dynamics needs at least two critical sets to be implemented. Indeed, the gap $U$ from Definition \ref{d:capture}
is critical; in addition, the cycle of infinite gaps into which $U$ eventually maps contains at least one gap with some criticality,
either on its boundary (in the caterpillar or Siegel cases) or inside (in the hyperbolic case).
In the cubic case capture dynamics is to some extent well-defined. Indeed,
let $\lam$ be a cubic lamination and let $V$ be a periodic eventual image of $U$. Then there are the following three cases.

\begin{enumerate}

\item If $V$ is hyperbolic, then $\lam$ is called a \emph{hyperbolic capture} lamination.

\item If $V$ is a Siegel gap, then $\lam$ is called a \emph{Siegel capture} lamination.

\item If $V$ is a caterpillar gap, then $\lam$ is called a \emph{caterpillar capture} lamination.

\end{enumerate}

Laminations that do not exhibit capture dynamics are said to be \emph{non-capture}.
The same terminology applies to topological polynomials and all other related objects.

\subsection{Side critical portraits}

We need a few known results.

\begin{lem}\cite[Lemma 2.5]{bopt16}\label{l:limgap1}
Let $\lam=\lim \lam_i$ where $\lam_i$ are $\si_d$-invariant q-laminations.
Let $G$ be an $n$-periodic gap of $\lam$.
Then, for $i$ sufficiently large, $\lam_i$ have $n$-periodic gaps $G_i$
that converge to $G$.
\end{lem}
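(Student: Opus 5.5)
This is the standard fact that periodic gaps persist under Hausdorff approximation by q-laminations. The plan is to approximate the boundary of $G$ by boundaries of gaps of $\lam_i$ and then show that the approximating gaps inherit period $n$.

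\textbf{Step 1: a nondegenerate quadrilateral to anchor on.} Pick four distinct points $x_1<x_2<x_3<x_4$ of $G\cap\uc$. Choose them so that the arcs of $\uc$ between consecutive points contain points of $G\cap\uc$, or so that they are endpoints of edges of $G$.

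\textbf{Step 2: a gap $G_i$ of $\lam_i$ near $G$.} For each $i$, let $G_i$ be the closure of the component of $\disk\sm\bigcup\lam_i$ containing a fixed interior point $c$ of $G$. Since $\lam_i\to\lam$ in the Hausdorff sense, a leaf of $\lam_i$ cannot pass close to $c$ for large $i$. Otherwise a limit leaf of $\lam$ would pass through $c$, which lies in the interior of the gap $G$.

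I then pass to a subsequence along which $G_i\cap\uc$ converges, and show that $\ch(G_i\cap\uc)\to G$.
\begin{itemize}
\item[] \emph{$G$ is contained in the limit.} Every edge of $G$ is a leaf of $\lam$, so it is a limit of leaves of $\lam_i$. No leaf of $\lam_i$ separates $c$ from a vertex $v$ of $G$ in the limit: such leaves would converge to a leaf of $\lam$ crossing the interior of $G$ or separating $c$ from $v$, which is impossible.
\item[] \emph{The limit is contained in $G$.} Conversely, each edge $\ell$ of $G$ is approximated by leaves $\ell_i\in\lam_i$. These $\ell_i$ separate $c$ from the hole of $G$ behind $\ell$, so $G_i\cap\uc$ lies near $G\cap\uc$. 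For edges that are limits of leaves from both sides, I use instead leaves of $\lam_i$ approximating $\ell$ from outside.
\end{itemize}
This gives Hausdorff convergence $G_i\to G$. A compactness argument together with uniqueness of the limit should upgrade this to the full sequence, for large $i$.

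\textbf{Step 3: periodicity.} Since $\si_d^n(G)=G$ and $\si_d$ is continuous on compact subsets of $\uc$, the set $\si_d^n(G_i)$ is a gap or leaf of $\lam_i$ by gap invariance (Theorem \ref{t:gapin}). It lies close to $\si_d^n(G)=G$. Take $i$ large enough that $\si_d^n(G_i)$ contains points near each of $x_1,\dots,x_4$. Then $\si_d^n(G_i)$ is a gap, and it meets the interior region near $c$.

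I plan to argue that two distinct gaps of $\lam_i$ cannot both be Hausdorff-close to the same gap $G$ with four well-separated vertices. The reason is that their boundaries would have to cross near the quadrilateral $\ch(x_1,\dots,x_4)$, whose interior contains $c$ for a suitable choice of the points. Hence $\si_d^n(G_i)=G_i$.

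The exact period of $G_i$ is $n$ rather than a proper divisor. If $\si_d^m(G_i)=G_i$ with $m<n$, then passing to the limit gives $\si_d^m(G)=G$, contradicting that $n$ is the minimal period of $G$.

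\textbf{Main obstacle.} The hardest part is the uniqueness claim in Step 3: ruling out that $\si_d^n(G_i)$ is a different gap, adjacent to $G_i$ and also approximating $G$. This can happen when $G$ is finite and the $\lam_i$ split the class of $G$. I would handle it by choosing $c$ deep inside $G$, for instance the centroid of the quadrilateral, and showing that every gap of $\lam_i$ close to $G$ in the Hausdorff metric must contain $c$. The justification is that a leaf separating $c$ from the quadrilateral's vertices cannot be near any leaf of $\lam$, since it would converge to a chord crossing the interior of $G$.
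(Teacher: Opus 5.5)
\paragraph{Verdict.} The paper gives no proof of this lemma; it quotes it from \cite[Lemma 2.5]{bopt16}, so there is no in-paper argument to compare with. Judged on its own, your proposal is a correct and self-contained proof. It needs only some tightening.

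\paragraph{Tightening.}
\begin{enumerate}
\item[(i)] Use three vertices $x_1,x_2,x_3$ of $G$, not four, since a triangular gap has only three. Then take $c$ to be the centroid of $\ch(x_1,x_2,x_3)$.

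\item[(ii)] The ``main obstacle'' is not really an obstacle.
 \begin{itemize}
 \item[$\cdot$] For large $i$, no leaf of $\lam_i$ passes through $c$: a subsequential limit of such leaves would be a leaf of $\lam$ through an interior point of $G$. Hence $c$ lies in the interior of exactly one gap of $\lam_i$, namely $G_i$.
 \item[$\cdot$] Any gap of $\lam_i$ with vertices near $x_1,x_2,x_3$ contains $c$, simply because $c$ is interior to $\ch(x_1,x_2,x_3)$.
 \item[$\cdot$] By uniform continuity, $\si_d^n(G_i\cap\uc)\to\si_d^n(G\cap\uc)=G\cap\uc$. By Theorem \ref{t:gapin}, $\si_d^n(G_i)$ is a gap, leaf or point of $\lam_i$. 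It has vertices near three distinct points, so it is a gap.
 \end{itemize}
 Together these give $\si_d^n(G_i)=G_i$ at once. A leaf of $\lam_i$ ``splitting'' the approximation of $G$, as you feared, would converge to a diagonal of $G$, which is impossible.

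\item[(iii)] The compactness-and-uniqueness upgrade in Step 2 is unnecessary, because both inclusions hold for the whole sequence.
 \begin{itemize}
 \item[$\cdot$] Suppose $G_i\cap\uc$ missed a fixed arc around a vertex $v$ of $G$ for infinitely many $i$. Then the edges of $G_i$ cutting off that arc would subconverge to a leaf of $\lam$ crossing the segment from $c$ to $v$. That segment lies in the interior of $G$, a contradiction.
 \item[$\cdot$] For the other inclusion, any $\ell_i\in\lam_i$ close to an edge $\ol{ab}$ of $G$ separates $c$ from the arc $[a+\varepsilon,b-\varepsilon]$. This holds from whichever side $\ell_i$ approaches, so your ``both sides'' remark is not needed. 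Only finitely many holes of $G$ are longer than $2\varepsilon$.
 \end{itemize}

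\item[(iv)] For the exact period, note that the minimal period of $G_i$ divides $n$. If it equals some $m<n$ for infinitely many $i$, then one fixed $m$ recurs, and passing to the limit gives $\si_d^m(G)=G$, a contradiction.
\end{enumerate}

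\paragraph{What your route gives.} Your argument uses only Hausdorff convergence of laminations and gap invariance. So it proves the lemma for arbitrary (sibling) invariant laminations $\lam_i$, not just q-laminations, which the citation in the paper does not make visible.
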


Recall the following fundamental theorem of Jan Kiwi (the second part of the theorem  easily follows from the first one).

\begin{thm}\cite{kiw02}\label{t:kiwan}
A wandering non-precritical gap of a $\si_d$-invariant lamination cannot have more than $d$ vertices.
In particular, infinite gaps of laminations are (pre)periodic.
\end{thm}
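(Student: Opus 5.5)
This is Kiwi's theorem. It is cited from \cite{kiw02}, and the paper notes that the second part follows easily from the first. I would therefore take the first claim (the bound of $d$ vertices for wandering non-precritical gaps) as given from \cite{kiw02}, and concentrate on deducing the second claim: every infinite gap $G$ of a $\si_d$-invariant lamination $\lam$ is (pre)periodic.

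Let $G$ be an infinite gap. By Theorem \ref{t:gapin}, $\lam$ is gap invariant, so each $\si_d^n(G)$ is a gap or a leaf, and its vertex set is $\si_d^n(G\cap\uc)$. I would argue in three steps.

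First, since $\si_d|_{\bd(G)}$ is, by gap invariance, a composition of a monotone map and a covering map of degree at most $d$, and $G\cap\uc$ is infinite, every image $\si_d^n(G)$ still has infinitely many vertices. In particular no image collapses to a leaf or a point.

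Second, I would dispose of images that are critical. There are only finitely many gaps $H$ of $\lam$ on which $\si_d|_{\bd(H)}$ has degree greater than one, or which contain a critical leaf on the boundary with infinite vertex set. The reason is that, by the sibling and covering structure, each such gap absorbs at least one unit of the total criticality $d-1$ of $\si_d$. Suppose $G$ is not (pre)periodic and its orbit passes through these critical gaps infinitely often. Then, since there are finitely many of them, two distinct iterates $\si_d^m(G)$ and $\si_d^n(G)$ with $m<n$ would coincide. That makes $\si_d^m(G)$ periodic, contrary to assumption. So after discarding finitely many iterates we may assume that no image of $G$ is critical, and that the images $\si_d^n(G)$ are pairwise distinct, that is, $G$ is wandering.

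Third, this replaced gap is a wandering non-precritical gap with infinitely many vertices. This contradicts the first part of Theorem \ref{t:kiwan}, so $G$ must be (pre)periodic.

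The main obstacle is the second step: making precise that only finitely many gaps carry criticality. I would handle it by noting that critical gaps with infinite vertex sets have pairwise disjoint interiors, and that each one contains, after connecting siblings inside it, a critical chord. A family of pairwise unlinked critical chords whose complementary regions behave as in Lemma \ref{l:basic} has cardinality bounded in terms of $d$, which gives at most $d-1$ such gaps.
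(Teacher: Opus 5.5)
Your deduction is correct and is the argument the paper intends: the paper gives no proof beyond citing Kiwi for the first claim and remarking that the second follows easily, and your pigeonhole step supplies that deduction. There are finitely many critical gaps, so a non-(pre)periodic infinite gap eventually becomes wandering and non-precritical, which contradicts the first claim. One small correction: the number of critical gaps is finite but not bounded by $d-1$, since a critical leaf can be the common edge of two infinite gaps (e.g., an invariant quadratic Siegel gap and its sibling pre-Siegel gap across the critical diameter); bounding it by twice the size of a family of pairwise unlinked critical chords suffices, and finiteness is all your argument uses.
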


One can now describe properties of Side critical portraits.

\begin{lem}\label{l:sicripo}
The following claims hold.

\begin{enumerate}

\item If $\cpc$ is a Side full critical collection, then all critical sets of $\lam^{pr}_\cpc$ are finite.

\item A q-lamination has only finite critical sets iff it is a non-capture Side q-lamination.

\item A full critical collection $\cpc$ is Side iff it is compatible with a Side q-lamination.

\end{enumerate}

\end{lem}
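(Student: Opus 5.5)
We prove the three claims in order, treating (2) as the central one and deriving (1) and (3) from it together with Lemmas \ref{l:noc-nod} and \ref{l:onlys}.

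For (1), let $\cpc$ be a Side full critical collection. By Lemma \ref{l:noc-nod}, every infinite gap of $\lam^{pr}_\cpc$ is (pre-)Siegel. Suppose an infinite gap $U$ of $\lam^{pr}_\cpc$ were critical. By construction, $\lam^{pr}_\cpc$ is compatible with $\cpc^+$, and each proper set $\ch(\Cc_i)$ lies inside the convex hull of a single $\sim^{pr}_\cpc$-class; since $\cpc$ is Side, every set of $\cpc^+$ is proper. The criticality of $\lam^{pr}_\cpc$ is therefore carried by finite classes containing the sets of $\cpc^+$. I would count degrees: the sets of $\cpc^+$ already absorb total criticality $d-1$, because $\cpc$ is full and its chords lie in finite classes. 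Hence an infinite gap cannot have degree greater than one under $\si_d$. It also cannot have a critical edge, since such an edge would itself be a critical leaf. A critical leaf of a q-lamination lies in a finite class, so that class would be a finite critical set and not the gap $U$. So every critical set of $\lam^{pr}_\cpc$ is finite.

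For (2), first suppose $\lam$ has only finite critical sets. If $\lam$ had a periodic hyperbolic gap, or a caterpillar gap, then some gap in that cycle would be an infinite critical set. For caterpillar gaps this uses Lemma \ref{l:crit-must}: the critical leaf with a periodic endpoint lies on the boundary of an infinite gap, and since that endpoint is periodic, Lemma \ref{l:samep} forbids it from belonging to a finite critical class. So all infinite periodic gaps are Siegel, i.e.\ $\lam$ is Side. A capture gap is by definition an infinite critical gap, so $\lam$ is also non-capture.

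Conversely, let $\lam$ be a non-capture Side q-lamination and suppose $G$ is an infinite critical gap. By Theorem \ref{t:kiwan}, $G$ is (pre)periodic. If $G$ is not periodic, then $\si_d|_G$ cannot have degree greater than one (non-capture), so $G$ has a critical edge. But critical leaves of q-laminations lie in finite classes, and $G$ itself is not such a class. If $G$ is periodic, it is Siegel, and I would argue the same way: a Siegel gap has degree one, so its criticality would have to come through a critical edge, which again lies in a finite class. This gives the contradiction. The main obstacle in (2) is making precise the notion of ``critical set'' for gaps with critical edges; I would take a critical set to be a $\sim$-class or gap on whose boundary $\si_d$ fails to be injective, and check each case against that definition.

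For (3), if $\cpc$ is compatible with a Side q-lamination, then Lemma \ref{l:onlys} applies to any portrait contained in $\cpc$. Since a full collection contains a portrait (Lemma \ref{l:basic}), I would apply Lemma \ref{l:onlys}, or directly its proof, to every chord of $\cpc$: each chord of $\cpc$ lies in a critical set of $\lam$, and these sets have no periodic vertices. Conversely, if $\cpc$ is Side, then $\lam^{pr}_\cpc$ is compatible with $\cpc^+$, hence with $\cpc$, by Lemma \ref{l:prer-cross} (which gives $\lam^{pr}_\cpc=\eYc_\cpc$) together with Lemma \ref{l:ey-nocross}. By Lemma \ref{l:noc-nod}, all infinite periodic gaps of $\lam^{pr}_\cpc$ are Siegel, so $\lam^{pr}_\cpc$ is Side.
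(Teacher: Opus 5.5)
You have the same ingredients as the paper: the criticality of $\lam^{pr}_\cpc$ sits in the finite classes containing the sets of $\cpc^+$, Kiwi's theorem reduces an infinite critical gap to a capture gap or a periodic hyperbolic gap, and (3) is read off from periodic vertices. Part (3) is fine. The genuine gap is in what you count as a critical set.

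The definition you commit to at the end of (2) is ``a class or gap on whose boundary $\si_d$ is not injective.'' Under it, every gap with a critical edge is critical, and by Lemma \ref{l:crit-must} every cycle of Siegel gaps contains such a gap. With this definition, claim (1) and the converse in (2) are false. For $d=2$, let $\cpc$ consist of the critical edge of the invariant Siegel gap of a quadratic Siegel q-lamination. By Theorem \ref{t:side-full} that lamination is $\lam^{pr}_\cpc$, and it is Side and non-capture, yet its Siegel gap would be an infinite critical set.

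This breaks several of your sentences:
\begin{itemize}
\item In (1), the claim that an infinite gap ``cannot have a critical edge'' is false.
\item In (2), noting that the critical edge lies in a finite class gives no contradiction: the edge lies in a finite class and is still an edge of the gap.
\item Your phrase ``that class would be a finite critical set and not the gap $U$'' silently uses a different definition from the one you state.
\end{itemize}

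What the paper intends, as its proof of (2) shows, is that an infinite gap $G$ is a critical set only when $\si_d|_{\bd(G)}$ has degree greater than one. With this notion the periodic (Siegel) case is immediate because the degree is one. The non-periodic case is immediate from non-capture. All discussion of critical edges should be dropped.

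The caterpillar step in the forward direction of (2) must then be redone. A caterpillar gap has degree one, so it is \emph{not} an infinite critical set. The correct reason it cannot occur is that a q-lamination has no critical leaf with a periodic endpoint at all. By Lemma \ref{l:samep}, both endpoints of such a leaf would be $n$-periodic with equal images, hence equal. This contradicts Lemma \ref{l:crit-must}.

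In (1) you can also drop the global degree count, which would need a Riemann--Hurwitz-type bookkeeping the paper does not supply. Use the simpler observation behind the paper's argument instead. Complementary components of $\cpc^+$ have degree one, so a gap of degree greater than one must $\sqsubset$-contain a set of $\cpc^+$. That is impossible for an infinite gap, since in the Side case each such set lies in a single finite $\sim^{pr}_\cpc$-class.
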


\begin{proof}
(1) By Theorem \ref{t:side-lam} $\lam^{pr}_{\cpc}$ is a
q-lamination. Its critical sets contain elements of $\cpc^+$, hence they are classes under the equivalence relation
associated with $\lam^{pr}_{\cpc}$. It follows that critical sets of $\lam^{pr}_{\cpc}$ are finite.


(2) Let $\lam$ be a non-capture Side q-lamination.
By Theorem \ref{t:kiwan}, an infinite critical set of a q-lamination is either a preperiodic infinite gap, or a periodic hyperbolic gap,
and both possibilities are ruled out for $\lam$ by the assumptions. Hence $\lam$ has only finite critical sets.
The converse claim is immediate. In particular, if $\cpc$ is a Side full critical collection,
then $\lam^{pr}_{\cpc}$ is a non-capture Side q-lamination.

(3) Let $\cpc$ be a Side full critical collection. It is compatible with $\lam^{pr}_{\cpc}$, by (1) all critical sets of $\lam^{pr}_{\cpc}$ are
finite, and by (2) $\lam^{pr}_{\cpc}$ is a non-capture Side q-lamination. On the other hand, let $\lam$ be a Side q-lamination compatible with
some full critical collection $\cpc$. Then by definition critical sets of $\lam$ have no periodic vertices, hence
$\cpc$ contains no critical chords with a periodic endpoint, i.e. is Side.
\end{proof}

It turns out that there is a connection between Side critical portraits and proper laminations.
The next definition is very useful when studying laminations.

\begin{dfn}\label{d:cone}
A family $\Xc$ of chords $\ol{ab}$ sharing the same endpoint $a$
is said to be a \emph{cone}. The point $a$ is
called the \emph{vertex} of the cone; the set $\uc\cap \bigcup \Xc$ is called
the \emph{basis} of the cone and is denoted by $\Xc'$. A cone is said
to be \emph{infinite} if it consists of infinitely many chords.
\end{dfn}

It turns out that cones of invariant laminations have specific properties. The next claim is based \cite[Lemma 2.4]{bopt16}.

\begin{lem}\rm{(cf.} \cite[Lemma 2.4]{bopt16}\rm{)}\label{l:inficon1}
Let $\Xc$ be an infinite cone of an invariant lamination $\lam$. Then the following holds.

\begin{enumerate}

\item $\Xc$ has a (pre)periodic vertex and
all leaves in $\Xc$ are either (pre)critical or (pre)periodic.

\item $\lam$ has a set $Y$ which is a
leaf or a concatenation of two leaves, the points $a$ and $b$ are the endpoints of $Y$, $\si_d(a)=\si_d(b)$,
and $b=\si_d^k(b)$ for  some integer $k>0$.

\item $\lam$ is not compatible with any Side critical portrait.

\end{enumerate}
\end{lem}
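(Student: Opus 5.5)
We prove the three claims in order. The main tool is the standard analysis of infinite cones from \cite[Lemma 2.4]{bopt16}, combined with Theorem \ref{t:kiwan} and the sibling invariance of $\lam$.

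\textbf{Claim (1).} Let $a$ be the vertex of the infinite cone $\Xc$.

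\begin{enumerate}
\item The forward images of $\Xc$ are cones of $\lam$, because images of leaves are leaves.
\item Suppose the orbit of $a$ were infinite. Pick infinitely many leaves $\ol{ab_j}\in\Xc$ accumulating at some $\ol{ab}$. Then for every $n$ the set $\si_d^n(\Xc)$ is a cone with infinitely many leaves, unless leaves of $\Xc$ are identified by becoming critical or by collapsing.
\item The convex hulls of the images of a finite subcone with at least $d+1$ leaves are gaps or polygons of $\lam$ with pairwise disjoint interiors, since $\lam$ is a lamination.
\item If $a$ is not (pre)periodic, these polygons form a wandering non-precritical polygon with more than $d$ vertices (after discarding the finitely many leaves that ever become critical). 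This contradicts Theorem \ref{t:kiwan}.
\end{enumerate}

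Hence $a$ is (pre)periodic, and we may pass to an image in which $a$ is periodic of some period $k$. Under $\si_d^k$ the leaves of the cone move around the fixed vertex. Apply Theorem \ref{t:kiwan} to the endpoints $b_j$, using that chords from a common vertex cannot cross. The same argument then shows that each endpoint $b_j$ is either eventually mapped onto $a$ (so the leaf is precritical) or is (pre)periodic (so the leaf is (pre)periodic).

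\textbf{Claim (2).} Map $\Xc$ forward so that its vertex $b$ is fixed by $\si_d^k$.

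\begin{enumerate}
\item Since the cone is infinite while $\si_d^k$ restricted to a small neighborhood of $b$ is expanding, two leaves of the cone must eventually have the same image. Indeed, otherwise the images of the cone would form infinitely many pairwise distinct leaves at $b$ spreading apart, and some images would cross earlier leaves at the one-sided neighborhood of $b$. This is the same crossing argument as in Lemma \ref{l:prop-clos}.
\item Take the first time two distinct leaves $\ol{bx}$ and $\ol{by}$ of an image cone satisfy $\si_d(x)=\si_d(y)$. Then $Y=\ol{xb}\cup\ol{by}$ is a critical wedge.
\item Alternatively, some single leaf of the cone becomes critical with one endpoint at an image of $b$.
\end{enumerate}

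Relabel points so that the endpoints of $Y$ are $a$ and $b$, with $\si_d(a)=\si_d(b)$ and $b$ periodic. Some care is needed to ensure that the periodic point is an endpoint and not the middle vertex. If it is the middle vertex, use the pullback sibling structure: sibling invariance lets us pull back along the periodic orbit to get a leaf concatenation with periodic endpoint. I expect this bookkeeping to be the main obstacle.

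\textbf{Claim (3).} Suppose $\lam$ is compatible with a Side critical portrait $\Po$.

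\begin{enumerate}
\item The set $Y$ from (2) is a leaf or wedge of $\lam$ whose endpoints $a,b$ have the same image.
\item In a complementary component of $\Po^+$ the map $\si_d$ is injective on the boundary except for collapse along critical chords of $\Po^+$. So $a$ and $b$ must lie in the closures of different components, separated by chords of $\Po^+$ that $Y$ does not cross.
\item Then $a$ and $b$ are both vertices of a single set in $\Po^+$.
\item Since $b$ is periodic, $\Po$ has a periodic vertex, contradicting the Side assumption.
\end{enumerate}
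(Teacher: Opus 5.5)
Your treatment of (1) follows the same route as the paper: Theorem~\ref{t:kiwan} gives a (pre)periodic vertex, and then the cone analysis of \cite[Lemma 2.4]{bopt16} applies. Your self-contained version is loose in two places. First, the fact that leaves of image cones do not cross does not make the convex hulls of the images of a finite subcone pairwise disjoint, since a leaf of one image cone may end at the vertex of another. Second, the dichotomy ``(pre)critical or (pre)periodic'' is asserted rather than derived. Your (3) is the paper's argument. Once $Y$ has a periodic \emph{endpoint} $b$, it lies in the closure of a single complementary domain of the union of the portrait's chords, and injectivity of $\sigma_d$ on that boundary forces $b$ to be a vertex of the portrait. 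Work with the chords themselves rather than with $\Po^+$, because compatibility with the chords does not give compatibility with the added convex-hull edges.

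The genuine gap is in (2). Your reason why two leaves must eventually have the same image does not work. Once the vertex $b$ is fixed by $\sigma_d^k$, every image of the cone consists of leaves with the common endpoint $b$. Chords sharing an endpoint never cross, so the crossing mechanism of Lemma~\ref{l:prop-clos} is unavailable.

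More importantly, ``the first time two leaves have the same image'' produces a critical wedge whose periodic point is the \emph{middle} vertex, while (2) requires a periodic \emph{endpoint*}. This is not bookkeeping, because from such a wedge your argument for (3) yields nothing. For $d=2$, take $\overline{xv}\cup\overline{vy}$ with $v$ periodic and $\overline{xy}$ a diameter with non-periodic endpoints. It lies in one closed half-disk, and its ends are just the vertices of the Side portrait $\{\overline{xy}\}$. The promised ``pull back along the periodic orbit'' is never carried out.

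The missing idea is to use (1) to choose the leaves.
\begin{itemize}
\item If some leaf of the cone is precritical, one of its images is a critical leaf with a periodic endpoint, and that leaf serves as $Y$.
\item Otherwise all leaves are (pre)periodic. Only finitely many leaves at a periodic vertex can be periodic (the paper uses that a leaf joining periodic points joins points of equal period), so some leaf is strictly preperiodic.
\item Let $\overline{ua}$ be the last non-periodic image of that leaf, with $u$ periodic. Then $\sigma_d(\overline{ua})$ is a periodic leaf, and the leaf of its cycle mapping onto it is $\overline{ub}$, where $b$ is the periodic preimage of $\sigma_d(a)$.
\item Thus $Y=\overline{au}\cup\overline{ub}$ satisfies $\sigma_d(a)=\sigma_d(b)$ with $b$ periodic. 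With $u$ fixed, this is the paper's $\sigma_d(a)=\sigma_d(b)=b$.
\end{itemize}
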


\begin{proof}
(1) By Theorem \ref{t:kiwan} $\Xc$ is eventually mapped to an infinite cone with a periodic vertex. Then the claim
follows from Lemma 2.4 of \cite{bopt16}.

(2) Without loss of generality assume that the vertex $v$ of $\Xc$ is fixed.
Evidently, if there is a critical leaf coming out of $v$ then we are done. Otherwise notice that in an invariant
lamination periodic points can only be connected with a leaf if they are of the same period. It follows from the above then
that there are points $a$ and $b$ such that $\ol{va}\in \Xc$, $\ol{vb}\in \Xc$ and $\si_d(a)=\si_d(b)=b$.

(3) By way of contradiction assume that $\lam$ is compatible with a Side critical portrait $\Po$. Then the entire set $Y$ must be
contained in the closure of a partially critical complementary domain to $\bigcup \Po$. The properties of $Y$ now imply that the periodic
point $b$ must be an endpoint of a critical chord from $\Po$, a contradiction.
\end{proof}

Lemma \ref{l:propeside} relies on Lemma \ref{l:inficon1}.

\begin{lem}\label{l:propeside}
An invariant lamination $\lam$ is proper if and only if it is compatible with a Side critical portrait.
\end{lem}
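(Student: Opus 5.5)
We prove the two implications separately, using Lemma \ref{l:crit-equiv} as the working characterization of properness: $\lam$ is proper if and only if it has no critical leaf with a periodic endpoint and no critical wedge with a periodic vertex.

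\emph{Compatible with a Side portrait $\Rightarrow$ proper.} Let $\lam$ be compatible with a Side critical portrait $\Po$, and suppose $\lam$ is not proper.

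First, suppose $\lam$ has a critical leaf $\ell$ with a periodic endpoint. Since $\ell$ does not cross any chord of $\Po$, it lies in the closure of some component $A$ of $\disk\sm\bigcup\Po$. By Lemma \ref{l:dynama}, the map $\si_d$ is injective on $\bd(A)\cap\uc$ except at the endpoints of chords of $\Po$ on $\bd(A)$. So the two endpoints of $\ell$ must both lie in one component of $\bigcup\Po$ on $\bd(A)$. In particular, the periodic endpoint of $\ell$ is a vertex of $\Po$, which contradicts $\Po$ being Side.

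Second, suppose $\lam$ has a critical wedge $\ol{va}\cup\ol{vb}$ with $v$ periodic and $\si_d(a)=\si_d(b)$. The same argument applies: both leaves lie in closures of complementary components of $\Po$, and the sibling points $a\ne b$ are on the boundary of the union of those closures. Injectivity on each $\bd(A)$ (Lemma \ref{l:dynama}) then forces a chord of $\Po$ to have $v$ or $a,b$ as endpoints. One has to check the sub-case where $\ol{va}$ and $\ol{vb}$ lie in different components. Then $v$ itself must be a vertex of $\Po$, since the leaves can only leave a common component through a vertex of $\Po$ on $\bd(A)$. This is again a contradiction. In the remaining sub-case, the wedge lies in one component $\ol{A}$, and then $a$ and $b$ belong to one component $X$ of $\bigcup\Po$. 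Here I would use Lemma \ref{l:samep}-type reasoning, or more simply pass to the limit structure of Lemma \ref{l:inficon1}(2) (a leaf or concatenation $Y$ with sibling endpoints, one of them periodic), to conclude that a periodic point is a vertex of $\Po$.

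\emph{Proper $\Rightarrow$ compatible with a Side portrait.} Let $\lam$ be proper. I would build a full critical collection compatible with $\lam$ by the standard procedure: in each critical gap or critical leaf of $\lam$, choose critical chords inside that set (edges or diagonals joining sibling vertices), adding enough to absorb the local degree. Using the Appendix, Theorem \ref{t:gapin} and Theorem \ref{t:kiwan}, the total number of chords chosen this way is $d-1$. Chords chosen inside a gap or leaf of $\lam$ automatically do not cross $\lam$.

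The key point is to make these choices avoiding periodic vertices. A finite critical set of $\lam$ has no periodic vertex: otherwise, by Lemma \ref{l:crit-equiv}, either a critical leaf or a critical wedge would have a periodic vertex (a finite critical gap with a periodic vertex $v$ contains two sibling vertices joined to $v$ by a wedge of boundary leaves). For an infinite critical gap $U$, the vertices of $U$ that are periodic are countable. Since $\si_d|_{\bd(U)}$ has degree greater than one, we can choose critical chords in $U$ whose endpoints are non-periodic: perturb each candidate chord along the uncountable, or at least non-periodic-rich, set $U\cap\uc$, keeping the siblings matched. If $U$ is countable, we instead use that properness forbids infinite cones, by Lemma \ref{l:inficon1}(2) together with Lemma \ref{l:crit-equiv}.

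The main obstacle is exactly this choice in infinite critical gaps, especially periodic hyperbolic and caterpillar gaps, whose boundaries carry periodic points. For these I would first invoke Lemma \ref{l:crit-must}: a caterpillar gap forces a critical leaf with a periodic endpoint, which properness rules out. For hyperbolic gaps, I would pick critical chords in the interior, joining non-periodic preimages, which is possible since $\bd(U)\cap\uc$ is a Cantor set on which $\si_d$ acts as a degree $k$ covering.
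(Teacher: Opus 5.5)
Your argument is sound in outline. The second direction is the paper's: insert critical chords with non-periodic endpoints into the critical sets of $\lam$, and exclude caterpillar gaps by Lemma~\ref{l:crit-must}.

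The first direction takes a detour the paper avoids. The paper argues directly. If $\lam$ is improper, some leaf $\ol{xv}$ joins a point $v$ of period $n$ to a non-periodic $x$. Every leaf has a pullback whose sibling collection uses each preimage of $v$ exactly once, so $\ol{xv}$ can be pulled back along the periodic orbit of $v$. This gives leaves $\ol{vy_k}$ with $\si_d^{kn}(y_k)=x$, pairwise distinct because $x$ is not periodic. The resulting infinite cone contradicts Lemma~\ref{l:inficon1}(3).

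Your unfinished sub-case (the wedge lies in one $\ol{A}$, with $a,b$ on the same $\Po$-continuum) needs exactly this step before Lemma~\ref{l:inficon1} can be used. Two distinct siblings cannot both be periodic, so one of $\ol{va},\ol{vb}$ joins $v$ to a non-periodic point. Its pullbacks at $v$ form the cone, and Lemma~\ref{l:inficon1}(3) then finishes directly, with no need to redo the argument through $Y$. Once this step is written, it settles every case, so your Lemma~\ref{l:dynama} analysis becomes optional.

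Your reduction via Lemma~\ref{l:crit-equiv} does buy something. It hands you a leaf from the periodic vertex to a genuinely non-periodic point. The definition of improper only gives a point that is not $n$-periodic, and the paper's ``by definition'' step tacitly takes it to be non-periodic. If it were periodic of another period, the pullbacks could simply cycle through its orbit and a separate argument would be needed. The price of your route is dependence on Lemma~\ref{l:crit-equiv}, whose relevant direction is imported from \cite{bmov13}.

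Two justifications in your second direction need fixing.

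First, a finite critical gap with a periodic vertex $v$ need not contain a critical wedge at $v$. The only boundary leaves at $v$ are its two edges there, and for a hexagon mapped two-to-one onto a triangle these have different images. Argue from the definition of properness instead. An edge at a vertex of period $n$ must end at a vertex of period $n$, so going around the boundary, every vertex of such a gap has period $n$. This is impossible for a critical gap, which has two distinct sibling vertices.

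Second, countable infinite critical gaps are not excluded by the cone argument. A countable infinite gap eventually maps to a countable periodic gap. That gap must be caterpillar, since periodic gaps of degree greater than one and Siegel gaps are uncountable. Lemma~\ref{l:crit-must} then produces a critical leaf with a periodic endpoint, as you note in your last paragraph.
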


\begin{proof}
Suppose that $\lam$ is compatible with a Side critical portrait. By Lemma \ref{l:inficon1} $\lam$ cannot have
an infinite cone. We claim that then $\lam$ is proper. Indeed, suppose that $\lam$ is not proper. Then
by definition there is a leaf $\ol{xv}$ connecting a periodic point $v$ and a non-periodic point $x$. Pulling it back
we can find a leaf $\ol{yv}$ such that $\si_d^k(y)=x$; clearly, $y\ne x$. Repeating it infinitely many times we will
discover an infinite cone with periodic vertex $v$, a contradiction.

Suppose that $\lam$ is proper. Consider critical sets of $\lam$. If none of them has periodic vertices then any critical portrait that consists
of chords inserted into critical sets of $\lam$ will be a desired Side critical portrait compatible with $\lam$. Suppose now that $\lam$ has
a critical set $C$ with a periodic vertex. Since $\lam$ is proper, then $C$ is an infinite gap; for the same reason no image of $G$ is a caterpillar gap
(recall that by Lemma \ref{l:crit-must} a caterpillar gap has a critical edge with a periodic endpoint). Hence $C$ is (pre-)Siegel or (pre-)hyperbolic.
Evidently, in these cases one can insert in $C$ the corresponding number of critical chords with non-periodic endpoints. This completes the proof.
\end{proof}

We need the following definition.

\begin{dfn}\label{d:sqsubset}
Let $A$ be a gap/leaf and $B$ be a gap. If $A\subset B$ and some edges of $A$ intersect the interior of $B$,
then we denote it by $A\sqsubset B$. Observe that if $\ell$ is an 
edge of $B$ then $\ell\not\sqsubset B$. If $A\sqsubset B$ then we say that $A$ \emph{$\sqsubset$-contained} in $B$.
The symbol $\sqsupset$ is used analogously.
\end{dfn}

We can now describe Side laminations in a more dynamical fashion.

\begin{lem}\label{l:side2}
A lamination $\lam$ is Side if and only if it is proper and has no periodic hyperbolic gaps.
\end{lem}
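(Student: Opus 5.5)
We prove the two implications separately. By Definition \ref{d:uni-side}, $\lam$ is Side exactly when it is compatible with some critical portrait and every critical portrait compatible with it is Side. Equivalently, every critical chord with a periodic endpoint crosses some leaf of $\lam$.

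\emph{Side implies proper with no periodic hyperbolic gaps.} Assume $\lam$ is Side. It is compatible with at least one critical portrait, and that portrait must be Side, so Lemma \ref{l:propeside} shows that $\lam$ is proper. Next suppose, toward a contradiction, that $\lam$ has a periodic hyperbolic gap $V$ of period $n$, and let $V=V_0,\dots,V_{n-1}$ be its cycle. Some $V_j$ is critical, and the degree $k$ of $\si_d|_{\bd(V_j)}$ is at least $2$. The first return map $\si_d^n$ on $\bd(V_j)$ has degree greater than one, so by the standard monotone-semiconjugacy-to-$\si_k$ argument (Appendix) the set $V_j\cap\uc$ contains $\si_d^n$-periodic points. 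We then build a Siegel-free insertion into $V_j$. Pick a periodic point $p\in\bd(V_{j+1})\cap\uc$ (for example one of period $n$). Choose $k-1$ critical chords inside $V_j$ that are pullbacks of the point $p$, placing them so that at least one has a periodic endpoint; one can take a preimage of $p$ on $\bd(V_j)$ that is itself periodic, which requires $p$ to lie in the image of the periodic cycle. Complete these chords to a full critical portrait by inserting critical chords into the other critical sets of $\lam$. That this completion is possible is the same insertion argument used in the proof of Lemma \ref{l:propeside}. The resulting critical portrait is compatible with $\lam$ but is not Side, which is the contradiction we want.

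\emph{Proper with no periodic hyperbolic gaps implies Side.} Assume $\lam$ is proper and has no periodic hyperbolic gaps. Lemma \ref{l:propeside} gives compatibility with some Side critical portrait. Now let $\Po$ be any critical portrait compatible with $\lam$, and suppose some chord $c\in\Po$ has a periodic endpoint $x$. Then $c$ lies in a critical set $C$ of $\lam$, either as a leaf or inside a gap, and $C$ has the periodic vertex $x$. Since $\lam$ is proper, $C$ is neither a leaf nor a finite gap with a periodic vertex (Lemma \ref{l:crit-equiv}). Hence $C$ is an infinite gap, and by Theorem \ref{t:kiwan} it is (pre)periodic. It contains the periodic point $x$, so $C$ is itself periodic. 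A periodic critical gap is hyperbolic, Siegel, or caterpillar. Hyperbolic is excluded by hypothesis. A caterpillar gap has a critical edge with a periodic endpoint (Lemma \ref{l:crit-must}), and that edge is a leaf contradicting properness. A Siegel gap has first return of degree one, so its criticality sits on its boundary as critical edges, and $\bd(C)$ maps monotonically with only those edges collapsing. A chord inside $C$ with a periodic endpoint $x$ and $\si_d(x)=\si_d(y)$ would then force $x$ and $y$ to be endpoints of a collapsed critical edge, again a critical leaf of $\lam$ with a periodic endpoint. This contradicts properness. Therefore no such $c$ exists, and $\lam$ is Side.

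The main obstacle is the first direction: producing a critical chord with a periodic endpoint inside a periodic hyperbolic cycle while keeping the rest of the portrait legal. If the direct choice of periodic preimages is awkward, we would instead use the Appendix's description of the first return map on the critical gap of the cycle, since it is semiconjugate to $\si_k$. That description lets us pull back the periodic point on the image gap to a periodic point on the critical gap itself.
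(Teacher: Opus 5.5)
Your proposal is correct and follows the same route as the paper. For the forward direction, properness comes from Lemma \ref{l:propeside}, and you insert a critical chord with a periodic endpoint into a periodic hyperbolic gap; for the converse, a critical chord with a periodic endpoint cannot be a leaf of a proper lamination, so it lies in a gap that is forced to be periodic hyperbolic. You just fill in details the paper leaves implicit, e.g.\ the periodic preimage in $V_j$ and the exclusion of finite, caterpillar and Siegel gaps (the Siegel case is immediate, since by Definition \ref{d:deg1} Siegel gaps have no periodic vertices).
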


\begin{proof}
If $\lam$ is Side then by Lemma \ref{l:propeside} it is proper. On the other hand, it cannot have a periodic hyperbolic gap
because otherwise one can find a critical portrait compatible with $\lam$ which includes a critical chord with a  periodic endpoint.

On other hand, let $\hlam$ be a proper lamination with no periodic hyperbolic gaps.
By way of contradiction suppose that $\hlam$ is compatible with a critical portrait $\Po$ which includes
a critical chord $\ell$ with a periodic endpoint.
Since $\hlam$ is proper, $\ell\notin \hlam$. Thus $\ell\sqsubset U$ where $U$ is a gap. Since $\ell$ has a periodic endpoint
and $\lam$ has no critical leaves with a periodic endpoint, then $U$ is a hyperbolic periodic gap, a contradiction.
\end{proof}

\subsection{Perfect laminations}\label{ss:perfect}

Every lamination $\lam$ has a maximal perfect subset $\lam^p$ called the \emph{perfect part} of $\lam$.

\begin{lem}\cite[Lemma 3.12]{bopt20}\label{l:3.12}
$\lam^p$ is an invariant perfect lamination.
For every $\ell\in \lam^p$ and every planar neighborhood $U$ of $\ell$, there
exist uncountably many leaves of $\lam^p$ in $U$.
\end{lem}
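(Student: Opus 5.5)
The plan is to realize $\lam^p$ as the kernel of the Cantor--Bendixson process applied to $\lam$, viewed as a closed subset of the compact metric space $\cho$ of chords. Show invariance at every stage of the process by transfinite induction. Then derive the uncountability claim from the standard fact that nonempty perfect compact metric spaces are uncountable locally.

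Throughout, degenerate leaves (points of $\uc$) are kept, since by definition every lamination contains them. The process only removes isolated non-degenerate leaves.

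The construction and the induction:

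\begin{itemize}
\item \textbf{Base and successor steps.} Set $\lam_0=\lam$. Given $\lam_\alpha$, let $\lam_{\alpha+1}$ be the family of non-isolated leaves of $\lam_\alpha$, together with all points of $\uc$. By Lemma \ref{l:clos-sibl}, if $\lam_\alpha$ is an invariant lamination then so is $\lam_{\alpha+1}$. The lemma is stated for non-degenerate leaves, but degenerate leaves cause no trouble: they map to degenerate leaves and have degenerate pullbacks.

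\item \textbf{Limit steps.} For a limit ordinal $\beta$, set $\lam_\beta=\bigcap_{\alpha<\beta}\lam_\alpha$. This is closed and is a prelamination, and forward invariance is clear. For backward invariance and the sibling property, take $\ell\in\lam_\beta$ non-critical. For each $\alpha<\beta$, pick a sibling collection of $\ell$ inside $\lam_\alpha$; in particular all its chords map to the fixed chord $\ell'=\si_d(\ell)$. Only finitely many chords map to $\ell'$, so some single sibling collection is chosen for cofinally many $\alpha$. Since the $\lam_\alpha$ decrease, that collection lies in every $\lam_\alpha$, hence in $\lam_\beta$. Pullbacks of an arbitrary leaf (including critical leaves and points) are handled the same way by finiteness or compactness, with Lemma \ref{l:clos-sibl-1} available if a genuine limit is needed.

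\item \textbf{Stabilization.} The space $\cho$ is second countable, so the decreasing transfinite sequence of closed sets stabilizes at some countable ordinal $\gamma$. Then $\lam^p:=\lam_\gamma$ has no isolated non-degenerate leaves, i.e. it is perfect.

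\item \textbf{Maximality.} Every perfect subfamily of $\lam$ survives each removal step, by induction. Hence it lies in $\lam^p$, which is therefore the maximal perfect subset.
\end{itemize}

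For the second claim, let $\ell\in\lam^p$ be non-degenerate and $U$ a planar neighborhood of $\ell$. Choose a closed Hausdorff ball $B$ around $\ell$ in $\cho$ such that every chord in $B$ is non-degenerate and contained in $U$. The set $\lam^p\cap B$ is compact and contains $\ell$. If it were countable, the Baire category theorem would give an isolated point of $\lam^p\cap B$. That point can be taken in the interior of $B$: apply Baire to $\lam^p\cap B'$ for a smaller concentric closed ball $B'$, which is also countable. A point isolated in the interior of $B$ is isolated in $\lam^p$, contradicting perfectness. Hence $U$ contains uncountably many leaves of $\lam^p$.

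I expect the main obstacle to be the limit-stage invariance, specifically making sure the chosen sibling collections and pullbacks really lie in the intersection. The finiteness of the set of chords with a prescribed non-degenerate image is what makes this work. The only remaining care is the bookkeeping with degenerate leaves and critical leaves, where Definition \ref{d:sipro} imposes no sibling requirement.
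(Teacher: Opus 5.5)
The paper does not prove this lemma; it quotes it from \cite{bopt20}. So there is no internal argument to compare with, and your proposal has to stand on its own.

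Its core is correct. The successor steps are exactly Lemma \ref{l:clos-sibl}. At a limit stage the only input needed is that a non-degenerate chord has at most $d^2$ preimage chords. Hence, for a fixed leaf, the sets of available sibling collections (or preimage leaves) in $\lam_\alpha$ form a decreasing chain of nonempty finite sets, and such a chain has a common element. This is what your ``cofinally many $\alpha$'' pigeonhole says; the appeal to compactness or to Lemma \ref{l:clos-sibl-1} at that point is not needed. Stabilization at a countable ordinal and maximality are standard.

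One step in your last paragraph fails as written. Applying Baire to $\lam^p\cap B'$ gives a point isolated in $\lam^p\cap B'$. That point may lie on the boundary sphere of $B'$, and leaves of $\lam^p$ just outside $B'$ can accumulate on it, so it need not be isolated in $\lam^p$.

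The fix is routine. Let $V$ be the intersection of $\lam^p$ with the open Hausdorff ball of the same radius as $B$. Then $V$ is a nonempty open subset of the compact space $\lam^p$, hence locally compact and therefore a Baire space. If $V$ were countable, some singleton would be open in $V$, hence open in $\lam^p$, contradicting perfectness. Equivalently, $\ol{V}\subset B$ is a nonempty compact set with no isolated points, hence uncountable.

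There is also a route that avoids ordinals. Take $\lam^p$ to be the condensation leaves of $\lam$ (leaves every neighborhood of which in $\cho$ contains uncountably many leaves of $\lam$), together with the points of $\uc$. By Cantor--Bendixson this is the perfect kernel and $\lam\setminus\lam^p$ is countable, so the second claim is immediate. Invariance is then checked directly:
\begin{itemize}
\item forward invariance holds because $\si_d$ is finite-to-one on non-degenerate chords;
\item the sibling property follows from a pigeonhole over the finitely many sibling collections of $\ell$, together with Lemma \ref{l:clos-sibl-1}.
\end{itemize}
Your route instead reuses Lemma \ref{l:clos-sibl} verbatim and reduces the new work to finite combinatorics at limit stages. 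The price is a separate (corrected) Baire argument for the uncountability claim.
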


By Lemma \ref{l:crit-must} a perfect lamination cannot have infinite periodic gaps of degree one (in particular, there are no periodic Siegel gaps). Moreover, if two
infinite gaps are non-disjoint, there will have to be isolated leaves, so this is ruled out too. However, periodic hyperbolic gaps in general are
possible. For example, if a quadratic polynomial has a cycle of hyperbolic domains of \emph{primitive type}
(see Definition~\ref{d:satellite}), then the associated lamination is perfect.

The structure of a non-empty perfect q-lamination $\lam$ can be described as follows.
Consider a monotone quotient map of the closed unit disk that collapses not only leaves and
finite gaps but also infinite gaps of $\lam$. This map gives rise to a dendritic quotient space
which formally speaking is not a topological Julia set (it has points of infinite order) but has very similar properties. We shall call such spaces
\emph{generalized dendritic topological Julia sets} and the induced maps \emph{generalized topological polynomials}.
Such maps were studied in \cite{bl02} where the following lemma was proved.

\begin{lem}\cite[Theorem C]{bl02}\label{l:nowanco}
If $f:J\to J$ is a generalized topological polynomial on a generalized dendritic topological Julia set
then for every continuum $T\subset J$ there are two numbers $m<n$ such that $f^m(T)\cap f^n(T)\ne \0$
(one can say that $f$ has no wandering continua).
\end{lem}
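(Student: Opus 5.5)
The plan is to argue by contradiction, transferring the dynamics back to the circle and using the length-expansion of $\si_d$. A degenerate continuum (a point) can genuinely wander, so I will read the statement as being about non-degenerate continua. Suppose $T\subset J$ is such a continuum with $f^m(T)\cap f^n(T)=\0$ for all $m<n$. Since $J$ is a generalized dendrite, I will first replace $T$ by an arc $[x,y]\subset T$; sub-arcs of wandering continua still wander. I will also discard finitely many initial iterates and pass to a sub-arc. Then no $f^n(T)$ contains a critical point of $f$ or a point of infinite order: there are only finitely many of each, and each can lie in at most one of the pairwise disjoint sets $f^n(T)$. Consequently each $f^n$ is injective on $T$ and $f^n(T)$ is again an arc.

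Next I will lift to the lamination. Let $\pi:\cdisk\to J$ be the monotone quotient map collapsing leaves, finite gaps and infinite gaps of $\lam$, and set $X_n=\pi^{-1}(f^n(T))\cap\uc$, a closed subset of $\uc$. Because $f^n(T)$ is a non-degenerate arc, $X_n$ has at least two holes, each bounded by points identified under $\pi$ or lying in one collapsed gap. The key quantity is
\[
\lambda_n=\min\{|H| : H \text{ a hole of } X_n\},
\]
or more robustly the length of the ``short'' hole lying under the arc, namely the hole of $\uc\sm X_n$ cut off by the class of an interior point of $f^n(T)$.

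The central step is the claim that, since $f^n(T)$ contains no critical sets, $\si_d$ maps the chosen hole of $X_n$ homeomorphically onto a hole of $X_{n+1}$, so its length gets multiplied by $d$. This uses the positive-orientation property of $\si_d$ on classes and gaps: Definition~\ref{d:lameq}(D3) for classes, and gap invariance (Theorem~\ref{t:gapin}) for collapsed gaps. Iterating gives $\lambda_{n}\ge d^{n-N}\lambda_N\to\infty$, which is absurd since hole lengths are at most $1$.

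I expect the main obstacle to be precisely this step, in the regime where the hole is not short, i.e.\ its length exceeds $1/d$. There $\si_d$ on the hole is no longer injective, and the image hole may wrap around the circle. I would first try to handle this using the pairwise disjointness of the $f^n(T)$. Disjointness makes the regions in $\cdisk$ cut off by the classes over the endpoints of $f^n(T)$ either disjoint or nested. Only finitely many of them can have a hole of length $\ge 1/(d+1)$ without overlapping, so from some moment on all the relevant holes are short and the expansion argument applies. If that fails, the fallback is Kiwi's Theorem~\ref{t:kiwan}: choose $d+1$ points of $T$ that are endpoints of disjoint sub-arcs, and show that their classes form a wandering non-precritical polygon with more than $d$ vertices, which that theorem forbids.
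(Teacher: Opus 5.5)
The paper does not prove this lemma; it quotes \cite{bl02}. Measured against that, your proposal has a genuine gap. First a small correction to your reductions: points of infinite order are not finitely many, since every preimage of a collapsed infinite gap is one. They are, however, (pre)periodic by Theorem \ref{t:kiwan}, and a wandering continuum contains no (pre)periodic point, so your conclusion survives.

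The real break is the central expansion step. A hole $H=(a,b)$ of $X_n$ is the angle set of a component $B$ of $J\setminus f^n(T)$. The absence of critical points on $f^n(T)$ controls only the endpoints of $H$. It does show that $(\si_d(a),\si_d(b))$ is again a hole of $X_{n+1}$, but the length of that hole is $d|H|$ reduced modulo $1$, and it need not grow. The map $\si_d$ is injective on $H$ only if $B$ contains no critical point of $f$, and nothing gives you this. For all large $n$, every critical point lies in $J\setminus f^n(T)$, i.e.\ behind some hole of $X_n$, and the particular hole you follow can acquire critical points at later times. Also, your first quantity $\min_H|H|$ is $0$ as soon as $f^n(T)$ contains infinitely many branch points.

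Your repairs do not close this gap.
\begin{itemize}
\item For different $n$, the lobes of $\cdisk$ cut off by the relevant edges can be disjoint, nested, or have disjoint complements. The count ``only finitely many have a hole of length $\ge 1/(d+1)$'' holds only for pairwise disjoint lobes. Nothing in your argument excludes, say, $f^m(T)$ lying behind the tracked hole of $f^n(T)$ for all $m>n$, and ruling out such infinite nesting is the actual content of the theorem.
\item The fallback misapplies Theorem \ref{t:kiwan}. Distinct points of $T$ have distinct $\sim$-classes with disjoint convex hulls, so the hull of the union of their classes is not a gap, and Kiwi's bound says nothing about it. To use that bound you must produce a single wandering class with too many branches. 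Alternatively you need unboundedly many wandering branch points in distinct grand orbits, which requires more than Kiwi's per-gap bound.
\end{itemize}
The argument in \cite{bl02} is combinatorial rather than metric: the ``growing trees'' spanned by images of a wandering arc produce wandering branch points, which are then ruled out by an inequality bounding the wandering branch points of a topological polynomial. That counting step is what your proposal lacks.
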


Lemma \ref{l:nowanco} is used in the proof of Lemma \ref{l:condense}.

\begin{lem}\cite[Lemma 3.25]{bopt20}\label{l:condense}
Suppose that $J$ is a generalized topological Julia set generated by a perfect q-lamination
$\lam_\sim$. Then the following holds.

\begin{enumerate}

\item Each subcontinuum of $J$ contains a $($pre$)$periodic
    non-$($pre$)$\-cri\-ti\-cal point.

\item Each subcontinuum of $J$ contains a $($pre$)$critical
    point.

\item Each leaf of $\lam_\sim$ can be approximated by
    $($pre$)$periodic leaves that never map to a critical set of
    $\lam_\sim$.

\end{enumerate}

\end{lem}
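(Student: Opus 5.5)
The plan is to pass back and forth between the dendrite $J$ and the circle. Expansion of $\si_d$ on circle arcs will give the dynamics, and Lemma \ref{l:3.12} (perfectness) will guarantee that every non-degenerate subcontinuum of $J$ carries a whole ``strip'' of the disk. Throughout, $T\subset J$ is a non-degenerate subcontinuum; degenerate $T$ has to be excluded or handled by convention. Let $\pi$ denote the monotone quotient map of the closed disk onto $J$. Choose an arc $I\subset T$. Its preimage $\pi^{-1}(I)$ separates the disk. By Lemma \ref{l:3.12}, inside $\pi^{-1}(I)$ I will pick two distinct non-degenerate leaves $\ell,\ell'$ whose images are distinct interior points of $I$. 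The strip $S$ between $\ell$ and $\ell'$ meets $\uc$ in at least one non-degenerate circle arc $H$, and $\pi(S)$ is contained in the subarc of $I$ between $\pi(\ell)$ and $\pi(\ell')$. So every point of $H$ projects into $T$.

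\textbf{Claim (2).} Suppose $T$ contains no (pre)critical point. Then every $f^k$ is locally injective on $I$. A locally injective map from an arc into a dendrite is injective, so each $f^k|_I$ is a homeomorphism onto its image. On the lamination level this should mean that $\si_d^k$ maps the strip $S$ ``homeomorphically'': the images of $\ell,\ell'$ keep bounding a strip with no critical set in between, and the arc $H$ maps injectively for all $k$. This is impossible, because the length of $\si_d^k(H)$ is multiplied by $d$ until it exceeds $\frac1d$, at which point $\si_d^k$ stops being injective on it. Lemma \ref{l:nowanco} is not even needed here; it is the backup tool if the strip argument has a gap, since a non-wandering arc mapped homeomorphically forever also contradicts expansion.

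\textbf{Claim (1).} Choose $N$ so large that $\si_d^N$ maps a subarc $H'\subset H$ onto an arc containing $\ol{H}$. Then $\si_d^N$ has a fixed point in $H'$, so $H$ contains periodic points, and their $\pi$-images lie in $T$. Moreover, for every large $N$ the number of $\si_d^N$-fixed points in $H$ grows like $d^N|H|$. Among these I must find one whose orbit avoids the critical sets. This is the step I expect to be the main obstacle, because critical sets of a perfect lamination may be infinite (hyperbolic) gaps whose boundaries contain periodic points. I would handle it as follows.
\begin{itemize}
\item A periodic point is precritical only if its orbit lies on the boundary of a periodic critical set.
\item There are finitely many critical sets. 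Each is finite or a (pre)periodic gap by Theorem \ref{t:kiwan}, and the periodic ones form finitely many cycles.
\item By Lemma \ref{l:crit-must}, the boundaries of such periodic gaps contain only finitely many periodic points of each period; I would check that this count grows much more slowly than $d^N$, or else place $H$ inside a single hole of a critical gap so that it avoids those points.
\end{itemize}
Comparing this count with the $\asymp d^N|H|$ periodic points available in $H$, some of them must have orbits missing every critical set.

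\textbf{Claim (3).} Given a leaf $\ell\in\lam_\sim$ and a small neighborhood $V$ of $\ell$, perfectness provides non-degenerate continua in $\pi(V)$ lying on either side of $\pi(\ell)$; if $\ell$ is an edge of a gap, I use the side where leaves accumulate. Applying (1) to these continua, with $V$ shrinking, yields (pre)periodic non-(pre)critical points converging to $\pi(\ell)$. Their $\sim$-classes are finite, because infinite classes are gaps, which are collapsed and are critical or precritical only along controlled orbits. Edges of the convex hulls of these classes are (pre)periodic leaves that never map to a critical set. I would then choose the points so that these leaves are non-degenerate, for instance by taking points of $\uc$ near both endpoints of $\ell$ that are branch points, as in the strip argument. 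These leaves converge to $\ell$.
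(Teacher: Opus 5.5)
Your argument for (1) and (2) fails at its first step. The assertion that $\pi(S)$ lies in the subarc of $I$ between $\pi(\ell)$ and $\pi(\ell')$ is false.

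\begin{itemize}
\item The strip $S$ contains \emph{every} leaf and gap lying between $\ell$ and $\ell'$. So $\pi(S)$ is that subarc together with all branches of $J$ attached to its interior points.
\item $\pi(H)$ is a connected set joining $\pi(\ell)$ to $\pi(\ell')$ along the contour $\pi|_{\uc}$. It therefore runs around every branch hanging off the subarc on the side of $H$.
\item Since $T$ may be just an arc, there is no reason to have $\pi(H)\subset T$. This fails as soon as some branch point in the open subarc has a branch on the side of $H$.
\item For typical dendritic $J$ (for example, the Julia set of $z^2+i$), branch points with branches on either side accumulate on every arc. In that case no circle arc projects into an arc of $J$ at all.
\end{itemize}

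Both expansion arguments are therefore applied to the wrong set.

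\begin{itemize}
\item In (2), injectivity of all $f^k|_I$ says nothing about $\si_d^k|_H$. The arc $H$ also covers side branches, which may well contain (pre)critical points, so the eventual non-injectivity of $\si_d^k$ on $H$ is no contradiction.
\item In (1), the periodic points you find in $H$ project into $\pi(S)$, possibly into side branches disjoint from $T$.
\item Separately, Lemma~\ref{l:crit-must} concerns periodic gaps of degree one, which a perfect lamination does not have. The periodic critical sets here are hyperbolic gaps, and what matters is the growth rate of $\si_d^N$-fixed points on their boundaries.
\end{itemize}

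What is missing is an argument carried out inside the dendrite $J$. This is where Lemma~\ref{l:nowanco} is needed; the paper says it is used in the proof of this lemma, so it is not merely a backup. Your remark that a non-wandering arc mapped homeomorphically forever ``also contradicts expansion'' is the real content of (2), and it needs proof.

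For (2), suppose an arc $I\subset T$ had no (pre)critical points. Then all $f^k|_I$ are homeomorphisms, and Lemma~\ref{l:nowanco} gives $m<n$ with $f^m(I)\cap f^n(I)\ne\emptyset$. You must extract a contradiction from this configuration. One way is to find a fixed point of $g=f^{n-m}$ that attracts the points of an arc from one side. This lifts to leaves converging under $\si_d^{n-m}$, from one side, to a periodic leaf (or to an edge of a collapsed periodic gap). That is impossible, because $\si_d$ repels near its periodic points.

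For (1), you likewise need (pre)periodic points lying in $T$ itself, again obtained from the returning iterates of $T$ given by Lemma~\ref{l:nowanco}. You can then use that the (pre)critical periodic points of $J$ form a finite set, namely the collapsed cycles of periodic critical gaps.

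Your outline of (3) works once (1) is available, provided you take the approximating points in the interiors of arcs $[\pi(\ell_i),\pi(\ell)]$, with leaves $\ell_i\to\ell$ from the accumulating side.
\begin{itemize}
\item Such points are cutpoints, so their classes are non-degenerate.
\item These classes are finite, because collapsed (pre)periodic infinite gaps of a perfect lamination are automatically (pre)critical.
\item Some leaf of each class separates $\ell_i$ from $\ell$, and hence these leaves converge to $\ell$.
\end{itemize}
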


\subsection{Perfect-Siegel laminations}
\emph{Siegel laminations} form another class of laminations. To define it, we need a technical concept
which we now introduce. Let $U$ be an infinite gap of a lamination $\lam$.
Moreover, suppose that either $U$ is a (pre)periodic gap eventually mapped to a periodic Siegel gap,
or that $\lam$ is a q-lamination and $U$ is a (pre)periodic infinite gap.
There may exist finite gaps attached to $U$ along some edges of $U$. Moreover, there may be other finite gaps
attached to these finite gaps along their edges, etc. Yet, there are only finitely many such planar concatenations
of finite gaps. Indeed, if $\lam$ is a q-lamination it follows from the fact that finite gaps of q-laminations are disjoint.
On the other hand, if $U$ is (pre)Siegel then the planar concatenation of finite gaps described above
is necessarily wandering and, hence, by Theorem \ref{t:kiwan} finite.



\begin{dfn}\label{d:siegel-part}
Let $\lam$ be an invariant lamination. The union of the grand orbits of all its periodic Siegel
gaps and collections of its finite gaps attached to those periodic Siegel gaps is called the \emph{non-closed Siegel part}
of a lamination $\lam$. Its closure is denoted by $\lam^{Sie}$ and is called the \emph{Siegel part} of
$\lam$. If $\lam=\lam^{Sie}$ then $\lam$ is called a \emph{Siegel lamination}.
\end{dfn}

Combining the perfect and Siegel parts of a lamination we obtain the
\emph{perfect-Siegel part} of a given lamination.

\begin{dfn}[Perfect-Siegel part]\label{d:pspart}\index{perfect-Siegel part}
The union $\lam^{pS}$ of the perfect and the Siegel parts of an
invariant lamination $\lam$ is called the \emph{perfect-Siegel
part} of $\lam$. If a lamination coincides with its perfect-Siegel part,
it is called a \emph{perfect-Siegel}.
\end{dfn}

We are ready to prove the next theorem.

\begin{lem}\label{l:side-ps}
The following claims hold.

\begin{enumerate}

\item Every Side q-lamination $\lam$ is perfect-Siegel.

\item Let $\cpc$ be a full Side critical collection. Then $\lam^{pr}_{\cpc}$ is perfect-Siegel with finite critical sets.

\end{enumerate}

\end{lem}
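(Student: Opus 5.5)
We argue by decomposing $\lam$ into its perfect part $\lam^p$, its Siegel part $\lam^{Sie}$, and the remaining leaves, and showing that the remaining leaves are absent. Let $\ell\in\lam$ be a leaf with $\ell\notin \lam^{Sie}$. We must show that $\ell$ is a limit of uncountably many leaves, so that $\ell\in\lam^p$. By Lemma \ref{l:3.12}, it suffices to show that $\ell$ lies in the closure of the perfect part, and for this it is enough to show that $\ell$ is not isolated in an appropriate ``dendritic'' sense.

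\textbf{Reduction to the dendritic case.} The key reduction is to collapse the Siegel structure. Consider the closure of the grand orbits of periodic Siegel gaps of $\lam$, together with their finitely many attached finite gaps. These are finitely many planar concatenations per gap, as explained before Definition \ref{d:siegel-part}. Outside these regions, $\lam$ has only finite gaps and wandering or (pre)periodic finite classes. The reason is that $\lam$ is Side, so it has no periodic hyperbolic gaps, and Theorem \ref{t:kiwan} shows that every infinite gap is (pre)periodic, hence (pre-)Siegel.

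\textbf{Case 1: $\ell$ is not an edge of any (pre-)Siegel gap or its attached finite cluster.} Then on either side of $\ell$ there is no infinite gap adjacent to $\ell$. We mimic the proof of Lemma \ref{l:2.18} (dendritic leaves are not isolated) and argue as follows.
\begin{enumerate}
\item If $\ell$ were isolated from one side, then on that side there is a finite gap $G$ (or a leaf region) with $\ell$ as an edge.
\item Iterating, and using that $\sim$-classes are finite and that (D3) holds, we obtain a non-degenerate wandering or periodic configuration.
\item The periodic case yields a periodic finite gap whose edges are periodic, and hence we can find approximating leaves by pulling back. Here we use Lemma \ref{l:condense}(3) applied to the perfect part of the quotient that collapses Siegel regions.
\end{enumerate}

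\textbf{Case 2: $\ell$ is an edge of such a cluster.} Then $\ell\in\lam^{Sie}$ by definition, which contradicts the choice of $\ell$.

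The main obstacle is making Case 1 rigorous. I would first pass to the generalized topological polynomial obtained by collapsing all (pre-)Siegel gaps together with their attached finite clusters. This gives a generalized dendritic Julia set in which each remaining leaf of $\lam$ corresponds to a cut point. I would then apply Lemma \ref{l:nowanco}: an arc near $\ell$ on an ``empty'' side would be a continuum containing no point of $\lam$ other than those of $\ell$'s class, and its images could never meet. This contradicts Lemma \ref{l:nowanco}, unless the arc eventually falls into a Siegel gap, which is excluded by Case 1. Hence leaves accumulate on $\ell$ from both sides. By Lemma \ref{l:condense} they can be taken uncountable, so $\ell\in\lam^p$.

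\textbf{Part (2).} By Lemma \ref{l:sicripo}(1)--(2), $\lam^{pr}_{\cpc}$ is a non-capture Side q-lamination with finite critical sets. Applying part (1) then gives that it is perfect-Siegel.
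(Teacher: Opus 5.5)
Part (2) of your proposal matches the paper, but part (1) has a genuine gap at the decisive step. You try to show directly that a leaf $\ell\notin\lam^{Sie}$ is non-isolated, and then conclude ``by Lemma \ref{l:condense} they can be taken uncountable, so $\ell\in\lam^p$.'' Lemma \ref{l:condense}, including item (3) which you also invoke in Case 1, is stated for a generalized Julia set generated by a \emph{perfect} q-lamination. So it assumes what you are trying to prove, and the upgrade from ``non-isolated'' to ``accumulated by uncountably many leaves'' is unjustified. Your intermediate claim that leaves accumulate on $\ell$ ``from both sides'' is also false. An edge of a finite gap (the hull of a class with at least three points) is never approached from the gap side, and such edges are typically not in $\lam^{Sie}$; any dendritic lamination with a finite gap gives an example. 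Finally, the argument via Lemma \ref{l:nowanco} after collapsing Siegel regions is too vague to check, and it turns out to be unnecessary.

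The missing idea is a countability (Cantor--Bendixson) argument. It lets you apply your Case 1 observation not to $\ell$ itself but to an isolated leaf near it. Suppose $\ell\notin\lam^{pS}$.
\begin{enumerate}
\item Since $\lam^{Sie}$ is closed and $\ell\notin\lam^p$, some open neighborhood of $\ell$ contains only countably many leaves and no leaf of $\lam^{Sie}$.
\item A nonempty countable, locally compact set of leaves has an isolated point (Baire). This gives an isolated leaf $\hat{\ell}$ of $\lam$ with $\hat{\ell}\notin\lam^{Sie}$.
\item In a q-lamination an isolated leaf is an edge of an infinite gap $W$. If both gaps adjacent to $\hat{\ell}$ were finite, all their vertices would lie in the class of the endpoints of $\hat{\ell}$, on opposite sides of $\hat{\ell}$. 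Then $\hat{\ell}$ would be a diagonal, not an edge, of that class's hull.
\item By Theorem \ref{t:kiwan} the gap $W$ is (pre)periodic, and since $\lam$ is Side its periodic image is Siegel. Hence $\hat{\ell}\in\lam^{Sie}$, a contradiction.
\end{enumerate}
This is exactly the paper's proof. Your remark that every infinite gap of a Side q-lamination is (pre-)Siegel is the right key ingredient. With this reduction you need neither the collapsing of Siegel regions nor Lemmas \ref{l:nowanco} and \ref{l:condense}.
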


\begin{proof}
(1) Assume that $\lam^{pS}\ne \lam$. Then, $\lam$ has
a leaf $\ell$ with a small neighborhood and countably many leaves in it. Moreover, all these leaves
never map to edges of periodic Siegel
gaps. Choose an isolated leaf $\hell$ among them. Then $\hell$ is an edge of an infinite gap $W$ of $\lam$;
by Theorem \ref{t:kiwan} $W$
has eventual forward image $U$ that is periodic and must be Siegel (because $\lam$ is Side), a contradiction.

(2) By Lemma \ref{l:sicripo}(1) and the assumption that $\cpc$ is Side, all critical sets of $\lam^{pr}_{\cpc}$ are finite.
By Lemma \ref{l:sicripo}(2) $\lam^{pr}_{\cpc}$ is a Side non-capture lamination as claimed.
\end{proof}

The following is a particular case of Theorem 3.57 of \cite{bopt20}.

\begin{thm}\label{t:noesli}
If $\lam_1$ and $\lam_2$ are invariant laminations that both \textbf{contain} the same
critical portrait $\Po$, then $\lam^{pS}_1=\lam^{pS}_2$.
\end{thm}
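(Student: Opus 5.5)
The plan is to reduce the statement to the case where both perfect-Siegel parts are generated by the same ``seed'' chords, namely the chords of $\Po$ together with all their iterated pullbacks, and then to show that this seed determines the perfect-Siegel part. Fix $\lam_1,\lam_2$ containing $\Po$.

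\textbf{Step 1: both laminations contain the same pullbacks.} Since $\lam_j$ contains $\Po$, every complementary component of $\bigcup\Po$ is bounded by chords of $\lam_j$. Hence pullbacks of leaves of $\lam_j$ are taken inside the closures of these components, and the pullback in Lemma~\ref{l:pull} is essentially unique. The only ambiguity occurs at images of critical chords, and there the choices in the Description of Pullback Chords (ii)--(iii) are all realized by leaves compatible with $\Po$.

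Iterating, I will show the following. If $\ell$ is a leaf common to $\lam_1$ and $\lam_2$, then every iterated $\Po$-pullback of $\ell$ that is a leaf of $\lam_1$ and has no endpoint at a vertex of $\Po$ is also a leaf of $\lam_2$. In particular, $\lam_1$ and $\lam_2$ share all iterated pullbacks of chords of $\Po$ lying in their perfect-Siegel parts. This first step is purely combinatorial and uses Lemma~\ref{l:dynama}.

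\textbf{Step 2: the perfect part.} I would show that $\lam_1^p$ equals the closure of the common leaves produced in Step~1 that are non-isolated. For a leaf $\ell\in\lam_1^p$, Lemma~\ref{l:condense}(3), applied to the q-lamination generated by $\lam_1^p$, or directly via Lemma~\ref{l:3.12}, approximates $\ell$ by (pre)periodic leaves that never map to critical sets.

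Pushing such a leaf forward, its orbit stays in the closed complementary components of $\Po$. Since $\si_d$ is expanding, a long forward image of a nearby leaf of $\lam_1^p$ must come close to a chord of $\Po$. I would use the no-wandering-continua statement, Lemma~\ref{l:nowanco}, to show that forward images of any small neighborhood of $\ell$ eventually meet $\bigcup\Po$. Pulling back, leaves of $\lam_1^p$ are then limits of pullbacks of leaves near $\Po$, and these are common to both laminations by Step~1. Thus $\lam_1^p\subset\lam_2$, and since perfectness is intrinsic, $\lam_1^p\subset\lam_2^p$. By symmetry the two perfect parts are equal.

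\textbf{Step 3: the Siegel part.} A periodic Siegel gap $U$ of $\lam_1$ has boundary that is the closure of the grand orbit of a critical leaf on its cycle (Appendix, Lemma~\ref{l:crit-must}-type facts). That critical leaf must be a chord of $\Po$, because $\Po$ is contained in $\lam_1$ and no critical chord of $\Po$ can enter the interior of $U$. Its forward orbit and pullbacks are common to $\lam_2$ by Step~1, so $\bd(U)$ lies in $\lam_2$. The finite gaps attached to $U$ are handled the same way, since they eventually map into critical finite sets.

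\textbf{Main obstacle.} The hard part is Step~2: showing that every perfect leaf is approximated by iterated pullbacks of chords of $\Po$, rather than merely by some pullbacks that the two laminations might choose differently. Where pullbacks are ambiguous, the two laminations may disagree on isolated leaves. I expect to handle this by showing that the ambiguous pullbacks are exactly those attached at endpoints of $\Po$, and that these produce only isolated leaves. Such leaves are discarded when passing to $\lam^{pS}$.
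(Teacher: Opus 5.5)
The paper gives no argument of its own here: it quotes the statement as a special case of \cite[Theorem 3.57]{bopt20}. Your outline therefore has to stand on its own, and its central step does not.

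\textbf{Step~1 is weaker than you use it.} It only transports to $\lam_2$ pullbacks of leaves already known to lie in $\lam_2$. Even then it works only along chains in which \emph{every} intermediate pullback avoids the vertices of $\Po$. One ambiguous intermediate choice changes everything pulled back from it, so restricting the final chord is not enough.

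\textbf{Step~2 is circular.} A priori the only common leaves are the chords of $\Po$. The ``leaves near $\Po$'' that Step~2 pulls back are leaves of $\lam_1$, and whether they lie in $\lam_2$ is exactly what is at stake. What you actually need is that every leaf of $\lam_1^p$ is a limit of iterated pullbacks of the chords of $\Po$ \emph{themselves}, each unique at every stage in the whole complementary component of $\bigcup\Po$. Uniqueness inside a strip bounded by leaves of $\lam_1$ tells you nothing about $\lam_2$. Moreover, if a vertex of $\Po$ lies in the forward orbit of a critical value (e.g.\ is periodic), even some pullbacks of $\Po$ are ambiguous. This density statement is the real content of the theorem, and it is not proved:
\begin{itemize}
\item ``images of a neighborhood of $\ell$ meet $\bigcup\Po$'' is far weaker than having a chord of $\Po$ between two image leaves;
\item Lemmas~\ref{l:nowanco} and~\ref{l:condense} concern perfect q-laminations and their quotients, not $\lam_1^p$.
\end{itemize}

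\textbf{Your fallback is false.} Ambiguous pullbacks need not be isolated. Take the dendritic quadratic q-lamination of the Misiurewicz parameter $c\approx-1.5437$, where the parameter rays $\frac{5}{12}$ and $\frac{7}{12}$ land. Its critical class is $\{\frac{5}{24},\frac{7}{24},\frac{17}{24},\frac{19}{24}\}$. Add $\Po=\{\ol{\frac{5}{24}\frac{17}{24}}\}$ and its pullbacks. Then $\ol{\frac{5}{24}\frac{7}{24}}$ and $\ol{\frac{7}{24}\frac{17}{24}}$ are the two competing pullbacks of $\ol{\frac{5}{12}\frac{7}{12}}$ into one half-disk, each attached at a vertex of $\Po$. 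Yet both are non-isolated by Lemma~\ref{l:2.18}, hence lie in $\lam_1^p$.

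\textbf{Step~3 is false as stated.} Let $U$ be an invariant Siegel gap of $\si_3$ whose only critical edge is $\ol{pq}$, the hole $(p,q)$ having length $\frac23$. Put $r=p+\frac13$ and $T=\ch(\{p,r,q\})$. Let $\lam_1$ be the Side q-lamination with critical class $\{p,r,q\}$ and Siegel gap $U$ (the model of a unicritical cubic Siegel polynomial), and let $\Po=\{\ol{pr},\ol{rq}\}\subset\lam_1$. Now remove from $\lam_1$ the leaf $\ol{pq}$ and all its iterated pullbacks.
\begin{itemize}
\item Each removed leaf is an isolated edge between a pullback of $U$ and a pullback of $T$.
\item A sibling of a removed leaf is removed.
\item Images and preimages of remaining leaves remain.
\end{itemize}
So the result is an invariant lamination $\lam_2\supset\Po$. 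Its invariant Siegel gap is the convex hull of $U$, $T$ and the pullbacks of $T$ attached to the other edges of $U$. That gap has critical edges $\ol{pr},\ol{rq}$, and no edge of $U$ is a leaf of $\lam_2$. Thus the critical edge of a Siegel cycle need not belong to $\Po$, and $\bd(U)\subset\lam_2$ fails.

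Worse, read Definition~\ref{d:siegel-part} literally, so that edges of periodic Siegel gaps belong to $\lam^{Sie}$ (as the proof of Lemma~\ref{l:side-ps}(1) assumes). Then this pair gives $\ol{pq}\in\lam_1^{pS}\setminus\lam_2$. So the statement itself requires a convention in which a periodic Siegel gap is taken together with its attached finite gaps. A proof must compare Siegel parts at that level, and your Step~3 does not attempt this.
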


\subsection{Root laminations: the Side case}\label{ss:root1}

Let us now give an important definition.

\begin{dfn}\label{d:root1}
Suppose that $\cpc$ is a Side full critical collection. A q-lamination $\lam$
is said to be a \emph{root} q-lamination (with the associated \emph{root} laminational equivalence $\sim$) of $\cpc$
if given a (proper) critical set $A$ from $\cpc^+$ all vertices of $A$ belong to the same $\sim$-class.
\end{dfn}

We claim that a q-lamination $\lam$ is a root lamination of a Side full critical collection $\cpc$
if and only if it is a non-capture q-lamination compatible with $\cpc^+$.
Indeed, if $\lam$ is a root lamination of $\cpc$ then it is non-capture compatible with $\cpc^+$. On the other hand,
if $\lam$ is a non-capture Side q-lamination then no critical chord of $\cpc$ can be contained in an infinite gap of $\lam$ unless
it is an edge of such a gap. Hence all vertices of a critical set from $\cpc^+$ belong to the same $\sim$-class.

\begin{thm}\label{t:side-full}
If $\cpc$ is a Side full critical collection and $\lam$ is a Side non-capture q-lamination compatible with $\cpc^+$
then $\lam=\lam^{pr}_{\cpc}$.
\end{thm}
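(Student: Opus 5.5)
We prove $\lam=\lam^{pr}_{\cpc}$ by showing the two inclusions between the associated equivalence relations $\sim$ (of $\lam$) and $\sim^{pr}_\cpc$, and then invoking the fact that a q-lamination and its laminational equivalence determine each other. Throughout we use that $\lam$ is Side and non-capture, so by Lemma \ref{l:sicripo}(2) all critical sets of $\lam$ are finite. Since $\lam$ is compatible with $\cpc^+$, every set in $\cpc^+$ lies in a critical set of $\lam$ or is an edge of one. Because $\cpc$ is Side, every set of $\cpc^+$ is proper.

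\textbf{Step 1: $\sim^{pr}_\cpc\subseteq\sim$.} By the remark after Definition \ref{d:root1}, $\lam$ is a root lamination of $\cpc$. So all vertices of each proper set of $\cpc^+$ are $\sim$-equivalent, and the convex hull of each such set lies in the convex hull of one $\sim$-class. Next we pull back. $\lam$ is compatible with $\cpc^+$, and Lemma \ref{l:pull} gives pullbacks of leaves into every complementary component. Hence, by induction on the number of pullbacks, the convex hull of every $\Yc_\cpc$-pile lies in the convex hull of a single $\sim$-class. The induction step uses Lemma \ref{l:basic1}(7) together with backward invariance (D2) of $\sim$: the pullback of a class-hull into $\ol{U}$ is a class-hull. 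Thus every edge of a pile hull is a leaf of $\lam$ or lies inside a finite $\sim$-gap. Taking closures ($\sim$ is closed), every chord of $\eYc_\cpc$ joins $\sim$-equivalent points. Finite concatenations preserve this, so $\approx_{\eYc_\cpc}\subseteq\sim$, that is, $\sim^{pr}_\cpc\subseteq\sim$.

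\textbf{Step 2: $\sim\subseteq\sim^{pr}_\cpc$.} Take any leaf $\ell$ of $\lam$. Its forward images are leaves of $\lam$, so they pairwise do not cross, and by compatibility none of them crosses an edge of $\cpc^+$. Theorem \ref{t:1class} applies because $\cpc$ is Side, and it places the endpoints of $\ell$ in one $\sim^{pr}_\cpc$-class. Every $\sim$-class is finite and is connected by its edges, which are leaves of $\lam$. Hence each $\sim$-class lies in a single $\sim^{pr}_\cpc$-class.

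Combining the two steps gives $\sim=\sim^{pr}_\cpc$, and hence $\lam=\lam^{pr}_\cpc$. I expect the main obstacle to be the induction in Step 1. The delicate point is that pulling back a $\sim$-class hull which contains a proper critical set must land inside one $\sim$-class. This needs the positive orientation of Lemma \ref{l:convex1}, together with the fact that on each complementary component $\si_d$ is order-preserving on $\bd(U)$ apart from the collapsed critical edges (Lemma \ref{l:dynama}). Step 2 is comparatively formal once Theorem \ref{t:1class} is available; its one subtle hypothesis, that no image of $\ell$ crosses $\cpc^+$, is exactly compatibility of the invariant lamination $\lam$ with $\cpc^+$.
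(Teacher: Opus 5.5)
Your proof is correct, but it takes a different route from the paper's. The paper argues with leaves and gaps. Non-capture puts each $C\in\cpc^+$ inside a finite gap-leaf of $\lam$. Hence pullbacks of $\cpc^+$-sets lie in pullbacks of these gap-leaves, and all their limits are common leaves of $\lam$ and $\lam^{pr}_\cpc$. Every remaining edge of a gap-leaf of $\lam^{pr}_\cpc$ borders an infinite gap $S$, which is (pre-)Siegel by Lemma \ref{l:side-ps}. The fact that any chord $\ell\sqsubset S$ has two crossing images is then used twice. First it gives $\lam^{pr}_\cpc\subset\lam$. Then it forces a putative leaf of $\lam\setminus\lam^{pr}_\cpc$ to be a diagonal of a finite gap of $\lam^{pr}_\cpc$ whose edges are already leaves of $\lam$, which is impossible.

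You compare the equivalence relations instead. Your Step 1 is cleaner than the paper's first inclusion. Since $\sim^{pr}_\cpc=\approx_{\eYc_\cpc}$ involves only leaves of $\eYc_\cpc$, closedness of $\sim$ handles the edges of $\sim^{pr}_\cpc$-class hulls that are not limits of pullbacks, and no Siegel argument is needed there. Your Step 2 delegates the reverse inclusion to Theorem \ref{t:1class}, whose Case (1) is exactly the paper's Siegel crossing argument. This makes the proof short and modular, but it rests on Theorem \ref{t:1class}, including its Case (2) about leaves crossed by pullbacks; the paper's direct proof avoids that theorem.

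One point in Step 1 should be stated precisely. The pullback of a $\sim$-class hull into $\ol{U}$ is in general only contained in a class hull, not equal to one: if the class contains the image of a critical edge of $U$, the class through that edge has vertices outside $\ol{U}$. Even this containment needs compatibility with $\cpc^+$, not just (D2). The clean formulation is that every chord of $\Yc_\cpc$ has $\sim$-equivalent endpoints. To prove it, use Lemma \ref{l:pull} to pull back into $\ol{U}$ a chain of edges of the class hull joining the image endpoints. Consecutive pullbacks either share an endpoint or end at the two $\sim$-equivalent endpoints of a critical edge of $U$. Then use (E2) to pass from chords to piles, since chords of one pile may cross.
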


Thus, Side non-capture q-laminations are completely defined
by any full critical collection compatible with them: if two such laminations share a full critical collection
(for instance, if they share a critical portrait) then they coincide.

\begin{proof}
Critical sets of $\cpc^+$ are all-critical. Suppose that $C\in \cpc^+$ and show that $C$ is contained in a gap-leaf
of $\lam$. Observe that since $\lam$ is non-capture, edges of $C$ cannot be contained in infinite gaps of $\lam$. Hence
all edges of $C$ are contained in finite gap-leaves. Since distinct gap-leaves of $\lam$ are pairwise disjoint
(they are, after all, convex hulls of finite classes of equivalence), it follows that $C$ is contained in a gap-leaf
of $\lam$ as claimed.

Hence pullbacks of all critical sets from $\cpc^+$ are contained in the pullbacks of the corresponding gap-leaves of $\lam$.
In particular, these laminations are compatible.
It follows that all chords that are limits of pullbacks of sets from $\cpc^+$ are leaves of both $\lam$ and $\lam^{pr}_{\cpc}$.
Now, take a gap-leaf $B$ of $\lam^{pr}_\cpc$ and its edge $\ell$ which is not the limit of pullbacks of sets from $\cpc^+$.
Then $\ell$ is an edge of an infinite gap $S$ of $\lam^{pr}_\cpc$.

By Lemma \ref{l:side-ps}, $S$ is either itself a periodic Siegel gap, or maps onto such
a gap after some steps in the one-to-one fashion. Hence a chord $\ell\sqsubset S$ maps so that two of its images
cross, and so it cannot be a leaf of any lamination.
Thus, $B$ is a gap-leaf of $\lam$ and so $\lam^{pr}_\cpc\subset \lam$. Moreover, the same argument
shows that if $\ell\in \lam\sm \lam^{pr}_{\cpc}$ then there exists
a finite gap $G$ of $\lam^{pr}_\cpc$ with $\ell\sqsubset G$. However this is impossible because by the above
all edges of $G$ are leaves of $\lam$. Thus, $\lam=\lam^{pr}_\cpc$ as desired.
\end{proof}

\section{Cycles of hyperbolic gaps}\label{s:hypegaps}

Let us study periodic hyperbolic gaps of $\eYc_{\cpc}$ and $\lam^{pr}_{\cpc}$.

\subsection{Periodic hyperbolic gaps of preroot q-laminations}
In the construction of $\lam^{pr}_{\cpc}$ we pull back \emph{only proper sets} of $\cpc^+$; yet, we use \emph{all sets} of $\cpc^+$ when we define the pullbacks.
Hence portals of $\cpc^+$ are compatible with $\eYc_{\cpc}$. However, the same claim does not hold of
the q-lamination $\lam^{pr}_{\cpc}$. Consider an invariant quadratic gap $U$ of $\lam^{pr}_{\cpc}$ with an invariant finite gap $G$ attached to $U$ along
an invariant edge $\ell$ of $U$. Then there exists a pullback chord $\ell'$ of $\ell$ which is also an edge of $U$, and a finite gap $G'$ attached to $U$
along $\ell'$ and such that $\si_d(G')=G$. The chord $\ell'$ is a \emph{sibling} of $\ell$, and $G'$ can be called a \emph{sibling} of $G$. It may well happen that
there is a portal $\{\ol{xx'}\}$ where $x\in G\sm \ell$ is a vertex of $G$ and $x'$ is the associated sibling vertex of $G'$. In that case $\ol{xx'}$ is
not compatible with $U$ because it crosses $\ell$. However if we erase $\ell$ and its entire grand orbit, we will get a slightly extended version
$\widetilde{U}$ of $U$ which may be a gap of $\eYc_{\cpc}$, and $\widetilde{U}$ \emph{is} compatible with $\ol{xx'}$.

On the positive side, periodic hyperbolic gaps of $\lam^{pr}_{\cpc}$ have nice properties.

\begin{dfn}\label{d:satellite}
Let $U$ be a periodic hyperbolic gap of a lamination $\lam$. If there exists a finite gap $G$ such that $U$ is attached to $G$
along an edge and the period of $U$ is greater than that of $G$, then $U$ is said to be of \emph{satellite type}.
Otherwise $U$ is said to be of \emph{primitive type.}
\end{dfn}

A periodic hyperbolic Fatou gap $U$ with an attached to it finite periodic gap of the same period as $U$ 
is of primitive type. 

Recall that in Definition \ref{d:poext2} we defined a number of objects describing critical collections.
Given a full critical collection $\cpc$, we denote by
$\{\Kc_i(\cpc)=\Kc_i\}$ the collections of critical chords forming components of $\bigcup \cpc$ and call them
\emph{critical chordial continua (of $\cpc$)}.
For each $i$ we then denote by $\Cc_i(\cpc)=\Cc_i$ the family of edges of the convex hull $\ch(\bigcup \Kc_i)$ of the set $\bigcup \Kc_i$.

\begin{lem}\label{l:primiti}
Distinct periodic hyperbolic gaps of $\lam^{pr}_{\cpc}$ cannot share edges with one finite gap or with each other. Thus,
all periodic hyperbolic gaps of $\lam^{pr}_{\cpc}$ are of primitive type.
\end{lem}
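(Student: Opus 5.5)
We argue by contradiction, using the structure of $\lam^{pr}_{\cpc}$ as the q-lamination generated by $\eYc_{\cpc}$. All leaves of $\eYc_{\cpc}$ are limits of edges of convex hulls of $\Yc_{\cpc}$-piles, and these eventually map to edges of proper sets of $\cpc^+$ (Lemma \ref{l:basic1}(8), Lemma \ref{l:prelam}). The key observation is the following. A periodic hyperbolic gap $U$ of $\lam^{pr}_\cpc$ must contain, in its cycle, a critical gap of degree $>1$. By construction the criticality of such a gap is not realized by proper sets, since those are collapsed into finite classes. Hence it comes from portals of $\cpc^+$ contained in the closures of gaps of the cycle of $U$: the only critical sets of $\cpc^+$ that are not absorbed into finite $\sim^{pr}_\cpc$-classes are the portals.

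First suppose two distinct periodic hyperbolic gaps $U,V$ of $\lam^{pr}_\cpc$ share an edge $\ell$, or are both attached to one finite gap $G$. Passing to an iterate $\si_d^n$ with $n$ a common multiple of the periods, we may assume $U$, $V$, $G$ and $\ell$ are all fixed; edges of periodic hyperbolic gaps are (pre)periodic by Lemma \ref{l:edges-of-gaps} and Theorem \ref{t:kiwan}, so we may also arrange that $\ell$, respectively the edges of $G$ along which $U$ and $V$ attach, are fixed. Then $\ell$ (or the relevant edges of $G$) must be a leaf of $\eYc_\cpc$ approached from both sides, or must be added when taking convex hulls of $\approx_{\eYc_\cpc}$-classes. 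In the first situation $\ell$ is isolated on both sides, being an edge of two infinite gaps. By the construction, isolated leaves of $\eYc_\cpc$ are edges of pullback piles, hence eventually map onto edges of proper critical sets; but $\ell$ is periodic, and a proper critical set has no periodic vertex. This is a contradiction.

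So $\ell$, or the attaching edges, arise in $\lam^{pr}_\cpc$ only after taking convex hulls of classes. We then argue as in Lemma \ref{l:prer-cross} and the discussion of $\widetilde U$ preceding Definition \ref{d:satellite}. In $\eYc_\cpc$ the gap $U$ is enlarged to a gap $\widetilde U\supset U\cup G$, or $\widetilde U\supset U\cup V$, of $\eYc_\cpc$. The first-return degree of $\widetilde U$ is at least the sum of the contributions of $U$ and $V$, because both are of degree $>1$ and their critical portals lie in distinct complementary pieces. The main step is then a counting argument on the portals inside the cycle of $\widetilde U$: each portal in $\widetilde U$ maps to the single periodic point that is the image of its periodic vertex. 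We aim to show that all portals lying in the cycle of $\widetilde U$ must sit in a single gap of $\lam^{pr}_\cpc$ inside $\widetilde U$. Such a portal lies in one complementary component of $\cpc^+$, and the pullback structure near the fixed finite gap $G$ (via Lemma \ref{l:pull} and the orientation property of Lemma \ref{l:convex1}) forces the critical wedge or portal to cross $\ell$ or an edge of $G$. Since portals are compatible with $\eYc_\cpc$ but not necessarily with $\lam^{pr}_\cpc$, this yields the needed contradiction with $U$ and $V$ both being of degree $>1$.

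Finally, the claim that all periodic hyperbolic gaps are of primitive type follows directly. If $U$ were of satellite type, it would be attached along an edge to a finite gap $G$ of smaller period. The images $\si_d^{k}(U)$, for $k$ the period of $G$, would then be distinct periodic hyperbolic gaps of the same cycle attached to the same finite gap $G$, which the first part rules out. I expect the main obstacle to be the middle step: pinning down precisely why two degree $>1$ gaps cannot meet at a common fixed edge or finite gap, that is, showing that the portals responsible for their criticality would be forced to be incompatible with the fixed leaves.
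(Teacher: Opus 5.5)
\textbf{Verdict: genuine gap.} Your key fact is the right one, and the paper's proof rests on it: a leaf isolated in $\eYc_\cpc$ must be an edge of the convex hull of a $\Yc_\cpc$-pile. Such an edge is eventually mapped onto an edge of a proper set of $\cpc^+$ and then collapsed, so it cannot have periodic endpoints. However, you apply this only when $\ell$ (or the attaching edges of $G$) already lies in $\eYc_\cpc$.

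For the other case you give a portal-counting sketch that is never carried out, and its intended contradiction is not a contradiction. The degree estimate for $\widetilde U$ is unsupported. More importantly, a portal crossing $\ell$ or an edge of $G$ can genuinely happen: the discussion before Definition \ref{d:satellite} describes exactly such a portal crossing a leaf of $\lam^{pr}_\cpc\sm\eYc_\cpc$. So the step you call the main obstacle is unproved. A minor point as well: Lemma \ref{l:edges-of-gaps} gives edges that are (pre)periodic \emph{or (pre)critical}. A precritical shared edge must be excluded separately, e.g.\ because it would produce a leaf of $\lam^{pr}_\cpc$ joining a periodic point to a non-periodic one.

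The missing idea is that no portal analysis is needed. The same fact settles the remaining case once you use how $\lam^{pr}_\cpc$ is generated from $\eYc_\cpc$. Each leaf of $\eYc_\cpc$ joins $\sim^{pr}_\cpc$-equivalent points, so it lies in the convex hull of a class and is compatible with $\lam^{pr}_\cpc$.

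\emph{Shared edge.} If $U$ and $V$ share $\ell$, the class of the endpoints of $\ell$ consists of these two points only. So the finite concatenation of $\eYc_\cpc$-leaves joining them is $\ell$ itself. Hence $\ell\in\eYc_\cpc$ automatically, and the enlargement $\widetilde U\supset U\cup V$ never occurs.

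\emph{Common finite gap.} Suppose $U$ and $V$ are attached to a periodic finite gap $G$ along $\ell=\ol{ab}$ and $\hell=\ol{ce}$, labelled so that the arcs $(a,b)$ and $(c,e)$ face $U$ and $V$. The vertices of $G$ form a single $\approx_{\eYc_\cpc}$-class, so the $\eYc_\cpc$-leaves joining them form a connected graph. Hence one of these leaves joins a vertex in $[b,c]$ to a vertex in $[e,a]$, and so separates $U$ from $V$. This leaf is $\ell$, $\hell$, or a diagonal of $G$, and its endpoints are periodic. It is also isolated in $\eYc_\cpc$: any other chord close to it would cross an edge of $G$ or meet the interior of $U$ or $V$, and no chord contained in the convex hull of a class can do either. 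This contradicts the key fact.

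Your deduction of primitive type from the first claim is fine.
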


\begin{proof}
The construction implies that all Fatou gaps of $\eYc_{\cpc}$ have their edges as either edges of
pullbacks of proper sets $\Cc_i$, or limits of such pullbacks. Thus, if $U$ and $\widehat{U}$ are periodic hyperbolic gaps
of $\lam^{pr}_{\cpc}$ sharing an edge $\ell$, or sharing edges $\ell$ and $\hell$ with
a periodic finite gap $G$ then $\ell,$ $\hell$ and all chords from their grand orbits
do not belong to $\eYc_{\cpc}$ because they are isolated and are not pullbacks of proper critical set.
\end{proof}

Let $U$ be a hyperbolic gap of $\lam^{pr}_{\cpc}$. It can have finitely many periodic edges. At some of them a finite gap
may be attached to $U$. Denote the family of all such edges of $U$ and their pullbacks by $\Tc$.
If $\ell\in \Tc$ then by construction $\ell\notin \eYc_{\cpc}$; it is added to $\eYc_{\cpc}$ because
$\lam^{pr}_{\cpc}$ is a q-lamination. As was explained in the beginning of this section, it may happen that
periodic edges from $\Tc$ cross edges of portals. 
If we remove all edges of $\Tc$ at which finite gaps are attached to $U$
we will obtain a gap $U'\supset U$ of $\eYc_{\cpc}$
compatible with $\cpc^+$ (otherwise $U$ will not be a gap of $\lam^{pr}_{\cpc}$). Thus, $U'$ can be obtained by removing all periodic edges of
$U$ at which finite periodic gaps are attached to $U$ as well as chords from their grand orbits. The gap $U'$ is called the \emph{extension
gap of $U$} or just the \emph{extension of $U$}. The advantage of the description of $U'$ given above is that it can be
constructed and studied without invoking $\eYc_{\cpc}$.

\begin{dfn}\label{d:tune-portal}
Let $\Hc_\cpc$ be the collection of all gaps of $\eYc_\cpc$ that eventually map to periodic hyperbolic
gaps of $\eYc_\cpc$.
\end{dfn}

Notice that while some critical chords from $\cpc^+$ may be edges of gaps of $\Hc_\cpc$ this is impossible for edges of portals.

\begin{lem}\label{l:no-po-ed} No edge of a portal is a leaf of $\lam^{pr}_\cpc$.
\end{lem}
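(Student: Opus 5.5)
The plan is to argue by contradiction. Let $\ell=\ol{xy}$ be an edge of a portal $\ch(\Cc_i)$, so $\ell$ is a critical chord and, by Definition \ref{d:im-prop}, some vertex $v$ of $\Cc_i$ is periodic, say of period $n$. Since all vertices of $\Cc_i$ have the same $\si_d$-image, $\si_d(x)=\si_d(y)=\si_d(v)$. Suppose $\ell\in\lam^{pr}_\cpc$.

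First I will dispose of the case $v\in\{x,y\}$. Then $\ell$ is a critical leaf with a periodic endpoint. This contradicts Lemma \ref{l:crit-equiv}(1), since the q-lamination $\lam^{pr}_\cpc$ is proper by Lemma \ref{l:4.2}.

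Otherwise $v\notin\{x,y\}$, and both $x$ and $y$ are siblings of the periodic point $v$, so neither is periodic. The endpoints $x$ and $y$ lie in one $\sim^{pr}_\cpc$-class $\g$. Hence $\si_d(\g)$ is a class containing $\si_d(v)$, which is periodic. Since classes of a laminational equivalence are finite and, by Lemma \ref{l:samep}, a class containing an $n$-periodic point consists only of $n$-periodic points, the class $\si_d(\g)$ consists of periodic points, and $\si_d^{n-1}(\si_d(\g))$ is the class of $v$. By (D2), $\si_d^{-1}(\si_d(\g))$ is a union of classes. The class $\g$ itself contains non-periodic points ($x$ and $y$), and $\si_d|_{\g}$ collapses $\ell$. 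So $\g$ is a critical class mapping onto a periodic class, and $\g\ne$ the class of $v$ (again by Lemma \ref{l:samep}, because $x$ is non-periodic). Thus $v\notin\g$.

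Now I use the geometry of $\Cc_i$. The point $v$ is a vertex of the convex polygon $\ch(\Cc_i)$, whose edges are critical chords. Walking along $\bd(\ch(\Cc_i))$ from $x$ or $y$ towards $v$, I will consider the edge $\ol{v w}$ of the portal at $v$. The chord $\ol{vw}$ is critical with periodic endpoint $v$, so it is not a leaf of $\lam^{pr}_\cpc$, as shown in the first case. The same holds for any chord inside $\ch(\Cc_i)$ joining $v$ to another vertex.

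The key step is to show that $\ch(\g)$, which contains $\ell$, together with the requirement that $\lam^{pr}_\cpc$ is unlinked with $\cpc^+$-data, forces a contradiction. I expect to use Lemma \ref{l:prer-cross}: a leaf of $\lam^{pr}_\cpc$ not in $\eYc_\cpc$ must be an edge shared by a finite gap and an infinite (pre)hyperbolic gap eventually mapping to a periodic edge of a periodic hyperbolic gap. So I first check that $\ell\notin\eYc_\cpc$. This holds because leaves of $\eYc_\cpc$ are edges of convex hulls of $\Yc_\cpc$-piles or their limits, these eventually map onto edges of \emph{proper} sets (Lemma \ref{l:basic1}(3),(8)), and critical limits would have to be edges of proper sets by the last claim of Lemma \ref{l:convex1}. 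Hence $\ell$ is a common edge of the finite gap $\ch(\g)$ and a (pre)hyperbolic gap $U$. But $\ell$ is critical, and a critical edge of a (pre)hyperbolic gap adjacent to a finite critical class forces $\si_d(U)$ to have the degenerate image $\si_d(\ell)$ as a vertex. Then $\si_d(\ch(\g))$ and $\si_d(U)$ meet at the periodic point $\si_d(v)$'s class, giving, after $n$ more iterates, a periodic hyperbolic gap sharing a periodic vertex with a finite periodic class. By Lemma \ref{l:primiti} and the extension-gap description preceding Definition \ref{d:tune-portal}, such an attached edge is removed in $U'$, and portal edges are compatible with $\eYc_\cpc$. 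So $\ell$ would have to lie inside the extension $U'$ rather than on its boundary, contradicting $\ell$ being an edge of $\ch(\g)$ meeting $U$ along a leaf.

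The main obstacle is this last step: making precise why a critical chord cannot be the common edge described by Lemma \ref{l:prer-cross}. Lemma \ref{l:prer-cross} says that edge eventually maps to a \emph{non-degenerate} periodic edge, whereas $\ell$ maps to a point. That mismatch is the cleanest contradiction, and I would try it first.
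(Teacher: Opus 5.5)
Your first case is fine. If the periodic vertex $v$ is an endpoint of $\ell$, then $\ell$ is a critical chord with a periodic endpoint, so it cannot be a leaf of the proper lamination $\lam^{pr}_\cpc$ (Lemmas \ref{l:4.2} and \ref{l:crit-equiv}).

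The gap is in the second case. There properness alone gives nothing, because a critical chord with non-periodic endpoints collapsing to a periodic point is proper. You assert $\ell\notin\eYc_\cpc$, but your justification only covers edges of convex hulls of $\Yc_\cpc$-piles. For limit leaves you cite the last claim of Lemma \ref{l:convex1}, which concerns edges of pile hulls and says nothing about limits. A limit of non-critical edges may well be critical, and whether such a limit can be a portal edge is exactly what the lemma asserts.

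The missing idea is a local crossing argument at the periodic vertex; this is the paper's entire proof.
\begin{itemize}
\item Suppose leaves $\ell_i=\ol{x_iy_i}$ of $\eYc_\cpc$ converge to $\ell=\ol{xy}$. They do not cross edges of $\ch(\Cc_i)$ (Lemma \ref{l:ey-nocross}). Hence their endpoints lie in the closure of the arc cut off by $\ell$ that contains no other vertex of the portal.
\item Since $\si_d$ preserves orientation locally and $\si_d(x)=\si_d(y)=\si_d(v)$, the points $\si_d(x_i)$ and $\si_d(y_i)$ lie on opposite sides of $\si_d(v)$.
\item Applying $\si_d^{n-1}$, the chord $\si_d^n(\ell_i)$ is short and its endpoints lie on opposite sides of $v$. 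It therefore crosses the edges of the portal at $v$, contradicting Lemma \ref{l:ey-nocross}.
\item If $\ell_i$ shares an endpoint with $\ell$, then for large $i$ the chord $\si_d^n(\ell_i)$ joins $v$ to a nearby point that is not $n$-periodic. This contradicts properness of $\eYc_\cpc$ (Theorem \ref{t:side-lam}).
\end{itemize}

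Your treatment of the remaining possibility also fails. That possibility is $\ell\in\lam^{pr}_\cpc\setminus\eYc_\cpc$, arising as an edge of the convex hull of an $\approx_{\eYc_\cpc}$-class. Lemma \ref{l:prer-cross} assumes the leaf is \emph{incompatible} with $\cpc^+$. A portal edge is an edge of a set of $\cpc^+$, hence compatible with it, so the lemma does not apply.

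Worse, in the proof of that lemma, the conclusion that the edge eventually maps to a non-degenerate periodic edge uses this very hypothesis. It is what excludes the case where the finite gap is eventually collapsed onto a critical leaf or point, which is precisely your situation. So the ``mismatch'' you plan to exploit is circular. The subsequent discussion of extension gaps and Lemma \ref{l:primiti} does not supply an argument either.

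Note that the paper's one-line proof addresses only the approximation case. If you want to rule out this second scenario explicitly, you need a separate argument of a different kind, not Lemma \ref{l:prer-cross}.
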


\begin{proof}
If pullbacks of edges of proper sets from $\cpc^+$ approach an edge $\ell$ of a portal $C$ with an $n$-periodic vertex $x$ then their
$\si_d^n$-images will cross the edges of $C$ that have an endpoint $x$, a contradiction.
\end{proof}

\subsection{Refining hyperbolic gaps}

We now refine $\lam^{pr}_{\cpc}$ relying on
portals; using familiar language, one can say that we \emph{tune $\lam^{pr}_{\cpc}$ inside periodic hyperbolic gaps of
$\lam^{pr}_{\cpc}$ and spread this tuning over the entire lamination $\lam^{pr}_{\cpc}$}. This results into definition of
the \emph{root q-lamination $\lam^{root}_\cpc$ generated by $\cpc$}. Basically, $\lam^{root}_\cpc$ tunes $\lam^{pr}_\cpc$
inside its hyperbolic cycles and their pullbacks so that around each cycle of portals its own cycle of hyperbolic gaps is
constructed. This is done by tuning $\lam^{pr}_\cpc$ inside grand orbits of its hyperbolic cycles \emph{as much as possible}.
We rely upon the results of the previous sections applying them to the first return maps
on the periodic hyperbolic gaps of $\lam^{pr}_\cpc$.

\begin{lem}[Portal location]\label{l:portaloc}
Each portal of $\cpc^+$ is contained in a periodic gap from $\Hc_\cpc$.
Periodic gaps from $\Hc_\cpc$ can $\sqsupset$-contain only portals and not other sets of $\cpc^+$.
If $\lam'\supset \lam^{pr}_\cpc$ is compatible with $\cpc$ then edges of portals cannot be approached by leaves of
$\lam'$.
\end{lem}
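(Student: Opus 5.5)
We prove the three claims in the order stated. For the first claim, let $C\in\cpc^+$ be a portal with an $n$-periodic vertex $x$. By Lemma \ref{l:ey-nocross}, $C$ is compatible with $\eYc_\cpc$, and by Lemma \ref{l:no-po-ed} no edge of $C$ is a leaf of $\lam^{pr}_\cpc$. We first check that no edge of $C$ is a leaf of $\eYc_\cpc$: by Lemma \ref{l:prelam} every chord of $\eYc_\cpc$ is proper, while an edge of $C$ with endpoint $x$ is critical with a periodic endpoint. Hence $C$ lies in the closure of a single gap $W$ of $\eYc_\cpc$ and is $\sqsubset$-contained in it. We then show that $W$ is infinite and lies in $\Hc_\cpc$.
\begin{itemize}
\item If $W$ were finite, then $x$ would be a vertex of a finite gap of the proper lamination $\eYc_\cpc$ containing a critical chord at $x$. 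This gives two vertices of $W$ with the same image, one of which is joined to $x$ and is not $n$-periodic. This contradicts properness, via Lemma \ref{l:crit-equiv} and Lemma \ref{l:samep} applied to the generated q-lamination.
\item So $W$ is infinite. By Theorem \ref{t:kiwan} it is (pre)periodic, and since it contains the periodic point $x$ and the critical chord in its interior, it is in fact periodic, of period dividing some multiple of $n$.
\item Because $W$ contains a critical chord in its interior, the first return map on $\bd(W)$ has degree $>1$. Siegel and caterpillar gaps have degree one, so by the classification in the Appendix $W$ must be a periodic hyperbolic gap, that is, $W\in\Hc_\cpc$.
\end{itemize}

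For the second claim, let $W\in\Hc_\cpc$ be periodic and suppose $A\in\cpc^+$ is a proper set with $A\sqsubset W$. Since $A$ is proper, its edges belong to $\Yc_\cpc$ and are edges of the convex hull of a $\Yc_\cpc$-pile. By Lemma \ref{l:prelam} those hull edges are leaves of $\eYc_\cpc$. A leaf of $\eYc_\cpc$ cannot enter the interior of the gap $W$, so $A$ cannot be $\sqsubset$-contained in $W$. Therefore only portals can be $\sqsubset$-contained in periodic gaps of $\Hc_\cpc$.

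For the third claim, let $\lam'\supset\lam^{pr}_\cpc$ be compatible with $\cpc$, and suppose leaves $\ell_i\in\lam'$ converge to an edge $\oc$ of a portal $C$ with $n$-periodic vertex $x$. Following the proof of Lemma \ref{l:no-po-ed}, we split into cases.
\begin{itemize}
\item If $\ell_i$ approach $\oc$ from the side away from $x$'s neighbouring edges, we take $\si_d^n$-images. Since $\si_d^n$ fixes $x$ and is locally expanding at $x$, while $\si_d^n(\oc)$ is a point, the images $\si_d^n(\ell_i)$ are short chords near $\si_d^n(x)=x$ that straddle the endpoints of edges of $C$ at $x$. Their endpoints are pushed apart around $x$ at a linear rate in the distance to $x$. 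A further application of $\si_d^n$ then forces crossing either with an edge of $C$, contradicting compatibility with $\cpc$, or with an earlier $\ell_i$.
\item If the $\ell_i$ lie inside the periodic hyperbolic gap $W$ of $\Hc_\cpc$ containing $C$, we use the first return map on $\bd(W)$, which is a covering of degree $>1$ semiconjugate to $\si_k$ for some $k$. We transport the picture to that model and apply the same crossing argument there.
\end{itemize}

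The main obstacle is this last claim: controlling the geometry of the $\si_d^n$-images near $x$ uniformly enough to force a crossing. I would first try reducing to the case where $x$ is fixed by passing to $\si_d^n$, as in the proof of Lemma \ref{l:prop-clos}.
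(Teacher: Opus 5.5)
Your three-part outline matches the paper's proof: the portal sits in a gap of $\eYc_\cpc$ that must be periodic hyperbolic; proper sets are used up in building $\eYc_\cpc$; and $\si_d^n$-images of approaching leaves cross the portal at $x$. The first part is essentially right. One omission: properness only excludes chords through $x$, so for a portal with four or more vertices you must also rule out that a diagonal of $C$ avoiding $x$ is a leaf of $\eYc_\cpc$. This is true: critical hull edges of piles are edges of proper sets of $\cpc^+$, which are disjoint from $C$, and such a diagonal is isolated among chords not crossing edges of $C$, so it is not a limit of hull edges either. Two steps, however, do not work as written.

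In the second claim, the edges of a proper set $A\in\cpc^+$ need \emph{not} be edges of the convex hull of its $\Yc_\cpc$-pile. If $\si_d(A)$ is a vertex of another pile, pullbacks of that pile are attached to $A$ along each of its edges; this is exactly the growth described in the proof of Lemma \ref{l:finpil}. The edges of $A$ then become diagonals of the pile hull $P$, and these need not be leaves of $\eYc_\cpc$. Argue with $P$ instead. Its edges are leaves of $\eYc_\cpc$ by Lemma \ref{l:prelam}, so the interior of a gap $W$ of $\eYc_\cpc$ lies either inside $P$, which is impossible for infinite $W$, or outside $P$. In the latter case no edge of $A\subset P$ meets the interior of $W$.

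In the third claim you stop at what you call the main obstacle, but no uniform control, no second iterate and no canonical model are needed.
\begin{itemize}
\item Write $\oc=\ol{ab}$ and $\ell_i=\ol{a_ib_i}\to\oc$. For large $i$, compatibility with $\cpc$ forces $a_i,b_i$ into the closure of a component of $\uc\setminus\{a,b\}$ that contains no vertex of $C$.
\item Suppose $a_i\ne a$ and $b_i\ne b$. Since $\si_d$ preserves orientation locally and $\si_d(a)=\si_d(b)=\si_d(x)$, the endpoints of $\si_d(\ell_i)$ are close to $\si_d(x)$ and on opposite sides of it.
\item Hence $\si_d^n(\ell_i)\in\lam'$ is a short chord whose endpoints lie close to $x$ on opposite sides of it. Such a chord already crosses every chord of $\cpc$ issuing from $x$, which contradicts compatibility.
\end{itemize}
That is the paper's one-line argument.

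What neither your sketch nor that line spells out is the subcase where the $\ell_i$ share an endpoint with $\oc$. Then the $\si_d^n(\ell_i)$ are infinitely many distinct leaves emanating from $x$ (note $\si_d^n(a)=\si_d^n(b)=x$), and they need not cross any chord of $\cpc$. So a separate reason is required, e.g.\ finiteness of classes when $\lam'$ is a q-lamination.
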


\begin{proof}
A portal $\Cc$ must be contained in a gap $U$ of $\eYc_\cpc$, and
this can only happen if $U$ is a periodic gap from $\Hc_\cpc$. On the other hand, all proper
critical sets are used in the construction of $\lam^{pr}_\cpc$ and cannot be contained in gaps of $\Hc_\cpc$.
The last claim follows from the fact that leaves that approach edges of a portal $A$ with $n$-periodic vertex have images
(under iterations of $\si_d^n$) that cross edges of $A$.
\end{proof}

Portals form cycles in the following sense.

\begin{dfn}\label{d:cyc-port} Let $x$ be the vertex of a portal $\Cc$ of period $n$. Then the orbit of
$x$ may well have other points that are vertices of portals. Say that all these portals with vertices in the orbit
of $x$ form a \emph{cycle of portals}.
\end{dfn}

We define cycles of portals as explained above despite the fact that the portals in that cycle do not map onto other portals,
rather they map to vertices of other portals.

\begin{dfn}[Tuning]\label{d:tune}
Let $\lam_1$ and $\lam_2$ be q-laminations associated with laminational equivalences $\sim_1$ and $\sim_2$.
The lamination $\lam_2$ \emph{tunes} the lamination $\lam_1$ (denoted $\lam_1\prec \lam_2$) if and only if for any $x, y\in \uc$
with $x\sim_1 y$ we have $x\sim_2 y$. Equivalently, classes of $\sim_2$-equivalence are unions of finitely many classes of $\sim_1$-equivalence.

Suppose that $\lam$ is a q-lamination with a cycle $\Uc$ of hyperbolic gaps, and that $\lam\prec \hlam$ so that
leaves from $\hlam\sm \lam$ appear inside gaps of $\Uc$, their pullbacks appear in gaps of the grand orbit
of $\Uc$, and outside the grand orbit of $\Uc$ the leaves of $\hlam$ and $\lam$ are the same.
Then say that $\hlam$ \emph{tunes $\lam$ inside $\Uc$}.
\end{dfn}

By definition a lamination tunes itself. If $\lam_1\ne \lam_2$ and $\lam_2$ tunes $\lam_1$ then we say that
$\lam_2$ \emph{non-trivially} tunes $\lam_1$. Similarly, if $\lam_2$ tunes $\lam_1$ inside a cycle of hyperbolic gaps $\Uc$
but $\lam_2\ne \lam_1$ then we say that $\lam_2$ \emph{non-trivially} tunes $\lam_1$ inside $\Uc$.

\begin{lem}\label{l:tuning1}
If $\lam_1\prec \lam_2$ then a leaf from $\lam_2\sm \lam_1$ belongs to 
an infinite gap $U$ of $\lam_1$. Moreover, $U$ cannot be a Siegel gap.
\end{lem}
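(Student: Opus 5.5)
We argue directly from Definition \ref{d:tune}. Let $\ell=\ol{xy}\in\lam_2\sm\lam_1$, and let $\sim_1,\sim_2$ be the corresponding equivalences, so that $x\sim_2 y$ and each $\sim_2$-class is a finite union of $\sim_1$-classes. First I would show that $\ell$ does not cross any leaf of $\lam_1$. Suppose $\ell$ crosses a leaf $\hell=\ol{ab}\in\lam_1$. Then $a\sim_1 b$, so $a\sim_2 b$. Hence $\hell\subset\ch([x]_{\sim_2})$ and also $\ell\subset\ch([x]_{\sim_2})$. But $\ell$ is an edge of this convex hull, and an edge cannot be crossed by a chord that lies inside the convex hull. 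This contradiction shows that $\ell$ lies in the closure of a single gap $U$ of $\lam_1$.

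Next I would show that $\ell$ is not an edge of $U$. If it were, then $\ell\in\lam_1$, contrary to the choice of $\ell$. So $\ell\sqsubset U$, i.e. $\ell$ meets the interior of $U$.

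Then I would show that $U$ is infinite. Suppose $U$ is finite. Then $U$ is a gap of the q-lamination $\lam_1$ with $U\cap\uc$ finite, so $U$ is either the convex hull of a single $\sim_1$-class, or it is bounded by leaves coming from several distinct classes. In the first case $x\sim_1 y$, so $\ell$ lies inside $\ch([x]_{\sim_1})$ and is not one of its edges, hence $\ell\notin\lam_1$ is consistent. However, $\ch([x]_{\sim_2})\supset\ch([x]_{\sim_1})=U$, so $\ell$ is interior to $\ch([x]_{\sim_2})$ and therefore cannot be an edge of it. This contradicts $\ell\in\lam_2$. In the second case, $U$ is a finite gap of $\lam_1$ that is not a class hull. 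The point to verify here is that, for a q-lamination, gaps with finitely many vertices are always convex hulls of classes. Indeed, if the vertices of $U$ were split among at least two classes, the complement of $\bigcup\lam_1$ near $U$ would force infinitely many vertices, since leaves of a q-lamination bounding a complementary region approach one another only at infinite gaps. This is the standard fact that finite gaps of q-laminations are class hulls. So $U$ is infinite.

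Finally, suppose $U$ is a Siegel gap, or more generally (pre)Siegel, meaning it eventually maps to a periodic Siegel gap. By Theorem \ref{t:kiwan}, $U$ eventually maps one-to-one onto a periodic gap $V$ whose first return map is conjugate to an irrational rotation via the monotone collapse of edges. Under forward iteration, $\ell$ maps to chords $\sqsubset$ the gaps in the orbit; I would check that the images are not edges and do not degenerate, using injectivity on $V$. Since $\ell$ meets the interior of $V$, its endpoints project to two distinct points of the circle under the collapse. Irrationality of the rotation then gives an iterate $\si_d^{kn}(\ell)$ crossing $\ell$ inside $V$. But $\lam_2$ is invariant, so $\si_d^{kn}(\ell)\in\lam_2$, and two leaves of $\lam_2$ cannot cross. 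This contradiction shows that $U$ is not Siegel.

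The main obstacle I expect is verifying that the endpoints of $\ell$ project to distinct points under the monotone collapse, i.e. that $\ell$ is not a chord joining endpoints of one collapsed edge-block. I would handle this using $\ell\sqsubset V$ together with the fact that a chord between endpoints of the same collapsed concatenation would lie outside the interior of $V$.
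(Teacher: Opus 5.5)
The step you flag as the main obstacle is where your argument, as proposed, fails. The justification you propose is false: a chord joining the two ends of a collapsed concatenation of edges does not miss the interior of $V$. If $\ol{xz}$ and $\ol{zy}$ are adjacent edges of an infinite gap, then $\ol{xy}$ has the single vertex $z$ on one side and infinitely many vertices on the other, so $\ol{xy}\sqsubset V$. Yet $x$, $z$, $y$ all collapse to one point, and Lemma \ref{l:crit-must} explicitly allows such concatenations for general invariant laminations.

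The missing ingredient is that $\lam_1$ is a q-lamination. Suppose two edges $\ol{xz}$, $\ol{zy}$ of an infinite gap $U$ of $\lam_1$ shared the endpoint $z$. Then $x,y,z$ would lie in one $\sim_1$-class, whose convex hull contains the triangle on $x,y,z$ and so shares interior points with $U$. Since both are gaps of $\lam_1$, the hull would equal $U$, which is impossible because $U$ is infinite. Hence the edges of $U$ are pairwise disjoint, each fibre of the collapse is a single edge or a point, and since $\ell$ is not an edge, its endpoints have distinct images. The irrational rotation then gives $k$ with $\si_d^{kn}(\ell)\in\lam_2$ non-degenerate and crossing $\ell$, as you intended. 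Also, the statement concerns only (periodic) Siegel gaps, so you can work directly in $U$. The detour through Theorem \ref{t:kiwan} and pre-Siegel gaps is unnecessary, and it would need an extra argument that the images of $\ell$ stay diagonals.

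There are two smaller repairs.
\begin{itemize}
\item In the no-crossing step you tacitly assume $a\sim_2 x$. If $[a]_{\sim_2}\ne[x]_{\sim_2}$, the contradiction comes from (E2) instead.
\item The fact that a finite gap of a q-lamination is a class hull should be proved directly rather than by your vague argument. Every edge of a finite gap $U$ is a leaf, so consecutive vertices of $U$ are $\sim_1$-equivalent. Hence $U\cap\uc$ lies in one class $E$, and $U=\ch(E)$.
\end{itemize}

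With these fixes, your leaf-by-leaf route is a legitimate alternative to the paper's class-by-class proof. The paper takes classes $\Ec_2\supsetneqq\Ec_1$ with hulls $G_2\supset G_1$ and splits $G_2\sm G_1$ into polygons, each missing one edge $\ell$ of $G_1$. It then argues that an infinite gap of $\lam_1$ is attached to $G_1$ along $\ell$ and contains the remaining edges of that polygon. Your version is more direct for the statement as phrased, while the paper's also locates the new leaves relative to $G_1$. Your Siegel part is the paper's one-line remark that forward images of such a chord cross, with the detail above supplied.
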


\begin{proof}

Let $\Ec_2\supsetneqq \Ec_1$ be classes of $\lam_2$ and $\lam_1$ with convex hulls $G_2\supset G_1$, resp. Then $G_2\sm G_1$
has finitely many components, each of which is a polygon without one edge.
Let $\ell$ be one of such edges; clearly, the component of $G_2\sm G_1$ whose convex hull contains $\ell$, is unique.
Denote this convex hull by $H$. Since $\ell$ cannot be an edge of any finite gap of $\lam_1$ (by definition of a q-lamination)
and cannot be approached from the outside of $G_1$ by leaves of $\lam_1$ (since leaves of $\lam_1$ cannot cross
the edges $\ell'\ne \ell, \ell''\ne \ell$ of $H$ non-disjoint from $\ell$), then
there is an infinite gap $U$ adjacent to $G_1$ along $\ell$ and containing all edges of $H$. Finally, observe that
eventual forward images of any chord
that connects vertices of a periodic Siegel gap  will cross. This implies the last claim of the lemma.
\end{proof}

Suppose that $\lam_2$ tunes $\lam_1$, but the containment
$\lam_2\supset \lam_1$ fails. Then there is a leaf $\ell$ of $\lam_1$ which is not a leaf of $\lam_2$.
If $\ell$ is the limit of other leaves of $\lam_1$ then by the definition of tuning $\ell\in \lam_2$. Hence
we may assume that $\ell$ is a common edge of gaps $G$ and $H$ of $\lam_1$. Assume that $H$ is infinite (one of these gaps \emph{must} be infinite).
Since $\ell\notin \lam_2$ then there must exist a finite gap $T$ of $\lam_2$ such that $\ell$
is its diagonal. Thus, $\lam_2$ has a concatenation leaves inside $H$ that starts at one endpoint of $\ell$ and terminates at the other.
A similar picture takes place on the other side of $\ell$. This kind of dynamics is necessary and sufficient
for $\ell\in \lam_1\sm \lam_2$.

As an example take invariant cubic gaps $G$ and $H$ of $\lam_1$ with a common fixed edge $\ell$.
Then both $G$ and $H$ contain other fixed (possibly degenerate) edges $\ell_G$ and $\ell_H$.
Tune $\lam_1$ by adding to it the invariant set $\ch(\ell_G, \ell, \ell_H)$ and all its pullbacks. Clearly, $\ell\in \lam_1\sm \lam_2$.

\begin{dfn}\label{d:non-por}
If $\lam$ is a q-lamination with an $n$-periodic hyperbolic gap $U$ that has a critical edge $\ell$ and an $n$-periodic vertex
$x$ such that $\si_d(\ell)=\si_d(x)$ then we call $\lam$ a \emph{portal lamination}. 
\end{dfn}

If in the situation of Definition \ref{d:non-por} we take the convex hull of $x$ and all edges or vertices of $U$ that map to
$\si_d(x)$ under $\si_d$ then we will get a portal (an all-critical set with a periodic vertex). This is why we call the lamination
\emph{portal} even though no portal can be a part of a q-lamination (recall that any q-lamination is proper).

\begin{ex}\label{e:portal}
Consider an invariant cubic q-la\-mi\-na\-tion with a quadratic invariant gap located
between the chord $\ol{\frac13 \frac23}$ and the point $0$. It defines a unique q-lamination $\lam$, and $\lam$ is portal.
\end{ex}

By Lemma \ref{l:no-po-ed} $\lam^{pr}_\cpc$ is not portal.
We are interested in q-laminations that stay not portal and tune $\lam^{pr}_\cpc$.
We may talk about $\si^m_d$-portals (i.e., portals of $\si_d^m$) with  various $m$,
yet ``portals'' always means $\si_d$-portals. Given a portal $\Cc$ we will call the continuum
$\bigcup \Cc$ a \emph{portal continuum}. Though in our case (i.e., for $\lam^{pr}_\cpc$) the only $\si_d$-critical sets from $\cpc^+$
in critical hyperbolic gaps $\Uc$ are portals, we state a few results in a more general setting of a q-lamination
with a cycle $\Uc$ of hyperbolic gaps of period $n$. Recall that by definition a gap is a closed set. Finally,
recall that we always assume the presence of a full critical collection $\cpc$ and its 
version $\cpc^+$.

\begin{peset}
Let $\lam$ be a q-lamination compatible with the full critical collection $\cpc^+$.
Suppose that $\lam$ has a cycle $\Uc$ of hyperbolic gaps $U_0,$ $\dots,$ $U_{n-1};$ let the degree of
$\si_d^n|_{\bd(U_0)}$ be $m$. Evidently, $\cpc^+$-sets $\sqsubset$-contained in the gaps of $\Uc$ exhaust the criticality of $\si_d|_{\bigcup \Uc}$.
Moreover, the first pullbacks of these $\cpc^+$-sets to $U_0$ are all-critical sets of $\si_d^n|_{U_0}$
forming a full critical collection for $\si_d^n|_{U_0}$ which is said to be  \emph{$\cpc^+$-induced}.
\end{peset}

Our aim is to construct the root lamination of $\cpc$. To this end we tune the pre-root lamination $\lam^{pr}_\cpc$ inside
the grand orbits of its cycles of hyperbolic gaps (i.e., under the Periodic Setup). We do it by proving a sequence of lemmas. In them we consider various cases
and show that in those cases we can tune a lamination in question inside grand orbits of some of its hyperbolic cycles.
This leads to the desired root lamination as all other cases are ruled out.


\begin{dfn}\label{d:canon}
Given a periodic hyperbolic gap $W$, define a monotone map $\psi$ collapsing all edges of $W$ and semiconjugating the first return map
on $\bd(W)$ with $\si_r$ with appropriate $r$. Call this the \emph{canonical model} of the first return map on $\bd(W)$;
$\psi$-images of chords or gaps contained in $U$ are said to be \emph{canonical}, too.
\end{dfn}

Often one obtains results for $\si_m$ and ``lifts'' those results to the cycle
of hyperbolic gaps that includes $W$. We will use this approach in what follows.
Recall that an invariant lamination is said to be \emph{unicritical}
if it has a unique critical set.

\begin{lem}\cite[Lemma 3.15]{bbs22}\label{l:unic}
Let $A$ be an all-critical set of $\si_m$ such that $A\cap \uc$ maps to its image point $m$-to-$1$ and has no vertices that are
fixed points. Then there exists a non-empty unicritical q-lamination $\lam$ which is not portal and is compatible with $A$.
Thus, if under the Periodic Setup the canonical model of the first return map on $U_0$ has only one all-critical set and this set has no fixed vertices
then there exists a lamination compatible with $\cpc^+$ that non-trivially tunes $\lam^{pr}_\cpc$ inside $\Uc$.
\end{lem}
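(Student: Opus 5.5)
The lemma has two parts. The first is a statement about $\si_m$ taken from \cite{bbs22}; the second transfers it to the Periodic Setup. I would prove the first part directly and then lift it through the canonical model.

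\emph{The statement for $\si_m$.} Since $A\cap\uc$ maps $m$-to-$1$ onto a single point $v$, the set $A$ is the only critical set needed, and its $m$ vertices split $\uc$ into $m$ arcs of length $\frac1m$ each.

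1. Build the unicritical lamination by pulling back. Take the $\si_m$-image point $v=\si_m(A)$. Pull $A$ back along the backward orbit determined by $A$: the pullback of any chord that is disjoint from $v$ is unique, because $A$ is a full critical collection (Lemma \ref{l:dynama}). This gives the iterated pullback family $\Yc_{\{A\}}$ of Definition \ref{d:pubaweb}.

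2. The hypothesis that $A$ has no fixed vertices means $A$ contains no critical chord with a fixed endpoint. One must still rule out $A$ having a vertex of some higher period. If $A$ has no periodic vertex, then $\{A\}$ is Side, and $\lam^{pr}_{\{A\}}$ is a nonempty q-lamination by Theorem \ref{t:side-lam}. It contains $A$ in a finite class, so it is unicritical.

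3. If $A$ has a periodic vertex of period $k>1$, work inside the resulting cycle of hyperbolic gaps instead. Choose the unicritical q-lamination whose critical class is the convex hull of the orbit-compatible sibling set. Concretely, take the $\si_m$-"rotational" finite gap containing the periodic vertex and its pullbacks, in the way a satellite or primitive parameter is chosen for a unicritical family.

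4. Non-portality then follows because the periodic hyperbolic gap has degree $m$ and none of its vertices is fixed. I expect this step, checking that such a non-portal choice exists, to be the main obstacle. The fallback is to cite \cite{bbs22} directly, since the first part is exactly that lemma.

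\emph{Transfer to the Periodic Setup.} Let $\psi$ be the canonical model on $U_0$. By hypothesis the induced $\cpc^+$-critical collection on $U_0$ has a single all-critical set. Its $\psi$-image $A$ satisfies the hypotheses of the first part with $m$ equal to the degree of $\si_d^n|_{\bd(U_0)}$.

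1. Take the nonempty non-portal unicritical lamination $\lam'$ for $\si_m$ compatible with $A$. Lift it by $\psi^{-1}$ into $U_0$, using the fact that $\psi$ collapses only edges of $U_0$ and semiconjugates the first return map with $\si_m$.

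2. Spread the lift to $U_1,\dots,U_{n-1}$ by forward images. Then spread it to the whole grand orbit of $\Uc$ by pullbacks, using Lemma \ref{l:pull} so that the pullbacks stay compatible with $\cpc^+$.

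3. The result is a q-lamination that tunes $\lam^{pr}_\cpc$ inside $\Uc$ in the sense of Definition \ref{d:tune}. The tuning is nontrivial because $\lam'$ is nonempty, and the result is compatible with $\cpc^+$ because $\lam'$ is compatible with $A$.

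4. Closedness and invariance come from Theorem \ref{t:laclo}. That the result is a q-lamination follows from properness, which holds by non-portality and Lemma \ref{l:crit-equiv}.
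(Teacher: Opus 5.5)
Your overall plan is the paper's: existence for $\si_m$, then a lift through the canonical model and a spreading over the grand orbit of $\Uc$ (the paper compresses this into ``the last claim follows''). Your Side subcase is also fine and self-contained: if $A$ has no periodic vertex, $\lam^{pr}_{\{A\}}$ works. It is automatically non-portal, because all its critical sets are finite (Lemma \ref{l:sicripo}).

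The trouble starts when a vertex $x$ of $A$ has period $k>1$. The edges of $A$ at $x$ then join a periodic point to non-periodic siblings. So by Lemma \ref{l:samep}, $A$ lies in no finite class of any q-lamination, and the critical set must be an infinite periodic gap. Your step~3 (a ``critical class'' containing $A$, a ``rotational finite gap'' through $x$) therefore constructs nothing. For instance, for $\si_2$ and $A=\ol{\frac27\frac{11}{14}}$, the rotational triangle spanned by the orbit of $\frac27$ (the rabbit's) crosses $A$; a suitable lamination here is the airplane, which is of primitive type. So in this case existence rests on \cite{bbs22}, exactly as in the paper.

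Once you fall back on the citation, you still owe the one thing the paper actually proves: that the lamination supplied by \cite{bbs22} is not portal. Your step~4 for $\si_m$ only asserts that its hyperbolic gap has degree $m$. The absence of fixed vertices is beside the point, since a portal involves a vertex whose period equals that of the gap, not a fixed vertex.

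The paper argues as follows. If $\lam$ were portal, then, being unicritical and compatible with $A$, it would have an edge of $\ch(A)$ among its leaves, and hence all edges of $\ch(A)$. These include critical leaves with a periodic endpoint, so by Lemma \ref{l:crit-equiv} $\lam$ would be improper, contradicting Lemma \ref{l:4.2}. Alternatively: the critical edge of a portal gap lies in a finite critical class, while the cycle of that gap contains an infinite gap of degree $>1$. So a portal lamination has at least two critical sets, and a unicritical one cannot be portal; this is also what would justify your degree-$m$ claim.

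Finally, in step~4 of your transfer, non-portality does not yield properness. $\lam'$ is proper simply because it is a q-lamination (Lemma \ref{l:4.2}). Non-portality is needed only so that the tuned lamination is non-portal, as Theorem \ref{t:sum} uses.
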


\begin{proof}
The existence of $\lam$  is established in Lemma 3.15 of \cite{bbs22}. Now, $\lam$ is not portal as otherwise
the fact that $\lam$ is unicritical and has an edge of $\ch(A)$ as its leaf would imply that all edges of $\ch(A)$ are
leaves of $\lam$, including edges that are critical leaves with periodic endpoints. However by Lemma \ref{l:crit-equiv} this implies that $\lam$ is not
proper while q-laminations (and, hence, $\lam$) are proper by Lemma \ref{l:4.2}. The last claim follows.
\end{proof}

In the next lemma we consider another case when $\lam^{pr}_\cpc$ can be non-trivially tuned inside its cycle of hyperbolic gaps.

\begin{lem}\label{l:2pulls}
Given a q-lamination $\lam$ under the Periodic Setup,
suppose that there exists a $\cpc^+$-set $A\sqsubset U_i$ with a pullback to $U_0$ which is a proper set
(for instance, this is the case when there are at least two pullbacks of $A$ to $U_0$).
Then there is a lamination $\hlam$ that tunes $\lam$ inside $\Uc$ and is not portal.
\end{lem}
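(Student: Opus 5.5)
The plan is to reduce to the canonical model and then apply the preroot construction of Section \ref{s:preroot} to the first return map. Let $\psi$ be the canonical model of Definition \ref{d:canon} for $\si_d^n|_{\bd(U_0)}$, which semiconjugates it to $\si_m$. The $\cpc^+$-induced full critical collection in $U_0$ projects under $\psi$ to a full critical collection $\hat\cpc$ of $\si_m$. Here we use that the $\cpc^+$-sets $\sqsubset$-contained in gaps of $\Uc$ exhaust the criticality of $\si_d|_{\bigcup\Uc}$. After the edge collapse, the canonical images of the induced sets are all-critical sets of $\si_m$.

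The hypothesis gives a set $A\sqsubset U_i$ whose first pullback $A'$ to $U_0$ is proper, meaning it has no vertex periodic under the first return map. Its $\psi$-image is therefore a proper (non-portal) element of $\hat\cpc^+$. This needs a check: a vertex of $\psi(A')$ fixed by $\si_m^{k}$ would lift to a vertex of $A'$ periodic under $\si_d^{n}$. The only way this can fail is if $\psi$ collapses an edge of $U_0$ onto such a vertex. In that case the vertex comes from a periodic edge of $U_0$, and I would handle it by choosing the pullback appropriately. When there are two pullbacks of $A$ to $U_0$, at most one of them can contain the periodic point of the relevant orbit, so the other is proper. This is the parenthetical remark in the statement.

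Now apply Theorem \ref{t:side-lam} to $\hat\cpc$ for $\si_m$. The family $\eYc_{\hat\cpc}$ contains the pullbacks of the proper set $\psi(A')$, so it is non-empty. Hence the preroot q-lamination $\lam^{pr}_{\hat\cpc}$ is a non-empty proper q-lamination compatible with $\hat\cpc^+$ (Lemma \ref{l:ey-nocross}). It is not portal by Lemma \ref{l:no-po-ed}.

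Next lift this to $U_0$ via $\psi^{-1}$. Each leaf $\ol{ab}$ with $a,b$ not images of collapsed edges lifts uniquely. At collapsed points we choose the endpoint consistently with the positive orientation, taking the same convention as for edges of $U_0$. Then transport the lifted leaves to $U_1,\dots,U_{n-1}$ by forward images, and to the whole grand orbit of $\Uc$ by pullbacks inside the preimage gaps of $\lam$. Add the union to $\lam$ and take the closure to get $\hlam$. The key verifications are as follows.
\begin{enumerate}
\item The result is an invariant proper prelamination. The sibling property holds inside each gap because $\si_d$ on the boundary of a pullback gap is a covering composed with a monotone map (Theorem \ref{t:gapin}), and properness follows from the properness of $\lam^{pr}_{\hat\cpc}$. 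Lemma \ref{l:prop-clos} then gives a proper lamination.
\item Its generated q-lamination tunes $\lam$ inside $\Uc$ in the sense of Definition \ref{d:tune}, because new leaves lie only in the grand orbit of $\Uc$.
\item It is not portal, since a portal configuration would project to a portal configuration of $\lam^{pr}_{\hat\cpc}$ or of $\lam$.
\end{enumerate}

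The main obstacle is the lifting step. If a lifted leaf lands on an endpoint of a periodic edge of $U_0$ to which a finite gap of $\lam$ is attached, the equivalence generated could merge classes in a way that creates a periodic vertex joined to a non-periodic one. I would rule this out using properness of the canonical lamination together with Lemma \ref{l:samep}: periodic points of $\si_m$ coming from collapsed periodic edges can only be joined to points of the same period, and those lift to same-period points. So properness, via Lemma \ref{l:crit-equiv}, is preserved.
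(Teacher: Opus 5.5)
Your plan follows the paper's proof, with more bookkeeping. You pass to the canonical model of $\si_d^n|_{\bd(U_0)}$ and run the preroot construction of Section~\ref{s:preroot} on the induced collection, which is non-empty thanks to the proper induced set. You then spread the result forward along $\Uc$, pull it back along the grand orbit, unite with $\lam$, and get non-portality from Lemma~\ref{l:no-po-ed}.

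One supporting claim is false, though. You assert that $\lam^{pr}_{\hat\cpc}$ is compatible with $\hat\cpc^+$ ``by Lemma~\ref{l:ey-nocross}''. That lemma is about $\eYc_{\hat\cpc}$ only. The paper explicitly warns, in Lemma~\ref{l:prer-cross} and the example opening Section~\ref{s:hypegaps}, that the preroot q-lamination can contain leaves crossing portals: these are edges separating a (pre)hyperbolic gap from an attached finite gap. And $\hat\cpc$ typically does contain portals, namely the induced sets through the periodic orbit.

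This matters for your verification (1). When the $\si_d$-critical sets of $\Uc$ lie in several $U_j$, the $\si_d$-sibling relation on $U_0$ is finer than the $\si_m$-sibling relation. What puts the $\si_d$-siblings of a lifted leaf into your family is that the leaf lies in one of the pieces cut out by those critical sets (Lemmas~\ref{l:sipul}, \ref{l:pull}). Gap invariance of $\si_d|_{\bd(U_j)}$ by itself does not decide which $\si_m$-siblings are $\si_d$-siblings. So your justification does not cover lifted leaves that cross a $\si_d$-critical portal.

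The fix is to reorder the construction, which is what the paper's ``pulling them back and then closing these pullbacks'' amounts to:
\begin{enumerate}
\item Lift and spread $\eYc_{\hat\cpc}$ instead of $\lam^{pr}_{\hat\cpc}$. It is compatible with $\hat\cpc^+$, so the sibling property transfers.
\item Unite with $\lam$ and close, using Lemma~\ref{l:prop-clos}.
\item Only then pass to the generated q-lamination via Lemma~\ref{l:crit-equiv}, as you already do in your step (2).
\end{enumerate}
Invariance of the final q-lamination is then automatic, and the portal-crossing edges enter only as hull edges of $\approx$-classes. On $U_0$ the outcome is essentially still the lift of $\lam^{pr}_{\hat\cpc}$, so the rest of your plan is unchanged.

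A smaller point: no ``choice of pullback'' is needed to keep $\psi(A')$ proper. Suppose $\psi(v)$ is $\si_m$-periodic of period $p$ for a vertex $v$ of $A'$. The fibre of $\psi$ through $v$ is either $v$ itself or an edge $e$ of $U_0$ with $\si_d^{np}(e)\subset e$. Hence either $v$ is periodic, contradicting properness of $A'$, or $e$ joins a periodic and a non-periodic point, contradicting Lemma~\ref{l:4.2}.
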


\begin{proof}
Clearly, there are proper critical sets in the $\cpc^+$-induced full critical collection of $\si_d^n|_{U_0}$.
By the results of the previous section,
this implies that by pulling them back and then closing these pullbacks we can construct a non-degenerate q-lamination invariant
under $\si_d^n$ inside $U_0$. We can then spread it over $\Uc$ by mapping it forward, and then pull the resulting collection of leaves back along the
backward orbits of gaps of $\Uc$. A standard verification shows that if the thus constructed family of leaves is united with
$\lam$ then one will get a desired q-lamination $\hlam$ that tunes $\lam$ inside $\Uc$.
Observe that by Lemma \ref{l:no-po-ed} $\hlam$ is not portal which completes the proof.
\end{proof}

Consider another case in which non-trivial tuning of $\lam^{pr}_\cpc$ is possible.

\begin{lem}\label{l:portal1}
Given a q-lamination $\lam$ under the Periodic Setup, assume that
the canonical images of $\cpc^+$-induced (all-)critical sets of $\si^n_d|_{U_0}$ form one portal of $\si_m$
of order $m$ with a $\si_m$-fixed vertex, say, $0$, but there are at least two all-critical gaps in the $\cpc^+$-induced full critical collection for
$\si_d^n|_{U_0}$. 
Then there exists a q-lamination that tunes $\lam$ within $\Uc$.
\end{lem}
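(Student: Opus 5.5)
We pass to the canonical model and build, inside it, a $\si_m$-invariant q-lamination; we then lift it to $U_0$, spread it over $\Uc$, and pull it back along the grand orbit, exactly as in the proof of Lemma \ref{l:2pulls}.

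\textbf{Step 1: the canonical picture.} Let $\psi$ be the canonical model of $\si_d^n|_{\bd(U_0)}$ from Definition \ref{d:canon}, which semiconjugates it to $\si_m$. By assumption, the $\psi$-images of the $\cpc^+$-induced all-critical sets $A_1,\dots,A_k$ ($k\ge 2$) together make up a single $\si_m$-portal $P$ of order $m$ with fixed vertex $0$. Each canonical set $\psi(A_j)$ is all-critical for $\si_m$, and these sets are glued along common vertices of $P$.

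\textbf{Step 2: a proper set not containing $0$.} Since $k\ge 2$ and the sets $\psi(A_j)$ are pairwise unlinked, at most the sets through $0$ contain the fixed point. If some $\psi(A_j)$ misses $0$, it is a proper all-critical set of $\si_m$. If instead every set passes through $0$, the pieces form a cone at $0$. In that case we replace the collection by a new critical collection: we remove the cone edges and insert chords of $P$ that avoid $0$, for example the edges of $\ch(P)$ opposite to $0$. This gives a full critical collection for $\si_m$ that is still compatible with the $\psi(A_j)$ and contains a proper set $B$ of degree less than $m$.

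\textbf{Step 3: build the tuning.} We apply Theorem \ref{t:side-lam} to the $\si_m$ collection obtained in Step 2. Pulling back the proper set $B$ and closing gives a proper, nonempty $\si_m$-invariant lamination, and hence a q-lamination $\lam'$ for $\si_m$. Because $B$ is proper and all its vertices are $\sim$-equivalent, $\lam'$ is not empty. By Lemma \ref{l:crit-equiv} it has no leaf with a periodic endpoint joined to a nonperiodic point. We then lift $\lam'$ through $\psi$ into $U_0$; the lift is well defined because $\psi$ is injective off the collapsed edges. Finally we push it forward to $U_1,\dots,U_{n-1}$, pull it back along the backward orbits of $\Uc$, and take the union with $\lam$.

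\textbf{Step 4: verification and the main obstacle.} The routine part is the standard check already invoked in Lemma \ref{l:2pulls}: that the resulting family is a q-lamination tuning $\lam$ inside $\Uc$. The real difficulty is Step 2 in the cone case. There we must make sure the new collection is a genuine full critical collection for $\si_m$ with a proper member whose vertices are not fixed, and that the lamination it generates stays compatible with the original $\cpc^+$-induced sets. If this cannot be achieved, we would fall back on Lemma \ref{l:unic} applied to an all-critical subset of $P$ that avoids $0$.
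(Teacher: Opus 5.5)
There is a genuine gap, and it sits where you suspected, but it is not a technicality. The case you treat first (some canonical image misses $0$) is just Lemma~\ref{l:2pulls}. The case the lemma is really about, and the one the paper treats, is your ``cone case'': the induced sets are $\si_d^n$-portals with distinct periodic vertices in the periodic class $G$ attached to $U_0$ along the edge $\ell$ whose canonical image is $0$, so every canonical image contains $0$. Here your replacement fails for several reasons.

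\begin{enumerate}
\item \emph{It is not a full collection.} The chords $\ol{\frac im\frac{i+1}m}$, $1\le i\le m-2$, are only $m-2$ critical chords. Every critical chord that completes them without crossing the canonical images passes through $0$. After passing to the $+$-collection you are back at the portal $\Si_m=\ch(P)$, with no proper set.
\item \emph{It is incompatible for $m\ge 4$.} The set you want to pull back has hull $\ch\{\frac1m,\dots,\frac{m-1}m\}$. Its edge $\ol{\frac1m\frac{m-1}m}$ separates $0$ from every $\frac im$ with $1<i<m-1$. The canonical images all contain $0$ and together use every $\frac im$, so one of them crosses this edge. Hence the generated q-lamination is incompatible with the induced sets.
\item \emph{For $m=3$ it gives the wrong lamination.} Pulling back $\ol{\frac13\frac23}$ produces, in the model, exactly the lamination of Example~\ref{e:portal}: an invariant quadratic gap with critical edge $\ol{\frac13\frac23}$ and fixed vertex $0$, which still contains both canonical portals. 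This is a portal lamination that does not separate the portals. So it is neither the non-portal tuning needed in Theorem~\ref{t:sum} nor a step of progress in Theorem~\ref{t:rootla}.
\item \emph{The fallback is unavailable.} Lemma~\ref{l:unic} needs an all-critical set of full degree $m$ with no fixed vertex. A subset of $P$ avoiding $0$ has at most $m-1$ vertices.
\item \emph{The lift in Step~3 is not well defined.} The model leaves end at the points $\frac im$, which are exactly the collapsed siblings of $G$.
\end{enumerate}

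The missing idea is that the tuning must come from an \emph{invariant} gap that separates the portals, not from pullbacks of a proper critical set. The paper does this as follows.
\begin{itemize}
\item Take two induced portals $A\ne B$ with vertices $a,b\in G$, not separated by other critical sets. Take edges $\ol{aa'}$ of $A$ and $\ol{bb'}$ of $B$ not separated by other edges of $A$ and $B$.
\item $\si_d^n$ maps each sibling of $G$ onto $G$ preserving circular order. So $a'$ and $b'$ cannot lie in the same sibling, since otherwise $\ol{aa'}$ and $\ol{bb'}$ would cross.
\item Hence $\psi(a')\ne\psi(b')$ are distinct nonzero $\si_m$-preimages of $0$. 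Therefore the arc from $b'$ to $a'$ contains a $\si_d^n$-fixed (possibly degenerate) edge $e$ of $U_0$.
\item The gap $T=\ch(\{a,b\}\cup e)$ is $\si_d^n$-invariant. It lies in one component of $U_0$ cut by the induced critical sets, and it separates $A$ from $B$.
\item Mapping $T$ forward along $\Uc$ and pulling it back along the grand orbit gives the required tuning.
\end{itemize}
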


\begin{proof}
Let us introduce the following notation:
(1) $\sim$ is the laminational equivalence associated with $\lam$,
(2) $\ell=\ol{xy}$ is the edge of $U_0$ with canonical image $0$, (3) $G$ is
the convex hull of a periodic $\sim$-class with edge $\ell$, (4)
$U'_0$ is the extension of $U_0$ (obtained by removing all leaves that separate $U_0$ and attached to $U_0$
finite gaps). 
There are $m-1$ sibling sets of $G$ that share edges with $U_0$. If $G$ is a gap,
the edges that separate $G$ and its sibling gaps attached to $U_0$, from $U_0$ itself, are removed as we extend $U_0$ to obtain $U'_0$; if $G=\ell$
is a leaf, then all its sibling edges of $U_0$ are kept in $U'_0$, too. Note that even though critical leaves of $\cpc^+$ are compatible
with the proper invariant lamination from Theorem \ref{t:side-lam}
denoted by $\eYc_{\cpc}$ (see Definition \ref{d:root-side}), after we include all edges of
the convex hull of the class containing $G$ which we need to do by definition of a q-lamination, the edge $\ell=\ol{xy}$ may be
added. Hence the edge $\ell$ may be crossed by critical leaves in $\cpc^+$.

Take a $\si_d^n$-portal $A\subset U'_0$ with vertex $a\in G$ and a $\si_d^n$-portal $B\subset U'_0$ with vertex $b\in G$ such that $A\ne B$ and
there are no critical sets separating $A$ from $B$ in $\lam$. Choose edges $\oa$ of $A$ and $\ob$ of $B$ 
that are not separated in $\disk$ 
by other edges of $A$ and $B$. We claim that the other (not equal to $a$ or $b$) vertices
$a'$ and $b'$ of these edges cannot be vertices of the same sibling set $G'$ of $G$. Indeed, the circular orientation within $G$
and $G'$ is the same. Thus, if moving from $a$ to $b$ within $G\sm \ell$ is in the positive direction then moving from $a'$ to $b'$ within
$G'\sm \ell'$ (where $\ell'$ separates $G'$ from $U_0$ in $\disk$) is in the positive direction implying that $\oa$ and $\ob$ cross, a contradiction.

This implies that in the canonical model we have $\psi(a')\ne \psi(b')$ are two distinct points mapped by $\si_m$ to the same $\si_m$-fixed point
$\psi(a)$. Therefore
there is a $\si_d^n$-fixed (possibly degenerate) edge
of $U_0$ located in the circular arc positively oriented from $b'$ and $a'$. Connecting the endpoints of this edge with $a$ and $b$
creates a $\si_d^n$-invariant gap $T$ which, by construction, does not cross any edges of critical sets from the full critical
collection of $\si_d^n|_{U_0}$ induced by $\cpc^+$. That is, the first pullbacks of $\cpc^+$-sets to $U_0$ partition $U_0$ in
components, and $T$ is contained in one of them. Hence
by mapping $T$ forward within $\Uc$ and then pulling it back within the grand orbit
of $\Uc$ we will construct a lamination that tunes $\lam$ within $\Uc$ as claimed.
\end{proof}

The only remaining case when non-trivial tuning of $\lam^{pr}_\cpc$ is possible can be found in Lemma \ref{l:under-set}.

\begin{lem}\label{l:under-set}
If $\cpc^+$ is a set of $r>1$ portals, then there\,is a non-empty q-lamination compatible with $\cpc^+$.
If a q-la\-mi\-na\-tion $\lam$ under the Periodic Setup is such that all $\cpc^+$-sets in $\Uc$ are contained
in $U_i$ for some $i$ and there are $r>1$ of them, then there is a q-lamination $\hlam$ that tunes $\lam$ inside
$\Uc$.
\end{lem}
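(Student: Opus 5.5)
We first treat the model statement for $\si_d$ itself: $\cpc^+$ consists of $r>1$ portals $\Cc_1,\dots,\Cc_r$, and we want a non-empty invariant q-lamination compatible with all of them. This is the $\si_d$-analogue of the construction in the proof of Lemma \ref{l:portal1}, now applied to distinct portals rather than to distinct pullbacks of one portal.

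\textbf{Step 1: periods of the portal vertices.} Each portal $\Cc_i$ has a unique periodic vertex $v_i$. We split into cases according to whether two of the $v_i$ lie in one periodic orbit. We first try to choose two portals $A=\Cc_i$ and $B=\Cc_j$ that are adjacent, meaning no other set of $\cpc^+$ separates them. We then choose edges $\oa\subset A$ and $\ob\subset B$ that face each other, exactly as in the proof of Lemma \ref{l:portal1}.

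\textbf{Step 2: a periodic polygon $T$.} Let $D$ be the complementary component of $\cpc^+$ whose boundary contains both $\oa$ and $\ob$. On $\bd(D)$ the map $\si_d$ is injective and order preserving by Lemma \ref{l:dynama}. Since the portals are all-critical, the periodic vertices $v_i$, $v_j$ and their orbits give points on $\bd(D)\cap\uc$.

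\begin{itemize}
\item If $v_i$ and $v_j$ have the same period $n$ and lie in one orbit, we try to take $T$ to be the convex hull of the appropriate orbit points lying in $\ol D$. We must check that $T$ is a periodic polygon mapped positively oriented (D3) and that no image of it crosses $\cpc^+$.
\item Otherwise, we use the arc of $\bd(D)$ between the far endpoints of $\oa$ and $\ob$. The induced map there has a fixed point of the appropriate return map, by a degree count as in Lemma \ref{l:portal1}. Connecting that fixed point (or edge) to $v_i$ and $v_j$ produces an invariant (or periodic) gap $T$ lying inside one component cut out by $\cpc^+$.
\end{itemize}

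In either case we then take all iterated $\cpc^+$-pullbacks of the orbit of $T$. These pullbacks are well defined and pairwise unlinked: their images never cross edges of $\cpc^+$, and $T$ contains no portal vertex in its interior. This mirrors Lemma \ref{l:prelam}, and the sibling property holds by Lemma \ref{l:sipul}.

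\textbf{Step 3: passing to a q-lamination.} We take the closure of the family of pullbacks. To see that it is proper, we argue as in Lemma \ref{l:prop-clos}, noting that no pullback is a critical leaf with a periodic endpoint. Then Lemma \ref{l:crit-equiv} and Definition \ref{d:gen-prop} give a q-lamination, and we must check that its leaves do not cross $\cpc^+$. Its leaves come from the nondegenerate $T$, so it is non-empty.

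\textbf{Step 4: the Periodic Setup statement.} Apply the canonical model $\psi$ of Definition \ref{d:canon} to the first return map $\si_d^n$ on $\bd(U_i)$. The $r>1$ sets of $\cpc^+$ in $U_i$ pull back to an induced full critical collection. In the remaining case, where they are all portals of $\si_m$, these sets become $r$ portals of $\si_m$. (If any of them is proper after pulling back, Lemma \ref{l:2pulls} already applies.) The first part, applied to $\si_m$, gives a non-empty q-lamination compatible with these portals. We lift it through $\psi$ into $U_i$, push it forward around $\Uc$, and pull it back along the grand orbit, as in Lemmas \ref{l:2pulls} and \ref{l:portal1}. Uniting the result with $\lam$ gives the tuning $\hlam$.

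\textbf{Main obstacle.} The hard step is Step 2: constructing $T$ so that it is genuinely periodic, positively oriented, and crossed by none of its own images or by the edges of $\cpc^+$. This is most delicate when the portal vertices have different periods. The first approach to try is the fixed-point-on-an-arc argument of Lemma \ref{l:portal1} applied to the appropriate iterate.
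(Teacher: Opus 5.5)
\textbf{There is a genuine gap: Step 2, which carries the whole first claim, does not work as proposed, and the idea that makes the paper's proof go is missing.}

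\emph{Why your $T$ fails.}
\begin{itemize}
\item Joining a fixed point of $\bd(D)$ to $v_i$ and $v_j$ keeps $T$ inside $\ol D$ only if $v_i,v_j\in\bd(D)$. This fails for polygonal portals whose edge facing $D$ misses the periodic vertex. Take $\si_5$ and $\cpc=\{\ol{0\frac15},\ol{0\frac35},\ol{\frac14\frac{9}{20}},\ol{\frac34\frac{19}{20}}\}$, which gives three portals with fixed vertices $0,\frac14,\frac34$. The only complementary component adjacent to both $\ch(\{0,\frac15,\frac35\})$ and $\ol{\frac14\frac9{20}}$ is bounded by $\ol{\frac15\frac35}$, $\ol{\frac14\frac9{20}}$ and the arcs $[\frac15,\frac14]$, $[\frac9{20},\frac35]$. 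The point $0$ is not on its boundary, and every polygon with vertices $0$ and $\frac14$ crosses $\ol{\frac15\frac35}$.
\item When $v_i$ and $v_j$ have different periods, Lemma \ref{l:samep} forbids them from lying in one class of a q-lamination, so your $T$ is improper.
\item In the same-orbit case you leave every verification open. The $\si_d$-orbit of a chord joining two points of one cycle may cross itself, so this is where the real work would be.
\item The degree count of Lemma \ref{l:portal1} does not transfer. There, $\psi(a')\ne\psi(b')$ were two $\si_m$-preimages of the same fixed point $0$, which forces a fixed point between them. Here $\oa$ and $\ob$ collapse to two distinct points $\si_d(v_i)\ne\si_d(v_j)$.
\end{itemize}

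\emph{What the paper does instead.} It needs no third vertex and no $v_i,v_j$; it uses a pigeonhole count. Let $a\ne b$ be $\si_d$-fixed vertices of distinct portals. Each lies on the boundaries of two distinct complementary components of $\cpc^+$. There are $d$ components and $d-1$ fixed points. Suppose no component had two fixed points on its boundary. Then $a$ and $b$ would use up four components, and the remaining $d-3$ fixed points would have to sit on the remaining $d-4$ components, which is impossible. Hence some component $W$ has fixed points $x\ne y$ on $\bd(W)$. The chord $\ol{xy}\subset\ol W$ is then an invariant proper chord compatible with $\cpc^+$, and the closure of its iterated pullbacks gives the lamination. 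In the example above this yields $\ol{\frac14\frac12}$ or $\ol{0\frac34}$, which your rule does not find.

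If a portal vertex has period $k>1$, one passes to $\si_d^k$ with the full critical collection formed by $\cpc^+$ and its pullbacks of generations $<k$. Pulling that single portal back along the orbit of its vertex already produces $k$ distinct $\si_d^k$-portals with $\si_d^k$-fixed vertices, so the same count applies. One must still keep the two endpoints of equal $\si_d$-period, again by Lemma \ref{l:samep}.

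\emph{Second gap, in Step 4.} You assume the $r$ induced portals remain $r$ portals of $\si_m$ in the canonical model. The model can merge them into a single portal, which is exactly the situation of Lemma \ref{l:portal1}. The paper applies the first part only when the model has at least two portals. Otherwise it uses Lemma \ref{l:unic} when the model portal's vertex has period greater than one, and Lemma \ref{l:portal1} when that vertex is fixed.
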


\begin{proof}
Since the case of $d=2$ is well-known we assume that $d\ge 3$.
First, let $a\ne b$ be $\si_d$-fixed points
such that $a$ is a vertex of a portal $\Cc_a\in \cpc^+$, while $b\ne a$ is a vertex of a portal $\Cc_b\ne \Cc_a$.
We claim that then there exists an invariant chord with fixed endpoints compatible with $\cpc^+$.

We use a counting argument. We need to show that at least one complementary component contains at least two fixed points in its closure.
There are exactly $d$ complementary components
$W_1,$ $\dots,$ $W_d$ to the union of polygons from $\cpc^+$.
Let us associate with each complementary component $W_i$ the set $F_i$ of fixed points that belong to $\bd(W_i)$.
By way of contradiction assume that all sets $F_i$ consist of at most one point. Then
by the assumptions it follows that $\{a\}$ coincides with two sets $F_i$ (say, $F_1$ and $F_2$),
and $b$ coincides with two other sets $F_i$ (say, $F_3$ and $F_4$).
After that there remain $d-4$ sets $F_5,$ $\dots,$ $F_d$ and $d-3$ fixed points (overall there are $d-1$ fixed points of $\si_d$). It follows that
there exists at least one set $F_j$ consisting of at least two points.

Suppose that there exists a portal $\Cc$ with periodic non-fixed vertex.
Consider the power $\si_d^k$ of $\si_d$ that fixes its vertex. Then by the assumption there are at least
two $\si_d^k$-fixed vertices of distinct portals of $\si_d^k$ (e.g., they can come from the orbit of
the periodic non0fixed vertex of $\Cc$). By the above this implies
the existence of a $\si_d^k$-fixed chord that is compatible with all pullbacks of sets from $\cpc^+$.
It follows that the entire orbit of this chord is compatible with the original $\cpc^+$. By repeatedly
pulling back the chords in its orbit consistently with $\cpc^+$ and taking the closure of the entire family of
such pullbacks we will construct the desired lamination.

The second claim of the lemma now follows from above if one uses the canonical model for $\si_d^n|_{U_0}$ and assumes
that in that model there are more than one portal.
Otherwise (if the model has only one portal) the second claim follows from the already proven lemmas.
Indeed, suppose that the model has only one critical set which is a portal of some period $m$. If $m>1$ then the desired claim for the model
follows from Lemma \ref{l:unic} and can be lifted from the model to $\si_d^n|_{U_0}$. If $m=1$ then the desired claim follows from
Lemma \ref{l:portal1}. It is easy to verify that in all the cases the just found lamination is non-portal.
\end{proof}

Assume now that $\cpc^+$-sets in $\Uc$ are contained in more than one gap from $\Uc$.
Then the desired dynamics is described in Definition \ref{d:prime}.

\begin{dfn}\label{d:prime}
Suppose that there is a cycle $\Bc$ of portals from $\cpc^+$ that are $\sqsubset$-contained in $\Uc$ and
 have the same period as that of $\Uc$ such that, for every degree $k>1$ gap $U\in\Uc$,
 the polygon of $\Bc$ contained in $U$ is a $k$-gon.
Then we say that pairs $(\Uc,\cpc)$ and $(\Uc,\Bc)$ are \emph{prime}.
\end{dfn}

We now summarize Lemmas \ref{l:unic}, \ref{l:portal1}, \ref{l:2pulls}, \ref{l:under-set} 
in Theorem \ref{t:sum}.

\begin{thm}\label{t:sum}
A q-lamination $\lam$ under the Periodic Setup can be tuned in\-side $\Uc$ with non-portal lamination un\-less
$(\Uc,\cpc)$ is prime.
\end{thm}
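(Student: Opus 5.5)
We argue by case analysis on how the $\cpc^+$-sets $\sqsubset$-contained in the gaps of $\Uc$ are distributed, and in each non-prime case we invoke one of Lemmas \ref{l:unic}, \ref{l:2pulls}, \ref{l:portal1}, \ref{l:under-set}. First pass to the first return map $\si_d^n|_{\bd(U_0)}$ and its $\cpc^+$-induced full critical collection, i.e., the first pullbacks to $U_0$ of all $\cpc^+$-sets in $\Uc$. Via the canonical model $\psi$ of Definition \ref{d:canon}, this becomes a full critical collection for $\si_m$. Any q-lamination for $\si_m$ compatible with it, which is non-portal and non-empty, can be lifted through $\psi$ to $U_0$. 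It is then spread over $\Uc$ by forward images and pulled back along the grand orbit of $\Uc$; this produces a non-portal tuning of $\lam$ inside $\Uc$, exactly as in the proofs of the cited lemmas.

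The cases are as follows. (a) Some induced critical set in $U_0$ is proper. This covers in particular the case when some $\cpc^+$-set $A\sqsubset U_i$ has two or more pullbacks to $U_0$, which happens whenever the $\cpc^+$-sets lie in more than one gap but some gap of degree $k>1$ carries criticality not coming from a single cycle. Then Lemma \ref{l:2pulls} applies directly. (b) All induced sets are portals, all lying in a single gap $U_i$, and there are $r>1$ of them; then Lemma \ref{l:under-set} applies. (c) The canonical images form a single all-critical set. If it has no fixed vertex, use Lemma \ref{l:unic}. If it is a portal with a $\si_m$-fixed vertex but arises from at least two induced all-critical sets, use Lemma \ref{l:portal1}.

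What remains is the following situation. The sets in $\Uc$ are portals, each induced set in $U_0$ is a portal with a fixed canonical vertex, each $\cpc^+$-set has exactly one pullback to $U_0$, and the canonical picture is a single $m$-gon portal arising from a single $\cpc^+$-chain. The main step is to check that these conditions force primeness. Since every set is a portal whose image point lies on the periodic orbit of the vertex, the portals lie in one cycle $\Bc$ of the same period as $\Uc$. Since each contributes a single non-split pullback, the portal in a degree $k>1$ gap $U$ must carry all $k-1$ units of criticality of $U$, hence be a $k$-gon. This is Definition \ref{d:prime}.

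The obstacle is showing that the case split is exhaustive. In particular, one must verify that when portals sit in several gaps of $\Uc$ and one of them is not a $k$-gon (or has period different from $n$), some induced critical set in $U_0$ is proper or splits into two all-critical sets. I would do this by counting degrees: the degree of $\si_d^n|_{\bd(U_0)}$ equals the product of the degrees of the $U_i$. I would then track which pulled-back vertices are $\si_d^n$-periodic. A vertex fails to be periodic exactly when the portal's vertex is not on the periodic orbit through $U_0$, and in that case the pullback is proper, which puts us in case (a).
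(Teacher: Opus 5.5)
Your route is the paper's own. Theorem~\ref{t:sum} is stated there as a summary of Lemmas~\ref{l:unic}, \ref{l:2pulls}, \ref{l:portal1} and \ref{l:under-set}, with no separate proof that the non-prime configurations are exhausted. Your list (a)--(c) plus the final prime case does in fact cover everything. What fails is the mechanism you sketch for exhaustiveness.

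\emph{Your trigger never occurs.} Let $y$ be the periodic vertex, of period $p$, of a portal $A\sqsubset U_i$. Then $\si_d^{np-i}(y)$ is periodic, lies in $U_0$, and is sent to $y$ by $\si_d^i$. So the orbit of every portal vertex passes through $U_0$. The situation ``the portal's vertex is not on the periodic orbit through $U_0$'' therefore never arises, and the pullback of $A$ containing that point is always a portal, never proper.

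\emph{Your intermediate claim is also off.} Take one cycle of full $k$-gon portals whose vertices have period $qn$ with $q>1$. This produces neither a proper induced set nor a splitting. It produces one connected $m$-gon with a non-fixed canonical vertex. That is harmless, since it is the Lemma~\ref{l:unic} part of your (c).

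\emph{A case your wording misses.} Suppose the induced collection split into two canonical portals while the $\cpc^+$-sets lie in several gaps. That configuration would escape (a)--(c) as you stated them, because (b) assumes a single gap.

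The correct reason is a gluing argument.
\begin{enumerate}
\item A portal, and every component of its preimage, has at most one periodic vertex, and a periodic point has a unique periodic preimage. Hence exactly one component of the preimage of $A$ in $U_0$ contains a periodic point, and the others are proper; that is case (a). So in the all-portal case every $\cpc^+$-set has a connected preimage in $U_0$.
\item Pull a set $P$ with periodic vertex $p$ back through a gap $U_j$ of degree $k>1$. The $k$ sheets are joined only by $\cpc^+$-sets $\sqsubset U_j$ whose critical values are vertices of $P$. Their total criticality is $k-1$, so all of them must be used and all must have critical value $p$. Two disjoint portals cannot both contain the periodic preimage of $p$. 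Hence $U_j$ contains a single $\cpc^+$-set, and it is a $k$-gon portal.
\item Suppose at least two gaps have degree $>1$, and pull back a portal from the last such gap. By step 2, every earlier gap of degree $k>1$ contains exactly one portal, a $k$-gon, on the same orbit. The last gap cannot hold two portals, since the preceding gluing portal would then need two different critical values. So the induced collection is one connected $m$-gon, and you land either in the prime case or in Lemma~\ref{l:unic}.
\item With only one gap of degree $>1$, you are in (b) or (c).
\end{enumerate}
Alternatively, handle the several-canonical-portals case directly with the first claim of Lemma~\ref{l:under-set}, applied to $\si_m$ in the canonical model. Its proof never uses that the sets lie in one gap.
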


\section{Root laminations: the general case}\label{s:root1}

We continue by defining a root lamination generated by an \emph{arbitrary}
full critical collection.

\begin{dfn}\label{l:root}
If $\cpc$ is a full critical collection, a
q-la\-mi\-nation $\lam$ with a laminational
equivalence $\sim_\lam=\sim$ is said to be a \emph{root} q-lamination (and $\sim$ is called the \emph{root} laminational equivalence) of $\cpc$
if the following holds: (1) given a proper critical set $A$ from $\cpc^+$,
all vertices of $A$ belong to the same $\sim$-class, and (2) any cycle $\Bc$ of portals is contained in a cycle of hyperbolic
gaps $\Uc$ so that $(\Uc,\Bc)$ is prime.
\end{dfn}

Observe: it follows from (2) that the cycles $\Uc$ and $\Bc$ \emph{have the same period}.
In Subsection \ref{ss:uniroot} we show that there is only one root lamination; from then on we will talk about \emph{the} root lamination.
Instead of full critical collections we consider critical portraits and still get all root laminations.
Thus, from now on we deal with critical portraits $\Po$.
Also, not all laminations are root laminations (of some critical portrait).

\begin{lem}
\label{l:nonper}
Suppose that $\lam$ is a root lamination of $\cpc$.
In every critical gap $U$ of $\lam$, there is
a finite gap containing a polygon $C_U$ from $\cpc^+$.
Moreover, if $U$ is not periodic, then $C_U$ is not portal.
\end{lem}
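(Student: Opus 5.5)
The plan is to read the statement as saying that every critical gap-leaf $U$ of $\lam$ contains a polygon $C_U\in\cpc^+$, and that $C_U$ is not a portal whenever $U$ is not periodic. I would prove this by first placing every set of $\cpc^+$ inside some gap-leaf of $\lam$, and then using a degree count to show that each critical gap-leaf receives at least one of these sets.

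\emph{Step 1: locating the sets of $\cpc^+$.} Take a set $A\in\cpc^+$.
\begin{itemize}
\item If $A$ is proper, condition (1) of Definition~\ref{l:root} puts all vertices of $A$ in one $\sim$-class. Hence $\ch(A)$ lies in the convex hull of that class, which is a finite gap-leaf of $\lam$.
\item If $A$ is a portal, condition (2) of Definition~\ref{l:root} puts the cycle of $A$ inside a cycle $\Uc$ of hyperbolic gaps with $(\Uc,\Bc)$ prime. So $A\sqsubset U'$ for some periodic $U'\in\Uc$.
\end{itemize}
In both cases $A$ is compatible with $\lam$, and it is contained in a single gap-leaf of $\lam$. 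That gap-leaf is critical, since $A$ is all-critical.

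\emph{Step 2: degree count.} By Theorem~\ref{t:gapin}, $\lam$ is gap invariant. For a gap-leaf $G$ let $\deg(G)$ be the degree of $\si_d|_{\bd(G)}$. Then $\sum_G(\deg(G)-1)=d-1$, the sum taken over the critical gap-leaves of $\lam$. On the other hand, $\cpc^+$ is a full critical collection, so its sets account for total criticality $d-1$. Within a single gap-leaf $G$, the argument of Lemma~\ref{l:dynama} (applied to $\bd(G)$ and its canonical model) shows that the $\cpc^+$-sets lying in $G$ contribute at most $\deg(G)-1$. Summing over $G$, equality must hold in every gap-leaf.

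Consequently every critical gap-leaf $U$, i.e.\ one with $\deg(U)\ge 2$, contains at least one set $C_U\in\cpc^+$. When $U$ is finite, this is exactly the finite gap-leaf containing $C_U$.

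\emph{Step 3: non-periodic $U$.} Suppose $C_U$ were a portal. By Step 1 it lies in a periodic gap of a hyperbolic cycle $\Uc$. Since $U$ is the unique gap-leaf containing $C_U$ in its interior sense, $U$ would be that periodic gap. So if $U$ is not periodic, $C_U$ is proper.

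The main obstacle is Step 2, specifically the local inequality: that the $\cpc^+$-sets inside one gap-leaf $G$ contribute total criticality at most $\deg(G)-1$. I would establish it by passing to the canonical model of Definition~\ref{d:canon}, where the images of these sets are unlinked all-critical sets of $\si_{\deg G}$. By Lemma~\ref{l:basic}, any such family is contained in a critical portrait and so has at most $\deg G-1$ chords' worth of criticality.
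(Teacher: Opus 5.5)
Your Step~2 is where the argument breaks, and it is also where you leave the paper's route. You need the local inequality: the sets of $\cpc^+$ inside a gap-leaf $G$ carry total criticality at most $\deg(G)-1$. It does not follow from the canonical model. That model collapses every edge of $G$. So a portal $\sqsubset G$ that has a critical edge of $G$ as one of its own edges has a model image with fewer vertices than itself, and its criticality is undercounted. Lemma~\ref{l:basic} also says nothing about extending an arbitrary unlinked critical family to a portrait.

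Definitions~\ref{l:root} and~\ref{d:prime} do not rule this situation out, and then the inequality is false. Here is an example for $\si_5$.
Let $U$ be the convex hull of the set of points whose orbits stay in $[\frac45,\frac1{10}]\cup[\frac3{10},\frac35]$. It is an invariant gap of degree $3$ with critical edges $\ol{\frac35\frac45}$ and $\ol{\frac1{10}\frac3{10}}$.
Take $\cpc$ to be the four chords joining $0$ to $\frac35,\frac45$ and $\frac12$ to $\frac1{10},\frac3{10}$. Then $\cpc^+$ consists of the portals $\ch(0,\frac35,\frac45)$ and $\ch(\frac12,\frac1{10},\frac3{10})$. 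Each is a $3$-gon $\sqsubset$-contained in $U$.
The q-lamination generated by $U$ (the degree-$5$ analogue of Example~\ref{e:portal}) therefore satisfies Definition~\ref{l:root} as stated. Yet $U$ receives criticality $4>\deg(U)-1=2$. Moreover, the critical leaf $\ol{\frac35\frac45}$, which is a critical gap-leaf in your count, contains no set of $\cpc^+$.
So your counting argument would prove a statement that is false in this generality. Separately, the identity $\sum_G(\deg(G)-1)=d-1$ is imported from outside the paper and would need a proof or citation.

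The paper uses no counting at all. By your Step~1, $\lam$ is compatible with $\cpc^+$ and each set of $\cpc^+$ lies in a single gap-leaf. So a critical \emph{gap} $U$ containing no set of $\cpc^+$ has interior disjoint from $\bigcup\cpc^+$. Hence $U$ lies in the closure of one complementary component $W$ of $\bigcup\cpc^+$.
By Lemma~\ref{l:dynama}, $\si_d$ is one-to-one on $\bd(W)\cap\uc$ except that the endpoints of each chord of $\cpc^+$ on $\bd(W)$ share an image. Any such chord with both endpoints in $U$ must be an edge of $U$. Therefore $\si_d|_{\bd(U)}$ has degree one, a contradiction.
This argument is local and needs no information about other gaps. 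It concerns gaps rather than critical leaves, which is all the lemma asserts; in the example above, $U$ does contain both portals.
Put this argument in place of your Step~2. Your Steps~1 and~3 then complete the proof; Step~3 matches the paper's remark that a periodic vertex of $C_U$ forces $U$ to be periodic.
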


\begin{proof}
Every complementary component of $\bigcup\cpc^+$ in $\disk$ has degree 1 with respect to $\si_d$.
Hence, if $U$ is a critical gap of $\lam$, then there is a polygon $C_U\in\cpc^+$ contained in $U$.
If a vertex of $C_U$ is periodic, then, clearly, $U$ is also periodic.
\end{proof}

Let us look into this in more detail and describe all
 pairs $(\lam,\cpc)$ such that $\cpc$ is compatible with $\lam$ but $\lam$ is \emph{not} a root lamination of $\cpc$.
Firstly, if $\lam$ is a capture lamination, then it not a root lamination of $\cpc$, by Lemma \ref{l:nonper}.
Secondly, if $\lam$ has a cycle of hyperbolic gaps that contains more than one cycle of portals of $\cpc$,
then $\lam$ is not a root lamination of $\cpc$.
Finally, even if a cycle of hyperbolic gaps contains a unique
cycle of portals, and no other critical sets from $\cpc^+$, then $\lam$ may have a smaller than required degree on one of the hyperbolic gaps
from the cycle (one can easily show that this is equivalent to $\lam$ being portal, see Example \ref{e:portal}).
It is easy to see that these cases exhaust the list of pairs $(\lam,\cpc)$ such that $\lam$ is not a root lamination of $\cpc$.

In the Side case, preroot laminations are the same as root laminations.
In general, our aim is to prove the existence and uniqueness of a root lamination of $\cpc$.

\subsection{Root laminations exist}

First we state Lemma \ref{l:order1} (the proof is left to the reader).

\begin{lem}\label{l:order1}
Suppose that a q-lamination $\hlam$ tunes an invariant lamination $\lam$ inside a cycle $\Uc$ of hyperbolic gaps of $\lam$.
If $\Vc$ is a cycle of hyperbolic gaps of $\hlam$ contained in $\Uc$, then the period of $\Uc$ divides the period of $\Uc$.
\end{lem}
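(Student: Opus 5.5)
The statement has an evident typo. It should read: the period of $\Uc$ divides the period of $\Vc$. The plan is to prove that version by comparing first return maps on the cycles.

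Let $n$ be the period of $\Uc=\{U_0,\dots,U_{n-1}\}$ and $k$ the period of $\Vc=\{V_0,\dots,V_{k-1}\}$. By hypothesis each $V_j$ is contained in some gap of $\Uc$. I will argue that each $V_j$ lies in exactly one gap of $\Uc$. A hyperbolic gap $V_j$ is infinite, so it has nonempty interior. Distinct gaps of $\lam$ have disjoint interiors, so $V_j$ lies in a unique $U_{i(j)}$.

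Next I use invariance. Since $\si_d(V_j)=V_{j+1}$ and $\si_d(U_i)=U_{i+1}$ (indices mod $n$), the set $\si_d(V_j)$ lies in $\si_d(U_{i(j)})=U_{i(j)+1}$. Hence $i(j+1)=i(j)+1 \pmod n$. Two points need a short justification.

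\begin{itemize}
\item $\si_d(V_j)\subset\si_d(U_{i(j)})$. This holds because $V_j\cap\uc\subset U_{i(j)}\cap\uc$ and $\si_d$ of a gap is defined as the convex hull of the image of its vertex set.
\item $V_{j+1}$ cannot also lie in some other gap. This follows from the uniqueness in the previous step.
\end{itemize}

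Iterating $k$ times gives $\si_d^k(V_0)=V_0\subset U_{i(0)}$. On the other hand $\si_d^k(V_0)\subset U_{i(0)+k}$. By uniqueness again, $U_{i(0)+k}=U_{i(0)}$. Since the gaps $U_0,\dots,U_{n-1}$ are pairwise distinct, the shift by $k$ acts trivially on $\mathbb Z/n$, so $n\mid k$.

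The only delicate point is the interior-disjointness used in the uniqueness step. It depends on the definition of tuning: leaves of $\hlam\sm\lam$ lie inside gaps of $\lam$, so a gap of $\hlam$ cannot straddle a leaf of $\lam$. I expect that to be the main (mild) obstacle and would cite Definition~\ref{d:tune} for it.
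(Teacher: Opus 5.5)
Your argument is correct, with the clearly intended reading that the period of $\Uc$ divides the period of $\Vc$. The paper leaves this lemma to the reader, and your index map $j\mapsto i(j)$ with $i(j+1)\equiv i(j)+1 \pmod n$ is the natural proof.

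Your last paragraph misplaces the delicate point and gives a reason that does not work.

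\begin{itemize}
\item The uniqueness step only uses that distinct gaps $U_i$ of $\lam$ have disjoint interiors. This is automatic and has nothing to do with tuning.
\item Tuning matters only if you read ``$\Vc$ contained in $\Uc$'' as $\bigcup\Vc\subset\bigcup\Uc$. Then you must show that each $V_j$ lies in a single $U_i$.
\item For that, ``leaves of $\hlam\sm\lam$ lie in gaps of $\lam$'' does not help. The danger is a leaf of $\lam$ that is \emph{not} a leaf of $\hlam$, and such leaves can exist (see the example with two invariant cubic gaps sharing a fixed edge, following Lemma~\ref{l:tuning1}).
\item The correct reason comes from $\lam\prec\hlam$. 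It puts the endpoints of every leaf of $\lam$ into one $\sim_{\hlam}$-class, so every leaf of $\lam$ lies in the convex hull of a finite $\sim_{\hlam}$-class. That hull is disjoint from the interior of the infinite gap $V_j$. Hence the interior of $V_j$ lies in a single component of $\disk\sm\bigcup\lam$.
\end{itemize}
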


We are ready to establish the existence of a root lamination generated by a full critical collection $\cpc$.

\begin{thm}\label{t:rootla}
Given a full critical collection $\cpc$, the root lamination of $\cpc$ exists.
\end{thm}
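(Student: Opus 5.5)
Starting from the preroot q-lamination $\lam^{pr}_\cpc$, I will tune it repeatedly inside its cycles of hyperbolic gaps until every such cycle is prime, and then check that the final q-lamination satisfies both conditions of Definition \ref{l:root}. Condition (1) already holds for $\lam^{pr}_\cpc$. By construction of $\eYc_\cpc$ the vertices of each proper set of $\cpc^+$ lie in one $\approx_{\eYc_\cpc}$-class, hence in one $\sim^{pr}_\cpc$-class. Tuning only merges classes (Definition \ref{d:tune}), so condition (1) is preserved by every later tuning. The whole task is therefore to achieve condition (2) while never producing a portal lamination and never losing compatibility with $\cpc^+$.

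The procedure runs as follows. Set $\lam_0=\lam^{pr}_\cpc$; by Lemma \ref{l:no-po-ed} it is not portal. By Lemma \ref{l:portaloc}, every portal of $\cpc^+$ lies in a periodic gap from $\Hc_\cpc$, that is, inside a cycle of periodic hyperbolic gaps of $\lam_0$. Fix such a cycle $\Uc$ and place it in the Periodic Setup. If $(\Uc,\cpc)$ is not prime, Theorem \ref{t:sum} gives a non-portal q-lamination $\lam_1$ that tunes $\lam_0$ inside $\Uc$ and is compatible with $\cpc^+$; the constructions in Lemmas \ref{l:unic}, \ref{l:2pulls}, \ref{l:portal1} and \ref{l:under-set} only add leaves compatible with the induced critical collection. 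Iterating this, at each step I tune inside some non-prime cycle of the current lamination $\lam_k$ and obtain $\lam_{k+1}$.

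The step I expect to be the main obstacle is showing that this process \emph{terminates}, or at least has a well-defined limit that is still a q-lamination. I would measure complexity by the finite multiset of pairs (period, degree) over the hyperbolic cycles carrying portals of $\cpc^+$. A non-trivial tuning inside $\Uc$ either removes all hyperbolic gaps there, which happens when a unicritical tuning with no fixed vertex collapses them to a Julia-type piece, or replaces $\Uc$ by cycles $\Vc$ of hyperbolic gaps. By Lemma \ref{l:order1} the period of each $\Vc$ is a multiple of that of $\Uc$, and either the period grows strictly or the degree of the first return map drops strictly. Since each portal has a fixed finite period and there are finitely many portals, periods are bounded by the periods of the portal vertices, and degrees are bounded by $d$. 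Hence only finitely many tunings can occur, and I would check that the invariant decreases in the right order.

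When the process stops at a lamination $\lam$, every portal cycle $\Bc$ lies in a hyperbolic cycle $\Uc$ of $\lam$ with $(\Uc,\cpc)$ prime. Definition \ref{d:prime} then gives that $\Uc$ and $\Bc$ have the same period and that the polygon of $\Bc$ in each degree-$k$ gap is a $k$-gon. This is exactly condition (2) of Definition \ref{l:root}, so $\lam$ is a root lamination of $\cpc$.
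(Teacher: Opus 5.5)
Your proposal is correct and follows the paper's proof. You start from $\lam^{pr}_\cpc$, repeatedly tune inside a non-prime hyperbolic cycle via Theorem \ref{t:sum}, and use Lemma \ref{l:order1} together with the finiteness of the portal cycles and their periods to force termination at a root lamination; your explicit check that condition (1) survives tuning is a welcome addition that the paper leaves implicit. Your (period, degree) measure repackages the paper's dichotomy (either the portal cycles in $\Uc$ get split among different hyperbolic cycles, or the period of the cycle carrying them grows), and your ``collapse to a Julia-type piece'' alternative never occurs: each portal keeps a periodic vertex and so stays inside a periodic hyperbolic gap, which is exactly what your final paragraph tacitly relies on.
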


\begin{proof}
Consider a cycle $\Uc$ of hyperbolic gaps of $\lam^{pr}_\cpc$. Suppose that $\Uc$ with all the critical sets from $\cpc^+$ that are
$\sqsubset$-contained in gaps of $\Uc$ is non-prime. By Theorem \ref{t:sum} there is a non-portal q-lamination $\lam$ that tunes $\lam^{pr}_\cpc$ inside
$\Uc$. By Lemma \ref{l:order1} there are two cases. In the first case there are at least two cycles of portals contained in
$\bigcup \Uc$, and the cycles of hyperbolic gaps of $\lam$ containing these cycles of portals are different. In the second case the period
of the cycle of hyperbolic gaps containing the same portals as $\Uc$ grows (compared to the period of $\Uc$).

Now, the number of cycles of portals is finite, and their periods are finite. It follows that after finitely many consecutive steps each of which repeats
the one described in the previous paragraph we will not be able to continue because we will arrive at a root q-lamination generated by $\cpc$.
\end{proof}

\subsection{Uniqueness of root laminations}\label{ss:uniroot}

Now we will prove the uni\-que\-ness of a root lamination.
Consider an example.

\begin{ex}\label{e:sim}
Consider $\si_m$ and the convex hull $\Si_m$ of $0$ and all its $\si_m$-preimages. Clearly,
$\Si_m$ is an $m$-gon and a portal. Pull $\Si_m$ back to each partially critical component of $\cdisk\sm \Si_m$,
i.e. to the parts of the unit disk enclosed by the edges of $\Si_m$ and $m$ circular arcs $[\frac{i}m, \frac{i+1}m], 0\le i<m$
called \emph{arcs of the first generation}.
Each such pullback into one of these arcs is an $m+1$-gon with an
edge of $\Si_m$ serving as one of its edges; these form a ``ring'' of circularly ordered $m+1$-gons, consecutively sharing vertices
of the form $\frac{j}m$. We say that these pullbacks of $\Si_m$ are \emph{of the first generation}.

The unit circle $\uc$ is partitioned by the vertices of the pullbacks of $\Si_m$ of the first generation into
the circle arcs of the form $[\frac{i}{m^2}, \frac{i+1}{m^2}]$ called \emph{arcs of the second generation}.
Each such arc $I$ maps onto an arc of the first generation and, hence, covers the vertices of a pullback of the first generation of $\Si_m$, say,
$B$. Hence $B$ can be pulled back to the convex hull of $I$ forming a new $m+1$-gon. This can be done with every arc of the second generation.
Then this process is repeated.

The unions of 
pullbacks of $\Si_m$ of generations up to $i, i\to \infty,$ form a growing sequence of gaps
that fill up from within the open unit disk $\disk$. These gaps ``touch'' $\uc$ at their vertices that are
points of the form $\frac{i}{m^k}$ in lowest terms
where $k$ is the generation of the 
pullback of $\Si_m$ containing the point.
Evidently, in the limit these gaps fill up the open unit disk $\disk$ while also touching $\uc$ at points $\frac{i}{m^k}$. \hfill $\square$
\end{ex}


Let us generalize the approach discussed in Example \ref{e:sim} onto the general case.

\begin{dfn}\label{d:circ-poly} A closed convex set $A\subset \cdisk$ whose boundary $\bd(A)$ is the union of alternating
 arcs of $\uc$ and
chords is called a \emph{circular polygon}. Consider the image of $\bd(A)$ under the map
that coincides with $\si_d$ on the arcs and sends a chord $\ol{ab}$ to the chord $\ol{\si_d(a) \si_d(b)}$ affinely;
abusing the notation we will denote this
map by $\si_d$. If $\si_d(\bd(A))$ is a Jordan curve and $\si_d|_{\bd(A)}$ is
one-to-one except for (possibly) some critical chords in $\bd(A)$ we call $A$ a \emph{monotone} circular polygon. 
Abusing the notation we set $\ch(\si_d(\bd(A)))=\si_d(A)$; clearly, $\si_d(A)$ is a circular polygon.
\end{dfn}

The next definition develops the above further and makes it more dynamic.

\begin{dfn}\label{d:dynam1} The non-degenerate convex hull $\ch(X)$ of a closed (finite) set $X\subset \uc$ is a \emph{(finite) gap-leaf}.
If a finite gap-leaf $B$ is contained in $\si_d(A)$ where $A$ is a monotone circular polygon, then the unique maximal by
inclusion gap-leaf $B'\subset A$ that maps onto $B$ by $\si_d$ is called the \emph{pullback of $B$ into $A$}. Abusing the notation,
denote $B'$ by $\si^{-1}_d(B)$. Observe that these pullbacks are convex.
\end{dfn}

In Definition \ref{d:dynam1}, if no vertex of $B$ is the image of a critical edge of $A$ then $\si^{-1}_d(B)$ and $B$ have the same
number of vertices. Otherwise each vertex $x$ of $B$ which is the image of a critical edge $\ol{x}$ of $A$ is pulled back
to $\ol{x}$ or to a concatenation of $\ol{x}$ and other critical edges; in this case the number of vertices of $\si^{-1}_d(B)$ is greater.

Now, given a full critical collection $\cpc$, construct $\cpc^+$.
Components of $\cdisk\sm \bigcup \cpc^+$
are monotone circular polygons, and for each such component $A$ we have $\si_d(A)=\cdisk$. Consider
pullbacks of sets of $\cpc^+$
into the partially critical components of $\cdisk\sm \bigcup \cpc^+$. Since sets of $\cpc^+$ are pairwise disjoint,
their
pullbacks are pairwise disjoint, too. Clearly, this construction is of combinatorial nature.

To make the next step, observe that complementary components to the union of sets of $\cpc^+$ and their 
pullbacks
are circular polygons. Their boundaries map to the boundaries of complementary components of $\bigcup \cpc^+$ under $\si_d$ extended according
to Definition \ref{d:dynam1}. This allows us to define 
pullbacks of sets of $\cpc^+$ of the next generation.

We continue this process inductively. By induction on the step $n$ we will have constructed all 
pullbacks
of sets of $\cpc^+$ through $\si_d^n$-pullbacks. This creates complementary components to their union that will
be called \emph{complementary components of generation $n$}.
Each such component $A$ is a circular polygon with the boundary that maps to the boundary of a complementary component $X$ of generation $n-1$.
The set $X$ may contain some 
pullbacks of sets of $\cpc^+$ of generation $n$. Then their 
pullbacks into $A$ form 
pullbacks of
sets of $\cpc^+$ of generation $n+1$. Once this is done for all complementary components of generation $n$, the step of induction is made, and we can
continue.

\begin{dfn}\label{d:tile}
The iterated 
pullbacks of sets of $\cpc^+$ are called the \emph{tiles (generated by $\cpc$)}.
The family of tiles generated by $\cpc$ is called the \emph{partial tiling (generated by $\cpc$)}.
An iterated pullback of a proper set from $\cpc^+$ will be called a \emph{proper tile}.
\end{dfn}


We are ready to prove the main theorem of this section.

\begin{thm}\label{t:rootuni} Given a full critical collection $\cpc$ the root lamination of $\cpc$ is unique.
\end{thm}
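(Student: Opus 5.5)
Suppose $\lam_1=\lam_{\sim_1}$ and $\lam_2=\lam_{\sim_2}$ are two root q-laminations of $\cpc$; the plan is to show that both are determined by the partial tiling of Definition \ref{d:tile}. Concretely, each $\lam_j$ should coincide with the q-lamination generated by the closure of the edges of proper tiles, together with one specific hyperbolic structure built around each cycle of portals.

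\textbf{Step 1: proper tiles.} I would show that every proper tile lies in a single $\sim_j$-class. By condition (1) of Definition \ref{l:root}, the vertices of every proper set of $\cpc^+$ are $\sim_j$-equivalent. Next I would check that each $\lam_j$ is compatible with all tiles, including portals: a portal lies in a hyperbolic gap of a prime cycle, so it crosses no leaf. Then Lemma \ref{l:pull}, applied generation by generation inside the complementary circular polygons, pulls the class back along the tile pullbacks (Definition \ref{d:dynam1}); this uses backward invariance (D2) and the fact that pullbacks of convex gap-leaves into monotone circular polygons are unique. It follows that $\lam^{pr}_\cpc\prec\lam_j$. Moreover, outside the grand orbits of the hyperbolic cycles of $\lam^{pr}_\cpc$, the two laminations agree. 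For that I would reuse the argument of Theorem \ref{t:side-full}: any extra leaf would lie $\sqsubset$ a finite gap, or in a (pre-)Siegel gap whose images cross. Lemma \ref{l:tuning1} confines all differences to infinite non-Siegel gaps, i.e.\ hyperbolic ones.

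\textbf{Step 2: inside a hyperbolic cycle $\Uc$ of $\lam^{pr}_\cpc$.} Pass to the canonical model (Definition \ref{d:canon}) of the first return map $\si_d^n|_{U_0}$, so that it becomes $\si_r$ with a $\cpc^+$-induced full critical collection. The restrictions of $\lam_1$ and $\lam_2$ to this model are both root laminations of the induced collection, and they have strictly fewer hyperbolic-cycle levels. So I would argue by induction on the pair (number of portal cycles, $d$), or equivalently on total criticality. The base case is when every gap of $\Uc$ contains a single portal cycle of the same period in prime position. There the model is $\si_m$ with the single portal $\Si_m$ of Example \ref{e:sim}, up to relabeling. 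Its pullbacks fill $\disk$, so the only root lamination compatible with it is the empty one, and the hyperbolic gap is forced.

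\textbf{Step 3: the main obstacle.} The hard part is showing that in the non-prime situations the refinement is forced, not merely possible. Theorem \ref{t:sum} only gives existence of a tuning. For uniqueness I would use the tiling. The portal tiles and their pullbacks (as in Example \ref{e:sim}) cut $U_0$ into circular polygons. Each gap of a root lamination containing a portal cycle must be the maximal region bounded by periodic leaves that separate distinct portal cycles. These separating leaves are the periodic (possibly degenerate) edges produced in Lemmas \ref{l:portal1} and \ref{l:under-set}, connecting fixed points lying on the boundary of a common complementary tile-component, and they are determined combinatorially by the tiling. Primeness (equal period and a $k$-gon in each degree-$k$ gap) forces each such gap to have exactly the degree predicted by its portal. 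Two root laminations therefore have the same periodic hyperbolic gaps, hence the same boundary leaves. By Step 1 and pullback uniqueness, they then have the same grand orbits of these leaves and their closures, so $\sim_1=\sim_2$. The step most likely to need care is making sure that no extra leaves can be put inside a prime cycle. The inclusion of all vertices of the $\Si_m$-pullbacks via Example \ref{e:sim} addresses this, since those pullbacks are dense.
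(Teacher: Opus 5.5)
Your Step 1 is a reasonable counterpart of the paper's final appeal to Theorem \ref{t:side-full}. The heart of uniqueness, however, is not established: you never show that two root laminations have the same periodic hyperbolic gaps around each cycle of portals.

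\emph{Step 3 does not prove this.} It asserts that the boundaries of these gaps are the periodic leaves ``produced in Lemmas \ref{l:portal1} and \ref{l:under-set}'' and are therefore determined by the tiling. But those lemmas only prove existence, and they involve choices. The counting argument of Lemma \ref{l:under-set} yields \emph{some} complementary component containing two fixed points, not a canonical one. The period-raising case rests on the existence result quoted in Lemma \ref{l:unic}. Nothing shows that a root lamination is forced to use those leaves.

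\emph{The characterization in Step 3 is also wrong as stated.} Take $\si_3$ and $\cpc=\{\ol{\frac{3}{24}\frac{19}{24}},\ol{\frac{9}{24}\frac{17}{24}}\}$. There is a single cycle of portals, with vertices $\frac18\leftrightarrow\frac38$. Since $\cpc$ has no proper sets, $\lam^{pr}_\cpc$ is empty. The root lamination has a period-$2$ cycle of quadratic gaps, and these gaps must be separated from \emph{each other}, not from another portal cycle. Your ``maximal region'' would here be the whole disk.

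\emph{Step 2 does not help.} In the same example the induction never starts: the canonical model of the (trivial) cycle of $\lam^{pr}_\cpc$ is the original problem. Moreover, the claim that $\lam_j$ restricted to the model of a cycle $\Uc$ of $\lam^{pr}_\cpc$ is a root lamination of the induced collection can fail. A portal in $U_i$, $i\neq 0$, may have a pullback to $U_0$ that misses the periodic point. Such a pullback is proper for the first return map, yet it lies in a non-periodic gap of $\lam_j$ on which $\si_d^n$ has degree $>1$ (a capture gap of the model). This violates condition (1) of Definition \ref{l:root} in the model.

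The missing idea is to use Example \ref{e:sim} to \emph{identify} the gap, not only to forbid extra leaves inside a gap you already have. Let $U$ be an $n$-periodic gap of a root lamination containing a portal. Take the union $X$ of that portal with the pullbacks to $U$, along the cycle, of the other portals of the cycle. Primeness makes $X$ a convex polygon that collapses under $\si_d^n$ to its periodic vertex, and whose canonical image is $\Si_m$. By Example \ref{e:sim}, the iterated pullbacks of $X$ inside $U$, which are tiles, fill the interior of $U$. So $U$ is the closure of a connected union of tiles around $X$ (the paper's $Y_X$), and this union can be described from the partial tiling of $\cpc$ alone. Hence the periodic hyperbolic gaps, and their grand orbits, coincide for $\lam_1$ and $\lam_2$. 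Only after this does the paper replace the portals by non-periodic critical chords inside these common gaps, obtain a Side collection $\cpc'$, and conclude with Theorem \ref{t:side-full}.
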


\begin{proof}
Let $U$ be an $n$-periodic
gap of a root lamination for $\si_d$. Then $\si^n_d|_U$ is canonically semiconjugate to $\si_m$ with the appropriately chosen $m$.
Take tiles contained in gaps from the orbit of $U$ and pulled back to $U$ under the smallest power of $\si_d$ that
brings them to $U$. Their union is a convex polygon, say, $X$, collapsing to its unique periodic vertex under $\si_d^n$. Its image under the canonical
semiconjugacy is the set $\Si_m$ (see Example \ref{e:sim}).

By Example \ref{e:sim}, in the case of $\Si_m$ the union of tiles is the entire open unit disk
$\disk$ plus all the points of $\uc$ of the form $\frac{i}{m^k}$ in lowest terms. Other points of $\uc$ do not belong to this union.
Lifting it under the canonical semiconjugacy we see that the union $Y_X$ of all pullbacks of $X$ inside $U$ covers the interior of
$U$ and the points/leaves of $\bd(U)$ that canonically map to points $\frac{i}{m^k}$ in lowest terms (we choose a canonical semiconjugacy
for $\si_d^n|_U$ so that the periodic vertex of $X$ maps to $0$).

The above arguments are implemented in the framework of a root lamination. However, $Y_X$
(which equals $U$ with some points and edges of its boundary) can be defined independently. Indeed,
let $A\in \cpc^+$ (for example, $A$ may be a portal). Consider the set $\bigcup \cpc^+\sm A$ (i.e., remove $A$ from $\cpc^+$).
Let $Z_A$ be the component of $\cdisk\sm (\bigcup \cpc^+\sm A)$ containing $A$. Then $Y_X$ is the maximal convex connected union of tiles contained in $Z_X$.
It follows that the periodic hyperbolic gap of any root lamination containing $X$ is the closure of the set $Y_X$. Thus, cycles of
periodic hyperbolic gaps containing portals coincide for all root laminations generated by $\cpc$. The iterated pullbacks of such hyperbolic
gaps depend only on $\cpc^+$, and are, therefore, well defined and independent of the choice of a root lamination.

Next we argue as follows. Let $\lam_1$ and $\lam_2$ be two root laminations of a full critical collection $\cpc$. By the above
their periodic hyperbolic gaps coincide, together with their grand orbits. Insert in those periodic gaps proper critical leaves
and do it the same way for both $\lam_1$ and $\lam_2$. Replace portals with these newly inserted leaves to create a new full critical collection
$\cpc'$. It is a Side critical collection, hence by Theorem \ref{t:side-full} the corresponding root lamination $\lam^{pr}_{\cpc'}$ is the same for
both $\lam_1$ and $\lam_2$. Since the grand orbits of periodic hyperbolic gaps coincide for both $\lam_1$ and $\lam_2$
then in fact $\lam_1=\lam_2$.
\end{proof}

\section{Concluding remarks}

In this paper, we are addressing the issue of relating critical portraits on the one hand and q-laminations/topological polynomials  on the other.
In the process, we establish equivalence between some critical portraits. Indeed, if $\lam$ is a Side non-capture q-lamination then we associate
with it any critical portrait compatible with $\lam$. As a result, all critical portraits compatible with $\lam$ are identified.

The association between critical portraits and laminations in the case when some critical leaves have periodic endpoints is
more complicated. Indeed, consider the quadratic case. In that case it is well-known that if a hyperbolic lamination $\lam$ has a unique periodic
hyperbolic gap $U$ of period $N$ then $U$ has a unique (non-degenerate) edge, say, $\ol{ab}$ of period $N$. In that case one associates to
$\lam$ a pair of critical leaves with endpoints $a$ and $b$. In other words, in the hyperbolic case the association between laminations
and critical leaves is made on the basis of critical leaves contained in the critical hyperbolic gap of $\lam$ and having an endpoint of the same period as this gap.
This motivates the next definition which repeats Definition \ref{d:introlegal} and is placed here for the sake of convenience.

\begin{dfn}\label{d:legal} Given a q-lamination $\lam$ a critical portrait $\Po$ is
\emph{legal} for $\lam$ (and vice versa) if:
\begin{enumerate}
\item no chord in $\Po$ crosses any leaf of $\lam$ inside the unit disk,
\item  for every  critical leaf with a periodic endpoint  $x=x_0$ of period $n$, there exist a cycle of hyperbolic gaps $U_0,\ldots,U_{n-1}$, also of period $n$ so that
if $\si_d|_{\bd(U_i)}$ has degree $d_i$, then there exist $d_i-1$ critical chords  of $\Po$ contained in $U_i$ with $\si_d$-images equal to $\si_d^{i+1}(x)=x_{i+1}$.
\end{enumerate}
We also say that $(\Po, \lam)$ is a \emph{legal pair}.
If two legal pairs share either a critical portrait or a lamination, we declare them
\emph{equivalent} and, as usual, spread this equivalence by transitivity.
\end{dfn}

Evidently, the pair $(\Po, \lam^{root}_\Po)$ is legal. There may exist other legal pairs equivalent to it.
By definition, switching from one legal pair to another may either (1) involve the same
lamination but distinct critical portrait, or (2) the same critical portrait but distinct lamination (the latter possibility relies upon the fact that
in Definition \ref{d:legal} we consider both non-portal laminations and laminations that \emph{are not} non-portal).
We refer to these transitions as \emph{type (1) and type (2) equivalences}, respectively.
Say that a critical portrait $\Pc$ is \emph{non-capture} if it is compatible with no capture q-lamination.

\begin{lem}
  \label{l:noncap-equiv}
Suppose that $\Po$ is non-capture, 
and that $(\tlam, \tPo)$ is equivalent to $(\lam^{pr}_\Po, \Po)$.
Then $\tlam\supset \lam^{pr}_\Po=\lam^{pr}_{\tpo}$.
\end{lem}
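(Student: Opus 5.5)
We will argue by induction on the length of the chain of type (1) and type (2) equivalences linking $(\lam^{pr}_\Po,\Po)$ to $(\tlam,\tPo)$. The inductive statement is slightly stronger than the lemma. For every pair $(\lam',\Po')$ in the chain we aim to show:
\begin{itemize}
\item[(a)] $\lam'\supset\lam^{pr}_\Po$;
\item[(b)] $\lam^{pr}_{\Po'}=\lam^{pr}_\Po$;
\item[(c)] $\Po'$ is again non-capture.
\end{itemize}
The base case is trivial.

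The first structural step concerns what $\lam^{pr}_\Po$ looks like when $\Po$ is non-capture. We claim that every critical set of $\lam^{pr}_\Po$ is either a finite gap-leaf containing a proper set of $\Po^+$, or a periodic hyperbolic gap containing portals. Indeed, a preperiodic infinite critical gap of $\lam^{pr}_\Po$ would make $\lam^{pr}_\Po$ a capture lamination compatible with $\Po$, contradicting the hypothesis. By Lemma~\ref{l:nonper}, applied via Theorem~\ref{t:sum}, there are also no non-periodic critical hyperbolic gaps. Consequently the proper sets of $\Po^+$ sit inside finite critical classes of $\lam^{pr}_\Po$, and portals sit inside periodic hyperbolic gaps of $\lam^{pr}_\Po$, by Lemma~\ref{l:portaloc}.

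\emph{Type (2) step: same portrait, new lamination.} Suppose $(\lam',\Po')$ is legal and $\Po'$ is non-capture. Condition (1) of legality says $\lam'$ does not cross $(\Po')^+$. We repeat the argument of Theorem~\ref{t:side-full}. Each proper set $C$ of $(\Po')^+$ cannot lie in an infinite gap of $\lam'$ unless it is an edge of that gap: such a gap would be preperiodic (capture, excluded) or periodic hyperbolic (then $C$ would have a periodic vertex by legality (2), contradicting properness). So $C$ lies in a finite gap-leaf of $\lam'$. Hence all pullbacks of proper sets, and their limits, are leaves of $\lam'$, i.e.\ $\eYc_{\Po'}\subset\lam'$. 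Taking $\approx$-classes, which are finite in a q-lamination, gives $\lam^{pr}_{\Po'}\subset\lam'$.

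\emph{Type (1) step: same lamination, new portrait.} Suppose $\Po''$ is another portrait legal for $\lam'\supset\lam^{pr}_{\Po'}$. By the type (2) step we get $\lam^{pr}_{\Po''}\subset\lam'$. We must show $\lam^{pr}_{\Po''}=\lam^{pr}_{\Po'}$. The plan is to use Theorem~\ref{t:noesli}. Both preroot laminations are built from proper sets lying in the same finite critical classes of $\lam'$ outside the hyperbolic cycles. By legality (2), the portals of both portraits lie in the same cycles of hyperbolic gaps of $\lam'$, and the number of chords with periodic image equals $d_i-1$ in each gap. Inside each such hyperbolic gap we replace the portals by proper critical chords, chosen identically for both portraits, as in the proof of Theorem~\ref{t:rootuni}. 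This produces Side collections whose preroot laminations are perfect-Siegel and contain a common critical portrait. Theorem~\ref{t:noesli} together with Theorem~\ref{t:side-full} then identifies them, and removing the tuning inside the hyperbolic grand orbits returns $\lam^{pr}_{\Po'}=\lam^{pr}_{\Po''}$.

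Non-capture (c) propagates along the chain: a capture q-lamination compatible with $\Po''$ would contain $\lam^{pr}_{\Po''}=\lam^{pr}_\Po$. Its capture gap would then lie inside a preperiodic gap of $\lam^{pr}_\Po$, producing a capture lamination compatible with $\Po$ by the same chords. This contradicts the assumption that $\Po$ is non-capture.

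The main obstacle is the type (1) identification. Two portraits sharing a lamination may place their proper critical chords differently inside the same finite critical class, and the preroot laminations must be shown to agree there. I would try first to prove directly that the $\approx$-classes coincide. Both $\eYc$'s are generated by pullbacks of convex hulls lying in the same finite classes of $\lam'$, so the closures of their pullback families have the same limit leaves, by Lemma~\ref{l:2.18}-type density arguments. Only if that fails would I fall back on the Side-replacement trick via Theorem~\ref{t:noesli}.
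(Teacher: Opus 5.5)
Your overall scheme (induction along the chain, with one case for each type of move) is the same as the paper's. However, the type (2) step contains a false inference. You exclude a proper set $C$ of $(\Po')^+$ from a periodic hyperbolic gap of $\lam'$ by saying that legality (2) would then give $C$ a periodic vertex. Condition (2) of Definition \ref{d:legal} only runs one way: it constrains critical chords that already have a periodic endpoint. It says nothing about a periodic gap of degree $>1$ that $\sqsubset$-contains only proper critical chords.

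This cannot be patched from the stated definitions. Take $d=2$, a diameter $\ell$ with non-periodic endpoints, and $\Po=\{\ell\}$; it is non-capture, since capture needs two critical sets. Take also the empty q-lamination $\lam_\0$, whose only gap $\cdisk$ is invariant of degree $2$. Both $(\lam^{pr}_\Po,\Po)$ and $(\lam_\0,\Po)$ are legal, because condition (2) is vacuous, so the two pairs are equivalent. Yet the endpoints of $\ell$ are $\sim^{pr}_\Po$-equivalent, so $\lam_\0\not\supset\lam^{pr}_\Po$. A non-degenerate version is the basilica together with a diameter joining two generic vertices of its critical Fatou gap.

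So the containment you want in the type (2) step needs an extra hypothesis on legal pairs. For example, you could require that no proper set of $\Po^+$ is $\sqsubset$-contained in a periodic hyperbolic gap, i.e. every hyperbolic cycle is carried by a cycle of portals. The paper's case (2) does not prove this either. It simply asserts that a type (2) move can change the lamination only inside hyperbolic gaps of $\lam^{pr}_\Po$ and their pullbacks, which is exactly this missing hypothesis.

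Other steps need repair as well.
\begin{itemize}
\item Even granting that hypothesis, your type (2) argument only shows that pullbacks of proper sets lie in gap-leaves of $\lam'$, i.e.\ $\lam^{pr}_{\Po'}\prec\lam'$. The jump to $\lam^{pr}_{\Po'}\subset\lam'$ is not automatic. As noted after Lemma \ref{l:tuning1}, tuning can turn a leaf into a diagonal of a larger class. Theorem \ref{t:side-full} avoids this only because (pre-)Siegel gaps admit no tuning.
\item Your propagation of (c) runs backwards. A capture lamination compatible with $\Po''$ has a proper set of $(\Po'')^+$ that is $\sqsubset$-contained in its non-periodic critical Fatou gap. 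Hence it can never contain $\lam^{pr}_{\Po''}$.
\item Your type (1) plan via Theorem \ref{t:noesli} needs a single critical portrait contained in both laminations. Replacing portals ``identically'' does not supply one when $\Po'$ and $\Po''$ differ on their proper chords. The paper argues directly instead: the new portrait has the same number of critical chords in the same finite critical gap-leaves of $\lam^{pr}_\Po$, so the pullback construction returns the same preroot lamination.
\end{itemize}
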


\begin{proof}
We use induction. The base of induction is immediate.
Suppose that $(\tlam, \tPo)$ is equivalent to $(\lam^{pr}_\Po, \Po)$ and has the
properties claimed in the lemma: $\tlam\supset \lam^{pr}_\Po$ and $\lam^{pr}_{\tpo}=\lam^{pr}_\Po$.
Consider the next legal pair $(\tlam', \tPo')$ equivalent to $(\tlam, \tPo)$.

(1) We have $\tlam=\tlam'$ (i.e., it is the critical portrait that varies). Then the new critical portrait $\tPo$ still has the same number of critical leaves in
the same finite critical gaps/leaves of $\lam^{pr}_\Po$ as $\tlam$ did. It follows that if we construct the preroot lamination based upon
$\tPo'$ we will obtain the same preroot lamination as before, i.e., by induction, $\lam^{pr}_{\tpo'}=\lam^{pr}_\Po$.
Since by induction $\tlam'=\tlam\supset \lam^{pr}_\Po$, we are done in this case.

(2) We have that $\tPo'=\tPo$ (i.e., it is the lamination that varies).
Then by construction it follows that the only changes in lamination that are possible may take place within hyperbolic gaps of $\lam^{pr}_\Po$ and their pullbacks.
This implies that the preroot lamination does not change and, therefore, $\tlam'\supset \lam^{pr}_\Po=\lam^{pr}_{\tpo'}$  as desired.
\end{proof}

Moreover, the definition of legal implies that the periods of critical leaves with periodic endpoints do not change when we move from one legal pair
to an equivalent one (even though the actual critical leaves may change). Since the only other critical leaves our laminations are allowed to have
inside hyperbolic gaps of $\hlam$ are contained in the corresponding portals, we conclude that there are finitely many possible parts of
critical portraits that consist of critical leaves not contained in finite gaps/leaves of $\hlam$. This finiteness implies the conclusion of the following theorem.

\begin{thm}\label{t:conclu}
Consider the family of all legal pairs $(\lam, \Po)$ such that $\Po$ is incompatible with capture laminations,
with the equivalence defined in Definition \ref{d:legal}.
Then each class of equivalence is finite.
\end{thm}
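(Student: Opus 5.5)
Fix an equivalence class and a legal pair $(\lam,\Po)$ in it. The plan is to show that every pair in the class lies in a finite set determined by one lamination, namely $\hlam=\lam^{pr}_\Po$. The first step is to apply Lemma \ref{l:noncap-equiv}: for every pair $(\tlam,\tPo)$ in the class we get $\tlam\supset\lam^{pr}_\Po=\lam^{pr}_{\tPo}$. So the preroot lamination is an invariant of the class. Both its finite critical gap-leaves and its cycles of periodic hyperbolic gaps, together with their grand orbits, are therefore fixed once and for all. If $(\lam,\Po)$ was obtained by starting from a pair not of this form, we first move to $(\lam^{pr}_\Po,\Po)$. This is allowed when $\lam^{pr}_\Po\prec\lam$ is legal for $\Po$, which is the content of the type (2) equivalence.

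The second step bounds the critical portraits. Every chord of $\tPo$ is either contained in a finite critical gap-leaf of $\hlam$ or lies in a critical hyperbolic gap of $\hlam$. By Lemma \ref{l:nonper} and the non-capture hypothesis, no critical chord lies in a non-periodic infinite gap, and Siegel gaps carry no interior critical chords. The finite critical gap-leaves of $\hlam$ have finitely many vertices, so there are only finitely many choices for the chords of the first kind. Chords of the second kind lie in periodic hyperbolic gaps $U$ of $\hlam$. By the legality condition, each such chord has a periodic endpoint whose period equals the period of a hyperbolic cycle of $\tlam$, or else it belongs to a portal. A hyperbolic gap $U$ of period $n$ has only finitely many vertices of any given period, because $\si_d^n|_{\bd(U)}$ is semiconjugate to some $\si_m$, which has finitely many periodic points of each period. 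Legality also fixes the image points $x_{i+1}$. Hence there are finitely many choices for the critical chords in $U$. This is the step I expect to be the main obstacle. The difficulty is to show that the periods of periodic endpoints, and the hyperbolic cycles they must sit in, cannot grow along a chain of equivalences. My first attempt would be to use Lemma \ref{l:order1} together with the fact that any lamination $\tlam\supset\hlam$ tunes $\hlam$ only inside hyperbolic gaps. Then every hyperbolic cycle of $\tlam$ has period divisible by, and bounded in terms of, the degree data of $\hlam$'s cycles: the total degree is at most $d$, and each increase of period forces a reduction in the number of portals, as in the proof of Theorem \ref{t:rootla}.

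The third step bounds the laminations. For each of the finitely many portraits $\tPo$, the laminations $\tlam$ in the class are legal for $\tPo$, contain $\hlam$, and differ from it only inside the grand orbits of its hyperbolic cycles. Inside a periodic hyperbolic gap of $\hlam$, the tuning lamination is forced by the portals of $\tPo$ and legality to be one of finitely many options. It is generated, as in Lemmas \ref{l:unic}, \ref{l:2pulls} and \ref{l:portal1}, by invariant polygons spanned by finitely many periodic points of bounded period compatible with $\tPo$. The pullbacks to the rest of the grand orbit are then determined. Combining the finitely many portraits with finitely many laminations per portrait shows that the class is finite.
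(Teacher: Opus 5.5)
Your outline matches the paper's. You freeze $\hlam=\lam^{pr}_\Po$ via Lemma \ref{l:noncap-equiv}, place the proper critical chords in the finitely many finite critical gap-leaves of $\hlam$, and place the remaining chords as portal chords in the critical hyperbolic gaps of $\hlam$. But the step you flag as the main obstacle is where your argument breaks, because the mechanism you propose cannot work.

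\begin{itemize}
\item Lemma \ref{l:order1} gives only divisibility of periods, not an upper bound.
\item No bound ``in terms of the degree data of $\hlam$'' can exist. Take $d=2$ and a diameter $\Po$ with an endpoint of period $n$. Then $\Po$ is a portal, $\Yc_\Po$ is empty, and $\lam^{pr}_\Po$ is the empty lamination for every $n$. Yet legality forces the laminations in the class to have a hyperbolic cycle of period $n$. The same $\hlam$ thus occurs with arbitrarily large periods, so any bound must come from the class itself.
\item The counting in the proof of Theorem \ref{t:rootla} terminates only because the portals, and hence the periods of their vertices, stay fixed throughout. Along a chain of type (1) moves the portrait changes. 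Nothing in your argument prevents a portal of period $n$ from being traded for one of period $2n$, then $4n$, and so on.
\end{itemize}

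The missing idea, which is what the paper uses, is that these periods are invariants of the equivalence class, not merely bounded.
\begin{itemize}
\item A type (2) move keeps the portrait, so the periods of its periodic vertices do not change.
\item A type (1) move keeps the lamination $\tlam$. Since $\tlam\supset\lam^{pr}_{\tPo}$ and the portrait is non-capture, every infinite gap of $\tlam$ of degree greater than one is a periodic hyperbolic gap. The portrait chords it contains are portal chords, because proper chords lie in finite gap-leaves of $\hlam$.
\item Condition (2) of Definition \ref{d:legal} then ties the periods of these hyperbolic cycles to the old portal vertices. The same condition forces the portal vertices of the new portrait to have exactly those periods.
\end{itemize}
Once the periods are fixed, your count in Step 2 does yield finitely many portraits. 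That count uses the periodic points of a given period on the finitely many critical hyperbolic gaps of $\hlam$, together with the preimages of the prescribed image points $x_{i+1}$.

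A smaller point: your conditional move to $(\lam^{pr}_\Po,\Po)$ in Step 1 is not always available, since that pair need not be legal. In the quadratic example above, its only hyperbolic gap is fixed rather than of period $n$.
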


In the forthcoming papers we plan to use root lamination, legal pairs laminations and critical portrait, in order to
construct a model space of topological polynomials of degree $d\ge 3$.
\appendix

\section{Periodic gaps of degree one}\label{ss:deg1}

\setcounter{thm}{0}

We start with the following definition.

\begin{dfn}[Stand alone gaps]\label{d:stand}
Let $U'\subset \uc$ be a closed set. Suppose that $U',$ $\si_d(U'),$ $\dots,$ $\si_d^{n-1}(U')$, $\si_d^n(U')=U'$
have convex hulls denoted by $U_i$ with disjoint interiors and, for each $i$, the map from $\bd(U_i)$ to $\bd(U_{i+1})$
is the composition of a positively oriented monotone map and a positively oriented covering map.
Then $U_0$ is called a \emph{stand alone} $n$-periodic gap.
\end{dfn}

By a \emph{stand alone invariant gap}, we mean a stand alone 1-periodic gap.
Note that a stand alone $\si_d^n$-invariant gap is \emph{not} the same as a stand alone $n$-periodic gap under $\si_d$.

Let $U'\subset \uc$ be closed, let $U=\ch(U')$ be a stand alone gap of period $n$. If $x\in \bd(U)$ is such that $\si_d^{kn}(x)=x$ then either
$x\in U'$, or, otherwise, $x$ belongs to an edge $\ell=\ol{ab}$ of $U$ such that $\si_d^{nk}(a)=a$ and $\si_d^{nk}(b)=b$.
Hence, without loss of generality, from now on we always mean that periodic points $z\in \bd(U)$ belong to
$\uc$.

The following simple result is Lemma 2.16 of \cite{bopt10}.

\begin{lem}\label{l:edges-of-gaps}
Suppose that $U$ is a periodic stand alone gap. Then all its edges are either (pre)periodic or (pre)critical.
\end{lem}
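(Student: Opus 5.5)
We reduce to the dynamics of the first return map on the orbit of $U$. Set $V_i=\si_d^i(U)$ for $0\le i\le n$, so $V_n=V_0=U$, and let $V'_i=V_i\cap\uc$. By Definition \ref{d:stand}, each map $\bd(V_i)\to\bd(V_{i+1})$ is a positively oriented composition of a monotone map and a covering map. Let $\ell=\ol{ab}$ be an edge of $U$. Then $\si_d(\ell)$ is either a point, in which case $\ell$ is critical and we are done, or an edge of $V_1$. Iterating, either some image $\si_d^j(\ell)$ is critical, so that $\ell$ is precritical, or every image $\si_d^j(\ell)$ is a non-degenerate edge of $V_{j \bmod n}$. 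It therefore suffices to show that in the second case $\ell$ is (pre)periodic.

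\textbf{Step 1: reduction to a wandering edge.} Assume no image of $\ell$ is critical. The edges of $U$ correspond to the holes of $U'$, that is, to the components of $\uc\sm U'$, and these holes are pairwise disjoint arcs. Because the maps on the boundaries are positively oriented, a non-critical edge $\ol{ab}$ bounding the hole $(a,b)$ is sent to an edge bounding the hole $(\si_d(a),\si_d(b))$ of the image set. Positive orientation also guarantees that hole lengths get multiplied by $d$ as long as the hole is short, namely of length less than $1/d$. We will use this in Step 2.

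\textbf{Step 2: holes cannot stay short forever.} Suppose the edges $\si_d^{kn}(\ell)$, $k\ge0$, are pairwise distinct. They bound pairwise disjoint holes of $U'$, so their lengths sum to at most $1$, and hence their lengths tend to $0$. On the other hand, a hole of length $\varepsilon<1/d$ whose edge is non-critical maps to a hole of length $d\varepsilon$. Consider an iterate $\si_d^j(\ell)$ whose hole length is maximal among the first $N$ images, where $N$ is large enough that all later images in the subsequence are tiny. Its length is $d^{\,j'-j}$ times smaller than the length of the image $j'-j$ steps later, as long as all intermediate lengths stay below $1/d$. This contradicts the maximality unless the length eventually reaches at least $1/d$. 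Holes of length at least $1/d$ are disjoint, so there are at most $d$ of them across the finitely many sets $V'_0,\dots,V'_{n-1}$. Consequently only finitely many distinct edges in the orbit of $\ell$ ever have long holes, while the short ones strictly expand. Combining these two facts, every forward orbit of edges visits one of finitely many long edges infinitely often, so two images coincide. This gives $\si_d^{p}(\ell)=\si_d^{q}(\ell)$ with $p<q$, and hence $\ell$ is (pre)periodic.

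\textbf{Step 3: the main obstacle.} The hardest part is making Step 2 rigorous when a hole has length at least $1/d$. Then $\si_d$ may fail to multiply the length by $d$, because the arc wraps around, and this is exactly where the covering-map degree enters. I would handle it by noting that a non-critical edge $\ol{ab}$ with $(\si_d(a),\si_d(b))$ a hole must have $d\cdot|(a,b)|\equiv|(\si_d(a),\si_d(b))|\pmod 1$. So a long hole maps to a hole of length at least some positive amount, or else its edge is critical. Since there are finitely many long holes per gap and finitely many gaps in the cycle, a pigeonhole argument on these finitely many long edges gives the repetition. An alternative route that avoids the counting is to cite Kiwi's Theorem \ref{t:kiwan} for the wandering case. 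However, I expect the direct expansion argument above to suffice, and it is the more self-contained approach.
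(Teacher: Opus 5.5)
Your argument is correct, and its essential content is the end of Step 2. If the hole $H_j$ behind the non-critical edge $\sigma_d^j(\ell)$ has length less than $1/d$, then positive orientation makes the hole behind $\sigma_d^{j+1}(\ell)$ equal to the arc $\sigma_d(H_j)$, which has length $d|H_j|$. Hence every index $j$ is followed, within finitely many steps, by an index $j'$ with $|H_{j'}|\ge 1/d$. Each $V_i$ has at most $d$ holes of length at least $1/d$, so the whole cycle contains at most $nd$ edges with long holes. By pigeonhole, two images of $\ell$ coincide, and $\ell$ is (pre)periodic. The paper gives no proof of its own; it only quotes Lemma~2.16 of bopt10. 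This expansion-and-pigeonhole argument is the standard one for that statement, so there is no competing route to compare.

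Several parts of your write-up should be cleaned up.
\begin{enumerate}
\item The count ``at most $d$ of them across the finitely many sets $V'_0,\dots,V'_{n-1}$'' is false. Holes of different $V'_i$ need not be disjoint: the hole of $V'_0$ containing $V'_1$ contains holes of $V'_1$. What is true is ``at most $d$ per set, hence at most $nd$ in total,'' and finiteness is all the argument uses.
\item The maximality discussion, and the remark that hole lengths tend to $0$, are superfluous.
\item Step~3 addresses a non-issue. The argument never needs to know what a long hole maps to; it only needs that there are finitely many long holes.
\item The suggested alternative via Kiwi's theorem would not work. That theorem only excludes wandering non-precritical polygons with more than $d$ vertices. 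It says nothing against a wandering edge, since wandering leaves do exist, so it cannot replace the expansion argument.
\end{enumerate}
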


A typical example of a stand alone $k$-periodic gap is described in the next folklore lemma;
we only sketch the proof leaving details to the reader.

\begin{lem}\label{l:example1}
Suppose that $\ell_1,$ $\dots,$ $\ell_m$ are pairwise unlinked critical chords that are all contained
in the boundary of one component $U$ of $\cdisk\sm \bigcup \ell_i$ where $\bd(U)$ is the union of
$\bigcup \ell_i$ and a few circle arcs. Suppose that for some $k>0$ the $\si_d^k$-image of the union of all the circle arcs from $\bd(U)$
covers $\uc$ in a $t$-to-$1$ fashion (except for images of $\ell_i$'s that will have more preimages).
Consider the set $A\subset \uc$ consisting of points whose forward $\si_d^k$-orbits are contained in
$\ol{U}$. Then the following holds.

\begin{enumerate}

\item $\ch(A)$ is a stand alone $\si_d^k$-invariant gap of degree $m$ unless $A$ consists of at most 2 points.

\item Suppose that $t=1$. Then all periodic points (if any) that belong to $A$ have the same period.

\end{enumerate}

\end{lem}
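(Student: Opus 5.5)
To prove Lemma~\ref{l:example1}, I would realize $\ch(A)$ as the result of a nested intersection and read off both claims from the circle map $g=\si_d^k$ restricted to $A$. Let $I_1,\dots,I_s$ be the open circle arcs in $\bd(U)$, so that $\ol U\cap\uc=\bigcup\ol{I_j}$. Set $A_0=\ol U\cap\uc$ and $A_{j+1}=A_j\cap g^{-1}(A_j)$. Then $A=\bigcap_j A_j$ is a decreasing intersection of compact sets, hence closed, and $g(A)\subset A$ by definition.

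For (1), the first step is to show $g(A)=A$ and that $g|_A$ is positively oriented in the sense of Definition~\ref{d:pos-ori1}. The tool is that $g$ maps each arc $\ol{I_j}$ monotonically, with positive orientation, onto an arc of $\uc$. Since these images cover $\uc$ exactly $t$ times (away from the images of the $\ell_i$), every point of $A$ has a $g$-preimage in $\ol U\cap\uc$ whose forward orbit stays in $\ol U$. Hence that preimage lies in $A$, which gives surjectivity.

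Next I would take a hole $(x,y)$ of $A$ and show that either $g(x)=g(y)$ or $(g(x),g(y))$ is a hole of $g(A)$. There are two cases:
\begin{itemize}
\item If $(x,y)$ lies inside some $I_j$, positivity follows from monotonicity of $g$ on $I_j$.
\item Otherwise $(x,y)$ contains a complementary arc cut off by some $\ell_i$. Here I would use that $\ell_i$ is critical to see that the endpoints of $\ell_i$ have equal images, so the ``gap'' behind $\ell_i$ collapses.
\end{itemize}
With positivity in hand, $\bd\ch(A)\to\bd\ch(A)$ is a composition of a monotone map and a covering map, so $\ch(A)$ is a stand alone $\si_d^k$-invariant gap as soon as $|A|\ge3$.

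The degree is then computed by counting how many times $\bd\ch(A)$ wraps, which equals the covering degree of the induced circle map. Collapsing the $m$ critical chords on $\bd(U)$ identifies $m+1$ boundary pieces, and the count should give degree $m$. This counting step is where I expect the main obstacle: one must check that the critical chords $\ell_i$ on $\bd(U)$ contribute exactly $m$ to the degree even after passing to the invariant subset $A$. I would handle it by semiconjugating $g|_{\bd \ch(A)}$ to a circle map via collapsing edges, in the spirit of the canonical model of Definition~\ref{d:canon}, and computing the degree there from the combinatorics of $\bd(U)$.

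For (2), with $t=1$ the map $g$ restricted to $\bigcup\ol{I_j}$ is essentially a degree-one monotone map onto $\uc$. So $g|_A$ is semiconjugate, by collapsing holes, to an orientation-preserving circle homeomorphism, or is of degree $m\le 1$ in the relevant sense. The plan is to invoke the classical fact that all periodic orbits of an orientation-preserving degree-one circle map have the same rotation number $p/q$, hence the same period $q$. I would lift this back to $A$, noting that collapsed holes map to collapsed holes, so periodic points of $A$ project to periodic points of the same period. The one detail to check is that two distinct periodic points of $A$ cannot be identified by the collapse; this holds because the collapse only identifies endpoints of holes, and periodic endpoints of the same hole have equal period.
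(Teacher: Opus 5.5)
The paper's proof is only a two-line sketch. Your surjectivity argument is correct, and your idea for positivity is the same as the paper's: criticality of the $\ell_i$ makes the $\si_d^k$-images of the arcs of $\bd(U)$ concatenate, so orientation is preserved locally. Your two-case split is not exhaustive, though. A hole $(x,y)$ of $A$ may contain several arcs and several cut-off pieces, and for $t\ge 2$ you must also rule out that its image wraps around the circle. A uniform argument handles both. Let $\gamma$ be the piece of $\bd(U)$ from $x$ to $y$. Then $\si_d^k(\gamma)$, with chords collapsed, is a positively traversed path from $\si_d^k(x)$ to $\si_d^k(y)$. No interior point of this path lies in $A$: such a point would pull back to a point of $\ol U\cap\uc$ strictly inside $(x,y)$, and that point would then belong to $A$.

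The genuine gap is the degree count in (1): it cannot produce $m$. Summing the observation above over all holes, the degree of $\si_d^k$ on $\bd(\ch(A))$ equals the winding number of $\si_d^k$ along $\bd(U)$. That winding number is $t$ by hypothesis, whatever $m$ is. Two examples with $d=3$, $k=1$ show this.
\begin{itemize}
\item Take $\ell_1=\ol{0\,\frac13}$ and $U$ on the side of the arc $(\frac13,1)$. Then $m=1$ and $t=2$. $\si_3$ maps each of $A\cap[\frac13,\frac23]$ and $A\cap[\frac23,1]$ onto $A$, so $\ch(A)$ has degree $2$.
\item Take $\ell_1=\ol{0\,\frac13}$ and $\ell_2=\ol{\frac13\,\frac23}$. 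Then $m=2$, $t=1$, and $A=\{0\}\cup\{1-3^{-n},\,1-2\cdot 3^{-n}: n\ge 1\}$. Here $\ch(A)$ is an invariant gap of degree $1$.
\end{itemize}
So the conclusion of (1) can only be true as ``degree $t$'', which is also consistent with (2) being the degree-one case. The paper's sketch checks only that edges map to edges and never computes the degree. No count built from the $m$ critical chords, as you propose, can close this step.

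For (2), your rotation-number idea is what the paper's one-line argument (``the orientation on $A$ is preserved'') amounts to. However, collapsing holes is the wrong device. When $A$ has isolated points, closed holes chain together. In the second example they cover the whole circle, so the quotient is a point. Also, your claim that periodic endpoints of a common hole have equal period is part of what must be proved, not something you can assume. Instead, extend $\si_d^k|_A$ affinely over each hole to a non-decreasing degree-one circle map $G$. Points of $A$ have the same orbits under $G$ as under $\si_d^k$, so periodic points of $A$ are periodic for $G$ with the same period. Every periodic orbit of such a map has period equal to the denominator of its (unique) rotation number, which gives (2) with no identification issue.
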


\begin{proof}
(1) Since the chords on the boundary of $U$ are critical, the $\si_d^k$-images of circle arcs on $\bd(U)$ are
concatenated, and locally the map $\si_d^k$ preserves orientation on $\bd(\ch(A))$. Since
edges of $\ch(A)$ map onto edges of $\ch(A)$, we are done.

(2) Since $t=1$, the orientation on $A$ is preserved by $\si_d$. Hence periodic points in $A$
are all of the same period.
\end{proof}

We will need the following lemma.

\begin{lem}\cite[Lemma 3.10]{botw23}\label{l:crit-must}
Suppose that $G$ is a degree one $k$-periodic infinite gap of a $\si_d$-invariant lamination $\lam$ for some $d\ge 2$.
Then some gaps from the orbit of $G$ have critical edges. Moreover,
there are two possibilities.
\begin{enumerate}
\item There is a monotone semi-conjugacy between $\si_d^k|_{\bd(G)}$ and an irrational rotation
 of\, $\uc$ that collapses all edges of $G$ to points;
 moreover, if there are concatenations of edges of $G$, then each concatenation
 consists of at most $d-1$ leaves.
\item There are periodic edges of $G$; for some minimal $q$ all periodic edges of $G$ are
$\si_d^{kq}$-fixed. Moreover, each arc $I\subset \bd(G)$ located between two adjacent $\si_d^{kq}$-fixed
edges of $G$ has the following properties:

\begin{enumerate}

\item $I$ is $\si_d^{kq}$-invariant;

\item at exactly one endpoint, say $x$,
of $I$, a $\si_d^{kq}$-critical edge $\ell\subset I$ is located (so that there are critical leaves with periodic endpoints);

\item all points of $I$ map towards
$x$ by $\si_d^{kq}$.

\end{enumerate}

\end{enumerate}

Also, $\lam$ has isolated leaves with
both endpoints non-pre\-pe\-ri\-odic, or with one periodic and one non-periodic endpoints. In particular,
$(a)$ the lamination $\lam$ is not perfect, and $(b)$ it cannot have a dense subset of (pre)periodic leaves whose endpoints
have equal preperiods.
\end{lem}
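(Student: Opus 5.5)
Throughout I would replace $\si_d$ by $\si_d^k$ and treat $G$ as an invariant gap of degree one for the first return map. By gap invariance (Theorem \ref{t:gapin}), $\si_d^k|_{\bd(G)}$ is a positively oriented composition of a monotone map and a degree one covering. Let $\psi:\bd(G)\to\uc$ be the monotone map that collapses every edge of $G$ to a point, and also collapses every maximal arc of $\bd(G)$ that is eventually squeezed to a point, i.e.\ concatenations of (pre)critical edges. Then $\si_d^k$ descends to a circle map $g$ that is monotone, of degree one and orientation preserving, so after this collapse it is a homeomorphism. It therefore has a well-defined rotation number $\rho$, and I split into the cases $\rho\notin\mathbb Q$ and $\rho\in\mathbb Q$.

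\emph{Critical edges must occur.} I would prove this first, by a hole-length argument. Suppose no gap in the orbit of $G$ had a critical edge. Every hole of $G$ of length less than $\frac1d$ maps onto a hole of $\si_d(G)$ of $d$ times the length. The holes of the gaps in the orbit are infinitely many and have total length at most $k$. A hole of length at least $\frac1d$ either contains a full preimage arc, which forces its edge to be critical, or makes $\si_d|_{\bd(G)}$ of degree $>1$. Following the forward orbit of the longest hole gives a contradiction.

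\emph{Irrational case.} By Poincar\'e's classification, $g$ (and hence $\si_d^k|_{\bd(G)}$ after composing with $\psi$) is monotonically semiconjugate to the irrational rotation by $\rho$, with all edges of $G$ collapsed. For a concatenation of edges of $G$, its vertices are non-(pre)periodic, since the rotation has no periodic points. It is therefore a wandering polygon unless it is precritical. Kiwi's Theorem \ref{t:kiwan} bounds the number of vertices by $d$, so the concatenation has at most $d-1$ leaves. The precritical concatenations are handled by pushing forward to their first non-precritical image.

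\emph{Rational case.} Here $g$ has periodic points, which lift to periodic points or periodic edges of $G$. Choose the minimal $q$ such that all of them are $\si_d^{kq}$-fixed. Let $I$ be an arc of $\bd(G)$ between adjacent fixed edges. Then $I$ is invariant, and $\si_d^{kq}$ has no fixed points inside $I$, so by monotonicity all points of $I$ move toward one endpoint $x$; this gives (a) and (c). The key step, which I expect to be the main obstacle, is (b). Since $\si_d$ is expanding, $x$ is repelling on $\uc$, so points of $G\cap\uc$ cannot accumulate on $x$ from within $I$ while moving toward $x$. Hence there is an edge $\ell=\ol{xy}\subset I$. If $\ell$ were not critical, its image would be an edge at $x$ that, by orientation preservation and expansion, is strictly longer than $\ell$. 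That contradicts the fact that $y$ moves toward $x$. So $\ell$ is $\si_d^{kq}$-critical, and it has the periodic endpoint $x$ and the non-periodic endpoint $y$.

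\emph{Final claims.} In case (1) I would take an edge of $G$ with non-preperiodic endpoints, which exists by the semiconjugacy since edges are (pre)critical by Lemma \ref{l:edges-of-gaps}. In case (2) I would take the edge $\ell=\ol{xy}$. In both cases the leaf is isolated: leaves approaching it from outside $G$ would, under iteration and by expansion, have images crossing the edges of $G$ or each other. This immediately gives (a) that $\lam$ is not perfect. It also gives (b), since such an isolated leaf cannot be a limit of (pre)periodic leaves, and its own endpoints have different preperiodicity behaviour.
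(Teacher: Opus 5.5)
\textbf{Main gap.} Your upfront proof that some gap in the orbit has a critical edge does not work, because its key claim is false. A hole of length at least $\frac1d$ neither forces its edge to be critical nor forces degree $>1$. Traverse $\bd(H)$ for a gap $H$. The image winds $d$ times around $\uc$, except that each hole $h$ loses $\lfloor d|h|\rfloor$ turns, so the degree of $\si_d|_{\bd(H)}$ is $d-\sum_h\lfloor d|h|\rfloor$. Hence \emph{every} degree-one gap has holes of length $\ge\frac1d$. If the gap has no critical edge, these holes are non-critical, as for the gap without the critical diameter in a period-two cycle of quadratic Siegel gaps. So the longest hole in the orbit simply has length $\ge\frac1d$, and no contradiction arises.

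The missing ingredient is that the forward orbit of a hole consists of \emph{pairwise distinct} holes. Only then does the bound $k$ on total length force the lengths to tend to $0$, which contradicts the $d$-fold stretching of holes shorter than $\frac1d$. This holds when $\rho\notin\mathbb Q$, since edges of $G$ go to points with pairwise distinct orbits under the irrational rotation. It fails when $G$ has periodic edges, because an invariant edge can carry a long non-critical hole. So the existence of critical edges must be proved inside the case split. In case (1), use this wandering-hole argument. In case (2), it follows from your proof of (b), which does not use the upfront claim.

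\textbf{Smaller problems.}

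(i) The collapse $\psi$ breaks down in case (2). Your analysis there implies that each fundamental domain of $I$ maps onto the previous one (the first onto $\ell$) and so carries finitely many vertices. Hence $I$ minus its far endpoint is an infinite concatenation of precritical edges, and $G$ is countable; for example, $\ch\{0,\frac12,\frac14,\frac18,\dots\}$ for $\si_2$. Collapsing concatenations then leaves no circle. Define $\rho$ from the non-decreasing degree-one lift of $\si_d^k$ on $\bd(G)$, with edges mapped affinely, and collapse only when $\rho\notin\mathbb Q$.

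(ii) Pushing a precritical concatenation forward kills its critical edges. So the bound from Theorem \ref{t:kiwan} on the image does not bound the original concatenation, and an extra count is needed.

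(iii) In (b), the image of a non-critical $\ell$ need not be longer than $\ell$. Argue instead that its hole maps to a hole of $G$ at $x$ on the same side, hence to itself, which makes $\ell$ invariant.

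(iv) Isolation is not justified in case (2). Leaves $\ol{xy_n}$ with $y_n\to y$ accumulating on $\ell$ from its hole map to short leaves at $x$ on the side opposite to $I$. There $G$ may have a fixed edge at $x$, so nothing crosses, and the sibling property must be used. In case (1) the corresponding argument does work: push the edge forward to a critical edge. Straddling leaves then cross the image gap, and cones are excluded by Lemma \ref{l:inficon1}, since the vertices are not preperiodic.
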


The following definition is based upon Lemma \ref{l:crit-must}.

\begin{dfn}\label{d:deg1}
Periodic gaps of degree one that have no periodic vertices are said to be \emph{Siegel gaps}. Infinite periodic gaps of degree one
that have periodic vertices are said to be \emph{caterpillar} gaps. Their non-periodic pullbacks are called \emph{pre-Siegel} and
\emph{pre-caterpillar} gaps, respectively.
\end{dfn}

Evidently, Siegel gaps are 
 uncountable.



\bibliographystyle{amsalpha}

\end{document}